\documentclass[11pt]{article} 

\usepackage{latexsym}
\usepackage{amsmath}
\usepackage{amsthm}
\usepackage{amssymb}
\usepackage{vmargin}
\usepackage{cancel}
\usepackage{amscd}
\usepackage{stmaryrd}
\usepackage{euscript}
\usepackage{mathrsfs}
\usepackage{amscd}
\usepackage[all]{xy}
\usepackage{xr}
\usepackage{imakeidx}
\usepackage{hyperref} 
\usepackage{cleveref}
\usepackage{tikz-cd}
\usepackage{yhmath}
\usepackage{float}


    \definecolor{emphcolor}{RGB}{65,105,225}	
    \let\emph\relax 
    \DeclareTextFontCommand{\emph}{\color{emphcolor}\em}    
%
\definecolor{vinered}{rgb}{0.77, 0, 0.14}
\hypersetup{hidelinks,
            colorlinks=true,
            breaklinks=true,
            urlcolor= vinered,
            citecolor=vinered,
            linkcolor=vinered,
            bookmarksopen=false,
            pdftitle={Title},
            pdfauthor={Author}}

\crefformat{footnote}{#2\footnotemark[#1]#3}
\newcommand{\catdga}[1][k]{\operatorname{dg_+Alg}_{#1}} 
\newcommand{\catdgm}[1][A_\bullet]{\operatorname{dg_+Mod}_{#1}} 
\newcommand{\catdgmU}[1][A_\bullet]{\operatorname{dgMod}_{#1}} 

\DeclareMathAlphabet{\mathpzc}{OT1}{pzc}{m}{it}



\externaldocument[dmsch-]{dmschhyper}
\externaldocument[drep-]{drephyper}
\externaldocument[stacks-]{stacks2}
\externaldocument[stacks2-]{stacks2}
\externaldocument[DStein-]{DStein}

\newtheorem{theorem}{Theorem}[section]
\newtheorem{proposition}[theorem]{Proposition}
\newtheorem{corollary}[theorem]{Corollary}

\newtheorem{lemma}[theorem]{Lemma}
\newtheorem*{theorem*}{Theorem}
\newtheorem*{proposition*}{Proposition}
\newtheorem*{corollary*}{Corollary}
\newtheorem*{lemma*}{Lemma}
\newtheorem*{conjecture*}{Conjecture}

\theoremstyle{definition}
\newtheorem{definition}[theorem]{Definition}
\newtheorem{notation}[theorem]{Notation}

\newtheorem*{definition*}{Definition}

\theoremstyle{remark}
\newtheorem{example}[theorem]{Example}
\newtheorem{examples}[theorem]{Examples}
\newtheorem{remark}[theorem]{Remark}
\newtheorem{construction}[theorem]{Construction}
\newtheorem{warning}[theorem]{Warning}
\newtheorem{remarks}[theorem]{Remarks}
\newtheorem{exercise}[theorem]{Exercise}
\newtheorem{exercises}[theorem]{Exercises}
\newtheorem{property}[theorem]{Property}
\newtheorem{properties}[theorem]{Properties}

\newtheorem{digression}[theorem]{Digression}

\newtheorem*{example*}{Example}
\newtheorem*{examples*}{Examples}
\newtheorem*{remark*}{Remark}
\newtheorem*{warning*}{Warning}
\newtheorem*{remarks*}{Remarks}
\newtheorem*{exercise*}{Exercise}
\newtheorem*{property*}{Property}
\newtheorem*{properties*}{Properties}

\newcommand\da{\!\downarrow\!}

\newcommand\la{\leftarrow}

\newcommand\id{\mathrm{id}}

\newcommand\ten{\otimes}

\newcommand\eps{\epsilon}

\newcommand\CC{\mathrm{C}}
\newcommand\DD{\mathrm{D}}

\renewcommand\H{\mathrm{H}}
\newcommand\z{\mathrm{Z}}
\renewcommand\b{\mathrm{B}}

\newcommand\N{\mathbb{N}}
\newcommand\Z{\mathbb{Z}}
\newcommand\Q{\mathbb{Q}}

\newcommand\R{\mathbb{R}}
\newcommand\Cx{\mathbb{C}}

\newcommand\bA{\mathbb{A}}

\newcommand\bC{\mathbb{C}}

\newcommand\bF{\mathbb{F}}
\newcommand\bG{\mathbb{G}}

\newcommand\bL{\mathbb{L}}

\newcommand\bN{\mathbb{N}}

\newcommand\bP{\mathbb{P}}
\newcommand\bQ{\mathbb{Q}}
\newcommand\bR{\mathbb{R}}

\newcommand\bZ{\mathbb{Z}}

\newcommand\C{\mathcal{C}}

\newcommand\cC{\mathcal{C}}
\newcommand\cD{\mathcal{D}}

\newcommand\cF{\mathcal{F}}

\newcommand\cH{\mathcal{H}}

\newcommand\cJ{\mathcal{J}}

\newcommand\cL{\mathcal{L}}
\newcommand\cM{\mathcal{M}}
\newcommand\cN{\mathcal{N}}

\newcommand\cP{\mathcal{P}}
\newcommand\cQ{\mathcal{Q}}

\newcommand\cS{\mathcal{S}}

\newcommand\cU{\mathcal{U}}
\newcommand\cV{\mathcal{V}}
\newcommand\cW{\mathcal{W}}

\renewcommand\O{\mathscr{O}}

\newcommand\sE{\mathscr{E}}
\newcommand\sF{\mathscr{F}}

\newcommand\sH{\mathscr{H}}
\newcommand\sI{\mathscr{I}}

\newcommand\sL{\mathscr{L}}

\newcommand\sN{\mathscr{N}}
\newcommand\sO{\mathscr{O}}

\newcommand\sT{\mathscr{T}}

\newcommand\fX{\mathfrak{X}}
\newcommand\fY{\mathfrak{Y}}

\renewcommand\L{\Lambda}

\newcommand\m{\mathfrak{m}}

\newcommand\g{\mathfrak{g}}

\renewcommand\hom{\mathscr{H}\!\mathit{om}}

\newcommand\Ho{\mathrm{Ho}}

\newcommand\Alg{\mathrm{Alg}}

\newcommand\Mod{\mathrm{Mod}}

\newcommand\Hom{\mathrm{Hom}}
\newcommand\Map{\mathrm{Map}}

\newcommand\HHom{\underline{\mathrm{Hom}}}

\newcommand\Ext{\mathrm{Ext}}
\newcommand\EExt{\mathbb{E}\mathrm{xt}}

\newcommand\cone{\mathrm{cone}}

\newcommand\coker{\mathrm{coker\,}}

\newcommand\im{\mathrm{Im\,}}

\newcommand\Ob{\mathrm{Ob}\,}
\newcommand\Ch{\mathrm{Ch}}
\newcommand\ch{\mathrm{ch}}

\newcommand\Ab{\mathrm{Ab}}
\newcommand\Top{\mathrm{Top}}

\newcommand\Spec{\mathrm{Spec}\,}

\newcommand\Set{\mathrm{Set}}

\newcommand\Cat{\mathrm{Cat}}

\newcommand\Aff{\mathrm{Aff}}

\newcommand\Sch{\mathrm{Sch}}

\newcommand\Sing{\mathrm{Sing}}

\newcommand\Lim{\varprojlim}
\newcommand\LLim{\varinjlim}
\DeclareMathOperator*{\holim}{holim}
\newcommand\ho{\mathrm{ho}\!}

\newcommand\onto{\twoheadrightarrow}
\newcommand\abuts{\implies}
\newcommand\xra{\xrightarrow}
\newcommand\xla{\xleftarrow}
\newcommand\pr{\mathrm{pr}}

\newcommand\even{\mathrm{even}}

\newcommand\bt{\bullet}
\newcommand\by{\times}

\newcommand\Symm{\mathrm{Symm}}

\newcommand\GL{\mathrm{GL}}

\newcommand\Mat{\mathrm{Mat}}
\newcommand\et{\acute{\mathrm{e}}\mathrm{t}}

\newcommand\an{\mathrm{an}}
\newcommand\hol{\mathrm{hol}}
\newcommand\cart{\mathrm{cart}}

\newcommand\Tot{\mathrm{Tot}\,}
\newcommand\diag{\mathrm{diag}\,}

\newcommand\ev{\mathrm{ev}}
\newcommand\ind{\mathrm{ind}}

\renewcommand\iff{\Longleftrightarrow}

\newcommand\pd{\partial}

\newcommand\Tor{\mathrm{Tor}}

\newcommand\QCoh{\cQ\cC\mathit{oh}}

\newcommand\Hilb{\cH\mathit{ilb}}

\newcommand\cosk{\mathrm{cosk}}

\newcommand\dR{\mathrm{dR}}

\newcommand\op{\mathrm{op}}

\newcommand\co{\colon\thinspace}

\newcommand\oR{\mathbf{R}}
\newcommand\oP{\mathbf{P}}
\newcommand\oL{\mathbf{L}}

\newcommand\oSpec{\mathbf{Spec}\,}

\newcommand\on{\mathbf{n}}

\newcommand\oO{\mathbf{0}}

\newcommand\uleft\underleftarrow
\newcommand\uline\underline
\newcommand\uright\underrightarrow

\usetikzlibrary{arrows}

\makeindex

\sloppy
\begin{document}
\title{An introduction to derived (algebraic) geometry}

\author{Jon Eugster and Jonathan Pridham}
\maketitle

\begin{abstract}
Mostly aimed at an audience with backgrounds in geometry and homological algebra, these notes offer an introduction to derived geometry based on a lecture course given by the second 
    author. The focus is on derived algebraic geometry, mainly in characteristic $0$, but we also see the tweaks which extend most of the content to analytic and differential settings. 
\end{abstract}

\section*{Preface}
 These notes are based on those taken by both authors from a lecture course given by the second 
 author
 in Edinburgh in  2021. Some material from courses given in Cambridge in 2013 and 2011 has been added, together 
 with  details, references and  additional related content (notably some more down-to-earth characterisations of derived stacks). 

 The main background topics assumed are homological algebra, sheaves, basic category theory and algebraic topology, together with some familiarity with typical notation and terminology in algebraic geometry. A lot of the motivation will be clearer for those familiar with moduli spaces, but they are not essential background.
 
  The perspective of the notes is to try to present the subject as a natural, concrete development of more classical geometry, instead of merely as an opportunity to showcase  $\infty$-topos theory (a topic we only encounter indirectly in these notes). 
  The main moral of the later sections is that if you are willing to think of geometric objects in terms of \v Cech nerves of atlases rather than as ringed topoi, then the business of developing higher and derived generalisations becomes much simpler. 
 
 These notes are only intended as an introduction to the subject, and are far from being a comprehensive survey. We have tried to include more detailed  references throughout, with the original references where we know them. Readers may be surprised at how old many of the references are, but the basics have not changed in more than a decade and the fundamentals were established half a century ago,  though
  as 
 terminology becomes more specialised, researchers can
   tend to overestimate the originality of their ideas\footnote{potentially compounded by Maslow's hammer and  Disraeli's maxim on reading books}. We have probably overlooked precursors for many phenomena in the supersymmetry literature, for which we apologise in advance.

Focusing on the characteristic $0$ theory, Section \ref{dgasn} introduces  dg-algebras as the affine building blocks for derived geometry. It then gives a simple characterisation of derived schemes, derived DM stacks  and their quasi-coherent complexes in those terms, explaining their relation with the older notions of dg-schemes and dg-manifolds.

The natural notion of equivalence for dg-algebras is not isomorphism, but quasi-isomorphism. Since quasi-isomorphism classes have poor gluing properties, adequate treatment of morphisms necessitates some flavour of infinity categories.  
 \S \ref{inftycatsn} gives a minimalist overview of the necessary homotopy theory.

In \S \ref{consequencesn}, we start to reap the consequences of those homotopical techniques, introducing derived intersections, the cotangent complex and shifted symplectic structures. The section also features obstruction theory, which was the original motivating application for derived deformation theory, and hence for derived algebraic geometry.

Concrete constructions in homotopy theory tend to feature simplicial objects, and we cover the basic theory in \S \ref{simpsn}. Simplicial sets and simplicial algebras are covered, together with the Dold--Kan correspondence which allows us to think of these as generalising chain complexes beyond the realm of abelian categories. As preparation for the later sections, we also include an account of Duskin--Glenn $n$-hypergroupoids.

\S \ref{stacksn} develops the theory of higher stacks from that perspective, as simplicial schemes satisfying analogues of the hypergroupoid property. It includes a description of morphisms and of the theory of quasi-coherent complexes, as well as  a comparison with the slightly older topos-theoretic definitions.

In \S \ref{dstacksn}, those definitions and constructions are extended to derived stacks, which incorporate enhancements in both derived and stacky directions. Obstruction theory leads to the Artin--Lurie representability theorem, here covered in a simplified form due to the second author. Several examples of derived moduli functors  are then discussed, including concrete representability criteria for various classes of moduli problems parametrising schemes, derived Artin stacks and quasi-coherent complexes.
 
 We would like to thank the audience members, and particularly Sebastian Schlegel-Mejia, for very helpful comments, without which many explanations would be missing from the text. 
 
\subsection*{Notational conventions}

    \begin{itemize}
        
        
        \item We adhere strictly to the standard convention that the indices in chain complexes and simplicial objects, and related operations and constructions, are denoted with subscripts, while those in  cochain complexes and cosimplicial objects are denoted with superscripts; to do otherwise would invite chaos.
      
      \item We intermittently write chain complexes $V$ as $V_{\bt}$ to emphasise the structure, and similarly for cochain, simplicial and cosimplicial structures. The presence or absence of bullets in a given expression should not be regarded as significant.
        
        \item We denote shifts of chain and cochain complexes by $[n]$, and always follow the convention originally developed for cochains, so we have $M[n]^i:=M^{n+i}$ for cochain complexes, but $M[n]_i:=M_{n-i}$ for chain complexes. 
    \end{itemize}

\clearpage

\tableofcontents

\clearpage
%

\section{Introduction and dg-algebras}\label{dgasn}

    The idea behind derived geometries, and in particular  derived algebraic geometry  (DAG for short), is to endow rings of functions with extra structure, making families of geometric objects behave better.  For example, singular points start behaving more like smooth ones as observed in \cite{Kon2,Kon}, a philosophy known as \emph{hidden smoothness}. \index{hidden smoothness}
    
    The most fundamental formulation of the theory would probably be in terms of simplicial rings,
    but in characteristic $0$ these give the same theory as commutative 
    differential graded algebras (dg-algebras),
    which we will focus on most in these notes, as they are simpler to work with.
    
    \begin{remark}
        Spectral algebraic geometry\footnote{often confusingly referred to as derived algebraic geometry following \cite{lurieDAG5}, and originally dubbed  Brave New Algebraic Geometry in \cite{TVbn,hag2}, ``brave new algebra'' then  being well-established Huxleian (or Shakespearean) terminology,  dating at least to Waldhausen's plenary talk ``Brave new rings''  at the conference \cite{waldhausenBN})} (SAG)
        \index{spectral algebraic geometry}\index{SAG|see{spectral algebraic geometry}}
        is another powerful closely related framework and is based on commutative ring spectra; it is studied amongst other homotopical topics in 
        \cite{lurieSAG}.
        Over $\Q$, this gives the same theory as DAG, but different geometric behaviour appears
        in finite and mixed characteristic. While DAG is mostly used to apply methods of algebraic topology to
        algebraic geometry, SAG is mainly used the other way around, an example being elliptic cohomology as in \cite{lurieellipticsurvey}.
        
        One motivation for SAG is that cohomology theories come from symmetric spectra,
        and you try to cook up more exotic cohomology theories by replacing rings
        in the theory of schemes/stacks with $E_{\infty}$-ring  spectra.
        There's a functor $H$ embedding discrete rings in $E_{\infty}$-ring  spectra
        \cite[p.~185]{hag2}, but it doesn't preserve smoothness: even 
        the morphism $H(\bF_p) \to H(\bF_p[t])$ is not formally smooth.
        
        This is just a side note and we won't use spectra in these lecture notes, although most of  the
        results of \S \ref{dstacksn} also hold in  SAG. For example, \cite{stacks2},
        from which the results of \S\S\ref{stacksn}--\ref{dstacksn} are mostly taken,
        was explicitly couched in sufficient generality to apply to ring spectra, too.
    \end{remark}

    \begin{notation}
        Henceforth (until we start using simplicial rings), we fix a commutative ring $k$ containing $\bQ$, i.e. we work
        in equal characteristics $(0,0)$.\footnote{Cdga don't behave nicely in other characteristics
        because the symmetric powers of \cref{exp:free-graded} don't preserve quasi-isomorphisms of chain complexes.} 
    \end{notation}

\subsection{dg-Algebras}
    In this section we define dg-algebras and affine dg-schemes, as well as analogues in differential and analytic geometry.
    \begin{definition}
        A \emph{differential graded $k$-algebra}\index{differential graded algebra}\index{dga|see{differential graded algebra}}
        (dga or dg-algebra for short)
        $A$ consists of a chain complex with a unital associative multiplication.
        Concretely, that is a family of
        $k$-modules $\{A_i\}_{i\in\bZ}$, an associative $k$-linear
        multiplication $(-\cdot-): A_i \times A_j \to A_{i+j}$ (for all $i,j$), a unit $1 \in A_0$ and a differential
        $\delta: A_i \to A_{i-1}$ (for all $i$) which is $k$-linear, satisfies $\delta^2 = 0$ and is a
        derivation with respect to the multiplication, which means
        $\delta(a\cdot b) = \delta(a)\cdot b + (-1)^{\deg(a)} a \cdot \delta(b)$.
         
        An object $A$ with all the structures above except the differential $\delta$ is simply called  a \emph{graded algebra}\index{graded algebra}.
        
        A graded $k$-algebra $A$ is \emph{graded-commutative}\index{graded-commutative}
        if $a\cdot b = (-1)^{\deg(a)\cdot\deg(b)} b\cdot a$.
        We write \emph{cdga}\index{cdga|see{graded-commutative}} for differential graded-commutative algebras.
    \end{definition}
    \begin{definition}
        A dg-algebra $A$ is \emph{discrete}\index{discrete dg-algebra} if $A_n = 0$ for all $n \ne 0$.
    \end{definition}
    \begin{notation}
        We usually denote a graded algebra by $A_\ast := \{A_i\}_i$ while we use the notation $A_\bullet := (\{A_i\}_i, \delta)$ to denote a differential graded algebra, where usually the $\delta$ is implicit/suppressed. However, the presence or absence of bullets in a given expression should not be regarded as significant.
        
        Moreover, we implicitly identify rings with  discrete (differential) graded algebras; given a ring $A$,  we simply denote the associated discrete dg-algebra   by $A$, which corresponds to its degree zero term (all other terms being $0$). 
    \end{notation}
    \begin{remark}
        Usually we restrict ourselves to the case where these cdga are \emph{concentrated
        in non-negative chain degree}\index{concentrated},  i.e. $A_i = 0$ for all $i<0$.
        
        Algebraic geometers more often work with cochains  instead of chains;
        our main reasons for using chain notation here are to assist the comparison with simplicial objects and to help distinguish the indices from those arising from sheaf cohomology.
    \end{remark}
    \begin{notation}
        In concrete examples we will often denote cdga concentrated
        in non-negative chain degree
        by $(A_0 \leftarrow A_1 \leftarrow A_2 \leftarrow \ldots)$, assuming that the
        first written entry is degree zero. For example, if $f:A \onto B$ is a 
        surjective map of rings then $(A \hookleftarrow \ker(f) \leftarrow 0 \leftarrow \ldots)$
        would be a chain complex with $A$ in degree zero, $\ker(f)$ in degree $1$ and $0$ 
        everywhere else. This
        chain complex is quasi-isomorphic to the discrete dg algebra $B$.
        
    \end{notation}
    \begin{example}\label{exp:free-graded}
        Let $M_\ast$ be a graded $k$-module.
        The \emph{free graded-commutative $k$-algebra}\index{free graded-commutative algebra} generated by $M_\ast$ is
        $k[M_\ast] :=(\bigoplus \Symm^n M_{\even})\ten (\bigoplus \bigwedge^nM_{odd})$, with the degree of a product of elements being the sum of the degrees of those elements.
    \end{example}
    \begin{example}\label{exp:cdga-001}
        Take a free graded-commutative $k$-algebra $A_\ast$ on three generators $X,Y,Z$ (i.e. on the $k$-module $k.X \oplus k.Y \oplus k.Z$), where $\deg(X)=0, \deg(Y)=\deg(Z)=1$.
        Then we get
        \begin{itemize}
            \item $A_0 = k[X]$
            \item $A_1 = k[X]Y \oplus k[X]Z$
            \item $A_2 = k[X]YZ$
            \item $A_i = 0$ for $i <0$ and $i\ge 3$.
        \end{itemize}
        which we can see by computing that $ZY=-YZ$ and $Y^2=Z^2 = 0$.
        A differential of $A_\ast$ is then completely determined
        by its values $f:=\delta(Y), g:=\delta(Z) \in A_0 = k[X]$.
        So for example for $a,b,c\in k[X]$ we have $\delta(aY+bZ)=af + bg$ and
        $\delta(cYZ) = c(Zf - Yg)$.\footnote{Some readers might recognise this
        as a variant of a Koszul complex.}

        In fact, we get $\H_0(A_\bt) = k[X]/(f,g)$, and we thing of  $A_\bt$ as the ring of functions
        on the \emph{derived vanishing locus}\index{derived vanishing locus} of the map
        $(f,g): \bA^1 \to \bA^2, x \mapsto (f(x), g(x))$.
    \end{example}

\subsubsection{Differential and analytic analogues}

    \Cref{exp:cdga-001} is set in the world of  algebraic geometry. However, it is straightforward
    to adjust the example to differential or analytic geometry. All that's needed is
    to put extra structure on $A_0$.
    For differential geometry, $A_0$ ought to be a \emph{$\C^\infty$-ring}\index{C-ring@$\C^\infty$-ring} \cite{dubuc}, 
    which means that for any $f \in \C^\infty(\bR^n, \bR)$ there is an $n$-ary operation
    $A_0 \times\ldots\times A_0 \to A_0$, and these operations need to satisfy some
    natural consistency conditions.
   
    \begin{example}\label{Cinfty}
        Finitely generated
        $\C^\infty$-rings just take the form $\C^\infty(\bR^m, \bR)/I$ where $I$ is an ideal;
        these include $\C^{\infty}(X,\R)$ for manifolds $X$.
        Hadamard's lemma ensures that
        the operations descend to the quotient.
        
        A $\C^{\infty}$-ring homomorphism $\C^\infty(\bR^m, \bR)/I \to \C^\infty(\bR^n, \bR)/J $ is
        then just given by  elements $f_1, \ldots, f_m \in \C^\infty(\bR^n, \bR)/J$ satisfying
        $g(f_1, \ldots,f_m)=0$ for all $g \in I$; think of this as a smooth morphism from the
        vanishing locus of $J$ to the vanishing locus of $I$.
        
        Arbitrary $\C^{\infty}$ rings arise as quotient rings of nested unions
        $\C^{\infty}(\R^S,\R):= \bigcup_{\substack{T \subset S \\ \text{finite}}} \C^\infty(\bR^T, \bR)$ for infinite sets $S$.
    \end{example}
     
    This approach allows for singular spaces, and is known as \emph{synthetic}\index{synthetic differential geometry} differential geometry.
        
    For  analytic geometry, $A_0$ should be a \emph{ring with entire functional calculus} (\emph{EFC-ring}),
    \index{EFC-ring}\index{ring with entire functional calculus|see{EFC-ring}}
    meaning for any holomorphic function $f: \bC^n \to \bC$ there is again an operation
    $A_0 \times\ldots\times A_0 \to A_0$, with these satisfying some natural consistency conditions
    --- there are analogous definitions for non-Archimedean analytic geometry.  
    For more details and further references on this approach, see \cite{CarchediRoytenberg,nuitenThesis} in
    the differential setting, and \cite{DStein} in the analytic setting.
    The latter shows that this
    is equivalent to the approach via
    pregeometries in \cite{lurieDAG9},  classical theorems in analysis rendering
    most of the pregeometric data redundant.

\subsubsection{Morphisms and quasi-isomorphisms}

    \begin{definition}
        As with any chain complex, we can define the \emph{homology}\index{homology} $H_\ast(A_\bullet)$
        of a dg-algebra $A_\bullet$ by
        $H_i(A_\bullet) = \ker(\delta:A_i \to A_{i-1})/\im(\delta: A_{i+1}\to A_i)$, which is a
        graded-commutative algebra when $A$ is a cdga.
    \end{definition}
    \begin{definition}
        A \emph{morphism of dg-algebras}\index{dga!morphism}
        is a map $f: A_\bullet \to B_\bullet$ that respects the differentials
        (i.e. $f\delta_A = \delta_B f:A_i \to B_{i-1}$ for all $i \in \Z$), and the multiplication
        (i.e. $f(a\cdot_A b) = f(a) \cdot_B f(b) \in B_{i+j}$ for all $a\in A_i, b\in A_j$ for all $i,j$).
    \end{definition}    
    \begin{definition}
        We denote by $\catdga$ the
        \emph{category of graded-commutative differential graded $k$-algebras}\index{dg+Algk@$\catdga$|see {dga}}
        which are concentrated in non-negative degree. 
        The opposite category $(\catdga )^\op$ is the
        \emph{category of affine dg-schemes}\index{dg-scheme!affine}\index{dgaff@$\operatorname{DG^+\Aff}$|see {affine dg-scheme}},
         denoted by $\operatorname{DG^+\Aff}$.
        We denote elements in this opposite category formally by $\Spec(A_\bullet)$.\index{dg-scheme}
    \end{definition}
    \begin{notation}
        For $R_\bullet \in \catdga$ we write $\catdga[R_\bullet]$ for the category $R_\bullet \da (\catdga)$, i.e. 
        cdgas $A_\bullet \in \catdga$ with morphism $R_\bullet \to A_\bullet$.
        Further, for $A_\bullet \in \catdga[R_\bullet]$, an \emph{$A_\bullet$-augmented $R_\bullet$-algebra}\index{augmented} is an object
        $B_\bullet$ of the category $(\catdga[R_\bullet]) \da A_\bullet$, i.e. 
        $B_\bullet \in \catdga$ with a morphisms  $R_\bullet \to B_\bullet \to A_\bullet$ of dgas.
    \end{notation}
    \begin{remark}
        The notation $\Spec(A_\bullet)$ is
        used to stress the similarity to rings and affine schemes. However, at this stage the construction of an affine  dg-scheme
        is purely in a categorical sense, meaning we do not use any of the explicit constructions such as
        the prime spectrum of a ring or locally ringed spaces.    
    \end{remark}
    \begin{remark}
     In geometric terms, one should think of the ``points'' of a dg-scheme just as the points in
     $\Spec(\H_0(A_\bullet))$ (which is a classic affine spectrum). The rest of the structure of a dg-scheme is
     in some sense infinitesimal.
     
     In analytic and $\C^{\infty}$ settings, we can make similar definitions for dg analytic spaces or dg $\C^{\infty}$ spaces, but it is usual to impose some restrictions on the EFC-rings and $\C^{\infty}$-rings being considered, since not all are of geometric origin; we should restrict to those coming from {\it closed} ideals in affine space, with some similar restriction on the $A_0$-modules $A_i$.
    \end{remark}

    \begin{definition}
        Let $A_\bullet, B_\bullet \in \catdga[R]$.
        A morphism $f: A_\bullet \to B_\bullet$ of $R$-cdga is a \emph{quasi-isomorphism}\index{quasi-isomorphism} (or \emph{weak equivalence}\index{weak equivalence}) 
        if it induces an isomorphism on
        homology $H_\ast(A_\bullet) \xrightarrow{\simeq} H_\ast(B_\bullet)$. We say that $R$-cdga
        $A_\bullet$ and $B_\bullet$ are \emph{quasi-isomorphic}\index{quasi-isomorphic} if there exists a diagram  $A_\bullet \la C_\bt   \to B_\bullet$ of quasi-isomorphisms in $\catdga[R]$. 
    \end{definition}
    
        

\subsection{Global structures}
    As a next step, one would like to globalise the concept of an affine dg-scheme to get
    a dg-scheme (or a  dg analytic space or dg $\C^{\infty}$-space in other contexts). There's a straightforward approach to achieve this: instead of a ring in
    degree $0$ and more structure above it, we can take a scheme (or analogous geometric object)  in degree $0$ and a sheaf of
    dg-algebras above it. This definition is due to \cite{Quot} after Kontsevich 
    \cite[Lecture 27]{Kon}.\footnote{These dg-schemes should not be confused with the DG-schemes of \cite{GaitsgoryIndCoh}, which are an alternative characterisation of the derived schemes of Definition \ref{def:dersch}.}
        
    \begin{definition}\label{def:dg-scheme}
        A \emph{dg-scheme}\index{dg-scheme} consists of a scheme $X^0$ and quasi-coherent sheaves $\O_X:=\{\O_{X,i}\}_{i\ge 0}$ on $X^0$ such that
        $\O_{X, 0} = \O_{X^0}$ (i.e. the structure sheaf of $X^0$), equipped
        with a cdga structure, i.e. $\delta: \O_{X,i}\to \O_{X,i-1}$ and
        $\cdot:\O_{X,i} \otimes \O_{X,j} \to \O_{X,i+j}$ satisfying the usual conditions.
    \end{definition}
    
    Although we have given this  definition  in the algebraic setting, obvious analogues exist replacing schemes with other types of geometric object in $\C^{\infty}$ and analytic settings.
    
    \begin{definition}
        A \emph{morphism of dg-schemes}\index{dg-scheme!morphism} $f: (X^0,\O_X) \to (Y^0, \O_Y)$
        consists of a morphism of schemes $f^0:X^0\to Y^0$ and a morphism of sheaves of cdga
        $f^\sharp: f^{-1}\O_Y \to \O_X$.
    \end{definition}
    \begin{definition}
        Define the \emph{underived truncation}
        $\pi^0X \subseteq X^0$ to be $\underline{\Spec}_{X^0}(\H_0(\O_X))$,  \index{underived truncation $\pi^0X$}\index{pi0X@$\pi^0X$|see {underived truncation}}
        the closed subscheme of $X^0$ on which $\delta$ vanishes, or equivalently defined by the
        ideal $\delta \O_{X,1} \subseteq \O_{X,0}$.\footnote{In \cite{Quot}, the notation $\pi_0$ is used for this construction,
        but subscripts are more appropriate for quotients than for kernels, and using $\pi_0$ would cause confusion when we come to combine these with simplicial constructions.} 
    \end{definition}
    The underived truncation $\pi^0X$ is also known as the \emph{classical locus} of $X$\index{classical locus}. 
    \begin{definition}
        A morphism of dg-schemes is a \emph{quasi-isomorphism}\index{quasi-isomorphism} if $\pi^0f:\pi^0X \to \pi^0Y$
        is an isomorphism of schemes  and $\sH_*(\O_Y)\to \sH_*(\O_X)$ (homology taken in the category of sheaves) is an isomorphism of quasi-coherent sheaves on $\pi^0X=\pi^0Y$. 
    \end{definition}
    \begin{remark}
        A problem with \cref{def:dg-scheme} is that $X^0$ has no geometrical meaning, in the sense that we
        can replace it with any open subscheme containing $\pi^0X$ and get a
        quasi-isomorphic dg-scheme. Moreover, the ambient scheme $X^0$ can get in the way when we want
        to glue multiple
        dg-schemes together, since we cannot usually choose the ambient scheme consistently on overlaps.
        
        Gluing tends  not to be an issue for analogous constructions in differential geometry, because a
        generalised form of  Whitney's embedding theorem holds:  a derived $\C^{\infty}$ space  has a
        quasi-isomorphic dg $\C^{\infty}$ space  with  $X^0=\R^N$ whenever its underived truncation
        $\pi^0X$ admits a closed embedding in $\R^N$, by an obstruction theory argument along the
        lines of \S \ref{morphismsrevsn}. 
        However, in algebraic and analytic  settings this definition turns out to be too restrictive
        in general, which can be resolved by working with derived schemes.
    \end{remark}
         
    The following definition incorporates the flexibility needed to allow gluing constructions,
    and gives a taste of the sort of objects we will be encountering towards the end of the notes.
    
    \begin{definition}\label{def:dersch}
        A \emph{derived scheme}\index{derived scheme} $X$ consists of a scheme $\pi^0X$ and a
        presheaf $\O_X$ on the site of affine open subschemes of $\pi^0X$, taking values in $\catdga$, 
         such that $\H_0(\O_X) =\O_{\pi^0X}$ in degree zero and
        all $\H_i(\O_X)$ are quasi-coherent $\O_{\pi^0X}$-modules for all $i\ge 0$.\footnote{
            Here, $\O_{\pi^0X}$ is the structure sheaf of the scheme $\pi^0X$ and $H_i(\O_X)$ is
            a presheaf of homology groups.
        }
    \end{definition}
    In other words, this says that for every inclusion 
        $U \hookrightarrow V$ of open affine subschemes in $\pi_0X$, the maps
\[
 \H_0\O_X(U)\ten_{\H_0\O_X(V)} \H_i\O(V) \to \H_i\O(U)
\]
are isomorphisms.
    
    \begin{construction}
      To get from a dg-scheme to a derived scheme one looks at the canonical embedding
      $i: \pi^0X \hookrightarrow X^0$ and takes $(\pi^0 X, i^{-1}\O_X)$, which is a
      derived scheme.
      
      In the other direction, observe that on each open affine subscheme $U \subseteq \pi^0X$, we have an affine dg-scheme $\Spec (\sO_X(U))$, but that the schemes $\Spec (\sO_{X,0}(U))\supseteq U$ will not in general glue together to give an ambient affine scheme $X^0 \supseteq \pi^0X$.
    \end{construction}
    \begin{remark}[Alternative characterisations]
        By \cite[Theorem~\ref{stacks2-dgshfthm}]{stacks2}, 
        the derived schemes of Definition \ref{def:dersch}  are  equivalent to objects usually described in a much fancier way: those derived Artin or Deligne--Mumford $\infty$-stacks in the sense of \cite{hag2,lurie} whose underlying underived stacks are schemes.
        
 A similar characterisation, using sheaves instead of presheaves, was later stated without proof or reference as \cite[Definition 3.1]{toenICM}. Such an object can be obtained from our data $(\pi^0X,\sO_X)$ by sheafifying each presheaf $\sO_{X,n}$ individually. However, this na\"ive sheafification procedure destroys a hypersheaf property enjoyed by our presheaf $\sO_X$, so the quasi-inverse functor is not just given by forgetting the sheaf property, instead requiring fibrant replacement in a local model structure.      

        \label{dschfootnote}
Also beware that these are not the same as the derived schemes of \cite[Definition 4.5.1]{lurie}, which gives a notion more general than a derived algebraic space (see \cite[Proposition 5.1.2]{lurie}),  out of step with the rest of the literature.
   \end{remark}

        To generalise the definition to derived algebraic spaces (or even derived Deligne--Mumford $1$-stacks), let $\pi^0X$ be an algebraic space (or DM stack) and let $U$ run over affine schemes \'etale over $\pi^0X$. \index{derived algebraic space} \index{derived Deligne--Mumford stack} 
        
        In fact, only a basis for the topology is needed, which also works for derived Artin stacks (see \S \ref{dstacksn})\index{derived Artin stack}. However, we cannot  generalise Definition \ref{def:dersch} so easily to derived Artin stacks, because unlike the \'etale sites, $\pi^0$ 
        does not give an $\infty$-equivalence between the lisse-\'etale sites of $X$ and of $\pi^0X$.

    
    \begin{definition}\label{dgmfddef}
        A dg-scheme is a \emph{dg-manifold}\index{dg-manifold}\footnote{
            The ``manifold'' terminology alludes to the locally free generation of $\sO_{X,\bt}$ by co-ordinate variables.
        }
        if $X^0$ is smooth and as a graded-commutative algebra $\O_X$ is freely generated
        over $\O_{X^0}$ by a finite rank projective module (i.e. a graded vector bundle).
    \end{definition}
    \begin{remarks}
        Note that  the second condition  says that the morphism $\O_{X,0} \to \O_{X, \bt}$ is given by
        finitely generated cofibrations of cdgas.
        Every affine dg-scheme with perfect cotangent complex is quasi-isomorphic to an affine
        dg-manifold. Further, we can drop perfect condition if we drop finiteness in the definition of
        a dg-manifold.
    \end{remarks}
    
    \begin{digression}\label{LRinfty}       
        There is a more extensive literature on dg-manifolds in the setting of differential geometry, often in order to study supersymmetry and  supergeometry in mathematical physics; these tend to be $\Z/2$- or $\Z$-graded and are often known as $Q$-manifolds \index{Q-manifold@$Q$-manifold}
        (their $Q$ corresponding to our differential $\delta$), following \cite{AKSZ,kontsevichPoisson}; also see \cite{DeligneMorganSupersymm,voronovMackenzie,kapranovSupergeometry}. 
        The $Q$-manifold literature tends to place less emphasis on homotopy-theoretical phenomena (and especially quasi-isomorphism invariance) than the derived geometry literature.  
        
        When the sheaf $\sO_{X^0}$ of functions is enriched in the opposite direction to Definition \ref{dgmfddef}, i.e. $\delta \co \sO_{X,0} \to \sO_{X,-1} \to \ldots$, the resulting object behaves very differently from the dg-manifolds we will be using, and corresponds to a stacky (rather than derived) enrichment, related to quotient spaces rather than subspaces. It gives a form of derived Lie algebroid\index{derived Lie algebroid}, closely related to  strong homotopy (s.h.) Lie--Rinehart algebras\index{Lie--Rinehart algebra} ($LR_{\infty}$-algebras). In differential settings, these tend to be known as NQ-manifolds or (confusingly) dg-manifolds. For more  on their relation to derived geometry, see \cite{nuitenThesis,DQDG,smallet} and references therein; see the end of \S \ref{derivedDRsn} below  for a brief explanation of their role establishing Poisson geometry for stacks. Such objects are also closely related to foliations, with a universal characterisation in \cite{LaurentGengouxLavauStroblLieInftyAlgdFoliation}. 
    \end{digression}
    
\subsection{Quasi-coherent complexes}\label{qucohcx}    
    
    \begin{definition}
        Let $A_\bullet \in \catdga$. An \emph{$A_\bullet$-module in chain complexes}\index{module in chain complexes}\index{dgmod@$\catdgm$|see {module in chain complexes}}
        consists of a chain complex
        $M_\bullet$ of $k$-modules and a scalar multiplication $(A\otimes_k M)_\bullet \to M_\bullet$
        which is compatible with the multiplication on $A_\bullet$.
        
        Explicitly, for all $i,j$ we have a $k$-bilinear map
        $A_i \times M_j \to M_{i+j}$ satisfying $(ab)m=a(bm)$, $1m = m$, and the chain map
        condition $\delta_M(am) = \delta_A(a)m + (-1)^{\deg(a)}a\delta_M(m)$.

        We denote the category of  $A_\bullet$-modules in chain complexes by $\catdgmU$, and the subcategory of modules concentrated in non-negative chain degrees  by $\catdgm$.
    \end{definition}
    \begin{definition}
        A morphism of $A$-modules $M_\bullet \to N_\bullet$ is a \emph{quasi-isomorphism}\index{quasi-isomorphism}\index{$\simeq$|see {quasi-isomorphism, weak equivalence}},
        denoted $M_\bullet \simeq N_\bullet$,
        if it induces an
        isomorphism on homology $\H_\ast (M_\bullet) \cong \H_\ast (N_\bullet)$.
    \end{definition}
    \begin{definition}[Global version]\label{derschCart}
        Let $(\pi^0X, \O_X)$ be a derived scheme.
        We can look at $\O_X$-modules $\mathscr{F}$
        in complexes of presheaves. We say they are \emph{homotopy-Cartesian modules}\index{homotopy-Cartesian!module} (following \cite{hag2}), or \emph{quasi-coherent complexes}\index{quasi-coherent complex} (following \cite{lurie}), if the homology {\it presheaves} $\H_i(\sF)$ are all quasi-coherent $\O_{\pi^0X}$-modules.
        \end{definition}
In other words, this says that for every inclusion 
        $U \hookrightarrow V$ of open affine subschemes in $\pi_0X$, the maps
\[
 \H_0\O_X(U)\ten_{\H_0\O_X(V)} \H_i\sF(V) \to \H_i\sF(U)
\]
are isomorphisms. The conditions on $\O_X$ mean that is  equivalent to saying that, for the derived tensor product $\ten^{\oL}$ of Definition \ref{tenoLdef} below, the maps
\[
	        \O_X(U)\otimes^{\oL}_{\O_X(V)} \mathscr{F}(V) \to \sF(U)
        \]
        are quasi-isomorphisms, which is the characterisation  favoured in the original sources.

\subsection{What about morphisms and gluing?}

   
 	We want to think of derived schemes $X,Y$ as equivalent if they can be connected by
	a zigzag of quasi-isomorphisms.
    \[\xymatrix{
        X && W_1 &&& \ldots  & W_n  &&   Y\\
        & Z_0\ar[ul]^{\sim} \ar[ur]_{\sim} && Z_1\ar[ul]^{\sim} \ar[ur]_{\sim} &\ldots & 
        \ar[ur]_{\sim} && Z_n\ar[ul]^{\sim} \ar[ur]_{\sim}
    }\]
    
    How should we define morphisms compatibly with this notion of equivalence?\footnote{The first global constructions \cite{Kon,Quot,Hilb} of derived moduli spaces did not come with  functors of points partly because morphisms are so hard to define; it was not until \cite{dmc} that those early constructions were confirmed to parametrise the ``correct'' moduli functors.}  What about gluing data?
    
    We could forcibly invert
    all quasi-isomorphisms, giving the ``homotopy category'' $\Ho(\catdga)$ \index{homotopy category}
    in the affine case. That doesn't have limits and colimits, or behave well with gluing.
    For any small category $I$, we might also want to look at the category $\catdga^I$ of $I$-shaped diagrams of cdgas (e.g. taking $I$ to be a poset of open subschemes as in the definition
    of a derived scheme). There is then a homotopy category 
            $\Ho(\catdga^I)$ of diagrams, given by inverting objectwise quasi-isomorphisms.
    But, unfortunately, the natural functor
    \[
        \Ho(\catdga^I) \to \Ho(\catdga)^I
    \]
    (from the homotopy category of diagrams to diagrams in the homotopy category)
    is seldom an equivalence;
    it goes wrong for everything but for discrete diagrams, i.e. when $I$ is a set. This means that constructions such as sheafification are doomed to fail if we try to formulate everything in terms of the homotopy category $\Ho(\catdga)$.
    
    To fix this, we will need some flavour of infinity (i.e. $(\infty,1)$) category, this description in terms of diagrams being closest to
    Grothendieck's derivators. An early attempt to address the problems of morphisms and gluing for dg-schemes  was \cite{behrendDGSch2}, which used $2$-categories to avoid the worst pathologies.
  
\clearpage
\section[Infinity categories and model categories]{Infinity categories and model categories (a bluffer's guide)}\label{inftycatsn}
\subsection{Infinity categories}

    There are many equivalent notions of $\infty$-categories. We start by looking at a few
    different ones as it can be quite useful to have different ways to think about
    $\infty$-categories at hand. 
    This entire section is meant to merely give an overview of the more accessible notions of
    $\infty$-categories and is in no way meant to be a complete or rigorous introduction.
    For equivalences between these and some other models of $\infty$-categories, see for instance \cite{joyaltierney,joyalQCatSCat}. For the general theory of $\infty$-categories, with slightly different emphasis, see \cite{hinichLectInftyCats,cisinskiHigherCats}.

\subsection{Different notions of \texorpdfstring{$\infty$}{infinity}-categories}

    We continue with some constructions of $\infty$-categories.

    \begin{enumerate}
        \item Arguably \emph{topological categories}\index{topological category}
            are conceptually the easiest notion. 
            A \emph{topological category} is a category enriched in topological spaces
            (i.e. for any two objects $X, Y\in \mathcal{C}$, the morphisms between them
             form a topological space $\Hom_{\mathcal{C}}(X, Y)$ and composition is a
            continuous operation).
            
            Given a topological category, $\C$,  the \emph{homotopy category}\index{homotopy category}
            $\Ho(\mathcal{C})$ of $\mathcal{C}$ \index{homotopy category}
            is the category with the same objects as $\mathcal{C}$ and the morphisms are given by path
            components of morphisms in $\mathcal{C}$, i.e. $\pi_0\Hom_{\cC}(X,Y)$.
            
            A  functor $\mathcal{F}: \mathcal{C} \to \mathcal{D}$
            (assumed to respect the extra structure, so everything is continuous)  is a \emph{quasi-equivalence}\index{quasi-equivalence} if
            \begin{enumerate}
                \item for all $X, Y\in \mathcal{C}$ the map
                    $\Hom_{\mathcal{C}}(X, Y) \to \Hom_{\mathcal{D}}(\mathcal{F}(X), \mathcal{F}(Y))$
                    is a weak homotopy equivalence of topological spaces
                    (i.e. induces isomorphisms on homotopy groups).
                \item $\mathcal{F}$ induces an equivalence on the homotopy categories
                    $\Ho(\cF) \co \Ho(\C) \to \Ho(\cD)$.
            \end{enumerate}
        \item Topological spaces contain a lot of information,  so a more combinatorially efficient model
            with much of the same intuition is given by \emph{simplicial categories}\index{simplicial!category},
            which have a simplicial set of morphisms between each pair of objects.
            We will be defining simplicial sets in \ref{simplicial-sets}. The behaviour is much the same
            as for topological categories but simplicial categories have much less data
            to handle.
        \item\label{item:relative-categories} By far the easiest to construct are
            \emph{relative categories}\index{relative category}
            \cite{simploc2,DKEquivsHtpyDiagrams,BarwickKanRelative}.
            These consist of pairs $(\cC,\cW)$ where
            $\cC$ is a category and $\cW$ is a subcategory. That's it!\footnote{ignoring
            cardinality issues/Russell's paradox} The idea is that the morphisms in
            $\cW$ should encode some notion of equivalence
            weaker than isomorphism.\index{weak equivalence} The homotopy
            category $\Ho(\C)$ \index{homotopy category}
            is a localisation of $\cC$ given by forcing
            all the morphisms in  $\cW$ to become isomorphisms, and the
            associated simplicial category $L_{\cW}\cC$ arises as a fancier form of localisation, 
            whose path components of morphisms recover
            the homotopy category.
            
            Examples of subcategories $\cW$ are homotopy equivalences
            or weak homotopy equivalences for topological spaces, quasi-isomorphisms
            of chain complexes and of cdgas, and equivalences of categories.
            
            The main drawback is that quasi-equivalences of relative categories are hard to describe.
            
        \item \emph{Grothendieck's derivators}\index{Grothendieck's derivators}  provide another useful 
            perspective: Given a small category $I$, we can look at the $\infty$-category of
            $I$-shaped diagrams $\C^I$ in an $\infty$-category $\C$, and then there is a
            natural functor $\Ho(\C^I)\to \Ho(\C)^I$ from the homotopy category of
            diagrams to diagrams in the homotopy category, which is
            usually \textbf{not an equivalence}; instead, these data essentially determine
            the whole $\infty$-category.
            
            Concretely,
            a \emph{derivator}\index{derivator} is an assignment $I \mapsto \Ho(\C^I)$ for all small categories $I$. There are several accounts of the theory written by Maltsiniotis and others.
            It turns out that a derivator determines the $\infty$-category structure on $\C$, up to essentially unique quasi-equivalence, by \cite{renaudin}.
            This can be a  useful way to think about $\infty$-functors $\C \to \cD$, since they amount to giving compatible functors $\Ho(\C^I) \to \Ho(\cD^I)$ for all $I$.
    \end{enumerate}
    \begin{remarks}
        Especially (\ref{item:relative-categories}) illustrates how little data one needs to specify
        an $\infty$-category. While topological categories suggest that there are entire topological
        spaces to choose,  relative categories show that in practice once a notion of
        weak equivalence has been picked, everything else is determined.

        Model categories don't belong in this list. They are relative categories equipped with some
        extra  structure (two more subcategories in addition to $\cW$) which makes
        many calculations feasible  --- a bit like a presentation for a group ---  and avoids
        cardinality issues. See \cite{QHA,hovey,Hirschhorn} and \S \ref{modelsn} below.
    \end{remarks}
    
       If anyone gives you an infinity category, you can assume it's a topological or simplicial category,
        while if someone asks you for an infinity category, it's enough to give them a relative category.     

\subsection{Derived functors}

    Although derived functors are often just defined in the setting of model categories,  they only depend on relative category structures, as in the  approach of \cite{DwyerHirschhornKanSmith}:

    \begin{definition}
        If $(\C,\cW)$ and  $(\cD, \cV)$ are relative categories and  $F\co \C \to \cD$ is a functor of
        the underlying categories, we say that $F'\co \Ho(\C) \to \Ho(\cD)$ is
        a \emph{right-derived functor}\index{derived functor}\index{right-derived functor $\oR F$}
        of $F$, and denote $F'$ by $\oR F$, if:
        \begin{enumerate}
            \item There is a natural transformation
                $\eta\co \lambda_{\cD}\circ  F  \to F'\circ \lambda_{\C}$,
                for $\lambda_{\C} \co \C \to \Ho(\C)$
                and $\lambda_{\cD} \co \cD \to \Ho(\cD)$.
            \item Any natural transformation $\lambda_{\cD}\circ  F  \to G\circ \lambda_{\C}$
                factors through $\eta$, and this factorisation is unique up to
                natural isomorphism in $\Ho(\cD)$ 
                --- this condition ensures that $F'$ is unique up to weak equivalence.
                  
        \end{enumerate}
        The dual notion is called \emph{left-derived functor}\index{left-derived functor $\oL F$} and denoted by $\oL F$.
    \end{definition}
    \begin{warning}
        The notation $\oR F$ is also used to denote derived $\infty$-functors
        $L_{\cW}\C \to L_{\cV}\cD$. See  \cite[\S 4.1]{riehlHtpicalCats} for more on this 
        view of derived functors. The results there are stated for homotopical categories,
        which are relative categories with extra restrictions (almost always satisfied in practice).
%
    \end{warning}    
    
    \begin{examples} \label{cohoex}
        Most homology/cohomology theories arise as left/right derived functors.

        \begin{enumerate}
            \item Consider the global sections functor $\Gamma$ from sheaves of non-negatively graded cochain complexes 
                on a topological space $X$ to cochain complexes of abelian groups. If we take weak equivalences being quasi-isomorphisms on both sides, then $\Gamma$ has a right-derived functor $\oR\Gamma$, whose cohomology groups are just sheaf cohomology.
            \item For any category $\C$, consider the functor $\Hom: \C^{\op} \times \C \to \Set \subset \Top$
                (or simplicial sets, if you prefer).
                For a subcategory $\cW \subseteq \C$ and for $\pi_\ast$-equivalences in $\Top$, we get
                a right-derived functor, the derived mapping space $\oR\Map: \Ho(\C)^{\op} \times \Ho(\C)\to \Ho(\Top)$.\index{mapping space $\oR\Map$}
                That's essentially how simplicial and topological categories are associated to relative categories --- the spaces of morphisms in the topological category associated to the relative category $(\C, \cW)$ are then 
                just $\oR\Map(X, Y)$ (up to weak homotopy equivalence).
            \item For a category $\C$ of chain complexes, we  have an enriched $\Hom$ functor
                $\HHom\co \C^{\op}\times \C \to \operatorname{CochainCpx}$ (with $\Hom=\z^0\HHom$). If we take weak equivalences to be quasi-isomorphisms on both sides, this then leads to a right-derived functor $\oR\HHom \co \C^{\op}\times \C \to \operatorname{CochainCpx}$, which  has cohomology groups
                $H^i\oR\HHom(X,Y) \cong \EExt^{i}(X,Y)$. 
                The space $\oR\Map$ is then just the topological space associated to the good truncation of this complex, which satisfies $\pi_j\oR\Map(X,Y) \cong \Ext^{-j}_{\C}(X, Y)$ for $j \ge 0$.\footnote{
                    This last statement follows by combining the Dold--Kan equivalence with composition of right-derived functors, 
                    using that the right-derived functor of $\z^0$ is the good truncation $\tau^{\le 0}$.
                }
            \item\label{tophomologyitem} As a more exotic example, if $F$ is the functor sending a topological space $X$ to the free
                topological abelian group generated by $X$, then taking weak equivalences to be $\pi_*$-isomorphisms on both sides, we have a  left-derived functor $\oL F$ with homotopy groups $\pi_i\oL F(X) \cong \H_i(X)$ given by singular homology.        
        \end{enumerate}       
    \end{examples}
    
\subsection{Model categories}\label{modelsn}

    A standard reference for this section is \cite{hovey}.
    The idea is to endow a relative category with extra structure aiding computations.
    This is similar in flavour to presentations of a group; once
    the weak equivalences are chosen all the homotopy theory is determined, but
    the extra structure (classes of fibrations and cofibrations) makes it much more accessible.

    \begin{definition}
        A \emph{model category}\index{model category} is a relative category $(\C, \cW)$ together with two choices of 
        classes of morphisms, called \emph{fibrations} and \emph{cofibrations}.\index{fibration}\index{cofibration}
        These classes of morphisms are required to satisfy several further axioms.
        
        A \emph{trivial (co)fibration} is a (co)fibration that is also in $\cW$, i.e. also 
        a weak equivalence.\index{trivial fibration}\index{trivial cofibration}
    \end{definition}
    
    \begin{definition}
        Let $f\co X \to Y$ and $g\co A \to B$  be morphisms in a category. We say that $f$ has the \emph{left lifting property}\index{left lifting property}
        with respect to $g$ (LLP for short)\index{LLP}, or dually that $g$ has the \emph{right lifting property}\index{right lifting property} with respect to $f$ (RLP for short) \index{RLP}
        if for any commutative diagram of the form
        below, there is a lift as indicated.
        \[\begin{tikzcd}
            X \arrow[d, "f"]\arrow[r] & A\arrow[d, "g"]\\
            Y\arrow[r]\arrow[ru, dotted, swap, "\exists"] & B
        \end{tikzcd}\]
        
        We say that a morphism has LLP (resp. RLP) with respect to a class $S$ of morphisms if it has LLP (resp. RLP) with respect to all members of $S$.
            \end{definition}
    \begin{example}
        Any category with limits and colimits has a trivial model structure, in which all morphisms are both fibrations and cofibrations, while the weak equivalences are just the isomorphisms.
    \end{example}
    \begin{example}[Model structure on $\catdga$]\label{exp:model-struct-dgAlg} \index{model structure on cdgas!standard}
        There is a model structure on $\catdga$, due to Quillen \cite{QRat}.\footnote{
            Quillen's proof is for dg-Lie algebras, but he observed that the same  proof works for other types of algebras. For associative algebras and other algebras over non-symmetric operads, our characteristic $0$ hypothesis becomes unnecessary.
        }
        On $\catdga$ weak equivalences are quasi-isomorphisms. Fibrations are maps
        which are surjective in strictly positive chain degree, i.e. $f: A_i \to B_i$ is surjective
        for all $i>0$.
        Cofibrations are maps $f: P_\bullet \to Q_\bullet$ which have
        the left lifting property with respect to
        trivial fibrations. Explicitly, if $Q_\bullet$ is  \emph{quasi-free}\index{quasi-free} over $P_\bullet$ in the sense that it is freely generated as a
        graded-commutative algebra, then $f$ is a cofibration. An arbitrary cofibration is a retract of a quasi-free map.
  
    \end{example}
    
    \begin{example}[Model structure on $\operatorname{DG^+\Aff}$]
        When considering the opposite category
        $\operatorname{DG^+\Aff}=(\catdga)^\op$ one 
        takes the opposite model structure, so cofibrations in $\catdga $ correspond to fibrations in $DG^+\Aff$ and vice versa.
    \end{example}

    \begin{example}\label{projmodelstr}\index{model structure on modules!projective}
        Another model structure is the
        projective model structure on non-negatively graded chain complexes of modules over a ring $R$:
        weak equivalences are quasi-isomorphisms, fibrations are surjective in strictly positive
        chain degrees,  and cofibrations are maps $f: M \hookrightarrow N$ such that
        $N/M$ is a complex of projective $R$-modules.
        
        The resulting homotopy category\footnote{
            Here, we are using ``homotopy category'' in the homotopy theory sense of inverting weak equivalences (i.e. quasi-isomorphisms);
            beware that this clashes with the usage in  homological algebra which refers to the category $\mathbf{K}(R)$ of \cite[\S 10.1]{W}
            in which only strong homotopy equivalences are inverted.
        }
        is the full subcategory of the  derived category $\cD(R)$ on non-negatively graded chain complexes.
    \end{example}
        \begin{example}\label{injmodelstr}\index{model structure on modules!injective}
        Dually there is an injective model structure for non-negatively graded  cochain complexes of $R$-modules:
        weak equivalences are quasi-isomorphisms, cofibrations have trivial kernel in strictly
        positive degrees, and fibrations are surjective maps with levelwise injective kernel.
        The resulting homotopy category is the full subcategory of the  derived category $\cD(R)$ on non-negatively graded cochain complexes.
    \end{example}
    
    \begin{remark}
        There are also $\Z$-graded versions of the two examples above, but cofibrations
        (resp. fibrations) have extra restrictions.
        Specifically in the projective case, there should exist an ordering on the generators $x$ by some
        ordinal such that each $\delta x$ lies in the span of generators of lower order.
        For complexes bounded below, we can just order by degree; in general,
        the total complex of a Cartan--Eilenberg resolution as in \cite[\S 5.7]{W} is cofibrant.
        In both cases, the resulting homotopy category is the  derived category $\cD(R)$.\index{derived category}
    \end{remark}
    \begin{properties}\label{rmk:model-structure-axioms}
        Here we list some of the key properties of model structures, though this is not an exhaustive list of required axioms: \index{model category}
        \begin{itemize}
            \item (Lifting A)
                Cofibrations have the LLP with respect to all trivial fibrations.
            \item (Lifting B)
                Trivial cofibrations have the LLP with respect to all fibrations.
            \item (Lifting B')
                Dually, fibrations have the RLP with respect to trivial cofibrations.
            \item (Lifting A')
                Dually, trivial fibrations have the RLP with respect to all cofibrations.    
            \item (Factorisation A)
                Every morphism $f: A\to B$ can be factorised  as
                $A \to \tilde A \to B$ where the first map is a trivial cofibration
                and the second one a fibration. (In \Cref{injmodelstr}, this gives rise to injective resolutions.)
            \item (Factorisation B)
                Every morphism $f: A\to B$ can be factorised  as
                $A \to \hat B \to B$ where the first map is a cofibration
                and the second one a trivial fibration. (In \Cref{projmodelstr}, this gives rise to projective resolutions.)    
        \end{itemize}
    \end{properties}
    \begin{example}
        Let $R$ be a commutative $k$-algebra, $a\in R$ not a zero-divisor, and consider the map $R \to R/(a) =:S$.
        Here is a factorisation  $R\to \hat S \to S$  in $\catdga$ of the quotient map
       such that $R\to \hat S$ is a cofibration and $\hat S \to S$
        a trivial fibration.
For an element $t$ of degree $1$,    we set $\hat S:= (R[t], \delta t = a)$
        so this is a chain complex of the form $0 \to R.t\xrightarrow{\delta} R$.
        The cofibration $R \to \hat S$ is then just the canonical inclusion and the
        trivial fibration sends $t$ to $0$.
    \end{example}
    \begin{remark}
        We will follow the modern convention for model categories in assuming that the factorisations A and B above can be chosen functorially. However, beware that the functorial factorisations tend to be huge.
    \end{remark}
    \begin{example}
        On topological spaces, there is a model structure in which weak equivalences are $\pi_\ast$-equivalences; note that these really are weak, not distinguishing between totally disconnected (e.g. $p$-adic) and discrete topologies. Fibrations are Serre fibrations, which have RLP with respect to the inclusions  $S^n_+ \to B^{n+1}$ of the closed upper $n$-hemisphere in an $n$-ball, for $n\ge 0$ (see Figure \ref{fig:fibrations}).
        Cofibrations are then defined via LLP, or generated by $S^{n-1} \to B^{n}$ for $n \ge 0$ --- these include all relative CW complexes.
    \end{example}
    \begin{example}
        We've already seen commutative dg $k$-algebras in non-negative chain degree. There are variants for dg EFC and $\C^{\infty}$-algebras. Weak equivalences are quasi-isomorphisms. Fibrations are maps
        which are surjective in strictly positive chain degree, i.e. $f: A_i \to B_i$ is surjective
        for all $i>0$.
        
        Cofibrations are again defined by LLP, the property being satisfied whenever the morphism is freely generated as a graded EFC or $\C^{\infty}$-algebra. Freely graded $\C^{\infty}$-algebras over $\R$ are $\C^{\infty}(\R^S,\R)[x_i ~:~ i \in I]$, with $\deg x_i >0$ (taking exterior powers for odd variables) for sets $S,I$ and for $\C^{\infty}(\R^S,\R)$ as in \Cref{Cinfty}. 
    \end{example}

    
\subsection{Computing the homotopy category using model structures}
    \begin{definition}\label{cofreplace}
        We say that an object in a model category is \emph{fibrant}\index{fibrant} if the map to the final object is a fibration, and \emph{cofibrant}\index{cofibrant} if the map from the initial object is a cofibration.
            
        Given an object $A$ and a weak equivalence $A \to \hat{A}$ with $\hat{A}$ fibrant, we refer to $\hat{A}$ as a \emph{fibrant replacement} of $A$. Dually, if we have a weak equivalence $\tilde{A} \to A$ with $\tilde{A}$ cofibrant, we refer to $\tilde{A}$ as a \emph{cofibrant replacement}\index{replacement!(co)fibrant} of $A$.
    \end{definition}
    \begin{example}
        With the model structure on $\catdga$ from \cref{exp:model-struct-dgAlg} every object is fibrant.
    \end{example}
    \begin{definition}
        Given a fibrant object $X$, a \emph{path object}\index{path object}\index{PX@$PX$|see {path object}} $PX$ for $X$ is an object $PX$ together with a diagram	 
        \[
        \begin{tikzcd}
            X \arrow[r, "w"] \arrow[dr, swap, "\textrm{diag.}"] & PX \arrow[d, "f"]\\
            & X \by X\\
        \end{tikzcd}
        \]
        where $w$ a weak equivalence and $f$ a fibration.
    \end{definition}
    \begin{remark}
        Note that path objects always exist, by applying the factorisation axiom in
        \ref{rmk:model-structure-axioms} to the diagonal $X \to X \by X$.
    \end{remark}
    \begin{theorem}[Quillen]\label{thm:maps-homotopy-cat}
        Let $A\in \C$ be a cofibrant object and $X\in C$ a fibrant object. Morphisms in the homotopy category $\Ho(\C)$ are given by $\Hom_{\Ho(\C)}(A,X)$ being the coequaliser (i.e. quotient) of the diagram
        \[
            \Hom_{\C}(A,PX) \rightrightarrows \Hom_{\C}(A,X)
        \]
        induced by the two possible projections $PX \to X\by X \rightrightarrows X$.
    \end{theorem}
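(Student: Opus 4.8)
The plan is to identify the set-theoretic coequaliser with the quotient of $\Hom_{\C}(A,X)$ by the right-homotopy relation, and then to show that this quotient computes morphisms in $\Ho(\C)$. First I would unwind the coequaliser. Writing $p_0,p_1\co PX\to X$ for the composites of $f\co PX\to X\by X$ with the two projections, a morphism $H\co A\to PX$ is, by the defining diagram of the path object, exactly the datum of a \emph{right homotopy}: the two maps $\Hom_{\C}(A,PX)\rightrightarrows\Hom_{\C}(A,X)$ send $H$ to $p_0H$ and $p_1H$. Declaring $g\sim_r g'$ when such an $H$ exists, the coequaliser in $\Set$ is therefore $\Hom_{\C}(A,X)$ modulo the equivalence relation \emph{generated} by $\sim_r$.

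Second, I would show that for $X$ fibrant the relation $\sim_r$ is already an equivalence relation, so the generated relation is just $\sim_r$ itself and the coequaliser is the honest quotient $\Hom_{\C}(A,X)/\!\sim_r$. Reflexivity uses the structural weak equivalence $w\co X\to PX$ (a constant homotopy); symmetry follows by composing with the swap automorphism of $X\by X$. Transitivity is the only step needing fibrancy: given homotopies through two path objects sharing an endpoint, one forms their pullback over $X$ and checks, using that the projections of a path object for a fibrant $X$ are fibrations and that the relevant trivial cofibrations lift (property Lifting~B), that this pullback is again a path object realising the composite homotopy. At this stage $A$ may be arbitrary.

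Third, I would bring in cofibrancy of $A$ to connect $\sim_r$ with the localisation $\lambda\co\C\to\Ho(\C)$. The key inputs are the standard dual lemmas: for $A$ cofibrant and $X$ fibrant, right homotopy coincides with left homotopy through cylinder objects and is independent of the chosen path object; and $\lambda$ identifies homotopic maps, because the structural weak equivalence $w$ becomes invertible in $\Ho(\C)$, forcing $\lambda(p_0)=\lambda(p_1)$ and hence $\lambda(g)=\lambda(g')$ whenever $g\sim_r g'$. Thus $\lambda$ descends to a canonical map from the coequaliser $\Hom_{\C}(A,X)/\!\sim_r$ to $\Hom_{\Ho(\C)}(A,X)$, and it remains to prove this map is a bijection.

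The bijectivity is where essentially all the work lies, and I expect it to be the \emph{main obstacle}. The engine is a Whitehead-type lemma: a weak equivalence between cofibrant–fibrant objects admits a homotopy inverse, proved by solving lifting problems against the (trivial) (co)fibrations supplied by the axioms in \ref{rmk:model-structure-axioms}. Using functorial (co)fibrant replacement, every object of $\Ho(\C)$ becomes isomorphic to a cofibrant–fibrant one, and any zigzag representing a morphism $A\to X$ between such objects can be straightened to a genuine morphism in $\C$ by inverting its backwards weak equivalences up to homotopy via this lemma; this yields surjectivity. Injectivity amounts to showing that two parallel maps $A\to X$ which agree in $\Ho(\C)$ are already right-homotopic, which again follows by applying the homotopy-inverse lemma to the (co)fibrant factorisations that witness their identification in the localisation. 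Matching this formally defined localisation with the concrete homotopy quotient is the crux; steps one and two are essentially bookkeeping once the path-object formalism and the congruence/Whitehead machinery are in place.
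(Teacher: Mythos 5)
The paper offers no proof of this statement at all --- it is attributed to Quillen and left as a citation --- so there is nothing in-text to compare against; your sketch has to be judged on its own, and it is essentially the standard argument of \cite{QHA} (see also \cite[\S 1.2]{hovey}). The ingredients you assemble are the right ones and in the right order: the coequaliser is the quotient by the equivalence relation generated by right homotopy through the fixed $PX$; fibrancy of $X$ makes the end-point projections $p_0,p_1$ (trivial) fibrations so that path objects can be glued over $X$ and the relation is already transitive; cofibrancy of $A$ makes the relation independent of the chosen path object, so the fixed $PX$ of the statement detects the whole relation; and $\lambda(p_0)=\lambda(p_1)$ because both are one-sided inverses of the weak equivalence $w$ in $\Ho(\C)$, giving the comparison map out of the quotient.

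The one step that would not survive being written out as you describe it is the injectivity half of the final bijection. Morphisms in $\Ho(\C)$ are classes of zigzags under a relation generated by elementary moves, and ``the (co)fibrant factorisations that witness their identification in the localisation'' is not a well-defined datum to which you can apply the Whitehead lemma: two zigzags may be identified through a chain of intermediate zigzags none of which passes through bifibrant objects, so there is nothing to straighten. The standard (and Quillen's) resolution is to avoid analysing zigzags altogether: build the candidate category $\pi\C_{cf}$ of bifibrant objects with homotopy classes of maps, extend it over all of $\C$ via functorial replacement $RQ$, check that the resulting functor from $\C$ inverts weak equivalences, and verify the universal property of the localisation. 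Uniqueness in that universal property then yields $\pi(A,X)\cong\Hom_{\Ho(\C)}(A,X)$, with injectivity coming for free rather than by hand. Your surjectivity argument (inverting the backwards weak equivalences of a single zigzag up to homotopy) is fine as stated. In short: correct architecture and correct identification of where the work lies, but the direct injectivity argument should be replaced by the universal-property argument.
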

    \begin{example}
        In the model category $\catdgmU[A_\bullet]$, of $A_\bt$-modules in (unbounded) chain complexes, a path object $PM_\bt$ for $M_\bt$ is given by $(PM)_n:= M_n \oplus M_n \oplus M_{n+1}$, with $\delta (a,b,c)=(\delta a, \delta b, \delta c + (-1)^n (a-b))$. The map $M_\bt \to PM_\bt$ is $a \mapsto (a,a,0)$, and the map $PM_\bt \to M_\bt \by M_\bt$ is $(a,b,c) \mapsto (ab)$.
        
        Thus for cofibrant $Q_\bt$  (e.g. levelwise projective and bounded below in chain degrees) 
        two morphisms $f,g \co Q_\bt \to M_\bt$ are homotopic if and only if there exists a graded morphism $h \co Q_\bt \to M[-1]_\bt$ such that $f- g = \delta \circ h + h \circ \delta$.

        To modify this example for chain complexes concentrated in non-negative degrees, apply
        the \emph{good truncation}\index{good truncation}\index{tau@$\tau_{\ge 0}$, $\tau^{\le 0}$|see {good truncation}} $\tau_{\ge 0}$ in non-negative chain degrees, replacing $PM_\bt$ with $\tau_{\ge 0}PM_\bt$ so that it is still in the same category; the description of homotopic morphisms is unaffected.
        
        Explicitly, good truncation is $(\tau_{\ge 0}V)_i=\begin{cases} V_i & i>0 \\\z_0V & i=0 \\ 0 & i<0 \end{cases}$, where $\z_0V=\ker(\delta \co V_0 \to V_1)$.
        
    \end{example}
    \begin{example}
        In topological spaces, we can just take $PX$ to be the space of paths in $X$, i.e. the space of continuous maps $[0,1] \to X$, with
        \[
            X \xra{\text{constant}} PX \xra{(\ev_0,\ev_1)} X \by X.
        \]
        Thus morphisms in the homotopy category are just homotopy classes of morphisms.
    \end{example}
    \begin{example}\label{dgalgpath}
        In $\catdga$, a choice of path object is given by taking $PA_\bullet = \tau_{\ge 0}(A_\bullet[t,\delta t])$, for $t$ of degree $0$. The map $A_\bullet \to PA_\bullet$ is the  inclusion of constants, and the map $PA_\bullet \to A_\bullet \by A_\bullet$ given by $a(t) \mapsto (a(0),a(1))$ and $b(t)\delta t \mapsto 0$.
        Explicitly, 
        \[
            (PA)_n =
            \begin{cases}
                A_n[t] \oplus A_{n+1}[t]\delta t     & n>0\\
                \ker(\delta \co A_0[t] \oplus A_{n+1}[t]\delta t \to A_0[t]\delta t) & n=0,
            \end{cases}
        \]
        where $\delta (\sum_i a_i t^i)= \sum_i (\delta a_i) t^i + \sum_i (-1)^{\deg a_i} i a_i t^{i-1}\delta t$.   
         
        Thus for $C_\bullet$ cofibrant, $\Hom_{\Ho(\catdga)}(C_\bullet,A_\bullet)$ is the quotient 
        \[
            \Hom_{\catdga}(C_\bullet,A_\bullet)/\Hom_{\catdga}(C_\bullet,PA_\bullet). 
        \]
    \end{example}
    \begin{digression}\label{hensel}
        Taking a cofibrant replacement can be nuisance, but there are Quillen equivalent model structures with more cofibrant objects but fewer fibrant objects, with the fibrant replacement functor sending a  cdga $A_\bt$ to its completion, Henselisation or localisation over $\H_0(A_\bt)$;
        existence of all these  follows from \cite[Lemma \ref{stacks2-dgshrink}]{stacks2}, with details for the complete case in \cite[Proposition \ref{drep-cNhat}]{drep} and the others (including $\C^{\infty}$ and analytic versions) in \cite[Proposition \ref{DStein-locmodelprop}]{DStein}.\index{model structure on cdgas!complete} For the complete and Henselian model structures, all smooth $k$-algebras are cofibrant.   

            Specifically, cofibrations in the local (resp. Henselian) model structure are generated by cofibrations in the standard model structure together with localisations (resp. \'etale morphisms) of discrete algebras. Fibrations are those fibrations $A_\bt \to B_\bt$ in the standard model structure for which $A_0 \to B_0\by_{\H_0(B_\bt)}\H_0(A_\bt)$ is conservative (resp. Henselian) in the terminology of \cite[\S 4]{anel}. The identity functor from the standard model structure to the local or Henselian model structure is then a left Quillen equivalence.\index{model structure on cdgas!local}\index{model structure on cdgas!Henselian}
    \end{digression}
    
    \begin{example}\label{ex:odot}
        For dg $\C^{\infty}$-algebras, there is a similar description of path objects, with $PA = \tau_{\ge 0}(A \odot\C^{\infty}(\R)[\delta t])$, for $t \in \C^{\infty}(\R)$ the co-ordinate and $\odot$ the $\C^{\infty}$ tensor product, given by 
        \[
            (\C^{\infty}(\R^m)/(f_1,f_2, \ldots)) \odot  (\C^{\infty}(\R^n)/(g_1,g_2, \ldots))\cong (\C^{\infty}(\R^{m+n})/(f_1,g_1, f_2,g_2, \ldots)),
        \]
        so in particular $\C^{\infty}(X)\odot\C^{\infty}(Y)\cong \C^{\infty}(X \by Y)$.
        Again, the map $A \to PA$ is given by inclusion of constants, and the map $PA \to A \by A$ by evaluation at $t=0$ and $t=1$. 
        There is an entirely similar description for EFC algebras using analytic functions. 
    \end{example}

\subsection{Derived functors}\label{derivedfnsn}

    Model categories also give conditions for derived functors to exist and provide a way to compute them.
    
    \begin{definition}
        A functor $G \co \C \to\cD$ of model categories is
        \emph{right Quillen}\index{Quillen functor}\index{right Quillen}
        if it has a left adjoint $F$  and  preserves fibrations and trivial fibrations.
        Dually, $F$ is \emph{left Quillen}\index{left Quillen} 
        if it has a right-adjoint and $F$ preserves cofibrations and trivial cofibrations.
        $F\dashv G$ is in that case called a \emph{Quillen adjunction}\index{Quillen adjunction}.
    \end{definition}
    
    The following lemma, an exercise in lifting properties, shows that the notion of Quillen adjunction is well defined.
    
    \begin{lemma}\label{lemma:left-quillen-right-quillen}
        Let $F\dashv G$ be an adjunction of functors of model categories.
        $F$ is left Quillen if and only if $G$ is right Quillen
    \end{lemma}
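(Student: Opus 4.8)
The plan is to reduce everything to lifting properties and then exploit the adjunction bijection. Recall that in any model category each of the four distinguished classes is characterised by a lifting property against another: a map is a cofibration if and only if it has the LLP with respect to all trivial fibrations, a trivial cofibration if and only if it has the LLP with respect to all fibrations, and dually fibrations (resp. trivial fibrations) are exactly the maps with the RLP with respect to all trivial cofibrations (resp. all cofibrations). These characterisations are standard consequences of the factorisation and retract axioms (see \cite{hovey}), and I will take them as given here. Since $F$ is left Quillen precisely when it preserves cofibrations and trivial cofibrations, and $G$ is right Quillen precisely when it preserves fibrations and trivial fibrations, it suffices to match these preservation properties across the adjunction.

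The key observation is a transposition lemma: for the adjunction $F\dashv G$, a map $i$ in $\C$ and a map $p$ in $\cD$, the map $Fi$ has the LLP with respect to $p$ if and only if $i$ has the LLP with respect to $Gp$. To prove this I would take an arbitrary commutative square with left edge $Fi$ and right edge $p$, and apply the adjunction bijection $\Hom_{\cD}(F-,-)\cong\Hom_{\C}(-,G-)$ to its four edges. Naturality of the adjunction sends commuting squares to commuting squares and sends the two triangles witnessing a lift to the two triangles witnessing a lift of the transposed square, whose left edge is $i$ and right edge is $Gp$; hence lifts correspond bijectively, giving the claimed equivalence. Restated on the right, $p$ has the RLP with respect to $Fi$ if and only if $Gp$ has the RLP with respect to $i$.

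With the lemma in hand the rest is bookkeeping. First I would show that $F$ preserves cofibrations if and only if $G$ preserves trivial fibrations: if $G$ preserves trivial fibrations and $i$ is a cofibration, then for every trivial fibration $p$ the map $Gp$ is a trivial fibration, so $i$ has the LLP with respect to $Gp$, whence by the lemma $Fi$ has the LLP with respect to $p$; as $p$ was arbitrary, $Fi$ is a cofibration. Conversely, if $F$ preserves cofibrations and $p$ is a trivial fibration, then for every cofibration $i$ the map $Fi$ is a cofibration, so $p$ has the RLP with respect to $Fi$, whence $Gp$ has the RLP with respect to $i$; as $i$ was arbitrary, $Gp$ is a trivial fibration. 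The identical argument with the pair (trivial cofibrations, fibrations) in place of (cofibrations, trivial fibrations) shows that $F$ preserves trivial cofibrations if and only if $G$ preserves fibrations, and combining the two equivalences yields the statement. The only point demanding care, and the single place where the model-category axioms genuinely enter, is the correct pairing of classes: cofibrations must be tested against trivial fibrations and trivial cofibrations against fibrations, so that the image under $G$ lands in exactly the class needed to conclude. The transposition lemma itself is purely formal, which is why the result is, as advertised, an exercise in lifting properties.
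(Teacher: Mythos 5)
Your proof is correct and is exactly the argument the paper has in mind: the text gives no proof, describing the lemma only as ``an exercise in lifting properties,'' and your transposition lemma plus the lifting-property characterisations of the four distinguished classes is the standard way to carry out that exercise.
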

    \begin{theorem}[Quillen]
        If $G$ is right Quillen, then the right-derived functor $\oR G$ exists and is given on objects by $A \mapsto  G\hat{A}$, for $A \to \hat{A}$ a  fibrant replacement. 
        Dually, left Quillen functors give left-derived functors by cofibrant replacement.
    \end{theorem}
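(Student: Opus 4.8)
I will treat the right Quillen case; the statement for left Quillen functors is formally dual, obtained by passing to opposite categories (which interchanges fibrations with cofibrations and fibrant with cofibrant objects). The plan is to build $\oR G$ from a functorial fibrant replacement $A\mapsto \hat A$, available since the factorisations of \ref{rmk:model-structure-axioms} may be chosen functorially, with natural weak equivalence $A\to\hat A$; then to show this construction descends to $\Ho(\C)$, to equip it with the structural transformation $\eta$, and finally to verify the universal property from the definition of a right-derived functor.

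The heart of the argument is a version of \emph{Ken Brown's lemma}: a right Quillen functor $G$ sends weak equivalences between fibrant objects to weak equivalences. I would prove this by the standard factorisation trick. Given a weak equivalence $f\co X\to Y$ of fibrant objects, note that $X\by Y$ is fibrant and that the projection $\pi_X\co X\by Y\to X$ is a fibration, being the pullback of the fibration $Y\to\ast$ along $X\to\ast$ (and symmetrically for $\pi_Y$). Factor $(\id_X,f)\co X\to X\by Y$ as a trivial cofibration $j\co X\to Z$ followed by a fibration, and let $q_X\co Z\to X$ and $q_Y\co Z\to Y$ be the composites of this fibration with the two projections. Then $q_X j=\id_X$ and $q_Y j=f$, so two-out-of-three makes $q_X$ and $q_Y$ weak equivalences; as $Z$ is fibrant (since $Z\to X\by Y$ is a fibration), they are \emph{trivial} fibrations, which $G$ preserves by the right Quillen hypothesis. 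Applying two-out-of-three to $G(q_X)G(j)=\id$ and $G(q_Y)G(j)=G(f)$ then forces $G(f)$ to be a weak equivalence.

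Everything downstream is formal. By functoriality of fibrant replacement and two-out-of-three, any weak equivalence $f$ induces a weak equivalence $\hat f$ of fibrant objects, so Ken Brown's lemma shows that $A\mapsto \lambda_\cD G\hat A$ carries weak equivalences to isomorphisms in $\Ho(\cD)$; by the universal property of the localisation $\lambda_\C$ it factors uniquely as $\oR G\circ\lambda_\C$, defining $\oR G\co\Ho(\C)\to\Ho(\cD)$ with value $G\hat A$ on objects. The map $A\to\hat A$ induces the comparison $\eta\co\lambda_\cD G\Rightarrow \oR G\circ\lambda_\C$, and when $A$ is already fibrant the component $\eta_A$ is an isomorphism in $\Ho(\cD)$, since $A\to\hat A$ is then a weak equivalence of fibrant objects. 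Given any other $\theta\co\lambda_\cD G\Rightarrow H\circ\lambda_\C$, I would define the factorising transformation $\oR G\Rightarrow H$ on fibrant objects as $\theta\circ\eta^{-1}$ and transport it to arbitrary objects along the isomorphisms $\lambda_\C(A\to\hat A)$ using naturality; uniqueness is automatic, since the value on fibrant objects is forced and every object of $\Ho(\C)$ is isomorphic to a fibrant one.

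The only genuinely nonformal step, and hence the main obstacle, is Ken Brown's lemma — specifically setting up the auxiliary object $Z$ and checking that $q_X,q_Y$ are trivial fibrations, which is the single point where right Quillenness (preservation of trivial fibrations) is actually used; the descent to $\Ho(\C)$ and the universal property are then routine consequences of two-out-of-three and the universal property of localisation.
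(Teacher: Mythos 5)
Your proof is correct. The paper itself gives no proof of this theorem: it only remarks afterwards that the essential input --- right Quillen functors preserve weak equivalences between fibrant objects --- is ``an exercise with path objects''. You supply exactly that input, but via the classical Ken Brown factorisation (factor $(\id_X,f)\co X\to X\by Y$ by the factorisation axiom and extract two trivial fibrations $q_X,q_Y\co Z\to X,Y$), whereas the path-object route the paper gestures at would take $Z$ to be the mapping path space $X\by_{Y,\ev_0}PY$, whose projection to $X$ is a pullback of the trivial fibration $\ev_0\co PY\to Y$ and whose evaluation map to $Y$ is a fibration and, by two-out-of-three, a weak equivalence; from there the two arguments coincide. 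The difference is cosmetic: yours avoids exhibiting a path object, the paper's is more concrete in categories such as $\catdga$ where path objects are explicit. Your descent to $\Ho(\C)$ via functorial fibrant replacement and your verification of the universal property (using that $\eta_A$ is invertible at fibrant $A$, then transporting along $A\xra{\sim}\hat A$) are routine and sound. One small logical quibble in the key lemma: the fibrancy of $Z$ is not what makes $q_X,q_Y$ trivial fibrations --- they are fibrations because they are composites of fibrations, and weak equivalences by two-out-of-three --- and since your $G$ preserves \emph{all} trivial fibrations rather than only those between fibrant objects, the fibrancy of $Z$ is not actually needed anywhere in your argument.
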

    \begin{remark}
        To get a functor, we can take fibrant replacements functorially, but on objects the choice of fibrant replacement doesn't matter (and in particular need not be functorial), because it turns out that right Quillen functors preserve weak equivalences between fibrant objects. The proof is an exercise with path objects.
    \end{remark}
    \begin{example}
        We can thus interpret sheaf cohomology in terms of derived functors, because fibrant replacement in the model category of non-negatively graded cochain complexes of sheaves corresponds to taking an injective resolution.
    \end{example}
    \begin{definition}
        A Quillen adjunction $F \dashv G$ is said to be a \emph{Quillen equivalence}\index{Quillen equivalence} if
        $\oR G \co \Ho(\C) \to \Ho(\cD)$ is an equivalence of categories, in which case it has quasi-inverse $\oL F$.
    \end{definition}
        Explicitly, this says that for all fibrant objects $A \in \C$ and cofibrant objects $B \in \cD$, the unit and co-unit
        give rise to weak equivalences $F(\widetilde{GA}) \to A$ and $B \to G(\widehat{FB})$, where $\widehat{(\;)}$ and $\widetilde{(\;)}$ 
        are fibrant and cofibrant replacement.
    \begin{remark}
        Note that Quillen equivalences give equivalences on simplicial localisations. This is because in addition to the equivalence $\Ho(\C) \simeq \Ho(\cD)$, we have weak equivalences  $\oR \Map_{\C}(A,C) \simeq \oR \Map_{\cD}(GA,GC)$ of mapping spaces. 
        
        From the derivator perspective, we have an equivalence of $\infty$-categories because there are systematic techniques for endowing diagram categories $\C^I$ with model structures such that $G \co \C^I \to \cD^I$ is also a right Quillen equivalence, giving us equivalences $\Ho(\C^I) \to \Ho(\cD^I)$ for all small diagrams $I$.   
    \end{remark}
    

\subsection{Homotopy limits and fibre products}

    \begin{definition} \index{homotopy limit $\ho\Lim$ or $\oR\Lim$}
        \emph{Homotopy limits} $\ho\Lim_I$ or $\oR\Lim_I$ are right-derived functors of the limit functors $\Lim_I \co \C^I \to \C$ (weak equivalences in $\C^I$ defined objectwise).
        For diagrams of the form $X \to Y \la Z$, we denote the \emph{homotopy fibre product} by $X\by^h_YZ$. \index{homotopy fibre product $X\by^h_YZ$}
    \end{definition}
    
    
    
    \begin{lemma}
        If $Y$ is fibrant, the homotopy fibre product $X\by_Y^hZ$ is given by $\hat{X}\by_{Y}\hat{Z}$, where $\hat{X}\to Y$ and $\hat{Z} \to Y$ are fibrant replacements over $Y$. In \emph{right proper}\index{right proper}\footnote{Almost everything we work with will satisfy this; it says that weak equivalence is preserved by pullback along fibrations.}
        model categories, it suffices to take $\hat{X}\by_YZ$.
    \end{lemma}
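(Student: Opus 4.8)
The plan is to realise the homotopy fibre product as the right-derived functor $\oR\Lim$ of the pullback functor on the category $\C^{\Lambda}$ of cospan-shaped diagrams $X \to Y \la Z$, where $\Lambda$ denotes the cospan category, and then to compute this derived functor using the recipe already established above: a right Quillen functor is computed on fibrant replacements. So everything reduces to putting a suitable model structure on $\C^{\Lambda}$ and identifying its fibrant objects.

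First I would equip $\C^{\Lambda}$ with a model structure. Placing the central object $Y$ in degree $0$ and the two outer objects in degree $1$ exhibits $\Lambda$ as an inverse Reedy category, since every non-identity morphism strictly lowers degree; hence $\C^{\Lambda}$ carries a Reedy model structure (which exists for any Reedy category), in which weak equivalences and cofibrations are defined objectwise. With respect to this structure the constant-diagram functor $\Delta \co \C \to \C^{\Lambda}$ manifestly preserves cofibrations and trivial cofibrations, so it is left Quillen; its right adjoint $\Lim$ is therefore right Quillen, and $\oR\Lim$ is computed by taking a Reedy-fibrant replacement of the diagram and then forming the ordinary pullback.

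Next I would identify the Reedy-fibrant cospans. Because $\Lambda$ is an inverse category the latching objects are initial, so the relative latching maps are the objectwise maps themselves and impose no condition beyond objectwise cofibrancy; the fibrancy conditions come entirely from the matching maps. A direct computation of the three matching objects shows that a cospan $A \to B \la C$ is Reedy fibrant precisely when $B$ is fibrant and both legs $A \to B$ and $C \to B$ are fibrations. Given that $Y$ is already fibrant, I factor each leg as a weak equivalence followed by a fibration, $X \xrightarrow{\sim} \hat X \onto Y$ and $Z \xrightarrow{\sim} \hat Z \onto Y$ --- that is, I take fibrant replacements in the slice categories $\C \da Y$. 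The comparison cospan $\hat X \to Y \la \hat Z$ is then Reedy fibrant and objectwise weakly equivalent to the original, hence a Reedy-fibrant replacement, so $X \by^h_Y Z \simeq \hat X \by_Y \hat Z$ as claimed.

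Finally, for the right proper case I would show that replacing a single leg suffices. The projection $\hat X \by_Y \hat Z \to \hat X$ is a pullback of the fibration $\hat Z \onto Y$ along $\hat X \to Y$, hence itself a fibration, and pulling it back along the weak equivalence $X \xrightarrow{\sim} \hat X$ reconstitutes $X \by_Y \hat Z$. Right properness is exactly the statement that the base change of a weak equivalence along a fibration is again a weak equivalence, so the induced map $X \by_Y \hat Z \to \hat X \by_Y \hat Z$ is a weak equivalence, and therefore $\hat X \by_Y \hat Z \simeq X \by_Y \hat Z$ also computes $X \by^h_Y Z$. I expect the only genuine obstacle to be the bookkeeping in the first two steps --- verifying that $\Delta$ is left Quillen and that the Reedy-fibrant objects are precisely the fibrant cospans with fibration legs --- after which the right proper statement is a one-line application of the definition.
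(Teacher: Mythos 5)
Your argument is correct and complete. The paper actually states this lemma without proof (it is treated as a standard fact from the model-category literature), so there is nothing to compare against; the Reedy-structure argument you give --- exhibiting the cospan category as an inverse Reedy category, checking that $\Lim$ is right Quillen because the constant-diagram functor preserves the objectwise cofibrations and trivial cofibrations, and identifying the Reedy-fibrant cospans as those with fibrant vertex and fibration legs --- is precisely the standard justification, and your use of right properness for the one-sided replacement is the intended one-line argument. The only cosmetic discrepancy is that you replace the leg $Z\to Y$ where the statement replaces $X \to Y$, but the two claims are identical by the symmetry of the cospan.
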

    
    
    
    \begin{example}\label{lesfibreprod}
       For fibrant objects,  an explicit construction of $X\by^h_YZ$ is given in terms of the path object by  $X\by_{Y, \ev_0}PY\by_{\ev_1,Y}Z$. In particular, for topological spaces we get $\{x\}\by^h_Y\{z\}\simeq P(Y;x,z)$, the space  of paths from $x$ to $z$ in $Y$. Hence $\{y\}\by^h_Y\{y\}= \Omega(Y;y)$, the space of loops based at $Y$.
       
        For homotopy fibre products of topological spaces, we have a long exact sequence
        \[
            \pi_i(X\by^h_YZ) \to \pi_i X \by \pi_i Z \to \pi_iY \to  \pi_{i-1}(X\by^h_YZ) \to \ldots \to \pi_0Y 
        \]
        of homotopy groups and sets.\index{long exact sequence of groups and sets}\footnote{Exactness of this sequence can be deduced from a double application of the long exact sequence of  \cite[Lemma I.7.3]{sht} by observing that the homotopy fibres of $X\by^h_YZ \to Z$ and $X \to Y$ are equivalent.}

Explicitly, this means that the maps $\pi_i(X\by^h_YZ)\to \pi_iX\by_{\pi_iY}\pi_iZ$ (for compatible choices of basepoint) are all surjective, with $\coker(\pi_{i+1}X\oplus \pi_{i+1}Z \to \pi_{i+1}Y)$ acting transitively on the fibres for $i>0$, while when $i=0$ the fibre  over $([x],[z])$ is isomorphic to the set $\pi_1(X,x)\backslash \pi_1(Y; \bar{x},\bar{z})/ \pi_1(Z,z)$ of double cosets.
    \end{example}
    \begin{example}
        Similarly, in cdgas, we get  a long exact sequence
        \[
            \H_i(A_\bt\by^h_{B_\bt}C_\bt) \to \H_i(A_\bt) \by \H_i(B_\bt) \to \H_i(C_\bt) \to  \H_{i-1}(A_\bt\by^h_{B_\bt}C_\bt) \to \ldots .
        \]
        We can evaluate this as $A_\bt \by_{B_\bt}(PB_\bt)\by_{B_\bt}C_\bt$, though $\hat{A}_\bt\by_{B_\bt}C_\bt$ for any fibrant replacement $\hat{A}_\bt \to B_\bt$ will do. 
    \end{example}
     
\clearpage
\section{Consequences for dg-algebras}\label{consequencesn}

    We have an embedding of algebras in cdgas
    \begin{align*}
        \Alg_k &\subseteq \catdga[k]\\
        A &\mapsto (A \la 0 \la 0 \la \ldots )
    \end{align*}
    which induces a map $\Alg_k \to \Ho(\catdga)$ by composition
    with the map $\catdga \to \Ho(\catdga)$.
    
    \begin{lemma}\label{discretefull}
        The induced functor $\Alg_k \to \Ho(\catdga)$ is full and faithful.
    \end{lemma}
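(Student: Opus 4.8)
The plan is to compute the hom-set $\Hom_{\Ho(\catdga)}(A,B)$ for discrete $k$-algebras $A,B$ directly, via Quillen's description of morphisms in the homotopy category (Theorem \ref{thm:maps-homotopy-cat}), and to check that the resulting set is canonically $\Hom_{\Alg_k}(A,B)$ in a way compatible with the functor. Since $\Hom_{\Alg_k}(A,B)=\Hom_{\catdga}(A,B)$ when $A,B$ are viewed as discrete cdgas, establishing this bijection yields both fullness (surjectivity) and faithfulness (injectivity) simultaneously.

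First I would observe that in the model structure of Example \ref{exp:model-struct-dgAlg} every object is fibrant, so in particular $B$ is fibrant; and I would choose a cofibrant replacement $q\co \tilde A \xra{\sim} A$ (a quasi-free resolution), so that $\Hom_{\Ho(\catdga)}(A,B)\cong \Hom_{\Ho(\catdga)}(\tilde A,B)$ is computed by Theorem \ref{thm:maps-homotopy-cat} as the coequaliser of $\Hom_{\catdga}(\tilde A, PB) \rightrightarrows \Hom_{\catdga}(\tilde A, B)$. Next I would identify the plain hom-set $\Hom_{\catdga}(\tilde A, B)$: because $B$ is concentrated in degree $0$, a cdga morphism $\tilde A \to B$ is just its degree-$0$ component $\tilde A_0 \to B$, which (being a chain map into a complex with zero differential) annihilates $\delta\tilde A_1$ and hence factors through $\H_0(\tilde A)$. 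Since $q$ is a quasi-isomorphism, $\H_0(\tilde A)\cong \H_0(A)=A$, giving $\Hom_{\catdga}(\tilde A, B)\cong \Hom_{\Alg_k}(A,B)$; one checks this identification sends $f\circ q$ to $f$, so it is inverse to the functor under consideration.

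The crux is then to show the coequaliser collapses nothing, for which I would compute the path object $PB$ of the discrete algebra $B$ using Example \ref{dgalgpath}: $PB=\tau_{\ge 0}(B[t,\delta t])$. Here $t$ sits in degree $0$ and $\delta t$ in degree $-1$, so the underlying complex is the polynomial de Rham complex of $B$, with $\delta(b\,t^i)=i\,b\,t^{i-1}\delta t$. The Poincar\'e lemma in characteristic $0$ --- and this is exactly where $\bQ \subseteq k$ enters, through division by $i$ --- shows this complex has homology only the constants, so $\z_0$ in degree $0$ is just $B$ and $PB\cong B$, with both evaluation maps $\ev_0,\ev_1\co PB\to B$ equal to the identity. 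Consequently the two arrows $\Hom_{\catdga}(\tilde A, PB) \rightrightarrows \Hom_{\catdga}(\tilde A, B)$ coincide, the coequaliser is simply $\Hom_{\catdga}(\tilde A, B)$, and combining with the previous step gives $\Hom_{\Ho(\catdga)}(A,B)\cong \Hom_{\Alg_k}(A,B)$ as desired.

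I expect the main obstacle to be precisely this path-object computation. Verifying that $B[t,\delta t]$ is acyclic away from constants, and hence that the homotopy relation on maps $\tilde A \to B$ is trivial, is the one genuinely non-formal step and is where the characteristic-$0$ hypothesis is indispensable. It is worth being careful with the degree bookkeeping in $\tau_{\ge 0}$ so that $\ev_0$ and $\ev_1$ really do agree on $PB$, since any discrepancy there would reintroduce a nontrivial coequaliser and break faithfulness.
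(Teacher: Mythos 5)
Your proof is correct and follows the same overall architecture as the paper's: cofibrant replacement, the identification $\Hom_{\catdga}(\tilde A,B)\cong\Hom_{\Alg_k}(\H_0(A),B)$ for discrete $B$, and Theorem \ref{thm:maps-homotopy-cat} with a path object for $B$ that turns out to carry no information. The one place you diverge is the path object: you take the specific $PB=\tau_{\ge 0}(B[t,\delta t])$ of Example \ref{dgalgpath} and show by a Poincar\'e-lemma computation that it collapses to $B$, whereas the paper simply observes that $B$ is \emph{already} a path object for itself, since the diagonal $B\to B\by B$ is vacuously surjective in positive degrees and hence a fibration. Both are valid (Quillen's theorem allows any path object), and your computation of $\z_0(B[t,\delta t])=B$ is right, but your closing claim that the characteristic-$0$ hypothesis is ``indispensable'' at this step is an artifact of your choice of path object rather than of the lemma: the paper's argument uses no division by integers, which is why it can assert that the same result and proof carry over verbatim to $\C^{\infty}$- and EFC-rings.
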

    \begin{proof}
        First, we observe the following.
        For any $A_\bullet \in \catdga$ and $B \in \Alg_k$ we have 
        \[
            \Hom_{\catdga}(A_\bullet,B) =\Hom_{\Alg_k}(\H_0(A_\bullet),B)
        \]
        because for any $f\in \Hom_{\catdga}(A_\bullet,B)$
        anything positive $a\in A_{>0}$ has to map to zero, $f(a) = 0 \in B_i$, and thus
        $f(\delta a') = \delta f(a') = 0$ for all $a' \in A_1$.
        In particular we can also replace $A_\bullet$ with a cofibrant replacement
        $\tilde A_\bullet$ to obtain
        \[
            \Hom_{\catdga}(\tilde{A_\bullet},B) =
            \Hom_{\Alg_k}(\H_0(\tilde{A_\bullet}),B) =
            \Hom_{\Alg_k}(\H_0(A_\bullet),B)
        \]
        
        Next, we observe that for any $B \in \Alg_k$ the map $B \to B\by B$ is a
        fibration (as there is nothing in positive degrees), so any such $B$ is a
        path object for itself in $\catdga$.
        
        With these two observations we can show that the functor is full;
        let $A_\bullet \in \catdga$
                (for the proof it would be enough to take $A \in \Alg_k$),
        and $B \in \Alg_k$.
        We calculate
        \begin{align*}
            \Hom_{\Ho(\catdga)}(A_\bullet, B) &= \Hom_{\Ho(\catdga)}(\tilde A_\bullet, B)\\
            &= \operatorname{coeq}(\Hom_{\catdga}(\tilde A_\bullet, B) \rightrightarrows \Hom_{\catdga}(\tilde A_\bullet, B))\\
            &= \Hom_{\catdga}(\tilde A_\bullet, B)\\
            &= \Hom_{\Alg_k}(\H_0(A_\bullet), B)\\
        \end{align*}
        The second step is \cref{thm:maps-homotopy-cat} together with the observation that $B$
        is a path object for itself. Step three is then the observation that
        both maps in this coequaliser are simply the identity.
        
        Faithfulness follows because for $A \in \Alg_k$, we have $\H_0(A)=A$.
    \end{proof}
    
        The same result and proof hold for $\C^{\infty}$-rings and EFC-rings.
    
    \begin{remark}
        Geometrically, we can rephrase Lemma \ref{discretefull} as saying that
        given an affine scheme $X$ and a derived affine scheme $Y$, we have
        \[
            \Hom_{\Ho(DG^+\Aff)}(X,Y) \cong \Hom_{\Aff}(X,\pi^0Y);
        \]
         a similar statement holds for non-affine $X$ and $Y$.
    \end{remark}

\subsection{Derived tensor products (derived pullbacks and intersections)}
    
    \begin{definition}\label{def:derived-tensor}
        Let $A_\bullet,B_\bullet \in \catdga$. The \emph{graded tensor product}\index{tensor product!graded}
        $(A\ten_k B)_\bullet$ is defined by
        \[
            (A\ten_k B)_n = \bigoplus_{i+j=n} A_i \ten_k B_j
        \]
        with differential $\delta(aa\ten b) := \delta a   + (-1)^{\deg(a)} \delta b$,
        and multiplication $(a\ten b)\cdot (a'\ten b') := (-1)^{\deg(a')\deg(b)}(aa'\ten bb') $.
    \end{definition}
    \begin{lemma}
        The functor $\ten_k: \catdga \by \catdga \to \catdga$ is left Quillen, with right adjoint $A \mapsto (A,A)$.
    \end{lemma}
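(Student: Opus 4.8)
The statement bundles two claims: that $\ten_k$ is left adjoint to the diagonal $\Delta \co C_\bullet \mapsto (C_\bullet, C_\bullet)$, and that this adjunction is Quillen. The plan is to settle the adjunction first and then invoke \Cref{lemma:left-quillen-right-quillen} to reduce the Quillen property to a triviality about the product model structure on $\catdga \by \catdga$.

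For the adjunction, the essential point is that in graded-commutative dg-algebras the tensor product of \Cref{def:derived-tensor} is the \emph{coproduct}. I would exhibit a natural bijection
\[
\Hom_{\catdga}\big((A\ten_k B)_\bullet, C_\bullet\big) \;\cong\; \Hom_{\catdga}(A_\bullet, C_\bullet) \by \Hom_{\catdga}(B_\bullet, C_\bullet),
\]
whose right-hand side is precisely $\Hom_{\catdga \by \catdga}\big((A_\bullet, B_\bullet), \Delta C_\bullet\big)$. The forward map restricts a morphism along the two inclusions $a \mapsto a \ten 1$ and $b \mapsto 1 \ten b$; the inverse sends a pair $(f,g)$ to the assignment $a \ten b \mapsto f(a)\cdot g(b)$.

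The only genuine bookkeeping is to check that $a \ten b \mapsto f(a)g(b)$ is a morphism of cdgas. Compatibility with the differential is immediate from Leibniz, since $\delta(a \ten b)= \delta a \ten b + (-1)^{\deg a} a \ten \delta b$ maps to $\delta(f(a))g(b) + (-1)^{\deg a} f(a)\delta(g(b)) = \delta(f(a)g(b))$. Compatibility with the product is where the signs must be watched: the rule $(a\ten b)(a'\ten b') = (-1)^{\deg(a')\deg(b)}(aa' \ten bb')$ maps to $(-1)^{\deg(a')\deg(b)} f(a)f(a')g(b)g(b')$, and this agrees with the product $f(a)g(b)\cdot f(a')g(b')$ in $C_\bullet$ exactly because $g(b)$ and $f(a')$ graded-commute there. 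Naturality in $C_\bullet$ is then routine, completing the adjunction.

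For the Quillen property, I would avoid analysing how $\ten_k$ interacts with quasi-free generation and instead apply \Cref{lemma:left-quillen-right-quillen}: it suffices to show the right adjoint $\Delta$ is right Quillen. But in the product model structure the fibrations and weak equivalences are defined componentwise, and both components of $\Delta(f) = (f,f)$ are the same map $f$; hence $\Delta$ carries a fibration (resp. trivial fibration) of $\catdga$, with fibrations as in \Cref{exp:model-struct-dgAlg}, to a componentwise fibration (resp. trivial fibration). Thus $\Delta$ is right Quillen and $\ten_k$ is left Quillen. I expect no real obstacle here: the sole point demanding care is the sign check in the multiplicativity of the counit map above.
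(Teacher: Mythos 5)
Your proof is correct and follows essentially the same route as the paper's: identify the adjunction via the coproduct property of $\ten_k$, note that the diagonal preserves fibrations and trivial fibrations in the componentwise model structure, and conclude by \Cref{lemma:left-quillen-right-quillen}. The paper simply asserts the adjunction as immediate, whereas you fill in the sign checks; these are accurate.
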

    \begin{proof}
        It is immediate to see that this is the correct right adjoint functor:
        \[
            \Hom_{\catdga}((A\ten_k B)_\bt, C_\bullet) \cong
            \Hom_{\catdga\by\catdga}((A_\bullet, B_\bullet), (C_\bullet, C_\bullet))
        \]
        This right adjoint is right Quillen as it clearly preserves fibrations
        and trivial fibrations. Thus, by \cref{lemma:left-quillen-right-quillen}
        the left adjoint is left Quillen.
    \end{proof}
    
    Section \ref{derivedfnsn}
    told us that a functor $F$ being left Quillen means
    that the left-derived functor $LF$ exists.
    \begin{definition}\label{tenoLdef}
        Define the \emph{derived graded tensor product}\index{tensor product!derived}\index{$\ten^{\oL}$|see {derived tensor product}}
        $\ten^{\oL}_k: \Ho(\catdga)\by \Ho(\catdga) \to \Ho(\catdga)$
        to be the left-derived functor of
        $\ten_k \co \catdga \by \catdga \to \catdga$.
    \end{definition}
    \begin{remark}
        Recall that our base ring $k$ can be any $\Q$-algebra, not only a field. Therefore
        this construction is less trivial that it might seem at first glance.
    \end{remark}
    

    From this, one could expect that one would need to take cofibrant replacements on
    both sides to calculate $\ten_k^\oL$, which could be really complicated. The following
    simplifying lemma shows that one gets away with much less.
  
    \begin{definition}\label{def:quasiflat}
        Given a cdga $A_\bullet \in \catdga$ and an $A_\bullet$-module $M_\bullet$
        in chain complexes, say that $M_\bullet$ is \emph{quasi-flat}\index{quasi-flat}
        if the underlying graded module $M_\ast$ is flat over the graded algebra
        underlying $A_\ast$.\footnote{We say ``quasi-flat'' rather than just ``flat'' to avoid a clash with \Cref{strongdef}.}
    \end{definition}
    \begin{definition}
        Let $A_\bullet \in \catdga$ and $X_\bullet \in \Ho(\catdga)$. We say $A_\bullet$ is \emph{a model}\index{model for a homotopy class} for $X_\bullet$
        if $A_\bullet \simeq X_\bullet$ are quasi-isomorphic (i.e. isomorphic in $\Ho(\catdga)$).

        Note that here $A_\bullet$ is defined up to isomorphisms while $X_\bullet$ is defined up to quasi-isomorphisms.
    \end{definition}
    \begin{lemma}\label{lem:quasiflatreplace}
        To calculate $(A\ten_k^\oL B)_\bullet$ it is enough to take
        a quasi-flat replacement of one of the two factors.
        In particular, if $A_\bullet$ is a complex of flat $k$-modules, then
        $(A \ten_k B)_\bullet$ is a model for $(A \ten_k^\oL B)_\bullet$.
    \end{lemma}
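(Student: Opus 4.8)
The plan is to reduce everything to one classical fact of homological algebra: tensoring over $k$ with a non-negatively graded complex of flat $k$-modules preserves quasi-isomorphisms. Concretely, if $P_\bullet$ is degreewise flat over $k$ and $f\co C_\bullet \to C'_\bullet$ is a quasi-isomorphism of non-negatively graded complexes, then $P_\bullet \ten_k f$ is again a quasi-isomorphism, equivalently $P_\bullet \ten_k \cone(f)$ is acyclic whenever $\cone(f)$ is. I would prove this by filtering $P_\bullet$ by chain degree via the brutal truncations $\sigma_{\le p}P_\bullet$: the associated graded pieces are shifted copies of $P_p \ten_k \cone(f)$, each acyclic because $P_p$ is flat and $\cone(f)$ is acyclic, and since all complexes are bounded below the filtration is exhaustive and convergent in each total degree, so the total complex is acyclic. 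This is precisely where the hypothesis that $k$ is a general $\Q$-algebra rather than a field bites, as flagged in the preceding remark: over a field every module is flat and there is nothing to prove.

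Next I would record that every cofibrant object of $\catdga$ is quasi-flat over $k$. By \cref{exp:model-struct-dgAlg} a cofibrant cdga is a retract of a quasi-free one; a quasi-free cdga is free as a graded $k$-module (symmetric and exterior powers of free modules are free, using $\Q \subseteq k$), free modules are flat, and a direct summand of a flat module is flat. Hence the cofibrant replacements that compute $\ten_k^\oL$ (via the left Quillen structure on $\ten_k$ and Quillen's theorem recalled in \S\ref{derivedfnsn}) are automatically quasi-flat.

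With these two ingredients I would first settle the ``in particular'' clause, where $A_\bullet$ is itself a complex of flat $k$-modules. Choose cofibrant replacements $\tilde A_\bullet \xra{\sim} A_\bullet$ and $\tilde B_\bullet \xra{\sim} B_\bullet$, so that $\tilde A_\bullet \ten_k \tilde B_\bullet$ is a model for $(A \ten_k^\oL B)_\bullet$, and build the zigzag
\[
    \tilde A_\bullet \ten_k \tilde B_\bullet \xra{\sim} A_\bullet \ten_k \tilde B_\bullet \xra{\sim} A_\bullet \ten_k B_\bullet,
\]
in which the first map is $(\tilde A \to A)\ten_k \tilde B$, a quasi-isomorphism because $\tilde B_\bullet$ is flat over $k$, and the second is $A \ten_k (\tilde B \to B)$, a quasi-isomorphism because $A_\bullet$ is flat over $k$ by hypothesis. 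Each step applies the homological lemma with the flat factor held fixed. This shows $A_\bullet \ten_k B_\bullet$ is a model for $(A \ten_k^\oL B)_\bullet$.

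Finally, the general statement follows formally. If $A'_\bullet$ is any quasi-flat replacement of $A_\bullet$, then $A'_\bullet$ is flat over $k$, so the case just proved gives $A'_\bullet \ten_k B_\bullet \simeq (A' \ten_k^\oL B)_\bullet$; and since $\ten_k^\oL$ is a functor on $\Ho(\catdga)$ and $A'_\bullet \simeq A_\bullet$, the right-hand side agrees with $(A \ten_k^\oL B)_\bullet$. The main obstacle is the homological lemma of the first paragraph: once flatness is correctly exploited there, the remainder is bookkeeping with the two evident quasi-isomorphisms.
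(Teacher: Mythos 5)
Your proof is correct and follows essentially the same route as the paper: both arguments reduce to the fact that a non-negatively graded degreewise-flat complex computes the derived tensor product, which the paper invokes by identifying $\H_i((A\ten_k B)_\bullet)$ with the balanced $\Tor_i^k(A_\bullet,B_\bullet)$ and you instead prove directly via the brutal-truncation filtration on the flat factor. Your version simply makes explicit the standard homological input (and the zigzag through $A_\bullet\ten_k\tilde B_\bullet$) that the paper's two-line proof takes as known.
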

    \begin{proof}
        The assumptions imply that the $i^{\textit{th}}$ homology groups of the tensor product are
        simply
        \[
            H_i((A \ten_k B)_\bullet) = \Tor_i^k(A_\bullet, B_\bullet)
        \]    
        Now if $\tilde A_\bullet, \tilde B_\bullet$ are
        cofibrant replacements, they also satisfy the flatness condition, so we get
        \[
            H_i((A \ten_k^\oL B)_\bullet)
            = H_i((\tilde A \ten_k \tilde B)_\bullet)
            = \Tor_i^k(A_\bullet, B_\bullet)
        \]
        Therefore $(A \ten_k^\oL B)_\bullet \to (A \ten_k B)_\bullet$
        is a quasi-isomorphism.
    \end{proof}
    
    One can generalise this result by choosing an arbitrary base
    $R_\bullet \in \catdga$ instead of $k$. This just induces another grading but
    the proof goes through the same way:
    \begin{lemma}
        If $A_\bullet$ is quasi-flat  over $R_\bullet$, then
        $(A \ten_{R}B)_\bullet$ is a model for $(A \ten^\oL_{R}B)_\bullet$.
    \end{lemma}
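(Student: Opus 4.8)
The plan is to run the argument of \cref{lem:quasiflatreplace} essentially verbatim, replacing the discrete base ring $k$ by the cdga $R_\bullet$ throughout. As the surrounding text already indicates, passing from a discrete base to a graded one introduces nothing but an extra internal grading, so the shape of the proof is unchanged. Unwinding \cref{def:quasiflat}, the hypothesis is that the underlying graded module $A_\ast$ is flat over the graded algebra $R_\ast$, and the whole content is that this graded-level flatness already suffices for the \emph{underived} tensor product $(A\ten_R B)_\bullet$ to compute the derived one, so that no replacement of either factor is needed.

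The key steps, in order, are as follows. First I would identify the homology of the underived tensor product with a relative hyper-$\Tor$, namely $\H_i((A\ten_R B)_\bullet)\cong \Tor_i^{R_\bullet}(A_\bullet,B_\bullet)$, the point being that a quasi-flat $A_\bullet$ is ``$K$-flat relative to $R_\bullet$'': tensoring with it already computes the left-derived functor $\ten_R^{\oL}$. Second, I would take cofibrant replacements $\tilde A_\bullet\to A_\bullet$ and $\tilde B_\bullet\to B_\bullet$ over $R_\bullet$; since cofibrant objects are retracts of quasi-free ones (\cref{exp:model-struct-dgAlg}) and a quasi-free extension of $R_\bullet$ is free, hence flat, as a graded $R_\ast$-module, both replacements are themselves quasi-flat over $R_\bullet$. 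By definition $(A\ten_R^{\oL}B)_\bullet\simeq(\tilde A\ten_R\tilde B)_\bullet$, and the same hyper-$\Tor$ computation gives $\H_i((\tilde A\ten_R\tilde B)_\bullet)\cong\Tor_i^{R_\bullet}(A_\bullet,B_\bullet)$. Concretely I would factor the comparison map as
\[
    (\tilde A \ten_R \tilde B)_\bullet \to (A \ten_R \tilde B)_\bullet \to (A\ten_R B)_\bullet ,
\]
where the first arrow is a quasi-isomorphism because $\tilde A\to A$ is one and $\tilde B$ is quasi-flat, and the second because $\tilde B\to B$ is one and $A$ is quasi-flat (the two uses being symmetric). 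Composing yields that $(A\ten_R^{\oL}B)_\bullet\to(A\ten_R B)_\bullet$ is a quasi-isomorphism, which is exactly the claim.

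The crux — and the only place demanding more than bookkeeping — is the first step: justifying that flatness of $A_\ast$ over $R_\ast$, a condition blind to the differential of $A$, is enough to make $A\ten_R(-)$ preserve quasi-isomorphisms of $R_\bullet$-modules. The obstacle is the interaction of the two differentials (that of $A$ and that of the module), so one cannot simply invoke degreewise exactness. I would handle this by reducing to showing that $A\ten_R\cone(f)$ is acyclic for a quasi-isomorphism $f$, filtering $A\ten_R\cone(f)$ by the $A$-degree, and passing to the associated spectral sequence: its first page is obtained by tensoring the flat graded module $A_\ast$ into the acyclic complex $\cone(f)_\bullet$, and flatness makes $A_\ast\ten_{R_\ast}(-)$ exact, so that page vanishes. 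As all objects are concentrated in non-negative degrees the filtration is bounded below and convergence is automatic, so the total complex is acyclic. This is the same mechanism that underlies the discrete-base case; the only adjustment is tracking the extra $R$-internal grading through the spectral sequence.
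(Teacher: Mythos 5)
Your proposal is correct and follows the same route the paper intends: the text's entire ``proof'' of this lemma is the remark that the argument of \cref{lem:quasiflatreplace} goes through with the extra grading, and you reproduce exactly that hyper-$\Tor$/cofibrant-replacement argument, usefully making explicit the factorisation through $(A\ten_R\tilde B)_\bullet$ and the fact that cofibrant $R_\bullet$-algebras are graded-free, hence quasi-flat. The only imprecision is in your description of the spectral sequence: for the filtration by $A$-degree the associated graded of $(A\ten_R C)_\bullet$ is not $A_\ast\ten_{R_\ast}C_\ast$ with differential $1\ten\delta_C$ but rather $(A/R_{>0}A)_\ast\ten_{R_0}C_\ast$ (since $R_{>0}$ shifts the filtration, it acts trivially on the associated graded); the argument still closes because $A_\ast\ten_{R_\ast}R_0$ is flat over $R_0$ by base change, so the $E_1$ page still vanishes when $C$ is acyclic, and bounded-below convergence applies as you say.
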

    \begin{definition}
        In the opposite category we denote these as \emph{homotopy pullbacks}\index{homotopy pullback}, i.e. we write
        $X\times^h_Z Y := \Spec((A\ten^\oL_{C}B)_\bullet)$ where
        $X = \Spec(A_\bullet), Y= \Spec(B_\bullet), Z = \Spec(C_\bullet)$.    
    \end{definition}
    \begin{example}
        Consider the self-intersection $$\{0\}\by^h_{\bA^1}\{0\}$$ of the origin in the affine line,
        or equivalently look at $k\ten^\oL_{k[t]}k$.
        There is a quasi-flat (in fact cofibrant) resolution of $k$ over $k[t]$
        given by
        $(k[t]\cdot s \to k[t])$ with $\delta s = 1$. In other words, this is the graded
        algebra $k[t,s]$ with $\deg(t) = 0$, $\deg(s)=1$ and $\delta s=1$
        (and since we are in a commutative setting we automatically have $s^2=0$).
         We calculate
        \[
            k[t,s]\ten_{k[t]} k = k[s]
        \]
        where $\deg(s)=1$ and $\delta s =0$.
        
        The underived intersection corresponds to an underived tensor product, taking $\H_0(-)$ of this to just give $k$, corresponding to $\Spec(k) \cong \{0\}$. 
        On the other hand, the virtual number of points of our derived intersection scheme $\Spec k[s]$ is given by taking the Euler characteristic, giving $1-1=0$, so we can think of this as a negatively thickened point.
        
        It also makes sense to talk of the virtual dimension of an example such as this, informally given by taking the Euler characteristic of the generators. Since $s$ is in odd degree, the virtual dimension of $\Spec k[s]$ is $-1$, which is consistent with the usual  dimension rules for intersections since it is the derived intersection of codimension $1$ subschemes of a scheme of dimension $1$.

        These enumerative properties are instances of a general phenomenon, namely that properties which hold generically in the classical world tend to hold everywhere in the derived setting.
        \end{example}
  
    \begin{example}
        More generally, we can look at the derived intersection $\{a\} \by^h_{\bA^1} \{ 0\}$ = $\Spec(k \ten^{\oL}_{a,k[t],0} k)$.
   
        By \Cref{lem:quasiflatreplace}, to compute $k \ten^{\oL}_{k[t]} k$ we need to replace one of the copies of $k$ with a quasi-flat $k[t]$-algebra that is quasi-isomorphic to $k$. For this, consider the cdga $A_\bullet$ generated by variables $t,s$ with $\deg(t) = 0$ and $\deg(s)= 1$ and differential defined by $\delta s = t-a$. We have $A_0 = k[t]$ and $A_1 = k[t]s$ and $A_i = 0$ for $i > 1$. Thus the morphism $f\co k[t] \to A_\bullet$ is  quasi-flat, and is in fact cofibrant: $A_\bullet$ is free as a graded algebra over $k[t]$. Now we can compute the derived intersection 
        \begin{equation*}
            \{a\} \times_{\bA^1} \{0\} = \Spec(k[t]/(t-a)\ten^{\oL}_{k[t],0} k) = \Spec(A_\bullet \ten_{k[t],0} k)
            = \Spec(k[s], \delta s=-a).
        \end{equation*}
        When $a$ is a unit, this means the derived intersection is  quasi-isomorphic to $\Spec(0)=\emptyset$, but when $a=0$ we have $k[s]=k \oplus k.s$ with $\delta s=0$.
     
        The Euler characteristic of $k[s]$ is equal to zero, regardless of $\delta$. If we think of the Euler characteristic of a finite dimensional cdga as the (virtual) number of points, then this corresponds to our intuition for intersecting two randomly chosen points in $\bA^1$. 
     
        Contrast this with the classical intersection, which is not constant under small changes, since $\{a\} \by_{\bA^1} \{0\}$ is $\emptyset$ if $a \neq 0 $ and $\{0\}$ if $a = 0$. Our derived self-intersection is categorifying Serre's intersection numbers \cite{serreAlgebreLocale}.
    \end{example} 
    \begin{definition}\label{def:loopspace}
        The \emph{derived loop space}\index{loop space!derived}\index{LX@$\cL X$|see {derived loop space}} of $X\in \operatorname{DG^+\Aff}$ is
        $\cL X := X \times^h_{X\times X} X$, i.e. the pullback via the diagonal.\footnote{These loop spaces don't look like loop spaces
        in topology; the reason is that here the notion of equivalence is a
        completely different one.}
    \end{definition}
    \begin{example}
       Look at $\cL\bA^1 =\bA^1 \by_{\bA^1 \times \bA^1}^h \bA^1$,
       i.e. a self-intersection of a line in a plane.
       Equivalently, we are looking at
       $$(k[x,y]/(x-y)) \ten^\oL_{k[x,y]} (k[x,y]/(x-y)).$$
       
       A cofibrant replacement for $k[x,y]/(x-y)$ over $k[x,y]$ is given by
       $k[x,y,s]$ with $\deg(x)=\deg(y)=0$, $\deg(s)=1$ and $\delta s = x-y$.
       Then $\bA^1 \by_{\bA^1 \times \bA^1}^h \bA^1$ is $\Spec(k[x,s])$
       with $\deg(s)=1$ and $\delta s = x-x = 0$.
    \end{example}
    
    More generally, what happens if we take the loop space $X \by_{X\by X}^h X$ in $DG^+\Aff$?

    \begin{example}\label{ex:loop}
        For any smooth affine scheme $X$ of dimension $d$, 
        we can calculate $\cL X$ as
        \[
            \cL X= \Spec (\O_X \xla{0} \Omega^1_X \xla{0} \Omega^2_X \ldots \xla{0} \Omega^d_X).       
        \]
        
        This is  a strengthening of the  HKR 
        isomorphism relating Hochschild homology and differential forms; for more details and generalisations, see e.g. \cite{benzvinadlerlooplanglands,TVchern}, which were inspired by precursors in the supergeometry literature, where the right-hand side corresponds to $\Pi TX=\Map(\R^{0|1},X)$, as in \cite[Lectures 4 \& 5]{Kon} or \cite[\S 7]{kontsevichPoisson}.
    \end{example}
    
    \begin{remark} 
        The cotangent complex $\bL^{A/k}$ of \S \ref{cotsn} gives a generalisation of \Cref{ex:loop} to all cdgas $A_\bullet$, with $(A\ten^{\oL}_{(A\ten^{\oL}_k A)} A)_\bullet \simeq \bigoplus_p \L^p\bL^{A/k}[p]$. The easiest way to prove this is to observe that the functors have derived right adjoints sending $B_\bullet$ to  $(B\by^h_{(B \by B)}B)_\bullet$ and $B_\bullet \oplus B_\bullet[1]$ respectively; for $PB_\bullet$ as in \Cref{dgalgpath}, inclusion of constants then gives a quasi-isomorphism $B_\bullet \oplus B_\bullet[1] \to (PB\by_{(B\by B)} B)_\bullet \simeq (B \by^h_{(B \by B)}B)_\bullet$.
    \end{remark}
    
\subsubsection{Analogues in differential and analytic contexts}

    There are analogues of derived tensor products for the $\C^\infty$-case and EFC-case; one needs to tweak things
    slightly but not very much.
    
    The basic problem is that the abstract tensor product of two rings of smooth or analytic
    functions won't be a ring of smooth or analytic functions. So there are
    $\C^\infty$ and EFC tensor products $\odot$\index{tensor product!$\C^\infty$}\index{$\odot$|see {$\C^\infty$ tensor product}} as in \Cref{ex:odot}, satisfying
    \[
        \C^\infty(X) \;\odot\; \C^\infty(Y) = \C^\infty(X\times Y)
    \]
    and similarly for EFC-rings. To extend these to dg-rings, we set
    \[
        A \odot B :=
        A \ten_{A_0} (A_0 \odot B_0) \ten_{B_0} B;
    \]
    for example 
    \[
        \C^\infty(X)[s_1, s_2,\ldots] \;\odot\; \C^\infty(Y)[t_1, t_2, \ldots]
        = \C^\infty(X \times Y)[s_1, t_1, s_2, t_2, \ldots]
    \]
    %
    %
    There are similar expressions for EFC rings, and indeed for any Fermat theory in the sense of \cite{CarchediRoytenberg,DubucKock}).
    
\subsection{Tangent and obstruction spaces}\label{tgtsn1}\label{obssn}
  
    An area where derived techniques are particularly useful is obstruction theory.
    To begin with, we recall the dual numbers and how they give rise to tangent spaces.

    \begin{definition}
        For any commutative ring $R$ we define the \emph{dual numbers}\index{dual numbers}\index{Repsilon@$R[\eps]$|see {dual numbers}} 
       $R[\eps]$  by 
        setting $\deg(\eps)=0$ and $\eps^2=0$,
        so $R[\eps]=R \oplus R\eps $.
    \end{definition}     
    \begin{remark}
        Note that this is naturally a $\C^{\infty}$-ring when $R:=\R$ and
        an EFC-ring when $R:=\Cx$, since
        \[
            \C^{\infty}(\R)/(t^2)\cong \R[\eps], \qquad \sO^{\hol}(\Cx)/(z^2) \cong \Cx[\eps].
        \]
        for co-ordinates $t$ on $\R$ and $z$ on $\Cx$.
    \end{remark}
    \begin{construction}
        If $X$ is a smooth scheme, a $\C^{\infty}$-space (e.g. a manifold)
        or a complex analytic space (e.g. a complex manifold), then
        maps $\Spec k[\eps] \to X$ correspond to \emph{tangent vectors}\index{tangent vectors}. That means
        \[
            X(k[\eps]) \cong \{(x,v)~:~ x \in X(k), ~v \text{ a tangent vector at } x\}.
        \]
        i.e. the set of $k[\eps]$-valued points forms a \emph{tangent space}\index{tangent space}.

        More generally, for any ring $A$ and any $A$-module $I$, we have that $X(A \oplus I)$
        consists of $I$-valued tangent vectors at $A$-valued points
        of $X$.
        In this construction the ring $A\oplus I$ has multiplication determined by
        setting $I \cdot I=0$.

    \end{construction}    
    \begin{definition}
        A \emph{square-zero extension}\index{square-zero extension} of commutative rings is
        a surjective map $f \co A \onto B$ such that
        $xy=0$ for all $x,y \in \ker(f)$.
    \end{definition}
    \begin{notation}
        For the rest of \cref{tgtsn1} we pick two commutative rings $A$ and $B$, a square-zero extension $f: A\onto B$ 
        and we define $I:=\ker(f)$.
    \end{notation}
    
    Note that any \emph{nilpotent surjection}\index{nilpotent surjection} of rings can be written as a composite
    of finitely many square-zero extensions, which is why deformation theory
    focuses on the latter.
    There is a way of thinking about square-zero extensions in terms of torsors.
    Note that 
    \begin{align*}
        A\by_BA &\cong A\by_B(B \oplus I)\\
        (a, a') &\mapsto (a, (f(a), a-a'))
    \end{align*}
    which is a ring homomorphism.
    For a smooth scheme $X$ this means that 
    \begin{align*}
        X(A) \times_{X(B)} X(A) &\cong X(A\times_B A)\\
        &\cong X(A\times_B (B\oplus I))\\
        &\cong X(A) \times_{X(B)} X(B \oplus I)
    \end{align*}
    so we get $I$-valued tangent vectors (from the tangent space $X(B\oplus I)$) acting transitively on the fibres of $X(A) \to X(B)$.
    
    Note that $X(A) \to X(B)$ is only surjective for $X$ smooth (assuming finite type). Singularities in $X$ give obstructions to lifting $B$-valued points to $A$-valued points.  It had long been observed (since at least \cite{LS}) that obstruction spaces tend to exist, measuring this failure to lift.  Specifically, the image of $X(A)\to X(B)$ tends to be the vanishing locus of a section of some bundle over $X(B)$, known as the obstruction space.

    Here is an analogy with homological algebra. If $f:A^\bullet \onto B^\bullet$ is a surjective map of cochain complexes with kernel $I^{\bt}$, then in the derived category we have a map $B^\bullet\to I^\bullet[1]$ with homotopy kernel $A^\bullet$. If the image of $H^0(B^\bullet) \to H^0(I^\bullet[1]) = H^1(I^\bullet)$ is non-zero, it then gives an obstruction to lifting elements from $H^0(B^\bullet)$ to $H^0(A^\bullet)$.
    
    Now we want to construct a non-abelian version of this, leading to the miracle of derived deformation theory: that tangent spaces are obstruction spaces.\index{miracle  of derived deformation theory} This accounts for the well-known phenomenon that when a tangent space is given by a cohomology group, the  natural obstruction space tends to be the next group up. 
    
    Almost everything we have seen in the lectures  so far is essentially due to Quillen. However, the first instance of our next argument is apparently  \cite[proof of Theorem 3.1, step 3]{Man2}, 
    although its consequences already featured in \cite[III 1.1.7]{Ill1}, with a more indirect proof.
 
    \medskip
    
    We start with an analogue  of the homological construction above. Given a square-zero extension\footnote{For simplicity, you can assume  that $A$ and $B$ are commutative rings, but exactly the same argument holds for cdgas and for (dg) $\C^{\infty}$ or EFC rings.}
    $A \onto B$  with kernel $I$, let $\tilde B_\bullet:=\cone(I\to A)$, i.e. $\tilde B_\bullet = (A \hookleftarrow I \leftarrow 0 \leftarrow \ldots) \in \catdga$; the multiplication on $\tilde B_\bullet $ is the obvious one. There is a natural quasi-isomorphism $\tilde B_\bullet \to B$. 
    
    Now we have a cdga map $u:\tilde B_\bullet \to (B \xleftarrow{0} I \leftarrow 0 \leftarrow \ldots) =: B \oplus I[1]$ where we just kill the image of $I$.\footnote{This is where we need $I$ to be square-zero; otherwise, the map would not be multiplicative.}
    Observe that $u$ is surjective and that
    \[
        \tilde B_\bullet \times_{u, (B\otimes I[1]), 0} B = A
    \]
    which gives us 
    \begin{equation*}
        A = \tilde B_\bullet \times^h_{B\otimes I[1]} B \in \catdga. \tag{$\dagger$}
    \end{equation*}

    For a sufficiently nice functor on $\catdga$, we can use  this to generate obstructions to lifting elements. The first functors we can look at are representable functors on the homotopy category $\Ho(\catdga)$, i.e. $\Hom_{\Ho(\catdga)}(S_\bullet,-)$ for cdgas $S_\bullet$, the functors associated to derived affine schemes.

    Limits in the homotopy category tend not to exist, but we do have homotopy fibre products, which have a weak limit property and permit the following definition (c.f. \cite{heller}).

    \begin{definition}\label{halfexact}
        A functor $F: \Ho(\catdga) \to \Set$
        is \emph{half-exact}\index{half-exact}
        if we have
        \begin{enumerate}
            \item $F(0) \cong \ast$,
            \item $F((A \by B)_\bullet) \cong F(A_\bullet) \by F(B_\bullet)$ for any
        $A_\bullet, B_\bullet \in \catdga$, 
            \item $F((A\by_{B}^h C)_\bullet) \onto F(A_\bullet)\by_{F(B_\bullet)}F(C_\bullet)$ for all diagrams $A_\bt \to B_\bt \la C_\bt$ in $\catdga$. 
        \end{enumerate}
    \end{definition}   
     \begin{remark}
         If restricting to Artinian objects, readers may notice the similarity of the  half-exactness property to Schlessinger's conditions \index{Schlessinger's conditions}\cite{Sch} in the underived setting (also see \cite{descent,Artin}), and to Manetti's characterisation of extended deformation functors in \cite{Man2}.
     \end{remark}
     
    \begin{lemma}
        Any representable functor $F$  on $\Ho(\catdga)$ is half-exact.
    \end{lemma}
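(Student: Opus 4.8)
The plan is to write $F=\Hom_{\Ho(\catdga)}(S_\bullet,-)$ for a fixed cdga $S_\bullet$, and, since passing to a cofibrant replacement of $S_\bullet$ changes $F$ only up to natural isomorphism, to assume from the outset that $S_\bullet$ is cofibrant. The key preliminary observation is that, by \Cref{exp:model-struct-dgAlg}, every object of $\catdga$ is fibrant; hence by \Cref{thm:maps-homotopy-cat} every element of $F(T_\bullet)=\Hom_{\Ho(\catdga)}(S_\bullet,T_\bullet)$ is represented by an honest cdga morphism $S_\bullet\to T_\bullet$, two of these representing the same class exactly when they are related through the path object $PT_\bullet$ of \Cref{dgalgpath}. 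I would use such strict representatives throughout.

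First I would dispose of conditions (1) and (2), which are purely formal. For (1), the representative description gives $F(0)=\Hom_{\catdga}(S_\bullet,0)/{\sim}=\ast$, since $0$ is terminal in $\catdga$. For (2), fibrancy of $A_\bullet$ and $B_\bullet$ means the strict product $(A\by B)_\bullet$ already computes the homotopy product, so it is the genuine product in $\Ho(\catdga)$; taking $P(A\by B)_\bullet=PA_\bullet\by PB_\bullet$ as a path object then shows directly that $\Hom_{\Ho(\catdga)}(S_\bullet,(A\by B)_\bullet)$ splits as $F(A_\bullet)\by F(B_\bullet)$. Equivalently, a representable functor preserves products.

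The substance is condition (3), which is exactly the statement that the homotopy fibre product is a weak pullback in $\Ho(\catdga)$, with $F$ detecting this weakness as surjectivity. Here I would invoke the explicit model from \Cref{lesfibreprod}, namely $(A\by^h_BC)_\bullet\simeq A_\bullet\by_{B_\bullet}PB_\bullet\by_{B_\bullet}C_\bullet$ (valid because $B_\bullet$ is fibrant). Starting from a compatible pair in $F(A_\bullet)\by_{F(B_\bullet)}F(C_\bullet)$, represent it by genuine maps $\alpha\co S_\bullet\to A_\bullet$ and $\gamma\co S_\bullet\to C_\bullet$ whose composites into $B_\bullet$ agree in $\Ho(\catdga)$. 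Since $S_\bullet$ is cofibrant and $B_\bullet$ fibrant, this agreement is witnessed by a homotopy, i.e.\ a map $H\co S_\bullet\to PB_\bullet$ with $\ev_0 H$ and $\ev_1 H$ the two composites. Assembling $(\alpha,H,\gamma)$ produces a morphism $S_\bullet\to A_\bullet\by_{B_\bullet}PB_\bullet\by_{B_\bullet}C_\bullet$ whose class in $F((A\by^h_BC)_\bullet)$ maps to the chosen pair, establishing surjectivity.

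The only real obstacle is the homotopy-theoretic bookkeeping in this last step: one must verify that equality in $\Ho(\catdga)$ genuinely upgrades to an honest homotopy $H$ realizable through the path object (this is precisely where cofibrancy of $S_\bullet$ and fibrancy of $B_\bullet$ enter), and that the assembled triple is a bona fide cdga map into the strict fibre product modelling $(A\by^h_BC)_\bullet$. The failure of injectivity — and hence the $\onto$ rather than $\cong$ in (3) — is then explained by the dependence of the lift on the choice of $H$, different homotopies generally yielding distinct classes; no further argument is needed, since half-exactness asks only for surjectivity.
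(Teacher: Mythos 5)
Your proof is correct, but it takes a genuinely different route from the one in the text. The paper's (sketched) argument is purely formal: it writes $F=\pi_0\oR\Map_{\catdga}(S_\bullet,-)$, uses that $\oR\Map$ preserves homotopy limits, and then quotes the fact that for spaces $\pi_0(X\by^h_YZ)\onto \pi_0(X)\by_{\pi_0(Y)}\pi_0(Z)$. You instead inline the proof of that last surjectivity statement directly at the level of cdgas: using that every object of $\catdga$ is fibrant, you take the strict model $A_\bullet\by_{B_\bullet}PB_\bullet\by_{B_\bullet}C_\bullet$ of the homotopy fibre product from \Cref{lesfibreprod}, upgrade agreement in $\Ho(\catdga)$ to an honest homotopy $H\co S_\bullet\to PB_\bullet$ via \Cref{thm:maps-homotopy-cat} (this is exactly where cofibrancy of $S_\bullet$ is needed, since only then is the path-object relation already an equivalence relation rather than merely generating one), and assemble $(\alpha,H,\gamma)$ into a strict lift. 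Your approach is more elementary and entirely self-contained given the model-category material of \S\ref{modelsn}, and it makes the source of non-injectivity (the choice of $H$) completely explicit; the paper's approach is shorter, transfers verbatim to any model category, and via the long exact sequence of \Cref{lesfibreprod} even identifies the fibres of the surjection as orbits under a $\pi_1$-action. The only point worth making explicit in your write-up is that the comparison map $F((A\by^h_BC)_\bullet)\to F(A_\bullet)\by_{F(B_\bullet)}F(C_\bullet)$ is itself well defined, i.e.\ that the two projections from the strict model to $B_\bullet$ (namely $\ev_0\circ\pr_{PB}$ and $\ev_1\circ\pr_{PB}$) agree in $\Ho(\catdga)$; this is immediate since $\pr_{PB}$ is itself the required homotopy, but it should be said.
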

    \begin{proof}[Proof (sketch)]
        The reason for this is that $\Hom_{\Ho(\catdga)}(S_\bullet,-)$ is given by path components $\pi_0$ of a topological space-valued functor $\oR\Map_{\catdga}(S_\bullet,-)$, with the latter preserving homotopy limits. The first two 
        properties then follow quickly, with the final property following by noting that if we take a homotopy fibre product of spaces, then its path components map surjectively onto the fibre product of the path components:
        \[
            \pi_0(X\by^h_Y Z) \onto \pi_0(X)\by_{\pi_0(Y)}\pi_0(Z).\qedhere
        \]
   \end{proof}

    %
        

    \begin{remark}
        It will turn out that non-affine geometric objects such as derived schemes and stacks $F$ still satisfy a weakened  half-exactness property, with the  final condition of Definition \ref{halfexact} only holding when $A_\bt \onto B_\bt$ is a nilpotent surjection, which is all we will need for the consequences  in this section to hold.   
    \end{remark}   

    \begin{construction}[Obstruction theory]\label{obstrnth}
    Returning to the obstruction question, if we  apply a half-exact functor $F$ to our square-zero extension $A \onto B$, then the expression ($\dagger$) gives
    \[
        F(A) \onto F(\tilde B_\bullet) \times_{u, F(B\oplus I[1]), 0} F(B)
        \cong F(B) \times_{u, F(B\oplus I[1]), 0} F(B),
    \]
    so the theory has given us a map $u: F(B) \to F(B\oplus I[1])$ such that 
    \[
        u(x) = (x,0)  \quad \text{if and only if}  \quad x \in \im(F(A) \to F(B)).
    \]
    In other words, by working over $\catdga$, we have acquired an obstruction theory  $$(F(B\oplus I[1]),u)$$ for free. In contrast to classical deformation theory, this means obstruction spaces  exist automatically in derived deformation theory.
\end{construction}
  
    \begin{remark}
        Whereas $F(B \oplus I)$ is a tangent space over $F(B)$, we think of $F(B\oplus I[1])$ as a higher degree   tangent space. In due course, we'll work with tangent complexes instead of tangent spaces, and  this then becomes the first cohomology group $\H^1$.
    \end{remark}  

\subsection{Postnikov towers}\label{postnikovsn}
    Pick for this entire section a cdga $A_\bullet \in \catdga$. Postnikov towers will give us the  justification for thinking of derived structure as being infinitesimal.
    
    \begin{notation}
        We recall the notations $\b_nA := \im(\delta:A_{n+1} \to A_n)$ for the image of the differential and $\z_nA := \ker(\delta:A_n \to A_{n-1})$ for the kernel. \index{Bn@$\b_n$}\index{Zn@$\z_n$}
        In particular we have that $\b_nA \cong A_{n+1}/\z_{n+1}A$ and $H_n(A_\bullet) = \z_n A / \b_n A$.
    \end{notation}

    \begin{definition}\label{coskdef}
        The \emph{$n^\textit{th}$ coskeleton}\index{coskeleton $\cosk$}\index{cosknA@$(\cosk_n A)_\bullet$|see {coskeleton}} $(\cosk_n A)_\bullet \in \catdga$ of $A_\bullet$ is given by
        \[
            (\cosk_nA)_i= 
            \begin{cases}
                A_i     & i < n+1\\
                \z_nA   & i = n+1\\
                0       & i > n+1
            \end{cases}
        \] 
        with the differential in degrees $i<n$ being the differential of $A_\bullet$ (i.e. $\delta_{(\cosk_nA)} = \delta_A: A_{i+1} \to A_i$) and the differential $\delta_{(\cosk_nA)}: (\cosk_nA)_{n+1} \to (\cosk_nA)_n $ in degree $n$ being  given by the inclusion $\z_nA \hookrightarrow A_n$. The multiplication on $(\cosk_n A)_\bullet$ is given by
        \[
            a \cdot b =
            \begin{cases}
                ab      & \deg(a)+\deg(b) < n+1\\
                \delta_A(ab)   & \deg(a)+\deg(b) = n+1\\
                0       & \deg(a)+\deg(b) > n+1
            \end{cases}
        \]
        The canonical map $A_\bullet \to (\cosk_n A)_\bullet$ is given in degree $n+1$ by $\delta_A:A_{n+1} \to \z_nA$ and by the identity in degrees $\le n$.
    \end{definition}
    
    \begin{remark}
        The idea of coskeleta is to give quotients truncating $A_\bullet$ without changing its lower homology groups, i.e. $\H_i((\cosk_n A)_\bullet) = \H_i(A_\bullet)$ for $i< n$ and $\H_i((\cosk_n A)_\bullet) = 0$ for $i\ge n$.
    \end{remark}

    
    The following gives an adjoint characterisation of the coskeleton:
    
    \begin{lemma}
        $\Hom_{\catdga}(A_\bullet, (\cosk_nB)_\bullet) \cong \Hom_{\catdga}((A_{\le n})_\bullet,(B_{\le n})_\bullet)$, where $(A_{\le n})_\bullet$ is the \emph{brutal truncation}\index{brutal truncation}\index{An@$(A_{\le n})_\bullet$|see {brutal truncation}} in degrees $\le n$.
    \end{lemma}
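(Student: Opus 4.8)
The plan is to prove the isomorphism by unwinding both sides into concrete data and checking they coincide, which amounts to exhibiting $\cosk_n$ as right adjoint to the brutal truncation $(-)_{\le n}$. Throughout I read $(A_{\le n})_\bullet$ and $(B_{\le n})_\bullet$ as objects of the category of cdgas concentrated in degrees $\le n$, in which the multiplication $C_i \ten C_j \to C_{i+j}$ is only retained for $i+j \le n$; this is the natural target of $(-)_{\le n}$, since the na\"ive product into degree $n+1$ would violate the Leibniz rule. The first observation is that $(\cosk_n B)_\bullet$ depends only on $(B_{\le n})_\bullet$: its degree-$(n+1)$ term $\z_nB$ and the inclusion differential $\z_nB \hookrightarrow B_n$ are visibly determined by $B_{\le n}$, while its degree-$(n+1)$ product $\beta\cdot\beta' = \delta_B(\beta\beta')$ rewrites, by Leibniz in $B$, as $(\delta_B\beta)\beta' + (-1)^{\deg\beta}\beta(\delta_B\beta')$, an expression involving only products that land in degree $\le n$.

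First I would unpack a cdga morphism $\phi\co A_\bullet \to (\cosk_n B)_\bullet$. Since the target vanishes above degree $n+1$, such a $\phi$ is a family $\phi_i\co A_i \to B_i$ for $i \le n$ together with $\phi_{n+1}\co A_{n+1} \to \z_nB$, all compatible with $\delta$ and the multiplication. The key reduction is that $\phi_{n+1}$ is forced: compatibility with the differential in the square $A_{n+1}\to A_n$ against $\z_nB \hookrightarrow B_n$ gives $\phi_{n+1} = \phi_n\circ\delta_A$ (viewed in $B_n$), and its image lands in $\z_nB$ automatically because $\delta_B\phi_n\delta_A = \phi_{n-1}\delta_A^2 = 0$. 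Thus $\phi$ is completely determined by its truncation $\phi_{\le n} = (\phi_0,\ldots,\phi_n)$.

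The heart of the argument, and the step I expect to be the main obstacle, is to show that the conditions on $\phi$ collapse exactly to the statement that $\phi_{\le n}$ is a morphism $(A_{\le n})_\bullet \to (B_{\le n})_\bullet$. Compatibility with $\delta$ and with products in degrees $\le n$ transcribes verbatim into the truncated condition; what must be checked is that the two remaining degree-$(n+1)$ conditions are then automatic. Differential compatibility is handled above, and for multiplicativity, given $a\in A_i, b\in A_j$ with $i+j = n+1$, I would expand
\[
 \phi_{n+1}(a\cdot_A b) = \phi_n(\delta_A(ab)) = \phi_n\big((\delta_A a)\,b + (-1)^{\deg a}a\,(\delta_A b)\big),
\]
apply that $\phi_{\le n}$ respects $\delta$ and the products landing in degree $\le n$, and reassemble using Leibniz in $B$ to obtain $\delta_B(\phi_i(a)\phi_j(b)) = \phi_i(a)\cdot_{\cosk}\phi_j(b)$. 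This is precisely where the engineered form of the coskeleton's top product earns its keep, and it is the only genuinely computational point.

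Finally I would construct the inverse: any $\psi\co (A_{\le n})_\bullet \to (B_{\le n})_\bullet$ extends to $\phi$ by setting $\phi_i := \psi_i$ for $i\le n$, $\phi_{n+1} := \psi_n\circ\delta_A$, and $\phi_i := 0$ for $i > n+1$; the same computation shows $\phi$ is a cdga morphism, and the two assignments are mutually inverse. Since every construction is natural in $A_\bullet$ (and in $B_\bullet$), this yields the adjunction $(-)_{\le n}\dashv\cosk_n$, and the displayed bijection is its value with the second argument specialised to $(B_{\le n})_\bullet$.
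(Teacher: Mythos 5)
Your proof is correct, and since the paper states this lemma without proof there is nothing to compare it against beyond the definition of $\cosk_n$; your argument (uniqueness of the degree-$(n+1)$ component forced by injectivity of $\z_nB\hookrightarrow B_n$, automatic landing in $\z_nB$ via $\delta_A^2=0$, and the Leibniz computation showing multiplicativity into degree $n+1$ is automatic) is the intended one. The one genuinely delicate point, which you handle correctly, is the reading of the right-hand side: the brutal truncation of a cdga is \emph{not} an object of $\catdga$, since setting the products landing in degree $n+1$ to zero violates the Leibniz rule (e.g.\ $0=\delta(y\cdot z)\neq (\delta y)z - y(\delta z)$ in \Cref{exp:cdga-001} truncated at $n=1$), so $\Hom((A_{\le n})_\bullet,(B_{\le n})_\bullet)$ must be taken in the category of dgas truncated in degrees $\le n$, retaining only the products that land in degrees $\le n$ --- your observation that the coskeleton's top product $\delta_B(\beta\beta')=(\delta_B\beta)\beta'\pm\beta(\delta_B\beta')$ is expressible in terms of that truncated structure is exactly what makes $\cosk_n$ a functor on that category and upgrades the bijection to the adjunction $(-)_{\le n}\dashv\cosk_n$.
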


 The  brutal truncation functor from $\catdga$ to its subcategory of objects  concentrated in degrees $[0,n]$ also has a left adjoint, the $n$-skeleton\index{nskeleton@$n$-skeleton}. There are analogous adjunctions for simplicial algebras and sets, where the skeleton/coskeleton terminology is more common.
    
    \begin{definition}
        Let $A_\bullet \in \catdga$. The \emph{Moore-Postnikov tower}\index{Moore-Postnikov tower} is the family of cdgas $\{(P_nA)_\bullet\}_{n\in \bN}$ given by $(P_nA)_\bullet= \im(A_\bullet \to (\cosk_nA)_\bullet) = \im((\cosk_{n+1}A)_\bullet \to (\cosk_nA)_\bullet) \in \catdga$, so 
        \[
            (P_nA)_i =
            \begin{cases}
                A_i     & i \le n\\
                \b_nA   & i = n+1\\
                0       & i> n+1.
            \end{cases}
        \]
        These form a diagram with maps
        \[
            A_\bullet \to \ldots \to (P_nA)_\bullet \to (P_{n-1}A)_\bullet \to \ldots \to (P_0A)_\bullet.
        \]
    \end{definition}
    \begin{lemma}\label{postfact}
        The morphism $(P_nA)_\bullet \to (P_{n-1}A)_\bullet$ is the composition of a trivial fibration and a square-zero extension.
    \end{lemma}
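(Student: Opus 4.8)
The plan is to insert an explicit intermediate cdga $C_\bullet$ realising the tower map as
\[
(P_nA)_\bullet \xrightarrow{\;p\;} C_\bullet \xrightarrow{\;q\;} (P_{n-1}A)_\bullet,
\]
with $p$ a trivial fibration and $q$ a square-zero extension. First I would record the two ends explicitly: $(P_nA)_\bullet$ coincides with $A_\bullet$ in degrees $\le n$, has $\b_nA$ in degree $n+1$ with differential the inclusion $\b_nA \into \z_nA \subseteq A_n$, and vanishes above; while $(P_{n-1}A)_\bullet$ coincides with $A_\bullet$ in degrees $<n$, has $\b_{n-1}A$ in degree $n$, and vanishes above. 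Thus the tower map is the identity below degree $n$, is $\delta\colon A_n \onto \b_{n-1}A$ in degree $n$, and is $\b_nA \to 0$ in degree $n+1$; its kernel is the two-term complex $K_\bullet = [\b_nA \into \z_nA]$ in degrees $n+1$ and $n$, whose only homology is $\H_n(A)$ in degree $n$.

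The key idea is that $K_\bullet$ contains a canonical acyclic dg-ideal, and that quotienting by it simultaneously cleans up the kernel and splits off the trivial fibration. I would take $J_\bullet$ to be the diagonal copy of $\b_nA$, namely $J_{n+1}=\b_nA$ and $J_n = \b_nA$ (as the image of the inclusion inside $\z_nA$), with differential the identity; this is plainly acyclic. Setting $C_\bullet := (P_nA)_\bullet/J_\bullet$, the quotient $C_\bullet$ equals $A_i$ for $i<n$, equals $A_n/\b_nA$ in degree $n$, and vanishes above. The projection $p$ is surjective, hence a fibration, and has acyclic kernel $J_\bullet$, hence is a quasi-isomorphism, so $p$ is a trivial fibration. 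The induced map $q$ is $\delta$ in degree $n$ (well defined since $\delta$ annihilates $\b_nA$) with kernel $\z_nA/\b_nA = \H_n(A)$ placed in degree $n$; since this kernel lies in the top degree of $C_\bullet$, any product of two of its elements lands in degree $2n>n$, where $C_\bullet$ vanishes, so the kernel squares to zero and $q$ is a square-zero extension. Composing, $q\circ p$ is visibly the original tower map.

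The step needing genuine care is checking that $J_\bullet$ is a differential graded ideal of $(P_nA)_\bullet$, because the multiplication inherited from $(\cosk_nA)_\bullet$ is twisted in top degree: a product of two honest elements of $A$ whose degrees sum to $n+1$ is computed as $\delta_A$ of their naive product. After confirming $J_\bullet$ is a subcomplex (immediate from the differential description), the only products that can land in the nonzero degrees of $J_\bullet$ are $A_0\cdot\b_nA$ and $A_1\cdot\b_nA$ (in degree $n+1$). The former is just the $A_0$-module action and stays in $\b_nA$ because $\delta$ vanishes on $A_0$, so $ab=\delta(ac)$ whenever $b=\delta c$; the latter lands in $\b_nA$ precisely because the twisted top-degree product $\delta_A(ab)$ of an element of $A_{n+1}$ is by construction a boundary. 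Once this multiplicative closure is established, the homological claims for $p$ and $q$ follow at once from the long exact sequences attached to the short exact sequences $0\to J_\bullet \to (P_nA)_\bullet \to C_\bullet \to 0$ and $0\to \H_n(A)[n]\to C_\bullet \to (P_{n-1}A)_\bullet \to 0$, and the argument uses no characteristic hypothesis.
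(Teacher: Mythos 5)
Your factorisation is exactly the one in the paper: you construct the same intermediate cdga $C_\bullet$ (equal to $A_i$ in degrees $i<n$, to $A_n/\b_nA$ in degree $n$, and zero above), with $(P_nA)_\bullet \to C_\bullet$ a trivial fibration and $C_\bullet \to (P_{n-1}A)_\bullet$ a square-zero extension with kernel $\H_n(A_\bullet)$ in degree $n$. The paper merely asserts these properties, whereas you supply the (correct) verifications — in particular that the diagonal copy $J_\bullet$ of $\b_nA$ is an acyclic dg-ideal despite the twisted top-degree multiplication — so this is the same proof, carried out in more detail.
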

    \begin{proof}
        Define $C_\bullet \in \catdga$ by 
        \[
            C_i :=
            \begin{cases}
                A_i         & i < n\\
                A_n/\b_nA   & i = n\\
                0           & i> n,
            \end{cases}
        \]
        and note that the map $(P_nA)_\bullet \to (P_{n-1}A)_\bullet$ factors as $(P_nA)_\bullet \to C_\bullet \to (P_{n-1}A)_\bullet$, with $(P_nA)_\bullet \to C_\bullet$ a trivial fibration, and $C_\bullet \to (P_{n-1}A)_\bullet$ a square-zero extension (with kernel $(\H_n(A_\bullet))[-n]$).
    \end{proof}
    \begin{remark}
        We can thus think of $\Spec(A_\bullet)$ as like a formal infinitesimal neighbourhood of $\Spec(\H_0(A_\bullet))$, since we have characterised it as a direct limit of a sequence of homotopy square-zero thickenings.
    \end{remark}
    
    Assuming some finiteness conditions, we now strengthen these results, relating $A_\bullet$ to a genuine completion over $\H_0(A_\bullet)$.  

    \begin{definition}
        Let $A_\bullet \in \catdga$. The \emph{completion}\index{completion}\index{Ahat@$\hat A_\bullet$|see {completion}} of $A_\bullet$
        is given by \index{completion of a cdga}
        \[
            \hat A_\bullet := \varprojlim_n A_\bullet/I^nA_\bullet
        \]
        where $I:= \ker(A_0 \to \H_0(A_\bullet))$ and $I^n=\underbrace{I\cdot I \cdots  I}_n$.
    \end{definition}
    \begin{lemma}\label{dgshrink}
        If $A_0$ is Noetherian and each $A_n$ is a finite $A_0$-module, then the natural map $A_\bullet \to \hat{A_\bullet}$ is a quasi-isomorphism.
    \end{lemma}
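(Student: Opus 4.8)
The plan is to reduce the statement to two standard facts about $I$-adic completion over a Noetherian ring, namely that completion of finitely generated modules is given by $\ten_{A_0}\hat{A}_0$ with $\hat{A}_0$ flat, together with one short Leibniz computation showing that $I$ annihilates the homology of $A_\bullet$. Here $I=\ker(A_0\to\H_0(A_\bullet))=\b_0A=\delta A_1$. First I would observe that inverse limits of chain complexes are computed degreewise, so that $(\hat{A}_\bullet)_i=\varprojlim_n A_i/I^nA_i=\hat{A}_i$ is the $I$-adic completion of the $A_0$-module $A_i$. Since $A_0$ is Noetherian and each $A_i$ is a finite $A_0$-module, the comparison isomorphism $\hat{A}_i\cong A_i\ten_{A_0}\hat{A}_0$ holds, and $\hat{A}_0$ is flat over $A_0$. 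Because the differential and multiplication are $A_0$-linear, these degreewise isomorphisms assemble into an isomorphism of cdgas $\hat{A}_\bullet\cong A_\bullet\ten_{A_0}\hat{A}_0$ compatible with the natural map $A_\bullet\to\hat{A}_\bullet$.

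Next, flatness of $\hat{A}_0$ makes $-\ten_{A_0}\hat{A}_0$ exact, so it commutes with the homology of the (possibly unbounded) complex $A_\bullet$, giving
\[
\H_i(\hat{A}_\bullet)\cong\H_i(A_\bullet\ten_{A_0}\hat{A}_0)\cong\H_i(A_\bullet)\ten_{A_0}\hat{A}_0 .
\]
The key point is then that $I$ annihilates each $\H_i(A_\bullet)$: for $a=\delta w\in I$ with $w\in A_1$ and a cycle $z\in\z_iA$, the derivation property yields $\delta(wz)=(\delta w)z+(-1)^{\deg w}w\,\delta z=az$ since $\delta z=0$, so $az\in\b_iA$ and hence $a\cdot[z]=0$. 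Equivalently, $\H_i(A_\bullet)$ is a module over $\H_0(A_\bullet)=A_0/I$. Using $\hat{A}_0/I\hat{A}_0\cong A_0/I$, this gives
\[
\H_i(A_\bullet)\ten_{A_0}\hat{A}_0\cong\H_i(A_\bullet)\ten_{A_0/I}\bigl(\hat{A}_0/I\hat{A}_0\bigr)\cong\H_i(A_\bullet)\ten_{A_0/I}(A_0/I)\cong\H_i(A_\bullet),
\]
and tracing the identifications shows the composite is exactly the map induced by $A_\bullet\to\hat{A}_\bullet$, so the latter is a quasi-isomorphism.

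The hard part is essentially bookkeeping rather than conceptual: I must confirm that the degreewise completions genuinely assemble into the complex $A_\bullet\ten_{A_0}\hat{A}_0$ (compatibly with $\delta$) and that exchanging completion with homology is legitimate for an unbounded complex. Both follow from the Noetherian and finiteness hypotheses via exactness of completion on finitely generated modules, which also ensures each $\z_iA$, $\b_iA$, and $\H_i(A_\bullet)$ is finite over $A_0$. The only genuinely substantive input is the Leibniz computation giving $I\cdot\H_i(A_\bullet)=0$; everything else is formal.
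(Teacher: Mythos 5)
Your proof is correct and follows essentially the same route as the paper's: flatness of $A_0\to\hat{A}_0$ and the identification $\hat{A}_i\cong A_i\ten_{A_0}\hat{A}_0$ for finite modules over a Noetherian ring (Matsumura, Thms.~8.8 and 8.7) give $\H_*(\hat{A}_\bullet)\cong\H_*(A_\bullet)\ten_{A_0}\hat{A}_0$, and then the fact that $\H_*(A_\bullet)$ is a module over $\H_0(A_\bullet)=A_0/I$ collapses the tensor product. The only difference is that you make explicit the Leibniz computation showing $I\cdot\H_*(A_\bullet)=0$, which the paper leaves implicit.
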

    \begin{proof}
        This is \cite[Lemma \ref{stacks2-dgshrink}]{stacks2}, proved using fairly standard commutative algebra.
        If $A_0$ is Noetherian, then \cite[Thm.~8.8]{Mat} implies that $A_0 \to \hat{A}_0$ is flat. If $A_n$ is a finite $A_0$-module, then \cite[Thm~8.7]{Mat} implies that $\hat{A}_n = \hat{A}_0 \ten_{A_0}A_n$. Thus 
        \[
            \H_\ast(\hat{A}_\bullet)\cong \H_*(A_\bullet)\ten_{A_0}\hat{A}_0,
        \]
        and applying \cite[Thm~8.7]{Mat} to the $A_0$-module $\H_0(A_\bullet)$ gives that
        $\H_*(\hat{A}_\bullet)\cong \H_*(A_\bullet)$, as required. 
    \end{proof}
    
    

    
\subsection{The cotangent complex}\label{cotsn}
    
    The cotangent complex is one of the earliest applications of abstract homotopy theory, due to  Quillen \cite{Q}\footnote{The two manuscripts with the greatest influence on derived geometry are probably \cite{Q} and \cite{Kon}, though practitioners tend to encounter their contents indirectly.}, using \cite{QHA}. Until then, tangent and obstruction spaces for relative extensions only fitted in the nine-term long exact sequence of \cite{LS}. For more history, see \cite{barrearlycoho}.

    \begin{definition}
        Given a morphism $R_\bullet \to A_\bullet$ in $\catdga$, the complex $\Omega^1_{A/R} \in \catdgm$ of \emph{K\"ahler differentials}\index{K\"ahler differentials}\index{Omega1@$\Omega^1_{A/R}$|see {K\"ahler differentials}} is given by $I/I^2$, where $I = \ker((A\ten_{
        R} A)_\bullet \to A_\bullet)$.

        There is a \emph{derivation}\index{derivation} $d \co A_\bullet \to \Omega^1_{A/R}$ given by $a \mapsto a\ten 1-1\ten a +I^2$.

    \end{definition}    

    \begin{example}
        If $A_\bullet=(R_{\bt}[x_1, \ldots, x_n], \delta)$ for variables $x_i$ in various degrees, then $\Omega^1_{A/R} = (\bigoplus_{i=1}^n (A_\bt).dx_i, \delta)$.
    \end{example}

    \begin{definition}\label{cotdef}
        Given a morphism $R_\bullet \to A_\bullet$ in $\catdga$, the 
        \emph{cotangent complex}\index{cotangent complex}\index{LAR@$\bL^{A/R}$|see {cotangent complex}} is defined as
        \[
            \bL^{A/R}:= (\Omega^1_{\tilde{A}/R}\ten_{\tilde{A}}A)_\bullet \in \catdgm[A_\bt]
        \]
        where $\tilde{A}_\bullet \to A_\bullet$ is a cofibrant replacement in $\catdga[R_\bullet]$.          
    \end{definition}
    
    The idea behind the cotangent complex is that we want to take the left-derived functor of $A_\bullet \mapsto \Omega^1_{A/R}$,
    but this isn't a functor as such, since the codomain depends on $A_\bullet$.
    
    Instead, we take the slice category $(\catdga[R_\bullet]) \da A_\bullet$ of $A_\bullet$-augmented $R_\bullet$-algebras, and 
    look at the functor $B_\bullet \mapsto (\Omega^{B/R}\ten_{B}A)_\bullet$ from $(\catdga[R_\bullet]) \da A_\bullet$ to $\catdgm[A_\bt]$; this  is left adjoint to the functor $M_\bullet \mapsto A_\bullet \oplus (M_\bullet).\eps$, for $\eps^2=0$. These form a Quillen pair, and taking the left-derived functor gives the cotangent complex  $\bL^{A/R}:= \oL(C \mapsto (\Omega^1_{C/R}\ten_{C}A)_\bullet)(A_\bullet)$, which we calculate by evaluating the functor on a cofibrant replacement $\tilde{A}_\bullet \to A_\bullet$.

    \begin{remark}
        Note that $\H_0(\bL^{A/R}) = \Omega_{\H_0(A_\bullet)/\H_0(R_\bullet)}$.
    \end{remark}
    
    \begin{lemma} The cotangent complex
        $\bL^{A/R}$ can be calculated by letting $J := \ker((\tilde{A}\ten_{R} A)_\bullet \to A_\bullet)$, and setting $\bL^{A/R}:= J/J^2$. 
    \end{lemma}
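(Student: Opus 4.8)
The plan is to recognise the asserted formula as an instance of the standard base-change identity for K\"ahler differentials, which holds for \emph{any} composite $R_\bullet \to \tilde A_\bullet \to A_\bullet$ of cdgas and makes no use of cofibrancy; cofibrancy of $\tilde A_\bullet$ enters only in guaranteeing that the resulting object represents the correct homotopy class $\bL^{A/R}$. It therefore suffices to produce a natural isomorphism of $A_\bullet$-modules $(\Omega^1_{\tilde A/R}\ten_{\tilde A}A)_\bullet \cong J/J^2$, and I would do this by showing that both sides represent the same functor on $A_\bullet$-modules and invoking Yoneda.

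First I would record that the multiplication map $\mu \co (\tilde A\ten_R A)_\bullet \to A_\bullet$ is a map of $A_\bullet$-algebras (via the second factor) admitting the section $a \mapsto 1\ten a$, so that its kernel $J$ satisfies $(\tilde A\ten_R A)/J \cong A_\bullet$ and hence $J/J^2$ is naturally an $A_\bullet$-module in chain complexes. I would then define the graded $R$-linear derivation $d \co \tilde A_\bullet \to J/J^2$ by $\tilde a \mapsto (\tilde a\ten 1 - 1\ten \bar{\tilde a}) + J^2$, where $\bar{\tilde a}$ denotes the image of $\tilde a$ in $A_\bullet$, and verify the graded Leibniz rule together with compatibility with the internal differentials, so that $d$ is a dg-derivation.

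The heart of the argument is the universal property. For an arbitrary $A_\bullet$-module $M_\bullet$, regarded as a $\tilde A_\bullet$-module by restriction along $\tilde A_\bullet \to A_\bullet$, I would construct a bijection $\Hom_{A_\bullet}(J/J^2, M_\bullet) \cong \Der_R(\tilde A_\bullet, M_\bullet)$ natural in $M_\bullet$. The reverse direction is the crux: given an $R$-derivation $D \co \tilde A_\bullet \to M_\bullet$, form the square-zero extension $A_\bullet \oplus M_\bullet.\eps$ and the $R$-algebra map $(\tilde A\ten_R A)_\bullet \to A_\bullet \oplus M_\bullet.\eps$, $\tilde a\ten a \mapsto (\bar{\tilde a}\,a,\ a\,D\tilde a\,\eps)$; one checks this is a homomorphism using $\eps^2=0$ and the derivation property, observes that its first component is exactly $\mu$ so that $J$ maps into $M_\bullet.\eps$ and $J^2$ maps to $0$, and so descends to the required $A_\bullet$-linear map $J/J^2 \to M_\bullet$, which one verifies satisfies $\phi\circ d = D$ and is unique since $J/J^2$ is generated as an $A_\bullet$-module by $d(\tilde A_\bullet)$. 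Since the paper's Quillen adjunction already identifies $\Der_R(\tilde A_\bullet, M_\bullet)$ with $\Hom_{(\catdga[R_\bullet])\da A_\bullet}(\tilde A_\bullet, A_\bullet\oplus M_\bullet.\eps) \cong \Hom_{A_\bullet}((\Omega^1_{\tilde A/R}\ten_{\tilde A}A)_\bullet, M_\bullet)$, Yoneda then yields the claimed isomorphism $J/J^2 \cong (\Omega^1_{\tilde A/R}\ten_{\tilde A}A)_\bullet = \bL^{A/R}$.

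I expect the main obstacle to be bookkeeping rather than conceptual: keeping the Koszul signs straight when checking that $d$ and the auxiliary algebra map $(\tilde A\ten_R A)_\bullet \to A_\bullet\oplus M_\bullet.\eps$ respect both the graded multiplications and the internal dg-differentials, and confirming that the $A_\bullet$-module structure on $J/J^2$ is the intended one. An equivalent and more structural route, which I would mention as a cross-check, is to apply the conormal/cotangent exact sequence to $\mu$ together with the base-change formula $\Omega^1_{(\tilde A\ten_R A)/A} \cong \Omega^1_{\tilde A/R}\ten_{\tilde A}(\tilde A\ten_R A)$; the section of $\mu$ splits the conormal sequence and, since $\Omega^1_{A/A}=0$, collapses it to the desired isomorphism.
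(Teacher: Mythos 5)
Your argument is correct and complete; note that the paper states this lemma without proof, so there is nothing to compare it against directly — it is implicitly treating the identification as standard commutative algebra transported to the graded setting. Your route (showing $J/J^2$ and $(\Omega^1_{\tilde{A}/R}\ten_{\tilde{A}}A)_\bullet$ both corepresent $M_\bullet \mapsto \Der_R(\tilde{A}_\bullet,M_\bullet)$ on $A_\bullet$-modules, via the square-zero extension $A_\bullet \oplus M_\bullet.\eps$, then invoking Yoneda) is exactly the expected one, and you correctly isolate the only real content: that $J$ is generated as an ideal by the elements $\tilde{a}\ten 1 - 1\ten\bar{\tilde{a}}$, which gives both surjectivity of the comparison map and uniqueness in the universal property. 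The cross-check you mention at the end — base change $\Omega^1_{(\tilde{A}\ten_RA)/A}\cong \Omega^1_{\tilde{A}/R}\ten_{\tilde{A}}(\tilde{A}\ten_RA)$ plus the conormal sequence split by the section $a\mapsto 1\ten a$ — is the quickest formal derivation and is probably what the authors had in mind; the sign bookkeeping you flag is genuinely all that distinguishes the graded case from the classical one.
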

    
    \begin{remark}
        It follows from results below (see \Cref{rem:qusmooth}) that in Definition \ref{cotdef} we can relax the condition on  $\tilde{A}_\bullet$ to  just require that $R_0\to \tilde{A}_0$ be  ind-smooth (i.e. a filtered colimit of smooth morphisms) and that $\tilde{A}_\bullet$ be cofibrant over $(R\ten_{R_0}\tilde{A}_0)_\bt$ (i.e. underlying graded freely generated by a graded projective module).
    \end{remark}
    
    Also note that the functor $-\ten_{\tilde{A}}A \co \catdgm[\tilde{A}_\bullet]\to \catdgm[A_\bt]$ is a left Quillen equivalence, and in particular that $\Omega^1_{\tilde{A}/R}\to  (\Omega^1_{\tilde{A}/R}\ten_{\tilde{A}}A)_\bullet$ is a quasi-isomorphism of $\tilde{A}_\bullet$-modules, but beware that $\Omega^1_{\tilde{A}/R}$ itself is not an $A_\bullet$-module.
    
    \begin{definition}
        \emph{Andr\'e--Quillen}  \emph{cohomology}\index{cohomology!Andr\'e--Quillen} \index{cohomology!Harrison} (or Harrison cohomology --- they agree in characteristic $0$) is defined to be  $D^i_{R_\bullet}(A_\bullet,M_\bullet):=\EExt^i_{A_\bullet}(\bL^{A/R},M_\bullet)$.\index{Andr\'e--Quillen cohomology!$D^*(A,M)$}
    \end{definition} 

    In interpreting this, note that $\Hom_{\catdgm[A_\bullet]}(\Omega^1_{A/R},M_\bullet)$ consists of $R_\bullet$-linear derivations from $A_\bullet$ to $M_\bullet$.

    The homotopy fibre of $\oR\Map_{\catdga[R_\bt]}(S_\bullet,B_\bullet \oplus M_\bullet) \to \oR\Map_{\catdga[R_\bt]}(S_\bullet,B_\bullet)$ over $f \co S_\bt \to B_\bt$ has homotopy groups $\pi_i = D^i_{R_\bullet}(S_\bullet,f_*M_\bullet)$, where $f_*M_\bt$ is the chain complex $M_\bt$ equipped with the $S_\bt$-module structure induced by $f$.
    In particular, the $I$-valued  obstruction space (\Cref{obstrnth}) of $\Hom_{\Ho(\catdga[R_\bt])}(S_\bt,-)$  is $D^0_{R_\bullet}(S_\bullet,I[1])=D^1_{R_\bullet}(S_\bullet,I)$.

    \begin{theorem}[Quillen]\label{smoothcotthm} 
        If $R \to S$ is  a smooth morphism of $k$-algebras (concentrated in degree $0$), then $\bL^{S/R} \simeq \Omega^1_{S/R}$.
  
        Moreover, if $T=S/I$, for $I$ an  ideal generated by  a regular sequence $a_1, a_2, \ldots$, then  
        \[
            \bL^{T/R} \simeq \cone(I/I^2 \to  \Omega^1_{S/R}\ten_ST).
        \]
    \end{theorem}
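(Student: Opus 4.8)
The plan is to compute both cotangent complexes from explicit cofibrant replacements built out of polynomial and Koszul generators, reducing everything to the observation that $\Omega^1$ of a quasi-free cdga is the free graded module on the differentials of the generators. Recall from \Cref{cotdef} that $\bL^{A/R}=(\Omega^1_{\tilde A/R}\ten_{\tilde A}A)_\bullet$ for any cofibrant replacement $\tilde A_\bullet\to A_\bullet$ over $R$, and that if $\tilde A_\bullet=(R[x_\alpha],\delta)$ is quasi-free then $\Omega^1_{\tilde A/R}=\bigoplus_\alpha \tilde A_\bullet\,dx_\alpha$ with $\delta(dx_\alpha)=d(\delta x_\alpha)$. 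The two computational inputs I would isolate first are: (i) adjoining an odd generator $y$ with $\delta y=a$ to a cofibrant cdga keeps it cofibrant, and when $a_1,a_2,\dots$ is a regular sequence the resulting Koszul cdga is a resolution of the quotient; and (ii) $\Omega^1$ of such a Koszul extension is obtained from that of the base by adjoining free summands $\tilde A\,dy_j$ with $\delta(dy_j)=da_j$.

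For the smooth case I would use that smoothness is Zariski-local and that, locally, $S$ admits a standard smooth presentation $S=P/I$ with $P=R[x_1,\dots,x_n]$ and $I=(f_1,\dots,f_c)$ a regular sequence whose conormal sequence $0\to I/I^2\to \Omega^1_{P/R}\ten_P S\to \Omega^1_{S/R}\to 0$ is exact with projective terms. Building the Koszul replacement $\tilde S=(P[y_1,\dots,y_c],\delta y_j=f_j)$, which is cofibrant over $R$ and quasi-isomorphic to $S$ by input (i), input (ii) gives $\bL^{S/R}\simeq[\,\bigoplus_j S\,dy_j\xrightarrow{\,J\,}\bigoplus_i S\,dx_i\,]$, the Jacobian two-term complex. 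Its $\H_0$ is $\coker(J)=\Omega^1_{S/R}$, while exactness of the conormal sequence on the left forces $\H_1=0$; hence $\bL^{S/R}\simeq\Omega^1_{S/R}$. The passage from the standard smooth case to all smooth morphisms is handled by the compatibility of $\bL$ and $\Omega^1$ with localization (localizations being formally \'etale).

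For the moreover part I would take a cofibrant replacement $\tilde S_\bullet\to S$ over $R$, lift the $a_j$ to $\tilde S_0$, and form the Koszul extension $\tilde T_\bullet=(\tilde S[y_1,y_2,\dots],\delta y_j=a_j)$, which is cofibrant over $R$ and, by the regular-sequence hypothesis and input (i), a resolution of $T$. By input (ii), $\Omega^1_{\tilde T/R}\ten_{\tilde T}T$ splits as $(\Omega^1_{\tilde S/R}\ten_{\tilde S}T)\oplus\bigoplus_j T\,dy_j$ with connecting differential $dy_j\mapsto da_j$. The first summand is $\bL^{S/R}\ten^{\oL}_S T$ (since $\Omega^1_{\tilde S/R}$ is $\tilde S$-free, this is represented by a complex of free $S$-modules), which by the already-proven smooth case equals $\Omega^1_{S/R}\ten_S T$ concentrated in degree $0$ as $\Omega^1_{S/R}$ is projective; the second is identified with $I/I^2$ via $dy_j\leftrightarrow \bar a_j$, the regular-sequence condition guaranteeing that $I/I^2$ is free on the $\bar a_j$. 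Matching $\delta(dy_j)=da_j$ with the conormal map then exhibits $\bL^{T/R}$ as $\cone(I/I^2\to \Omega^1_{S/R}\ten_S T)$.

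I expect the main obstacle to be the two places where the regular-sequence/smoothness hypothesis does the real work: verifying that the Koszul cdga genuinely resolves the quotient (Koszul acyclicity), and, in the smooth case, identifying the vanishing of $\H_1$ with the left-exactness of the conormal sequence --- that is, with the classical differential criterion for smoothness --- together with the localization argument needed to reduce an arbitrary smooth morphism to a standard smooth presentation. By contrast, the identifications of $\Omega^1$ of quasi-free algebras, the splitting of the differential module, and the derived-versus-underived tensor bookkeeping are routine, if sign-sensitive.
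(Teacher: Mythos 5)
Your argument is correct in outline, but your treatment of the smooth case follows a genuinely different route from the one in the text. The proof here never invokes the local structure of smooth morphisms as complete intersections with split conormal sequences; instead it uses the transitivity triangle of Lemma \ref{cotses} to reduce from smooth morphisms to \'etale ones (via the \'etale-local factorisation through $\bA^n_R$), then from \'etale morphisms to open immersions (via the relative diagonal and the resulting splitting $\bL^{U/Y}\oplus\bL^{U/Y}\simeq\bL^{U/Y}$), and finally to a single explicit computation for $k[x]\to k[x,x^{-1}]$. You instead take a standard smooth presentation $S=P/(f_1,\dots,f_c)$, resolve by the Koszul cdga, and identify $\H_1$ of the resulting two-term Jacobian complex with the kernel of the conormal map, whose vanishing is the differential criterion for smoothness. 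The trade-off is clear: you import the classical facts that smooth morphisms are Zariski-locally standard smooth with Koszul-regular $f_j$ and left-exact conormal sequence, whereas the paper imports only the \'etale-local triviality of smooth morphisms and extracts everything else from the exact triangle; your version is closer to the computations an algebraist would recognise, the paper's is closer to pure abstract nonsense plus one hand computation. Note, however, that your reduction of a general smooth $S$ to its standard smooth localisations still requires $\bL^{S_g/S}\simeq 0$ for principal localisations (in order to identify $\bL^{S/R}\ten_SS_g$ with $\bL^{S_g/R}$ and check acyclicity of the cone locally), and that is precisely the paper's explicit $\delta t = xy-1$ computation, so that ingredient is not actually avoided. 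The regular-sequence part of your argument is essentially the same Koszul computation as in the text, differing only in that you build a cofibrant replacement of $T$ over $R$ directly by composing extensions, rather than working over $S$ and appealing to the ``cofibrant over smooth'' relaxation of Remark \ref{rem:qusmooth}.
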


    Before proving the theorem, we first state a key lemma from \cite{Q}, which follows from universal properties of derived functors.
    
    \begin{lemma}\label{cotses}
        Given morphisms $A_\bullet \to B_\bullet \to C_\bullet$ in $\catdga$, we have an exact triangle of dgas
        $$
        \bL^{C/B}[-1]  \to   (\bL^{B/A}\ten_{B}C)_\bullet \to \bL^{C/A} \to \bL^{C/B}. 
        $$
    \end{lemma}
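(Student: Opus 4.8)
The plan is to prove the transitivity triangle by realising it as the derived tensor-down of the first fundamental exact sequence of Kähler differentials, for which I first arrange all the cofibrant replacements compatibly. Concretely, I would choose a cofibrant replacement $\tilde B_\bullet \to B_\bullet$ in $\catdga[A_\bullet]$, and then a cofibrant replacement $\tilde C_\bullet \to C_\bullet$ in $\catdga[\tilde B_\bullet]$, so that $A_\bullet \to \tilde B_\bullet \to \tilde C_\bullet$ is a tower of cofibrations. Since cofibrations compose, $\tilde C_\bullet$ is then simultaneously a cofibrant replacement of $C_\bullet$ over $A_\bullet$ (computing $\bL^{C/A}$ by \Cref{cotdef}) and, after invoking invariance of the cotangent complex under the weak equivalence $\tilde B_\bullet \to B_\bullet$ in the base, a model computing $\bL^{C/B} \simeq \bL^{C/\tilde B}$.

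For such a tower of quasi-free extensions there is the first fundamental sequence of $\tilde C_\bullet$-modules
\[
0 \to (\Omega^1_{\tilde B/A}\ten_{\tilde B}\tilde C)_\bullet \to (\Omega^1_{\tilde C/A})_\bullet \to (\Omega^1_{\tilde C/\tilde B})_\bullet \to 0.
\]
The point of the compatible choices is that this is not merely right exact: because $\tilde C_\bullet$ is freely generated as a graded-commutative algebra over $\tilde B_\bullet$ by graded generators $\{y_j\}$, and $\tilde B_\bullet$ is freely generated over $A_\bullet$ by generators $\{x_i\}$, the module $\Omega^1_{\tilde C/A}$ is free on $\{dx_i\}\cup\{dy_j\}$ and the sequence splits degreewise as graded $\tilde C_\bullet$-modules (the splitting need not respect $\delta$). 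I would then apply $-\ten_{\tilde C}C$; degreewise splitness guarantees that exactness is preserved, yielding a short exact sequence of $C_\bullet$-modules in chain complexes
\[
0 \to (\Omega^1_{\tilde B/A}\ten_{\tilde B}C)_\bullet \to (\Omega^1_{\tilde C/A}\ten_{\tilde C}C)_\bullet \to (\Omega^1_{\tilde C/\tilde B}\ten_{\tilde C}C)_\bullet \to 0.
\]

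Next I would identify the three terms. The middle and right terms are $\bL^{C/A}$ and $\bL^{C/B}$ by construction. For the left term, since $\tilde B_\bullet$ is quasi-free over $A_\bullet$ the module $\Omega^1_{\tilde B/A}$ is free, hence flat, over $\tilde B_\bullet$, so $\Omega^1_{\tilde B/A}\ten_{\tilde B}-$ already computes the derived tensor; thus $(\Omega^1_{\tilde B/A}\ten_{\tilde B}C)_\bullet \simeq (\bL^{B/A}\ten_B C)_\bullet$ (cf.\ \Cref{lem:quasiflatreplace}). Finally, a short exact sequence of chain complexes of $C_\bullet$-modules determines an exact triangle in the homotopy category, the connecting map $\bL^{C/B}[-1] \to (\bL^{B/A}\ten_B C)_\bullet$ recording the failure of the degreewise splitting to commute with $\delta$; this is exactly the stated triangle.

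The main obstacle is the left-exactness, together with its preservation under tensoring down: everything hinges on choosing the replacements so that $\tilde C_\bullet$ is quasi-free over $\tilde B_\bullet$, which makes $\Omega^1_{\tilde C/\tilde B}$ a free $\tilde C_\bullet$-module and renders the fundamental sequence degreewise split rather than just right exact. A secondary point requiring care is the base-change invariance $\bL^{C/B}\simeq \bL^{C/\tilde B}$ along the weak equivalence $\tilde B_\bullet \to B_\bullet$; this is where the derived-functor formalism is genuinely used, and I would justify it by the universal property of $\oL$ as invoked in the remarks preceding the statement rather than by a direct computation.
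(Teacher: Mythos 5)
Your argument is correct, and it supplies a genuine proof where the paper offers none: the text merely asserts that the lemma ``follows from universal properties of derived functors'' and cites Quillen, so there is no in-paper argument to match. What you give is the standard concrete route (essentially Quillen's own): factor $A_\bullet \to B_\bullet \to C_\bullet$ through a tower of cofibrations $A_\bullet \to \tilde B_\bullet \to \tilde C_\bullet$, observe that quasi-freeness makes the Jacobi--Zariski sequence of K\"ahler differentials degreewise split (so it stays exact after applying $-\ten_{\tilde C}C$), and read off the triangle from the resulting short exact sequence of complexes. The two points you flag as delicate are exactly the right ones, and both go through: composability of cofibrations makes $\tilde C_\bullet$ a legitimate model for $\bL^{C/A}$, and the identification $\bL^{C/\tilde B}\simeq \bL^{C/B}$ along the acyclic fibration $\tilde B_\bullet \to B_\bullet$ is where the abstract derived-functor formalism (the route the paper gestures at) actually enters. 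The trade-off is the usual one: the universal-property argument is shorter and model-independent, while your construction exhibits the connecting map $\bL^{C/B}[-1]\to(\bL^{B/A}\ten_BC)_\bullet$ explicitly as the failure of the graded splitting to commute with $\delta$, which is what one needs in practice (e.g.\ in the \'etale and regular-sequence computations that follow in the proof of Theorem \ref{smoothcotthm}). One small point worth making explicit when you write this up: the left-hand term $(\Omega^1_{\tilde B/A}\ten_{\tilde B}C)_\bullet$ computes the \emph{underived} tensor product appearing in the statement precisely because $\Omega^1_{\tilde B/A}$ is free as a graded $\tilde B_\bullet$-module, so no further flatness hypothesis is being smuggled in.
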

    \begin{proof}[Sketch proof of theorem]\ 
        \begin{enumerate}
            \item If $S=R[x_1, \ldots, x_n]$, then it is cofibrant over $R$, so the conclusion holds.
            \item Next, reduce to the \'etale case (smooth of relative dimension $0$). A smooth morphism is \'etale locally affine space, in the sense that $\Spec S$ admits an \'etale  cover by  affine schemes $U$ admitting factorisations of the form:
                \[
                \begin{CD} U @>f>{\text{\'etale}}> \Spec S \\ 
                    @VgV{\text{\'etale} }V  @VVV\\
                    \bA^n_R  @>>> \Spec R.
                \end{CD}
                \]
                If the conclusion of the theorem holds for  \'etale morphisms, then the complexes $\bL^{U/S}$ and $\bL^{U/\bA^n_R}$ are both quasi-isomorphic to $0$, so    the lemma gives $f^*\bL^{S/R}\simeq \bL^{U/R}$ and $\bL^{U/R}\simeq g^*\bL^{\bA^n_R/R}\simeq \Omega^1_{U/R}$.
            
                Thus the map $\bL^{S/R} \to \Omega^1_{S/R}$ is a quasi-isomorphism \'etale locally, so must be a quasi-isomorphism globally.
            \item Now, reduce to open immersions. If $U \to Y$ is an \'etale map of affine schemes, then the relative diagonal
                \[
                    \Delta \co  U \to U\by_YU
                \]
                is an open immersion. If the conclusion holds for open immersions, then $\bL^{(U\by_YU)/U}\simeq 0$, so   Lemma \ref{cotses} gives
                \[
                    \Delta^*\bL^{(U\by_YU)/Y} \cong \bL^{U/Y}.
                \]
                But $\bL^{(U\by_YU)/Y} \cong \pr_1^*\bL^{U/Y} \oplus \pr_2^*\bL^{U/Y}$, so we would then have
                \[
                    \bL^{U/Y} \oplus \bL^{U/Y}\cong \bL^{U/Y},
                \]
                and thus $\bL^{U/Y} \simeq 0 = \Omega^1_{U/Y}$.
                
            \item Every  open immersion is given by repeated composition and pullback of the open
                immersion $\Spec(k[x,x^{-1}]) \to \Spec(k[x])$, so to prove the statement for smooth morphisms, it suffices to prove the theorem for this one morphism.
            \item Abstract nonsense has taken us this far,
                but now we have to dirty our hands. A cofibrant replacement $\tilde{A}_\bullet$ for $A := k[x,x^{-1}]$ over $B := k[x]$ is given by $k[x,y,t]$ with $\delta (t) = xy-1$, for $t$ of degree $1$ and $y$ of degree $0$, i.e. 
                \[
                 \tilde{A}_\bullet = (  k[x,y] \xla{\delta} k[x,y]t).
                \]
                
                Then $\Omega^1_{\tilde{A}/B} = (\tilde{A}_\bullet).dy \oplus (\tilde{A}_\bullet).dt$, with $\delta(dt)= x \cdot dy$. Thus $\bL^{A/B} \simeq (A.dy \oplus A.dt; ~ \delta(dt) = x \cdot dy)$. Since $x \in A$ is a unit, this gives $\bL^{A/B} \simeq 0 =\Omega^1_{A/B}$, completing the proof for smooth algebras.
                
            \item For the statement on regular sequences, we observe that a cofibrant
                replacement $\tilde{T}_\bullet$ for $T=S/(a_1, a_2, \ldots)$ over $S$ is given by $(S[t_1, t_2, \ldots],\delta)$ with $\delta(t_i) =a_i$, for $T_i$ of degree $1$; this is effectively a Koszul complex calculation. Then
                \[
                    \Omega^1_{\tilde{T}_\bullet/S}\cong (\bigoplus_i (\tilde{T}_\bullet).dt_i, \delta), 
                \]
                and $\bL^{T/S} \simeq (\Omega^1_{\tilde{T}/S}\ten_{\tilde{T}}T)_\bt \cong \bigoplus_i T.dt_i \cong (I/I^2)[1]$.
                
     Now  $\Omega^1_{\tilde{T}/R}\ten_{\tilde{T}}T \cong \cone(I/I^2 \to  \Omega^1_{S/R}\ten_ST)$, our desired expression. If we calculate $\bL^{T/R}$ by taking a cofibrant replacement of $\tilde{T}$ over $R$, we get a natural map $\alpha \co \bL^{T/R} \to  \Omega^1_{\tilde{T}/R}\ten_{\tilde{T}}T$. The calculations above and Lemma \ref{cotses} give both terms as  homotopy  extensions  of  $(I/I^2)[1]$ by $ \Omega^1_{S/R}\ten_ST$, so $\alpha$ is indeed a quasi-isomorphism. \qedhere
            \end{enumerate}
        \end{proof}
    
    \begin{remark}\label{rem:qusmooth}
    As a consequence, in general, we don't need cofibrant replacement to calculate $\bL^{A/R}$, it suffices for $R_\bullet \to A_\bullet$ to be the composite of a cofibration and a  smooth morphism.
    
    The natural name for this concept, as used for instance in \cite{Kon,Man2,ddt1} is \emph{quasi-smoothness}\index{quasi-smooth}, and it was simply called smoothness in \cite{Quot,Hilb}. However, quasi-smooth is more commonly used in the later DAG literature (apparently originating with \cite{toenseattle}) to mean virtually LCI\index{virtually LCI} in the sense that the cotangent complex is generated in degrees $0,1$, so the term is unfortunately now best avoided altogether despite its utility. 
    
    Both usages have their roots in the \emph{hidden smoothness}\index{hidden smoothness} philosophy
    of \cite{Kon2} and \cite[Lecture 27]{Kon}, with the motivating examples from the former (but {\it not} the latter) being virtually LCI as well as quasi-smooth in the original sense.
    \end{remark}
    
    \begin{remark}
        Analogues in differential and analytic settings work in much the same way for all the results in this section, giving $\Omega^1_{A/R}$ as
        the module of smooth or analytic differentials. The definition just uses the analytic or $\C^{\infty}$ tensor product $\odot$ instead of $\ten$.\footnote{In fact, cotangent modules were formulated in \cite{Q} for arbitrary algebraic theories, taking values in Beck modules \cite{beckThesis}.} 
        For instance, $\Omega^1_{\C^{\infty}(\R^n)}$ is given by $\bigoplus_i \C^{\infty}(\R^n)dx_i$. 
    
        The proof of the last theorem also works much the same: simpler in the differential setting, 
        but harder in the analytic setting.\footnote{In the $\C^{\infty}$ setting, the final step of the proof is to analyse 
        the restriction map from smooth functions on the line to smooth functions on an open interval,
        but the latter is already cofibrant. The analytic setting instead looks at the morphism from analytic functions on the plane to analytic functions on an open disc; calculating the latter's cotangent complex relies on its characterisation as a domain of holomorphy.} 
    In particular, the $\C^{\infty}$-cotangent complex of the $\C^{\infty}$-ring $\C^{\infty}(X,R)$ of smooth functions on a manifold $X$ is quasi-isomorphic to the module of K\"ahler $\C^{\infty}$-differentials of $\C^{\infty}(X,\R)$, which in turn  just consists  of global smooth $1$-forms on $X$. Similarly, the EFC-cotangent complex of the EFC ring of complex analytic functions on a Stein manifold  consists of its global analytic $1$-forms. 
    \end{remark}
    \begin{property}
        A map $f\co A_\bullet \to B_\bullet$ in $\catdga$ is a weak equivalence if and only if $\H_0(f)$ is an isomorphism and $(\bL^{B/A}\ten^{\oL}_{B}\H_0(B))_\bt \simeq 0$. The ``only if'' direction follows by definition; to prove the ``if'' direction, look at maps from both to arbitrary objects $C\in \Ho(\catdga)$, and use the Postnikov tower to break $C$ down into square-zero extensions over $\H_0(C)$. For details, see Lemma \ref{detectweak}.
    
        The same is true for dg $\C^{\infty}$-rings and dg EFC-rings, with exactly the same reasoning.
    \end{property}
    \begin{definition}\label{strongdef} (\cite{hag2})
        A morphism $f\co A_\bullet \to B_\bullet$ in $\catdga$ is \emph{strong}\index{strong morphism} if $\H_i(B_\bullet) \cong \H_i(A_\bullet)\ten_{\H_0(A_\bullet)}\H_0(B_\bullet)$. We then say a morphism is \emph{homotopy-}(\emph{flat}, resp. \emph{open immersion}, resp. \emph{\'etale}, resp. \emph{smooth})\index{homotopy-flat}\index{homotopy open immersion}\index{homotopy-\'etale}\index{homotopy-smooth} if it is strong and the morphism  $\H_0(A_\bullet) \to \H_0(B_\bullet)$ is is flat (resp. open immersion, resp. \'etale, resp. smooth).
    \end{definition}
    
    \cite[Def 1.2.7.1 and Theorem 2.2.2.6]{hag2}  then characterise homotopy-\'etale and homotopy-smooth morphisms as follows:
    \begin{align*}
        A_\bullet \to B_\bullet \text{ is homotopy-\'etale} &\iff \bL^{B/A}\simeq 0 \\
        A_\bullet \to B_\bullet \text{ is homotopy-smooth} &\iff (\bL^{B/A}\ten^{\oL}_{B}\H_0(B))_\bt\;\simeq\\
         &\phantom{\iff} \text{a projective } \H_0(B_\bullet)\text{-module in degree } 0. 
    \end{align*}
    \begin{remark}[Terminology]
        In \cite{hag2}, these properties are simply called flat, smooth, \'etale, etc., but we prefer to emphasise their homotopy-invariant nature and avoid potential confusion with notions such as Definition \ref{def:quasiflat}, or the smoothness of \cite{Quot,Hilb} (i.e. quasi-smoothness in the original sense of \Cref{rem:qusmooth}).
    \end{remark}
    
\begin{exercises}\label{strongex}\
 \begin{enumerate}
  \item If $f\co A_\bullet \to B_\bullet$ and $g \co  B_\bullet \to C_\bullet$ are morphisms in $\catdga$ such that $g \circ f$ and $f$ are both strong, then $g$ is also strong.
  
  \item The property of being homotopy-flat is closed under homotopy pushouts in $\catdga$, i.e. if  $f\co A_\bullet \to B_\bullet$ is a homotopy-flat morphism and $A_{\bt} \to A'_\bt$ any map, then the induced morphism $A'_\bt \to A'_\bt\ten^{\oL}_{A_\bt}B_\bt$ is also homotopy-flat. (Hint: use the algebraic Eilenberg--Moore spectral sequence $\Tor^{\H_*A}_p(\H_*A', \H_*B)_q \abuts \H_{p+q}(A'\ten_A^{\oL}B)$.)
  \end{enumerate}
\end{exercises}

    %
    
    \begin{remark}
        The cotangent complex functor $\bL$ can be constructed using functorial cofibrant replacements, so it sheafifies (Illusie \cite{Ill1, Ill2}).       
    \end{remark}

    \begin{lemma}\label{cotsheaflemma}
        For any morphism $f\co X \to Y$ of derived schemes, the presheaf $\bL^{X/Y}:=\bL^{\O_{X,\bt}/f^{-1}\O_{Y,\bt}}$ \index{cotangent complex}
        is a homotopy-Cartesian dg $\O_{X,\bt}$-module.
    \end{lemma}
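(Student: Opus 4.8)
The plan is to verify the characterisation of homotopy-Cartesian modules recorded after Definition \ref{derschCart}: it suffices to show that for every inclusion $U \hookrightarrow V$ of affine open subschemes of $\pi^0 X$, the natural comparison map
\[
\O_X(U)\ten^{\oL}_{\O_X(V)}\bL^{X/Y}(V)\to \bL^{X/Y}(U)
\]
is a quasi-isomorphism. Since $\bL$ can be computed by functorial cofibrant replacement (as noted in the remark preceding the statement), $\bL^{X/Y}$ is genuinely a presheaf of $\O_X$-modules and these maps are defined. Writing $A_V = \O_X(V)$, $A_U = \O_X(U)$, $R_V = f^{-1}\O_Y(V)$ and $R_U = f^{-1}\O_Y(U)$, we have $\bL^{X/Y}(V) = \bL^{A_V/R_V}$ and $\bL^{X/Y}(U) = \bL^{A_U/R_U}$, together with a commuting square of cdgas furnishing factorisations $R_V \to A_V \to A_U$ and $R_V \to R_U \to A_U$.

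First I would record two vanishing statements. Because $X$ is a derived scheme, the restriction $A_V \to A_U$ is strong in the sense of Definition \ref{strongdef} — this is exactly the homotopy-Cartesian condition $\H_i A_U \cong \H_0 A_U \ten_{\H_0 A_V}\H_i A_V$ — while $\H_0 A_V \to \H_0 A_U$ is the restriction $\O_{\pi^0 X}(V)\to \O_{\pi^0 X}(U)$ along an open immersion, hence étale. Thus $A_V \to A_U$ is homotopy-\'etale and the \cite{hag2} characterisation quoted after Definition \ref{strongdef} gives $\bL^{A_U/A_V}\simeq 0$. Running the same argument for $Y$, and using that the restriction maps of $f^{-1}\O_Y$ are built from those of $\O_Y$, one finds that $R_V \to R_U$ is also homotopy-\'etale, so $\bL^{R_U/R_V}\simeq 0$.

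With these in hand the conclusion is a double application of the transitivity triangle of Lemma \ref{cotses}. Applied to $R_V \to A_V \to A_U$ and combined with $\bL^{A_U/A_V}\simeq 0$, the triangle collapses to a quasi-isomorphism $\bL^{A_V/R_V}\ten^{\oL}_{A_V}A_U \xrightarrow{\simeq}\bL^{A_U/R_V}$. Applied to $R_V \to R_U \to A_U$ and combined with $\bL^{R_U/R_V}\ten^{\oL}_{R_U}A_U\simeq 0$, it collapses to $\bL^{A_U/R_V}\xrightarrow{\simeq}\bL^{A_U/R_U}$. Composing the two identifications gives $\O_X(U)\ten^{\oL}_{\O_X(V)}\bL^{X/Y}(V)\simeq \bL^{X/Y}(U)$, which is the required condition.

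The hard part will be the second vanishing $\bL^{R_U/R_V}\simeq 0$: unlike $\O_X$, the presheaf $f^{-1}\O_Y$ is an inverse image, so $R_U$ and $R_V$ are filtered colimits $\varinjlim_{W\supseteq \pi^0 f(U)}\O_Y(W)$ and $\varinjlim_{W\supseteq \pi^0 f(V)}\O_Y(W)$ of values of $\O_Y$, and one must check that the homotopy-\'etaleness of the restriction maps of $\O_Y$ survives passage to these colimits. This relies on $\bL$ commuting with filtered colimits (being the left-derived functor of a left adjoint) together with exactness of filtered colimits, so that $\bL^{R_U/R_V}$ is computed as a filtered colimit of relative cotangent complexes of homotopy-\'etale restriction maps of $\O_Y$, each of which vanishes. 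Alternatively one could sidestep the inverse-image bookkeeping by reducing, locally on $\pi^0 X$, to inclusions $U \subseteq V$ whose images lie in a common affine open of $\pi^0 Y$, since the homotopy-Cartesian property may be tested on a basis.
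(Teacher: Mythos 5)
Your proof is correct and follows essentially the same route as the paper's: both vanishings $\bL^{\O_X(U)/\O_X(V)}\simeq 0$ and $\bL^{f^{-1}\O_Y(U)/f^{-1}\O_Y(V)}\simeq 0$ (the paper phrases the restriction maps as homotopy-open immersions rather than homotopy-\'etale, but either suffices), followed by the transitivity triangle of Lemma \ref{cotses} to identify $\bL^{\O_X(U)/f^{-1}\O_Y(U)}$ with $\O_X(U)\ten^{\oL}_{\O_X(V)}\bL^{\O_X(V)/f^{-1}\O_Y(V)}$. Your explicit double application of the triangle and your remark on the filtered-colimit bookkeeping for $f^{-1}\O_Y$ merely spell out details the paper's one-line proof compresses into ``similarly''.
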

    \begin{proof}
        For any inclusion $U \to V$ of open affines in $\pi^0X$, the map $\O_{X,\bt}(V) \to\O_{X,\bt}(U) $ is homotopy-open immersion, so $\bL^{\O_{X,\bt}(U)/\O_{X,\bt}(V) }\simeq 0$, and $\bL^{f^{-1}\O_{Y,\bt}(U)/f^{-1}\O_{Y,\bt}(V) }\simeq 0$ similarly. The exact triangle for $\bL$ (Lemma \ref{cotses}) thus gives $$\bL^{\O_{X,\bt}(U)/f^{-1}\O_{Y,\bt}(U)}\simeq  \O_{X,\bt}(U)\ten^{\oL}_{\O_{X,\bt}(V)}\bL^{\O_{X,\bt}(V)/f^{-1}\O_{Y,\bt}(V)},$$ as required.
    \end{proof}

    Although defined in terms of deformations of morphisms, the cotangent complex also governs deformations of objects:  
    \begin{lemma}
        Given $A_\bullet, B_\bullet \in \catdga$, $S_\bullet \in \catdga[B_\bullet]$, and a surjection $A_\bullet \onto B_\bullet$ with kernel $I_\bullet$, the potential obstruction to lifting $S_\bullet$ to a cdga $S'_\bullet \in \catdga[A_\bullet]$ with $(S'\ten^{\oL}_{A}B)_\bullet \simeq S_\bullet$ lies in $\Ext^2_{S_\bullet}(\bL^{S/B}, (S\ten_{B}^{\oL}I)_\bullet)$. If the obstruction vanishes, then the set of equivalence classes of lifts is a torsor for $\Ext^1_{S_\bullet}(\bL^{S/B}, (S\ten_{B}^{\oL}I)_\bullet)$.  
    \end{lemma}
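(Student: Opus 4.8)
The plan is to reduce to the square-zero case and then read off both the obstruction and the torsor from a single homotopy-pullback computation, using the presentation ($\dagger$) of $A_\bullet$ together with the cotangent complex. Since $(S\ten^{\oL}_B I)_\bt$ only makes sense once $I_\bullet$ is a $B_\bullet$-module, the hypotheses force $I_\bullet\cdot I_\bullet=0$, so $A_\bullet\onto B_\bullet$ is a square-zero extension and ($\dagger$) applies: $A_\bullet\simeq \tilde B_\bullet\by^h_{B\oplus I[1]}B_\bullet$ with $\tilde B_\bullet=\cone(I_\bullet\to A_\bullet)\xra{\sim}B_\bullet$, one leg being the zero section and the other the map $u$ classifying the extension. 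For a cdga $C_\bullet$ over $B_\bullet$ I would introduce the space $\mathrm{Lift}_S(C_\bullet)$ of pairs $(S'_\bullet,\phi)$ with $S'_\bullet\in\catdga[C_\bullet]$ and $\phi\co (S'\ten^{\oL}_C B)_\bullet\simeq S_\bullet$; this is covariant in $C_\bullet$ via derived base change, and $\mathrm{Def}:=\mathrm{Lift}_S(A_\bullet)$ is exactly the space whose set of components the lemma describes. The key structural input is that $\mathrm{Lift}_S$ carries the homotopy pullback ($\dagger$) to a homotopy pullback of spaces,
\[
\mathrm{Def}\;\simeq\;\mathrm{Lift}_S(\tilde B_\bullet)\by^h_{\mathrm{Lift}_S(B\oplus I[1])}\mathrm{Lift}_S(B_\bullet).
\]
Two corners are contractible, since $\mathrm{Lift}_S(B_\bullet)$ (hence $\mathrm{Lift}_S(\tilde B_\bullet)$, as $\tilde B_\bullet\simeq B_\bullet$) is the space of objects equipped with an equivalence to the fixed $S_\bullet$. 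Thus $\mathrm{Def}$ is the space of paths in $\mathrm{Lift}_S(B\oplus I[1])$ joining the split point $z_\ast$ to the point $u_\ast$ cut out by the classifying leg $u$.

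It then remains to compute this anchor space. For a split square-zero extension $B_\bullet\oplus M_\bullet$ the trivial lift provides a basepoint, and the cotangent complex identifies $\mathrm{Lift}_S(B_\bullet\oplus M_\bullet)\simeq \oR\Map_{S_\bullet}(\bL^{S/B},(S\ten^{\oL}_B M)[1])$; this is the deformation-theoretic avatar of the Quillen adjunction defining $\bL$ in \S\ref{cotsn} together with the transitivity triangle of \Cref{cotses}, and is the analogue for the object $S_\bullet$ of the tangent-space computation $\pi_i=D^i_{R}(S,f_\ast M)$ recorded there. Taking $M_\bullet=I[1]$ gives $\mathrm{Lift}_S(B\oplus I[1])\simeq\oR\Map_{S_\bullet}(\bL^{S/B},(S\ten^{\oL}_B I)[2])$, an infinite loop space with $\pi_0=\Ext^2_{S_\bullet}(\bL^{S/B},(S\ten^{\oL}_B I)_\bt)$ and $\pi_1=\Ext^1_{S_\bullet}(\bL^{S/B},(S\ten^{\oL}_B I)_\bt)$. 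Here $z_\ast$ is the zero of this group, while $u_\ast$ represents the obstruction class $\mathrm{ob}(S_\bullet)$, which—unwinding $u$ through $\tilde B_\bullet\simeq B_\bullet$—is the composite of the derivation classifying $A_\bullet$ with the connecting map of \Cref{cotses} for $k\to B_\bullet\to S_\bullet$. The path space is non-empty precisely when $[u_\ast]=[z_\ast]$ in $\pi_0$, i.e. when $\mathrm{ob}(S_\bullet)=0$ in $\Ext^2$; and when non-empty, its set of components is a torsor under $\pi_1=\Ext^1$, which yields both assertions.

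The main obstacle is the structural input that $\mathrm{Lift}_S$ sends ($\dagger$) to a homotopy pullback of spaces: this is a space-level strengthening of the half-exactness of \Cref{halfexact}, which for representable functors gives only the surjection on $\pi_0$ used in \Cref{obstrnth}, whereas the torsor statement needs $\pi_1$ as well. Establishing the full equivalence amounts to the infinitesimal cohesion of the moduli of cdgas, and I would prove it by descent for square-zero extensions, ultimately reducing to cotangent-complex control of flat lifts. A more hands-on alternative bypasses the pullback packaging: construct $\mathrm{ob}(S_\bullet)$ directly as the above composite, prove $\mathrm{ob}(S_\bullet)=0\iff$ a lift exists by reduction to the split case (where $u=0$ and the trivial lift exists) via functoriality in the extension, and then exhibit the $\Ext^1$-torsor structure by a Baer-sum construction using the split deformations $\mathrm{Lift}_S(B\oplus I)$ as the acting group; the essential content is identical.
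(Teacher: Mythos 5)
Your argument is correct in outline, but it takes a genuinely different route from the paper's. The paper argues concretely: replace $S_\bullet$ by a quasi-free model, lift the underlying free graded algebra to $A_*$ (free algebras don't deform), and then locate the obstruction in the failure of a lifted derivation $\delta_{S'}$ to square to zero, with the $\Ext^1$-torsor structure coming from the different choices of $\delta_{S'}$; the work is then in checking quasi-isomorphism invariance of this construction. You instead run the $(\dagger)$-trick of \S\ref{obssn} at the level of the moduli functor of lifts: homotopy-homogeneity of $\mathrm{Lift}_S$ turns $A_\bullet \simeq \tilde B_\bullet\by^h_{B\oplus I[1]}B_\bullet$ into a path-space description of $\mathrm{Def}$ inside $\mathrm{Lift}_S(B\oplus I[1])\simeq \oR\Map_{S_\bullet}(\bL^{S/B},(S\ten^{\oL}_B I)[2])$, and both assertions fall out of $\pi_0$ and $\pi_1$. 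Your approach is manifestly quasi-isomorphism invariant and delivers the whole homotopy type of the space of lifts in one stroke, whereas the paper's explicit cocycle construction is more elementary and is the one that globalises readily (its quasi-isomorphism invariance being the price paid).

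The one point to be honest about is where your work hides: the space-level homotopy-pullback property of $\mathrm{Lift}_S$ is strictly stronger than the half-exactness of \Cref{halfexact} (which only gives the $\pi_0$-surjection of \Cref{obstrnth}), and in these notes homotopy-homogeneity of the moduli of algebras is only asserted much later (Examples \ref{modex}) with proof deferred to another paper. Its standard proofs are direct (strict fibre products of module/algebra categories model the homotopy fibre product, using flatness of the lifts over the square-zero extension) rather than via descent, and do not use the present lemma, so there is no circularity --- but that input, together with the identification of $\mathrm{Lift}_S(B\oplus M)$ with $\oR\Map_{S_\bullet}(\bL^{S/B},(S\ten^{\oL}_B M)[1])$, carries essentially all the content that the paper's proof puts into the explicit $(\delta_{S'})^2$ computation. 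Your reading that the hypotheses force $I_\bullet$ to be square-zero is the intended one.
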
 
    \begin{proof}
        This is essentially contained in \cite[III 1.2.5]{Ill1}, but there is  a more direct proof based on \cite[Lectures 13--14]{Kon}.
        Without loss of generality, we may assume that $S_\bullet$ is quasi-free, since every quasi-isomorphism class contains quasi-free cdgas. Since free algebras don't deform, there is a free graded-commutative $A_\ast$-algebra $S'_\ast$ with $ S'_\ast\ten_{A_\ast}B_\ast\cong S_\ast$.
    
        The obstruction in $\Ext^2$  then comes from  lifting $\delta_S$ to a derivation $\delta_{S'}$ on $S'$ and looking at $(\delta_{S'})^2$, while the parametrisation in terms of $\Ext^1$ comes from different choices of lift $\delta_{S'}$. Most of the work is then  in checking quasi-isomorphism-invariance. For details of this argument and global generalisations, see \cite[\S 8.2]{stacks2}; also see \cite[\S 4]{hinichDefsHtpyAlg}.
    \end{proof}    
    
    %
    %
    %
    %
    
    %
    %
    %
    %
    %
    
\subsection{Derived de Rham cohomology}
    
    This originates in \cite[\S VIII.2]{Ill2}.
    We have  a functor from cdgas to double complexes (a.k.a. bicomplexes),\index{double complex} \index {bicomplex|see {double complex}} sending $A_\bullet$ to
    \begin{eqnarray*}
        \Omega^{\bt}_A:=&(  A_\bullet \xra{d} \Omega^1_A \xra{d} \Omega^2_A  \xra{d} \ldots)\\
        = &\left(\!\begin{gathered}\xymatrix{ \vdots \ar[d]^{\delta} & \vdots\ar[d]^{\delta}  & \vdots\ar[d]^{\delta} & \ \adots \\
        A_2 \ar[r]^d \ar[d]^{\delta} & \Omega^1_{A,2} \ar[r]^{d}\ar[d]^{\delta}  & \Omega^2_{A,2}  \ar[r]^{d}\ar[d]^{\delta}  & \ldots \\ 
        A_1 \ar[r]^d \ar[d]^{\delta} & \Omega^1_{A,1} \ar[r]^{d}\ar[d]^{\delta}  & \Omega^2_{A,1}  \ar[r]^{d}\ar[d]^{\delta}  & \ldots \\
        A_0 \ar[r]^d & \Omega^1_{A,0} \ar[r]^{d} & \Omega^2_{A,0}  \ar[r]^{d} & \ldots  }  \end{gathered} \right)
    \end{eqnarray*}
    
    where $\Omega^p_A:=\L^p_A \Omega^1_A$ is the \emph{alternating power}\index{alternating power}, taken in the graded sense. Beware that when $A_\bullet$ has terms of odd degree, alternating powers go on forever. 
    
    Our notion of weak equivalence for double complexes will be quasi-isomorphism on the columns, so 
    \begin{eqnarray*}
        (U^0 \xra{d} U^1 \xra{d} \ldots) \to  (V^0 \xra{d} V^1 \xra{d} \ldots)
    \end{eqnarray*}
    is an equivalence if $\H_*(U^i) \cong \H_*(V^i)$ for all $i$.
    
    The idea behind derived de Rham cohomology is to then take the left derived functor, giving the  double complex $\oL\Omega^{\bt}_A:= \Omega^{\bt}_{\tilde{A}}$, for a cofibrant replacement $\tilde{A}_\bullet$  of $A_\bullet$ (cofibrant over smooth as in \Cref{rem:qusmooth} suffices --- we just need $\Omega^1 \simeq \bL$). Note that $ \Omega^{p}_{\tilde{A}} \simeq \L^p_A \bL^{A/R}$ (just apply the left Quillen equivalence $(- \ten_{\tilde{A}}A)_\bullet$), but also that the de Rham differential $d$ does not descend to the latter objects. 
    
    Then we take the \emph{derived de Rham complex} \index{derived de Rham complex}  to be the  \emph{product} total complex\index{product total complex}
    $ \Tot^{\Pi}\oL\Omega^{\bt}_A$, i.e. $\Tot^{\Pi}(V)_i := (\prod_p V^p_{p+i}, \delta \pm d)$, where $\pm$ is the Koszul sign $(-1)^{p+i}$ on $V^p_{p+i}$. 
In fact, for our notion of weak equivalences, $\Tot^{\Pi}$ is just the right-derived functor of the functor $\z^0\co V \mapsto \ker(d \co V^0 \to V^1)$ on double complexes in non-negative cochain degrees; it preserves weak equivalences by \cite[\S 5.6]{W}.
    
    \begin{theorem}[\cite{Ill2} (with restrictions), \cite{FeiginTsygan} (omitting details, cf. \cite{emmanouil}),  \cite{BhattDerivedDR}]
        The cohomology groups 
        $\H^*(\pi^0X,  \Tot^{\Pi}\oL\Omega^{\bt}_X)$ are Hartshorne's algebraic de Rham cohomology groups \cite{HartshorneAlgDeRham}\footnote{The algebraic de Rham cohomology of $Z$ is defined by taking a closed embedding of $Z$ 
        in a smooth scheme $Y$, then looking at the completion $\hat{\Omega}^{\bt}_Y$ of the de Rham complex of $Y$ with respect to the ideal $\sI_Z$ and taking hypercohomology.}. In particular, these are the singular cohomology groups  $\H^*(X(\Cx)_{\an},\Cx)$ of $X(\Cx)$ with the analytic topology  when working over $\Cx$.
    \end{theorem}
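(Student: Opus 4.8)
The plan is to reduce the global, derived statement to a purely classical local computation, in three movements: invariance of derived de Rham cohomology under the derived-to-classical truncation, descent to the affine case, and matching the resulting complex with Hartshorne's.

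First I would dispose of the derived structure. The key input is that the Hodge-completed derived de Rham complex is insensitive to derived (and more generally nilpotent) thickenings: since $\pi^0X \hookrightarrow X$ is a closed immersion of derived schemes inducing an isomorphism on classical truncations, the natural map $\Tot^{\Pi}\oL\Omega^{\bt}_X \to \Tot^{\Pi}\oL\Omega^{\bt}_{\pi^0X}$ should be a quasi-isomorphism of presheaves on $\pi^0X$. I would prove this via the Hodge filtration, whose graded pieces on the relative complex are $\L^p\bL^{\H_0\O_X/\O_X}[-p]$; because $\H_0(\bL^{\H_0\O_X/\O_X}) = \Omega^1_{\H_0\O_X/\H_0\O_X}=0$, the relative cotangent complex is $1$-connective, so the pieces $\L^p\bL^{\H_0\O_X/\O_X}[-p]$ become increasingly connected as $p\to\infty$, and after completion (which is exactly what the product total complex $\Tot^{\Pi}$ performs on the decreasing Hodge filtration) the relative term becomes acyclic. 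This is the invariance result of Bhatt, with Illusie's and Feigin--Tsygan's earlier versions, and it is the conceptual heart of the theorem: \emph{derived de Rham cohomology does not see the derived enhancement}. We are thus reduced to the classical scheme $Z:=\pi^0X$.

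Next I would globalise over $Z$ by descent and then compute locally. Since $\bL$ sheafifies, so does $\oL\Omega^{\bt}$, and the hypercohomology $\H^*(Z,\Tot^{\Pi}\oL\Omega^{\bt}_Z)$ may be computed from the values on an affine cover together with the local-to-global spectral sequence; it therefore suffices to identify $\Tot^{\Pi}\oL\Omega^{\bt}_B$ for $B=\O_Z(U)$ on affines $U$, compatibly with restriction. Choosing a closed embedding $B = S/I$ with $S$ smooth (e.g.\ polynomial) over $k$, I would take a cofibrant-over-smooth replacement $\tilde A_\bt \to B$ over $S$ --- a Koszul-type resolution $\tilde A_\bt = (S[t_1,t_2,\ldots],\delta t_i = f_i)$ when $I=(f_1,f_2,\ldots)$ --- which is permissible by the relaxation in Remark \ref{rem:qusmooth}, since there $\Omega^1_{\tilde A}\simeq \bL$. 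The remaining task is to identify $\Tot^{\Pi}\Omega^{\bt}_{\tilde A}$ with the $I$-adic completion $\hat\Omega^{\bt}_S$ of the de Rham complex of $S$.

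This last identification is where I expect the main difficulty to lie. The point is that the product (rather than direct-sum) total complex completes the Hodge filtration, and under the identification of the conormal piece $\bL^{B/S}\simeq (I/I^2)[1]$ from \Cref{smoothcotthm} this Hodge completion matches the $I$-adic completion of $\Omega^{\bt}_S$; carrying this out requires comparing the filtration by powers of $I$ on $\hat\Omega^{\bt}_S$ with the Hodge filtration on the Koszul--de Rham bicomplex and checking that the two completions agree on associated gradeds. The resulting complex $\hat\Omega^{\bt}_S$ is precisely the one in Hartshorne's definition (the footnote), and its hypercohomology is independent of the embedding by Hartshorne's theory, so the local computations glue to give $\H^*(Z,\Tot^{\Pi}\oL\Omega^{\bt}_Z)\cong \H^*_{\dR}(Z)$. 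Finally, over $\Cx$ the comparison with singular cohomology $\H^*(X(\Cx)_{\an},\Cx)$ is Hartshorne's comparison theorem, via Grothendieck's theorem in the smooth case and formal/analytic GAGA along the embedding, which I would simply cite.
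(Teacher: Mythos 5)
Your proposal is correct, and its core computation (Koszul--Tate resolution of $S/I$ over a smooth $S$, the generators $dt_i$ of even total degree generating power series in the product total complex, with $\delta(dt_i)=df_i$ effecting the $I$-adic completion of $\Omega^{\bt}_S$) is exactly the mechanism behind the paper's one-sentence sketch, which speaks of ``killing variables $x$ of non-zero degree, thus identifying $dx$ with $\pm\delta x$''. Where you differ is in the decomposition: the paper takes a single cofibrant resolution of the full derived ring $\sO_X(U)$ and kills \emph{all} positive-degree generators at once, so the derived enhancement and the classical embedding $\H_0 = S/I$ are handled in one stroke; you instead factor the argument into (i) invariance of the Hodge-completed complex under $X \to \pi^0X$, proved via the increasing connectivity of $\L^p\bL^{\H_0\sO_X/\sO_X}[-p]$, followed by (ii) the classical affine computation. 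Your factorisation is more modular and makes the key conceptual point (Hodge-completed derived de Rham cohomology does not see derived structure) into a standalone, citable lemma in the style of Bhatt, at the cost of needing the convergence argument for the Hodge filtration twice; the paper's route is shorter but leaves that invariance implicit in the choice of resolution. One small caution on your step (i): the vanishing of the completed relative term requires not just that each graded piece $\L^p\bL^{\H_0\sO_X/\sO_X}[-p]$ is $p$-connective but that this connectivity is compatible with passing to the limit over the Hodge filtration, i.e.\ you should argue on the finite quotients $\oL\Omega/F^p$ before taking $\Lim_p$; as stated this is the standard argument and is fine, but it is the one place where ``increasingly connected, hence acyclic after completion'' is doing real work.
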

    
    One proof proceeds by taking a cofibrant resolution and killing variables $x$  of non-zero degree, thus identifying $dx$ with $\pm \delta x$; this generates power series in $\delta x$ when $\deg x =1$, giving the comparison with \cite{HartshorneAlgDeRham}. 
    The same arguments work in differential and $\Cx$-analytic settings, giving equivalences with real and complex Betti cohomology respectively.
    
\subsubsection{Shifted symplectic structures}  \label{sympsn}  
    Any complex or  double complex admits a filtration by brutal truncation\index{brutal truncation}, i.e.
    \[
        F^p(V^0 \xra{d} V^1 \xra{d} \ldots ) = (0 \to  \ldots \to 0 \to V^p \xra{d} V^{p+1} \xra{d} \ldots);
    \]
    on the de Rham (double) complex, this is called the \emph{Hodge filtration}\index{Hodge filtration}. 
    Then $(\Tot^{\Pi}F^p)[p]$ calculates the
    right-derived functor $\oR \z^p$ of $\z^p\co V \mapsto \ker(V^p \to V^{p+1})$\index{Zp@$\z^p$|see {closed $p$-forms}}\index{closed $p$-forms},\footnote{Explicitly, we have quasi-isomorphisms $V^q \to W^q:=\cone(\Tot^{\Pi}F^{q+1}\to\Tot^{\Pi}F^q)[q]$ given by $v\mapsto (\pm dv,v)$, combining to give a columnwise quasi-isomorphism $V \to W$ of double complexes, with $\z^pW=(\Tot^{\Pi}F^pV)[p]$.} so  the 
    homologically
    correct analogue of closed $p$-forms is given by the complex  $(\Tot^{\Pi}F^p\oL\Omega^{\bt}_{A})[p]$. 
    
    \begin{example}
        Classically, when $X$ is a smooth scheme (in the algebraic setting) or a  manifold (in the $\C^{\infty}$ and analytic settings), then we just have $\oL\Omega^p_X \simeq \Omega^p_X $, and hence $(\Tot^{\Pi}F^p\oL\Omega^{\bt}_{X})[p]\simeq F^p\Omega^{\bt}_{X}[p]$.
        
        In  $\C^{\infty}$ and analytic settings, we can say more, because the Poincar\'e lemma implies that $F^p\Omega^{\bt}_{X}[p]$ is quasi-isomorphic to the sheaf $\z^p\Omega^{\bt}_{X}=\ker(d \co \Omega^p_X \to\Omega^{p+1}_X   )$ of   closed $p$-forms on $X$, so the derived constructions reduce to the na\"ive underived object.
        
        In algebraic settings, the sheaf $\z^p\Omega^{\bt}_{X}$ of closed algebraic $p$-forms on the Zariski site is poorly behaved, but the GAGA principle \cite{GAGA} applied to the graded pieces shows that for smooth proper complex varieties $X$, analytification gives a quasi-isomorphism
        \[
        \oR\Gamma(X, F^p\Omega^{\bt}_{X})[p] \simeq \oR\Gamma(X(\Cx)_{\an}, F^p\Omega^{\bt}_{X_{\an}})[p]\simeq \oR\Gamma(X(\Cx)_{\an}, \z^p\Omega^{\bt}_{X_{\an}}),
        \]
        and hence an isomorphism   between hypercohomology of the algebraic Hodge filtration and cohomology of closed analytic $p$-forms. 
        
        Thus even in the absence of derived structure, one immediately looks to the  Hodge filtration in algebraic geometry  when seeking to mimic closed forms in analytic  geometry. 
    \end{example}    
    
    Classically, a symplectic structure is a closed non-degenerate $2$-form. This notion was generalised in \cite{AKSZ}, which introduced the notion of a $QP$-manifold  as a $\Z/2$-graded dg-manifold equipped with a non-degenerate $2$-form of odd degree which is closed under both $d$ and $\delta$. Replacing $\z^2$ with $\oR\z^2$ leads to the following:
    
    \begin{definition}[\cite{KhudaverdianVoronov,bruceGeomObjects,PTVV}]
        The complex of    \emph{$n$-shifted pre-symplectic structures} \index{shifted pre-symplectic structure} is $\tau^{\le 0}((\Tot^{\Pi}F^2\oL\Omega^{\bt}_{A})[n+2])$.
        
        Hence the set of homotopy classes of such structures is $\H^{n+2}(\Tot^{\Pi}F^2\oL\Omega^{\bt}_{A})$,  
        each element consisting of an infinite sequence $(\omega_i \in (\Omega^i_{\tilde{A}})_{n-2+i})$ with $d\omega_i = \pm \delta \omega_{i+1}$, where $\tilde{A}$ is a cofibrant (or cofibrant over smooth) replacement for $A$.
        
        We say    $\omega$ is \emph{shifted symplectic} \index{shifted symplectic structure} if it is  non-degenerate in the sense that the maps $\EExt^i_{\tilde{A}}(\Omega^1_{\tilde{A}},\tilde{A}) \to \H_{-i-n}(\Omega^1_{\tilde{A}})$ from the tangent complex to the cotangent complex induced by contraction with $\omega_2 \in \H_{-n}(\Omega^2_{\tilde{A}})$ are isomorphisms. (In particular, this implies that $n$-shifted symplectic structures on derived schemes only exist for $n\le 0$; positively shifted structures can however exist on derived Artin stacks.)
        
        In the global case (for a derived scheme, or even derived algebraic space or DM stack), the complex of \emph{$n$-shifted pre-symplectic structures}\index{pre-symplectic structures}  is 
        $$
        \tau^{\le 0}\oR\Gamma(X,(\Tot^{\Pi}F^2\oL\Omega^{\bt}_X)[n+2]),
        $$
        so  homotopy classes are elements of $\H^{n+2}(X, \Tot^{\Pi}F^2\oL\Omega^{\bt}_{X})$, and are regarded as symplectic if they are locally non-degenerate. (Derived Artin stacks are treated similarly, but non-degeneracy becomes a more global condition --- see Definition \ref{sympdefstack}.) 
    \end{definition}
    
    \begin{remark}[Terminology]\label{PTVVterminology}
        In \cite{KhudaverdianVoronov},  working in  the $\C^{\infty}$ setting,  chain complexes were $\Z/2$-graded rather than $\Z$-graded, only even shifts were considered,  $\delta$ was zero, and their homotopy symplectic structures also permitted linear terms. They outlined an extension of their   definition and results to odd shifts, with the details appearing in \cite{bruceGeomObjects}. The  definition in 
        \cite{PTVV} is only formulated inexplicitly as a homotopy limit, obscuring the similarity with earlier work;  the Hodge filtration is not mentioned.
        
        Our terminology  follows \cite{poisson}, differing slightly from both sources. In \cite{PTVV}, pre-symplectic structures are called closed $2$-forms, terminology we avoid because it refers more naturally to $\z^2$ than to $
        \oR \z^2$. Also beware that 
        ibid. refers to double complexes as ``graded mixed complexes''.\index{graded mixed complex|see {double complex}}
        
        The opposite convention is adopted in \cite[Definition 2.7]{BorisovJoyce}, where a presymplectic $2$-form is taken to be non-degenerate but not closed, usage which clashes with classical terminology.
    \end{remark}
    
    \begin{example}
        For $Y$ a smooth scheme,  the \emph{shifted cotangent bundle} \index{shifted cotangent bundle} 
        $$
        T^*Y[-n]:=\oSpec_Y (\Symm_{\sO_Y}(\sT_Y[n]), \delta=0)
        $$ 
        is $(-n)$-shifted symplectic, with symplectic form $\omega$ given in local co-ordinates by $\sum_i dy_i\wedge d\eta_i$, for $\eta_i=\pd_{y_i} \in \sT_Y$, the tangent sheaf. Thus $\omega \in \Omega^2$ with $\delta\omega=0$ and $d\omega=0$.
        
        There are also twisted versions: if we  twist $T^*Y[-1]$ by taking the differential $\delta$ to be  given by contraction with $df$,  we still have a $(-1)$-shifted symplectic structure. That derived scheme is the  \emph{derived critical locus} \index{derived critical locus} of $f$, i.e. the derived vanishing locus of $df$, 
        \[
            Y\by^h_{df, T^*Y,0}Y.
        \]
    \end{example}
    
    \begin{remark}[Shifted Poisson structures and quantisation]
        There is a related notion of   shifted Poisson structures \cite{KhudaverdianVoronov,poisson,CPTVV}\footnote{introductory slides  available at \href{https://www.maths.ed.ac.uk/~jpridham/edbpoisson.pdf}{www.maths.ed.ac.uk/$\sim$jpridham/edbpoisson.pdf}}. In this setting, such a structure just amounts to a shifted $L_{\infty}$-algebra structure on $\sO_X$, with the brackets all being multiderivations, assuming we have chosen a   cofibrant (or cofibrant over smooth) model for $\sO_X$. The equivalence between shifted symplectic and non-degenerate shifted Poisson structures is interpreted in \cite{KhudaverdianVoronov} as a form of Legendre transformation, and the comparison in \cite{poisson} can be interpreted as a homotopical generalisation 
        of a Legendre transformation; 
        the comparison in \cite{CPTVV} (covering the same ground as \cite{poisson}) takes a much less direct approach.
        
        There are also notions of deformation quantisation for $n$-shifted Poisson structures, mostly summarised in \cite{DQDG}. For $n>0$ (generally existing on derived stacks rather than schemes), quantisation is an immediate consequence of formality of the little $(n+1)$-discs operad \cite[Theorem 2]{kontsevichOperads}, as observed in \cite[Theorem 3.5.4]{CPTVV}. The problem becomes increasingly difficult as $n$ decreases, unless one is willing to break the link with BV quantisation and redefine quantisation for $n<0$ as in \cite[Definition 3.5.8]{CPTVV} so that it also becomes a formality.
    \end{remark}
    
\clearpage 
\section {Simplicial structures}\label{simpsn}
    
    References for this section include \cite[\S 8]{W} and \cite{sht}, among others.
    
\subsection{Simplicial sets}\label{simplicial-sets}    
    
    {\bf Motivation:} 
    %
    \begin{itemize}
        \item Half-exact functors don't behave well enough to allow gluing, so we'll need to work with some flavour of $\infty$-categories instead of homotopy categories.
        \item The category $s\Set$ of simplicial sets is  much more manageable to work with than the category  $\Top$ of topological spaces.
    \end{itemize} 
    
    In algebraic geometry, the idea of looking at simplicial set-valued functors to model derived phenomena goes back at least as far as \cite{hinstack}.
    
    %
    %
    %
    %
    %
    %
    %
    
    \begin{definition}
        Let $|\Delta^n| \subset \R_{\ge 0}^{n+1}$ be the subspace $\{(x_0, \ldots, x_n)\,:\, \sum x_i=1\}$;
        this is the \emph{geometric $n$-simplex}\index{geometric $n$-simplex}\index{Deltan@$\mid\Delta^n \mid$|see {geometric $n$-simplex}}. See \Cref{fig:simplicesonly}.
        \end{definition}
    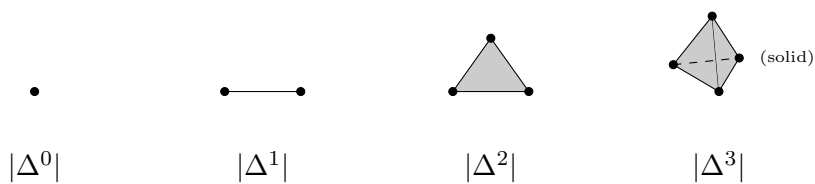
\begin{figure}[H]
        \centering
        \begin{tikzpicture}[scale=1]
            \newcommand\ptSize{1.5pt}
            
            \node at (9,-1) {$|\Delta^3|$};
            \draw[fill=black!20] (8.4,.356) -- (8.911,1) -- (9,0) -- cycle;
            \draw[fill=black!20] (9,0) -- (9.267,.445) -- (8.911,1);
            \draw[dashed] (8.4,.356) -- (9.267,.445);
            \draw[fill=black] (8.4,.356) circle (\ptSize);
            \draw[fill=black] (8.911,1) circle (\ptSize);
            \draw[fill=black] (9,0) circle (\ptSize);
            \draw[fill=black] (9.267,.445) circle (\ptSize);
            \node[anchor=west] at (9.4,.445) {\tiny (solid)};
            
            \node at (6,-1) {$|\Delta^2|$};
            \draw[fill=black!20] (5.5, 0) -- (6.5, 0) --  (6, 0.707) -- cycle;
            \draw[fill=black] (5.5, 0) circle (\ptSize);
            \draw[fill=black] (6.5, 0) circle (\ptSize);
            \draw[fill=black] (6, 0.707) circle (\ptSize);
            
            \node at (3,-1) {$|\Delta^1|$};
            \draw (2.5,0) -- (3.5,0);
            \draw[fill=black] (2.5,0) circle (\ptSize);
            \draw[fill=black] (3.5,0) circle (\ptSize);
            
            \node at (0,-1) {$|\Delta^0|$};
            \draw[fill=black] (0, 0) circle (\ptSize);
        \end{tikzpicture}
        \caption{Geometric $n$-simplices.}
        \label{fig:simplicesonly}
    \end{figure}
    \begin{definition}\label{Singdef}
        Given a topological space $X$, we then have a system 
        $$
        \Sing(X)_n := \Hom(|\Delta^n|, X)
        $$
        of sets, known as the \emph{singular functor}\index{singular functor $\Sing$},
        fitting into a diagram
        \[
            \xymatrix@1{ \Sing(X)_0 \ar@{.>}[r]|{\sigma_0}& \ar@<1ex>[l]^{\pd_0} \ar@<-1ex>[l]_{\pd_1} \Sing(X)_1 \ar@{.>}@<0.75ex>[r] \ar@{.>}@<-0.75ex>[r]  & \ar[l] \ar@/^/@<0.5ex>[l] \ar@/_/@<-0.5ex>[l] 
            \Sing(X)_2 &  &\ar@/^1pc/[ll] \ar@/_1pc/[ll] \ar@{}[ll]|{\cdot} \ar@{}@<1ex>[ll]|{\cdot} \ar@{}@<-1ex>[ll]|{\cdot}  \Sing(X)_3 & \ldots&\ldots,}
        \]   
        where the maps  $\pd_i: X_n \to X_{n-1}$  come from inclusion of the \emph{$i^{\textit{th}}$ face map}\index{face map} $ \pd^i \co |\Delta^{n-1}| \to |\Delta^n|$, and the maps $\sigma_i:X_n \to X_{n+1}$ come from the \emph{$i^{\textit{th}}$ degeneracy map\index{degeneracy map}} $\sigma^i \co |\Delta^{n+1}| \to |\Delta^n|$ given by collapsing the  edge $(i,i+1)$. 
        
        These operations  satisfy the following identities:
        \[
            \pd_i\pd_j=\pd_{j-1}\pd_i\quad \text{ for } \quad i<j,
        \]
        \[
            \sigma_i\sigma_j=\sigma_{j+1}\sigma_i \quad \text{ for }\quad i\le j,
        \]
        and
        \[
            \pd_i\sigma_j= \left\{\begin{matrix} \id & i=j,j-1 \\ \sigma_{j-1}\pd_i & i<j\\ \sigma_j\pd_{i-1} & i>j+1. \end{matrix}\right.       
        \]
    \end{definition} 
    \begin{definition}
        We denote by $\Delta$ the \emph{ordinal number category}\index{ordinal number category}\index{Delta@$\Delta$|see {ordinal number category}} which
        has objects $\on :=\{0,1,\ldots,n\}$ for $n \ge 0$, and morphisms $f$ given by non-decreasing maps between them (i.e. $f(i+1) \ge f(i)$ for every $i \in [0,n]$).
    \end{definition}
    
  $\Sing(X)_{(-)}$ has given us a contravariant functor from $\Delta$ to the category of sets. The correspondence comes by labelling the vertices of $|\Delta^n|$ from $0$ to $n$ according to the non-zero co-ordinate, allowing us to regard $\on$ as a subset of $|\Delta^n|$. The face and degeneracy maps then correspond to morphisms in the ordinal number category, and indeed every morphism in $\Delta$ is expressed as a composition of degeneracy and face maps, by \cite[Lemma 8.1.2]{W}, unique modulo the relations above.

      This motivates the following definition.
       
    \begin{definition}
        The category $s\Set$ of  \emph{simplicial sets}\index{simplicial!sets}\index{sset@$s\Set$|see {simplicial sets}} consists of functors $Y \co \Delta^{\op} \to \Set$. Write $Y_n$ for $Y(n)$. Thus objects are just diagrams
        \[
            \xymatrix@1{ Y_0 \ar@{.>}[r]|{\sigma_0}& \ar@<1ex>[l]^{\pd_0} \ar@<-1ex>[l]_{\pd_1} Y_1 \ar@{.>}@<0.75ex>[r] \ar@{.>}@<-0.75ex>[r]  & \ar[l] \ar@/^/@<0.5ex>[l] \ar@/_/@<-0.5ex>[l] 
            Y_2 &  &\ar@/^1pc/[ll] \ar@/_1pc/[ll] \ar@{}[ll]|{\cdot} \ar@{}@<1ex>[ll]|{\cdot} \ar@{}@<-1ex>[ll]|{\cdot}  Y_3 & \ldots&\ldots,}
        \]   
        satisfying the relations of Definition \ref{Singdef}.
    \end{definition}
    

    %
    
    \begin{definition}
        Define the \emph{combinatorial $n$-simplex}\index{combinatorial $n$-simplex}\index{Deltan@$\Delta^n$|see{combinatorial $n$-simplex}} $\Delta^n \in s\Set$ 
        by $\Delta^n:= \Hom_{\Delta}(-, \on)$.
    \end{definition}
    
    \begin{example}
        $\Delta^0$ is the constant diagram
        \[
            \xymatrix@1{ \bt \ar@{.>}[r]|{\sigma_0}& \ar@<1ex>[l]^{\pd_0} \ar@<-1ex>[l]_{\pd_1} \bt \ar@{.>}@<0.75ex>[r] \ar@{.>}@<-0.75ex>[r]  & \ar[l] \ar@/^/@<0.5ex>[l] \ar@/_/@<-0.5ex>[l] 
            \bt &  &\ar@/^1pc/[ll] \ar@/_1pc/[ll] \ar@{}[ll]|{\cdot} \ar@{}@<1ex>[ll]|{\cdot} \ar@{}@<-1ex>[ll]|{\cdot}  \bt & \ldots&\ldots}
        \]
        on the one-point set; note that this constant diagram is the smallest possible simplicial set with an element in degree $0$, since the degeneracy maps are necessarily injective.
        
        Meanwhile $(\Delta^1)_{i}$ has $i+2$ elements, of which only the two elements in $(\Delta^1)_{0}$ and one of those in $(\Delta^1)_{1}$ are \emph{non-degenerate}\index{non-degenerate} (i.e. not in the image of any  degeneracy map $\sigma_i$).            
    \end{example}    
    \begin{lemma}  
        The functor $\Sing: \Top \to s\Set$ has  a left adjoint $Y \mapsto |Y|$, determined by $\Delta^n \mapsto |\Delta^n|$, and the need to preserve coproducts and pushouts.
    \end{lemma}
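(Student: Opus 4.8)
The plan is to construct $|Y|$ as a coend (equivalently, a coequaliser of coproducts) and then verify the adjunction directly; the uniqueness clause will follow from the universal property of presheaf categories.

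First I would record that $r \co \Delta \to \Top$, $\on \mapsto |\Delta^n|$, is a cosimplicial space: a morphism $\phi \co \om \to \on$ in $\Delta$ is sent to the affine map $\phi_* \co |\Delta^m| \to |\Delta^n|$ determined on vertices by $\phi$, and one checks readily that the topological face and degeneracy maps assemble into a functor compatible with Definition \ref{Singdef}. I then define
\[
 |Y| := \int^{\on \in \Delta} Y_n \times |\Delta^n|,
\]
the coend of $(\om,\on) \mapsto Y_m \times |\Delta^n|$, where $Y_n \times |\Delta^n|$ denotes the copower (a disjoint union of copies of $|\Delta^n|$ indexed by the set $Y_n$). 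Concretely this is the quotient of $\coprod_n Y_n \times |\Delta^n|$ by the relation $(\phi^* y, x) \sim (y, \phi_* x)$ for $\phi \co \om \to \on$, $y \in Y_n$, $x \in |\Delta^m|$, carrying the quotient topology; it exists because $\Top$ is cocomplete.

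The key step is the adjunction $\Hom_\Top(|Y|,X) \cong \Hom_{s\Set}(Y,\Sing X)$, which I would obtain from the coend/end calculus. Since $\Hom_\Top(-,X)$ sends colimits to limits and $\Sing(X)_n = \Hom_\Top(|\Delta^n|,X)$ by definition, we have
\[
 \Hom_\Top(|Y|,X) \cong \int_{\on} \Hom_\Top(Y_n \times |\Delta^n|, X) \cong \int_{\on} \Hom_\Set(Y_n, \Sing(X)_n),
\]
and the right-hand end is precisely the set of natural transformations $Y \to \Sing X$, i.e. $\Hom_{s\Set}(Y,\Sing X)$; naturality in $Y$ and $X$ is automatic from the construction. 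Evaluating the coend on a representable, the co-Yoneda (density) lemma gives $|\Delta^n| \cong \int^{\om} \Hom_\Delta(\om,\on) \times |\Delta^m| \cong |\Delta^n|$, so the adjoint does send $\Delta^n$ to $|\Delta^n|$.

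Finally, being a left adjoint, $|-|$ preserves all colimits, in particular coproducts and pushouts. For the converse ``determined by'' assertion I would invoke that $s\Set$ is the free cocompletion of $\Delta$: every simplicial set is canonically a colimit of its simplices, realised concretely by the skeletal filtration $Y = \operatorname{colim}_n \sk_n Y$, where each $\sk_n Y$ is obtained from $\sk_{n-1}Y$ by a pushout along $\coprod \pd\Delta^n \to \coprod \Delta^n$ indexed by the non-degenerate $n$-simplices. Hence any colimit-preserving functor is determined up to unique natural isomorphism by its restriction to the representables $\Delta^n$, which yields the stated characterisation. The only genuine work is the end/coend identification of the middle display with natural transformations; the point-set subtlety of the quotient topology is harmless, since colimits in $\Top$ are computed exactly as such quotients.
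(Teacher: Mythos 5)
Your proof is correct and is the standard coend argument; the paper gives no proof of this lemma, only the explicit description of $|Y|$ as the quotient of $\coprod_n (Y_n \times |\Delta^n|)$ by the face and degeneracy relations, which is exactly the coend you construct (the general relation $(\phi^*y,x)\sim(y,\phi_*x)$ reduces to those two cases because every morphism of $\Delta$ factors as a composite of $\pd^i$'s and $\sigma^i$'s). The adjunction via the copower/end calculus and the uniqueness via the skeletal filtration are both sound and consistent with what the paper asserts.
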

    
    Explicitly, $|Y|$ is the quotient of $\coprod_n (Y_n \by |\Delta^n|)$ by the relations $(\pd_i y , a) \sim (y, \pd^ia)$ and  $(\sigma_i y , a) \sim (y, \sigma^ia)$.
    
\subsubsection{The Kan--Quillen model structure}
    
    \begin{definition}
        We say that a morphism $X \to Y$ in $s\Set$ is a  \emph{weak equivalence}\index{weak equivalence!of simplicial sets} if $|X|\to |Y|$ is a weak equivalence (i.e. $\pi_*$-equivalence) of topological spaces.
    \end{definition}
    
    \begin{theorem}[Quillen, \cite{QHA}]
        There is a model structure on $s\Set$ with the weak equivalences above, with cofibrations  just being  maps
        $f \co X \to Y$ which are injective in each level. Fibrations are then those maps with RLP with respect to all trivial cofibrations (i.e. cofibrations
        which are weak equivalences):
        \[
            \xymatrix{ A\ar[d]|{\text{triv.cof.}} \ar[r] & X \ar[d]|{\text{fib.}}\\
            B \ar@{.>}[ur]\ar[r] & Y.
            }       
        \]
    \end{theorem}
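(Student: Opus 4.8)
The plan is to realise this as a \textbf{cofibrantly generated} model structure, so that every axiom reduces to lifting properties against two explicit sets of maps. Take the generating cofibrations to be the boundary inclusions $I=\{\partial\Delta^n\into\Delta^n\}_{n\ge 0}$ and the generating trivial cofibrations to be the horn inclusions $J=\{\Lambda^n_k\into\Delta^n\}_{0\le k\le n,\ n\ge 1}$. Since $s\Set$ is a presheaf category it is complete and cocomplete and every object is small, so Quillen's small object argument applies to both $I$ and $J$, giving functorial factorisations of any map as a relative $I$-cell complex followed by a map with RLP against $I$, and likewise for $J$. The first combinatorial input is that a map is a monomorphism (injective in each level) precisely when it is a retract of a relative $I$-cell complex; this comes from the skeletal filtration of a simplicial set, which attaches nondegenerate simplices one at a time along their boundaries. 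Thus the cofibrations coincide with the monomorphisms as claimed, the maps with RLP against $I$ are the prospective \emph{trivial fibrations}, and the maps with RLP against $J$ are the Kan fibrations.

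Next I would dispose of the cheap structural axioms. Weak equivalences satisfy two-out-of-three and are closed under retracts because $|\cdot|\colon s\Set\to\Top$ is a functor and $\pi_*$-isomorphisms of spaces enjoy these properties. The two factorisation axioms (Factorisation A and B of \cref{rmk:model-structure-axioms}) are exactly the outputs of the small object argument, \emph{provided} two things hold: (i) every relative $J$-cell complex is a weak equivalence, and (ii) the two descriptions of the ``trivial'' classes agree, namely that a map is a trivial fibration (RLP against $I$) if and only if it is both a Kan fibration and a weak equivalence. Point (i) reduces to each horn inclusion $\Lambda^n_k\into\Delta^n$ being a weak equivalence (clear on realisations, where it is a deformation retract) together with the Gabriel--Zisman closure properties of anodyne maps under pushout, transfinite composition and retract.

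The crux is (ii). One direction is comparatively soft: a trivial fibration has RLP against all monomorphisms, hence against the horns, so it is a Kan fibration, and it admits a section with a fibrewise simplicial homotopy to the identity, hence realises to a homotopy equivalence and is a weak equivalence. The reverse implication --- that a Kan fibration which is a weak equivalence has RLP against every boundary inclusion --- is the genuinely hard step and the \textbf{main obstacle}. Here I would invoke the classical \emph{minimal fibration} machinery: every Kan fibration is fibrewise homotopy equivalent to a minimal fibration, and a minimal fibration is a locally trivial bundle of simplicial sets, so its geometric realisation is a Serre fibration. This yields the key theorem that $|\cdot|$ sends Kan fibrations to Serre fibrations, which in turn lets one identify the combinatorial simplicial homotopy groups of a Kan complex with the topological homotopy groups of its realisation and exploit the long exact sequence of a fibration. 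A weak-equivalence Kan fibration then has weakly contractible fibres, and minimality forces those fibres to be points, so the minimal model is an isomorphism and the original map is a trivial fibration.

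Finally I would assemble the pieces. With both factorisations and the matching of the trivial classes in hand, the lifting axioms (Lifting A and B) follow by the standard factor-and-retract trick: any (trivial) cofibration is a retract of a relative $I$-cell (respectively $J$-cell) complex, and lifting against (trivial) fibrations is inherited from the generating maps. Closure of each of the three classes under retracts is then automatic from their characterisation by lifting, and one checks directly that the fibrant objects are exactly the Kan complexes. I expect essentially all of the difficulty to sit in the minimal-fibration step and the resulting ``realisation preserves fibrations'' theorem; everything else is a formal consequence of the small object argument and routine simplicial homotopy theory.
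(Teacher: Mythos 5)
The paper gives no proof of this theorem, simply citing Quillen \cite{QHA}, and your sketch is precisely the classical argument from that source (and from \cite{sht,hovey}, the paper's stated references for this section): cofibrant generation by the boundary and horn inclusions, the small object argument, anodyne extensions realising to deformation retracts, and the minimal-fibration theorem showing that realisation sends Kan fibrations to Serre fibrations. Your outline is correct and correctly isolates the genuine difficulty in the step that a Kan fibration which is a weak equivalence has the right lifting property against the boundary inclusions; the only imprecision worth noting is that in step (i) what must be checked is that \emph{weak equivalences} are preserved under pushout along monomorphisms and under transfinite composition (via the gluing lemma in $\Top$, using that $|\cdot|$ preserves colimits and sends monomorphisms to cofibrations), not merely that the class of anodyne maps is saturated.
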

    \begin{definition}
        For $n \ge 0$, define the \emph{boundary} $\pd\Delta^n \subset \Delta^n$ to be $\bigcup_i \pd^i(\Delta^{n-1})$.\index{boundary of the $n$-simplex}\index{dDeltan@$\pd\Delta^n$|see{boundary of the $n$-simplex}}
        See Figure \ref{fig:boundarysimplices}. 
    \end{definition}

    \begin{figure}[H]
        \centering
        \begin{tikzpicture}[scale=1]
            \newcommand\ptSize{1.5pt}
            \node at (-5, 1.5) {$|\pd\Delta^0|$};
            \node at (-5, 0) {$|\Delta^0|$};
            \node[rotate=-90] at (-5, .75) {$\subset$};
            \node at (-4, 1.5) {$=$};
            \node at (-4, 0) {$=$};
            \draw[fill=black] (-3,0) circle (\ptSize);
            
            \node at (0, 1.5) {$|\pd\Delta^1|$};
            \node at (0, 0) {$|\Delta^1|$};
            \node[rotate=-90] at (0, .75) {$\subset$};
            \node at (1, 1.5) {$=$};
            \node at (1, 0) {$=$};
            \draw[fill=black] (1.5, 1.5) circle (\ptSize);
            \draw[fill=black] (2.5,1.5) circle (\ptSize);
            \draw[fill=black] (1.5,0) circle (\ptSize);
            \draw[fill=black] (2.5,0) circle (\ptSize);
            \draw (1.5, 0) -- (2.5, 0);
            
            \draw[fill=black!20] (7, .5) -- (6.5, -.2) -- (7.5, -.2) -- cycle;
            \node at (5, 1.5) {$|\pd\Delta^2|$};
            \node at (5, 0) {$|\Delta^2|$};
            \node[rotate=-90] at (5, .75) {$\subset$};
            \node at (6, 1.5) {$=$};
            \node at (6, 0) {$=$};
            \draw[fill=black] (7, 2) circle (\ptSize);
            \draw[fill=black] (6.5, 1.3) circle (\ptSize);
            \draw[fill=black] (7.5,1.3) circle (\ptSize);
            \draw (7, 2) -- (6.5, 1.3) -- (7.5,1.3) -- cycle;
            \draw[fill=black] (7,.5) circle (\ptSize);
            \draw[fill=black] (6.5,-.2) circle (\ptSize);
            \draw[fill=black] (7.5,-.2) circle (\ptSize);
        \end{tikzpicture}
        \caption{Realisations of  $\pd\Delta^n$}
        \label{fig:boundarysimplices}
    \end{figure}
    \begin{definition}
        For $n \ge 1$,  define the  \emph{$k^{\textit{th}}$ horn} $\Lambda^{n,k}\subset \Delta^n$ to be  $\bigcup_{i\ne k} \pd^i(\Delta^{n-1})\subset \Delta^n$ ($n \ge 1$).
        See Figure \ref{fig:horns}.\index{horn $\L^{n,k}$}\index{Lambdank@$\L^{n,k}$|see{horn}}
    \end{definition}
    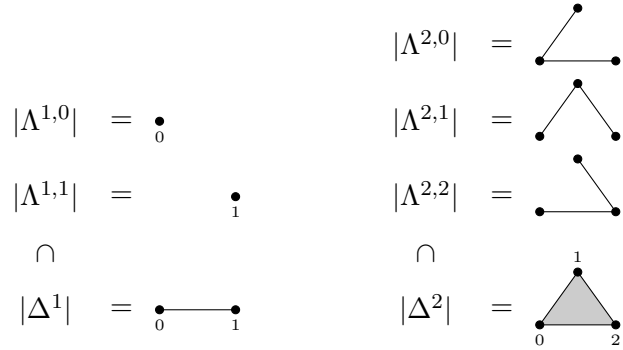
\begin{figure}[H]
        \centering
        \begin{tikzpicture}[scale=1]
            \newcommand\ptSize{1.5pt}
            
            \node at (0, 2.5) {$|\Lambda^{1,0}|$};
            \node at (0, 1.5) {$|\Lambda^{1,1}|$};
            \node at (0, 0) {$|\Delta^1|$};
            \node[rotate=-90] at (0, .75) {$\subset$};
            \node at (1, 2.5) {$=$};
            \node at (1, 1.5) {$=$};
            \node at (1, 0) {$=$};
            \draw[fill=black] (1.5, 2.5) circle (\ptSize);
            \draw[fill=black] (2.5, 1.5) circle (\ptSize);
            \draw[fill=black] (1.5,0) circle (\ptSize);
            \draw[fill=black] (2.5,0) circle (\ptSize);
            \draw (1.5, 0) -- (2.5, 0);
            \node[anchor=north] at (1.5,0) {\tiny $0$};
            \node[anchor=north] at (1.5,2.5) {\tiny $0$};
            \node[anchor=north] at (2.5,0) {\tiny $1$};
            \node[anchor=north] at (2.5,1.5) {\tiny $1$};
            
            \draw[fill=black!20] (7, .5) -- (6.5, -.2) -- (7.5, -.2) -- cycle;
            \node at (5, 3.5) {$|\Lambda^{2,0}|$};
            \node at (5, 2.5) {$|\Lambda^{2,1}|$};
            \node at (5, 1.5) {$|\Lambda^{2,2}|$};
            \node at (5, 0) {$|\Delta^2|$};
            \node[rotate=-90] at (5, .75) {$\subset$};
            \node at (6, 3.5) {$=$};
            \node at (6, 2.5) {$=$};
            \node at (6, 1.5) {$=$};
            \node at (6, 0) {$=$};
            \draw[fill=black] (7, 4) circle (\ptSize);
            \draw[fill=black] (6.5, 3.3) circle (\ptSize);
            \draw[fill=black] (7.5,3.3) circle (\ptSize);
            \draw (7, 4) -- (6.5, 3.3) -- (7.5,3.3);
            \draw[fill=black] (7, 3) circle (\ptSize);
            \draw[fill=black] (6.5, 2.3) circle (\ptSize);
            \draw[fill=black] (7.5,2.3) circle (\ptSize);
            \draw  (7.5,2.3) -- (7, 3) -- (6.5, 2.3) ;
            \draw[fill=black] (7, 2) circle (\ptSize);
            \draw[fill=black] (6.5, 1.3) circle (\ptSize);
            \draw[fill=black] (7.5,1.3) circle (\ptSize);
            \draw (7, 2) -- (7.5,1.3) --  (6.5, 1.3);
            \draw[fill=black] (7,.5) circle (\ptSize);
            \draw[fill=black] (6.5,-.2) circle (\ptSize);
            \draw[fill=black] (7.5,-.2) circle (\ptSize);
            \node[anchor=south] at (7,.5) {\tiny $1$};
            \node[anchor=north] at (6.5,-.2) {\tiny $0$};
            \node[anchor=north] at (7.5,-.2) {\tiny $2$};
        \end{tikzpicture}
        \caption{Realisations of  $\L^{n,k}$.}
        \label{fig:horns}
    \end{figure}
    \begin{theorem}
        Fibrations, resp.  trivial fibrations, in $s\Set$ correspond to maps with RLP with respect to $\L^{n,k} \to \Delta^n$ (generating trivial cofibrations), resp. $\pd \Delta^n \to \Delta^n$ (generating  cofibrations); these are known as (trivial) Kan fibrations.\index{Kan fibrations}
        {\rm See Figure \ref{fig:fibrations}.}
    \end{theorem}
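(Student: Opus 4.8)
The plan is to recognise the asserted characterisation as the statement that the Kan--Quillen structure is cofibrantly generated, with generating cofibrations $I=\{\pd\Delta^n\into\Delta^n\}_{n\ge 0}$ and generating trivial cofibrations $J=\{\L^{n,k}\into\Delta^n\}_{n\ge 1,\,0\le k\le n}$. Since fibrations are \emph{by definition} the maps with the RLP against all trivial cofibrations, and since in any model category the trivial fibrations are exactly the maps with the RLP against all cofibrations (one direction is Lifting A$'$ from \ref{rmk:model-structure-axioms}, the converse being the standard retract argument applied to a cofibration--trivial-fibration factorisation), it suffices to prove two orthogonality statements: that a map has the RLP against every cofibration iff it has the RLP against $I$, and that it has the RLP against every trivial cofibration iff it has the RLP against $J$. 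As the RLP against a set $S$ agrees with the RLP against its saturation $\cof(S)$, each statement reduces to identifying a saturated class.

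For the boundary inclusions I would show that the cofibrations, i.e.\ the levelwise monomorphisms, are precisely $\cof(I)$. One inclusion is formal: monomorphisms are closed under pushout, transfinite composition and retracts, and each $\pd\Delta^n\into\Delta^n$ is a monomorphism, so $\cof(I)$ consists of monomorphisms. For the reverse I would use the skeletal filtration: given a monomorphism $A\into B$, write it as the transfinite composite of the stages $A\cup\sk_{n-1}B\to A\cup\sk_n B$, each of which is obtained by gluing on the non-degenerate $n$-simplices of $B$ not already in $A$, hence is a pushout of a coproduct of copies of $\pd\Delta^n\into\Delta^n$. This exhibits $A\into B$ as a relative $I$-cell complex, establishing the trivial-fibration clause.

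For the horn inclusions the easy direction is that $J$ consists of trivial cofibrations: each $\L^{n,k}\into\Delta^n$ is visibly a monomorphism, and is a weak equivalence because $|\L^{n,k}|$ is a strong deformation retract of the contractible space $|\Delta^n|$; hence every fibration has the RLP against $J$. The substantive direction is the converse, that every map with the RLP against $J$ (a Kan fibration) is a fibration, equivalently that every trivial cofibration lies in $\cof(J)$. I would argue by the small object argument: factor a given trivial cofibration $i\co A\to B$ as $A\xrightarrow{\,j\,}C\xrightarrow{\,p\,}B$ with $j\in\cell(J)\subseteq\cof(J)$ and $p$ having the RLP against $J$. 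Since $j$ is anodyne, hence a trivial cofibration, two-out-of-three forces $p$ to be a weak equivalence; and if $p$ is then known to be a trivial fibration, the lifting property of the cofibration $i$ against $p$ produces a retraction exhibiting $i$ as a retract of $j$, whence $i\in\cof(J)$.

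The main obstacle is therefore the sublemma that a Kan fibration which is also a weak equivalence is a trivial fibration, i.e.\ has the RLP against the boundary inclusions $I$. This is the one genuinely non-formal input, and I would prove it through the theory of minimal fibrations: every Kan fibration admits a fibrewise strong deformation retract onto a minimal fibration, a minimal fibration which is a weak equivalence is an isomorphism, and the RLP against $I$ can then be transported back along the fibrewise equivalence. It is precisely here that the combinatorics of simplicial sets, rather than pure lifting-property formalism, becomes unavoidable; the full argument is carried out in \cite[Ch.~I]{sht}.
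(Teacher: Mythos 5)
The paper states this theorem without proof (it is the standard cofibrant-generation result for the Kan--Quillen model structure, implicitly deferred to \cite{QHA} and \cite{sht}), and your outline is a correct reconstruction of exactly the argument those references give: reduce both clauses to saturation statements, identify the monomorphisms with $\cof(I)$ via the skeletal filtration, and reduce the horn-filling clause via the small object argument and the retract trick to the one non-formal sublemma --- that a Kan fibration which is a weak equivalence has the RLP against the boundary inclusions --- which is where minimal fibrations enter. You have correctly isolated that sublemma as the genuine content; everything else in your write-up is formal and sound.
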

    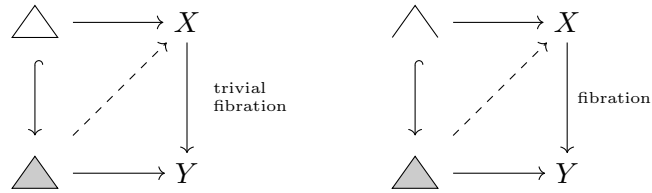
\begin{figure}[H]
        \centering
        \begin{tikzpicture}[scale=1]
            \newcommand\ptSize{1.5pt}
            
            \node (X1) at (2, 2) {$X$};
            \node (X2) at (7, 2) {$X$};
            \node (Y1) at (2, 0) {$Y$};
            \node (Y2) at (7, 0) {$Y$};
            
            \draw[] (-.3, 1.8) -- (.3, 1.8) -- (0, 2.224) -- cycle;
            \draw[fill=black!20] (-.3, -.2) -- (.3, -.2) -- (0, .224) -- cycle;
            \draw[] (5.3, 1.8) -- (5, 2.224) -- (4.7, 1.8);
            \draw[fill=black!20] (4.7, -.2) -- (5.3, -.2) -- (5, .224) -- cycle;
            
            \draw[->] (.5,2) -- (X1);
            \draw[->] (5.5,2) -- (X2);
            \draw[->] (.5,0) -- (Y1);
            \draw[->] (5.5,0) -- (Y2);
            \draw[->] (X1) -- (Y1);
            \draw[->] (X2) -- (Y2);
            \draw[right hook->] (0,1.5) -- (0, .5);
            \draw[right hook->] (5,1.5) -- (5,.5);
            \draw[->, dashed] (.5,.5) -- (X1);
            \draw[->, dashed] (5.5,.5) -- (X2);
            
            \node[anchor=west] at (2,1) {\tiny \begin{tabular}{l} trivial\\ fibration\end{tabular}};
            \node[anchor=west] at (7,1) {\tiny fibration};
        \end{tikzpicture}
        \caption{Existence of boundary-fillers and horn-fillers.}
        \label{fig:fibrations}
    \end{figure}

    \begin{definition}

        Say that a simplicial set is a \emph{Kan complex}\index{Kan complex} if it is fibrant:
        \[
            \xymatrix{ \L^{n,k}\ar[d] \ar[r] & X  & \forall n,k\\
            \Delta^n. \ar@{.>}[ur].
            }       
        \]
    \end{definition}
    \begin{theorem}[Kan, \cite{kanAdjointFunctors}]
        The adjunction 
        \[
            \xymatrix@1{ \Top \ar@<.5ex>[r]^{\Sing} & s\Set  \ar@<.5ex>[l]^{|-|} }
        \]
        is a Quillen equivalence.
        In particular, $\Ho(\Top) \simeq \Ho(s\Set)$.
    \end{theorem}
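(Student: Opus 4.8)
The plan is to verify the two halves of the Quillen-equivalence criterion stated just before the theorem, taking $\C=\Top$ and $\cD=s\Set$, so that $G=\Sing$ is the (putative) right Quillen functor and $F=|-|$ its left adjoint. First I would check that we have a Quillen adjunction at all. By \Cref{lemma:left-quillen-right-quillen} it is enough to show $|-|$ is left Quillen, and since $|-|$ is a left adjoint it preserves all colimits, so it suffices to examine the generating (trivial) cofibrations. The realisation of a generating cofibration $\pd\Delta^n\to\Delta^n$ is the inclusion $S^{n-1}\to B^n$, a relative CW inclusion and hence a cofibration of spaces, while the realisation of a generating trivial cofibration $\Lambda^{n,k}\to\Delta^n$ is the inclusion of a horn into a simplex, which is a deformation retract and hence a trivial cofibration. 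Closing under pushouts, transfinite composites and retracts (all preserved by $|-|$) then shows $|-|$ sends cofibrations to cofibrations and trivial cofibrations to trivial cofibrations; equivalently, $|-|$ preserves \emph{all} weak equivalences, essentially by the definition of weak equivalence in $s\Set$.

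Next I would reduce the equivalence statement to a single assertion about the counit. Every object of $s\Set$ is cofibrant and every object of $\Top$ is fibrant, so no (co)fibrant replacements intervene, and the criterion demands only that the counit $\eps_Y\co |\Sing Y|\to Y$ be a weak equivalence for every space $Y$ and that the unit $\eta_X\co X\to \Sing|X|$ be a weak equivalence for every simplicial set $X$. Because a map of simplicial sets is by definition a weak equivalence precisely when its realisation is, and because the triangle identity gives $\eps_{|X|}\circ|\eta_X|=\id_{|X|}$, two-out-of-three shows that the unit condition is \emph{implied} by the counit condition (applied to $Y=|X|$). Thus the whole theorem comes down to showing that $\eps_Y\co|\Sing Y|\to Y$ induces isomorphisms on all homotopy groups.

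For this crux I would argue combinatorially. First, $\Sing Y$ is a Kan complex: a horn $\Lambda^{n,k}\to\Sing Y$ is adjoint to a map $|\Lambda^{n,k}|\to Y$, which extends over $|\Delta^n|$ because $|\Lambda^{n,k}|$ is a retract of $|\Delta^n|$, and the extension is adjoint to the desired filler. For a pointed Kan complex $K$ one has a purely simplicial description of $\pi_n(|K|,x)$ as the set of simplicial homotopy classes of pointed maps $\Delta^n/\pd\Delta^n\to K$; feeding $K=\Sing Y$ into this and using the adjunction together with $|\Delta^n|\cong B^n$ and $|\pd\Delta^n|\cong S^{n-1}$ identifies it with the homotopy classes of maps $(B^n,S^{n-1})\to(Y,x)$, that is, with $\pi_n(Y,x)$; the case $n=0$ is the direct statement that $0$-simplices meet every path component and $1$-simplices realise all paths. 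One checks this bijection is exactly $\pi_n(\eps_Y)$, so $\eps_Y$ is a weak equivalence. Combining the three steps, the adjunction is a Quillen equivalence and $\oR\Sing\co\Ho(\Top)\to\Ho(s\Set)$ is an equivalence with quasi-inverse $\oL|-|$.

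The step I expect to be the main obstacle is the combinatorial computation of the homotopy groups of a Kan complex, i.e. the identification $\pi_n(|K|)\cong[\Delta^n/\pd\Delta^n,K]_{\mathrm{simp}}$. This is the one genuinely non-formal ingredient, typically established via simplicial approximation together with the CW structure on $|K|$ (or, following Milnor, via the theory of minimal Kan complexes), whereas everything else in the argument is lifting-property bookkeeping and the definitional fact that $|-|$ creates weak equivalences.
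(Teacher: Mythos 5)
The paper offers no proof of this theorem --- it is stated as a classical result with a citation to Kan --- so there is nothing internal to compare against; what you have written is the standard textbook argument, and it is correct in outline. Your three-step decomposition is sound: (i) $|-|$ is left Quillen because it preserves colimits and sends the generating (trivial) cofibrations $\pd\Delta^n\to\Delta^n$ and $\L^{n,k}\to\Delta^n$ to relative CW inclusions (resp.\ deformation-retract inclusions), so Lemma \ref{lemma:left-quillen-right-quillen} applies; (ii) since every simplicial set is cofibrant and every space is fibrant, the Quillen-equivalence criterion reduces to the unit and counit being weak equivalences, and your observation that the unit condition follows from the counit condition via the triangle identity, two-out-of-three, and the fact that $|-|$ creates weak equivalences in $s\Set$ by definition, is a genuinely efficient reduction; (iii) the counit is handled by the combinatorial description of $\pi_n$ of a Kan complex plus adjunction. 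You have also correctly isolated where all the real work lives: the identification $\pi_n(|K|,x)\cong \pi_n^{\mathrm{simp}}(K,x)$ for Kan complexes $K$, which is not formal and requires either minimal Kan complexes and Quillen's theorem that realisation preserves Kan fibrations, or a simplicial-approximation argument. That step is asserted rather than proved in your write-up, so as a self-contained proof it is incomplete, but since you name it explicitly, flag it as the sole non-formal ingredient, and indicate the standard routes to it, the proposal is an accurate map of the proof rather than a flawed one; filling in that one lemma (e.g.\ from \cite{sht} or \cite{QHA}) completes the argument.
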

    
    This also gives rise to an equivalence between the  category of topological categories and the category of simplicial categories, up to weak equivalence in both cases. Here, a \emph{simplicial category}\index{simplicial!category} is a category enriched in simplicial sets, meaning that
    for any two objects $X, Y\in \mathcal{C}$ there is a simplicial set
    $\Hom_{\mathcal{C}}(X, Y)$ of morphisms between them, and a  composition operation defined levelwise.
    
    The \emph{homotopy category} $\pi_0\C$\index{homotopy category!of a simplicial category} of a simplicial or topological category has the same objects, but morphisms given by path components $\pi_0\C(x,y)$.             
    A functor $F \co \C \to \cD$ is then a \emph{weak equivalence}\index{weak equivalence!of simplicial categories} of simplicial or topological categories if the functor $\pi_0F\co \pi_0\C \to \pi_0\cD $ is an equivalence of categories and the maps $\C(X,Y) \to \C(FX,FY)$ are all weak equivalences of simplicial sets or topological spaces.
    
\subsubsection{Matching objects}  \label{matchsn}
    
    \begin{definition}
        Given $X \in s\Set$, we define the \emph{$n^{\textit{th}}$ matching space}\index{matching!space} by $M_{\pd \Delta^n}(X):=\Hom_{s\Set}(\pd\Delta^n,X)$;
        this is often simply denoted by $M_n(X)$. Explicitly, this means
        $$
        M_{\pd \Delta^n}(X) = \{ x \in \prod_{i=0}^n X_{n-1} \,:\, \pd_ix_j= \pd_{j-1}x_i \text{ if } i<j\}
        $$ 
        for $n>0$, with $M_{\pd\Delta^0}X=\ast$.
        
        Define the \emph{$(n,k)^{\textit{th}}$ partial matching space}\index{partial matching!space} by $M_{\L^{n,k}}(X):=\Hom_{s\Set}(\L^{n,k},X)$. Explicitly, this means
        $$
        M_{\L^{n,k}}(X) = \{ x \in \prod_{i=0, i \ne k}^n X_{n-1} \,:\, \pd_ix_j= \pd_{j-1}x_i \text{ if } i<j\}.
        $$
    \end{definition}
    
    The inclusions $\pd\Delta^n \to \Delta^n$  and $\L^{n,k} \to \Delta^n$ induce matching maps \index{matching!map} and partial matching maps \index{partial matching!map}
    $X_n \to M_{\pd \Delta^n}(X)$ and $X_n\to  M_{\L^{n,k}}(X)$, sending $x$ to  
    $
    (\pd_0x, \pd_1x, \ldots, \pd_nx)$ and $(\pd_0x, \pd_1x, \ldots, \cancel{\pd_kx}, \ldots \pd_nx)$, respectively.
    Thus $X \to Y$ being a Kan fibration says the relative partial matching maps \index{relative partial matching map}
    \[
        X_n \to Y_n \by_{ M_{\L^{n,k}}(Y)}M_{\L^{n,k}}(X)
    \]
    are all surjective, while $X \to Y $ being a trivial Kan fibration says the relative  matching maps 
    \[
        X_n \to Y_n \by_{ M_{\pd \Delta^n}(Y)}M_{\pd \Delta^n}(X)
    \]
    are all surjective.   
    
    %
    %
    %
    
\subsubsection{Diagonals}
    A \emph{bisimplicial set}\index{bisimplicial set} is just a simplicial simplicial set, i.e. a functor $X \co (\Delta \by \Delta)^{\op} \to \Set$. The category
    of bisimplicial sets is denoted $ss\Set$.
    There is then a \emph{diagonal functor}\index{diagonal functor $\diag$} 
    \[
        \diag \co ss\Set \to s\Set
    \]
    from bisimplicial sets to simplicial sets given by $\diag(X)_n:=X_{n,n}$ with the maps $\pd_i \co \diag(X)_{n+1} \to \diag(X)_n$ and $\sigma_i \co \diag(X)_{n-1} \to \diag(X)_n $ given by composing the corresponding horizontal maps ($\pd^h_i \co X_{m,n} \to X_{m-1,n}$, $\sigma^h_i \co X_{m,n} \to X_{m+1,n}$) and vertical maps ($\pd^v_i \co X_{m,n} \to X_{m,n-1}$, $\sigma^v_i \co X_{m,n} \to X_{m,n+1}$) in $X$.
    
    It turns out that $\diag(X)$ is a model for the homotopy colimit $\ho\LLim_{n \in \Delta^{\op}} (X_{n,\bt})$. As a consequence, its homotopical behaviour is   just like the total complex of a double complex, even though the diagonal seems much larger. (The analogous statements for semi-simplicial sets are not true: although the degeneracy maps $\sigma_i$ might feel superfluous much of the time, they are vital  for results such as these  to hold.)
    
\subsection{The Dold--Kan equivalence} 
    
    If $A$ is a simplicial abelian group, then $\delta:= \sum (-1)^i\pd_i$ satisfies $\delta^2=0$, so $(A,\delta)$ becomes a chain complex.
    
    \begin{definition}
        The \emph{normalisation} $NA$ \index{normalisation functor $N$} \index{N@$N$|see {normalisation functor}} of a simplicial abelian group is the chain complex given by $N_mA:= \{a \in A_m \mid \pd_ia=0 \quad\forall i>0\}$, with differential given by $\pd_0 \co N_{m+1}A \to N_mA$ (it squares to zero because $\pd_0(\pd_0a)=\pd_0(\pd_1a)=\pd_0(0)$).
    \end{definition}
    
    In fact, the inclusion $NA \to (A,\delta)$ is a quasi-isomorphism of chain complexes. Also, the homology groups $\H_*(NA)$ are just the homotopy groups $\pi_*(A,0):=\pi_*(|A|,0)$ of the simplicial set underlying $A$. 
    
    (These should not be confused with the homology groups $\H_*(X,\Z)$ of a simplicial set $X$, which correspond to homotopy groups of the free simplicial abelian group  $\Z.X$ on generators $X$, with $N(\Z.X)$ then being the complex of normalised chains on $X$. The free abelian and forgetful functors form a Quillen adjunction.)  
    
    \begin{theorem}[Dold--Kan] \index{Dold--Kan equivalence}
        The functor $N$ gives an equivalence of categories between simplicial abelian groups and chain complexes in non-negative degrees.
    \end{theorem}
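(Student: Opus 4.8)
The plan is to exhibit an explicit quasi-inverse $K \co \Ch_{\ge 0} \to s\Ab$ and to verify that $N$ and $K$ are mutually inverse up to natural isomorphism. The guiding principle is that a simplicial abelian group is assembled from its normalised complex by freely re-inserting degeneracies, so $K$ should reconstruct $A$ from $NA$ as a direct sum indexed by all ways of degenerating. Concretely, for a chain complex $C_\bullet$ I would set $(KC)_n := \bigoplus_{\eta} C_k$, where $\eta$ ranges over the surjections $\eta \co \on \onto \ok$ in $\Delta$. Given $\theta \co \om \to \on$, I define the induced map $(KC)_n \to (KC)_m$ summand by summand: on the summand indexed by $\eta$, factor the composite $\eta\theta$ as a surjection followed by an injection $\om \onto \oj \into \ok$, and send the $\eta$-summand $C_k$ to the $(\om \onto \oj)$-summand $C_j$ by the map that is the identity when the injection is the identity, is $d$ (up to sign) when the injection is the elementary coface omitting the initial vertex $0$ (so as to match the convention that the differential on $NA$ is $\pd_0$), and is $0$ in every other case. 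Uniqueness of epi--mono factorisation in $\Delta$ makes $KC$ a well-defined simplicial abelian group and $K$ a functor.

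The crux is the following decomposition lemma. For every simplicial abelian group $A$, the natural map
\[
\Phi_n \co \bigoplus_{\eta \co \on \onto \ok} N_k A \lra A_n, \qquad (a_\eta)_\eta \mapsto \textstyle\sum_\eta \eta^* a_\eta,
\]
is an isomorphism, where $\eta^* \co A_k \to A_n$ is the degeneracy operator $A(\eta)$. I would prove this by induction on $n$. The summand indexed by $\eta = \id$ is exactly $N_n A$, while the remaining summands factor through degeneracies, with total image the degenerate subgroup $D_n A = \sum_i \sigma_i A_{n-1}$. Thus the lemma reduces to the splitting $A_n = N_n A \oplus D_n A$ together with the independence of the degenerate summands. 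Both are forced by the degeneracy identities $\pd_i\sigma_j = \id$ for $i = j, j+1$: one constructs an explicit idempotent endomorphism of $A_n$, as a composite of operators of the form $\id - \sigma_j\pd_{j+1}$, whose image lies in $\bigcap_{i>0}\ker\pd_i = N_n A$ and which annihilates $D_n A$, yielding the retraction; the inductive hypothesis then pins down the finer decomposition of $D_n A$.

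Granting the lemma, the two isomorphisms are formal. For $NK \cong \id$, note that the face operators act on the $\id$-summand $C_n$ of $(KC)_n$ through the recipe above: every $\pd_i$ with $i>0$ corresponds to an injection omitting a vertex $\ge 1$ and hence acts as $0$, so $C_n \subseteq N_n(KC)$, while $\pd_0$ recovers $d$; the remaining (genuinely degenerate) summands fail to lie in $\bigcap_{i>0}\ker\pd_i$, so normalisation isolates the $\id$-summand and $N_n(KC) = C_n$ with differential $d$, naturally in $C$. For $KN \cong \id$, take $C = NA$, so that $(KNA)_n = \bigoplus_{\eta} (NA)_k = \bigoplus_{\eta} N_k A$; then $\Phi_n$ supplies an isomorphism $(KNA)_n \xra{\cong} A_n$, and a direct check via epi--mono factorisation shows it commutes with all faces and degeneracies, giving an isomorphism of simplicial abelian groups.

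The main obstacle is the decomposition lemma of the second paragraph, i.e.\ the refined splitting $A_n \cong \bigoplus_{\eta} N_k A$. Everything else is bookkeeping with the factorisation system on $\Delta$ and with the simplicial identities recorded in \Cref{Singdef}, but this step carries the genuine content: producing the splitting projector onto $N_n A$ and verifying that the various degeneracy operators $\eta^*$ embed the normalised pieces as independent summands. Once this is in hand, both the functoriality of $K$ and the two natural isomorphisms follow by routine (if notation-heavy) verifications.
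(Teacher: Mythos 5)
Your proposal is correct and follows exactly the route the paper indicates: the paper offers no proof beyond the remark that the inverse functor is ``given by throwing in degenerate elements $\sigma_{i_1}\cdots\sigma_{i_n}a$'', and your decomposition lemma $A_n\cong\bigoplus_{\eta\co\on\onto\ok}N_kA$ is precisely the rigorous form of that remark. The construction of $K$ as a sum over surjections in $\Delta$, the splitting $A_n=N_nA\oplus D_nA$ via the idempotent built from the operators $\id-\sigma_j\pd_{j+1}$, and the two natural isomorphisms are the classical argument (cf.\ \cite[\S 8.4]{W}), correctly adapted to the paper's convention that $N_mA=\bigcap_{i>0}\ker\pd_i$ with differential $\pd_0$.
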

    The inverse functor $N^{-1}$ is just given by throwing in degenerate elements $\sigma_{i_1}\cdots \sigma_{i_n}a$. 


\subsection{The Eilenberg--Zilber correspondence}\label{shufflesn}
    
    Given a bisimplicial abelian group $A$, we can normalise in both directions to get a double complex $\uline{N}A$,
    and we can also take the diagonal to give a simplicial abelian group $\diag(A)$.
    
    There is a quasi-isomorphism, known as the \emph{Eilenberg--Zilber shuffle map},\index{Eilenberg--Zilber shuffle map}
    \[
        \nabla \co \Tot\uline{N} A \to N\diag  A,
    \]
    given by summing signed shuffle permutations of the horizontal and vertical degeneracy maps $\sigma_i$ in $A$.
    
    This map is symmetric with respect to swapping the horizontal and vertical bisimplicial indices. The homotopy inverse of $\nabla$ is  given by the  Alexander--Whitney cup product\index{Alexander--Whitney cup product},
    which sums the maps
    \[
        ( \pd_{i+1}^h)^j(\pd_0^v)^i \co A_{i+j,i+j}\to A_{ij},
    \]
    and is not symmetric. 
    
    \begin{construction}
        One consequence of the shuffle map is to give a functor from simplicial commutative rings $A$ (i.e. each $A_i$ a commutative ring) to cdgas. If we write $\uline{\ten}$ for the external tensor product  $(U \uline{\ten}V)_{i,j}:= U_i \ten V_j$, then we can characterise the multiplication on $A$ as a map $\mu \co \diag(A \uline{\ten} A) \to A$, so we have a composite
        \[
        NA \ten NA \cong \Tot(NA \uline{\ten} NA)=\Tot(\uline{N}(A \uline{\ten} A))  \xra{\nabla} N\diag(A \uline{\ten} A)\xra{\mu} NA,   
        \]
        giving our graded-commutative multiplication on the chain complex $NA$.
        
        Another consequence of the Alexander--Whitney is to give us a  simplicial ring $N^{-1}A$ associated to any dg-algebra $A$ in non-negative chain degrees, but this does not preserve commutativity. A generalisation of this construction allows us to associate simplicial categories to dg-categories (i.e. categories enriched in chain complexes as in \cite{kellerModelDGCat})\index{dg-category}, after truncation if necessary, as in Lemma \ref{dgModsCat}.
    \end{construction}
    
    
    
    
\subsection{Simplicial mapping spaces}\label{mappingsn}
    
    Given a category $\C$ with weak equivalences, we write $\oR \Map_{\C}$  for the functor $\C^{\op}\by \C \to s\Set$ given by right-deriving $\Hom$ (if it exists).  In model categories, $\oR \Map_{\C}$ always exists, and we now show how to calculate it using  function complexes as in \cite{DKfunction}  or  \cite[\S 5.4]{hovey}.
    
    \begin{definition}
        Given a model category $\C$ and a object $Y \in \C$, we can define a \emph{simplicial fibrant resolution}\index{simplicial!fibrant resolution} of $Y$ to be
        a simplicial diagram $\hat{Y} \co \Delta^{\op} \to \C$ and a map from  the constant diagram $Y$ to  $\hat{Y}$ (equivalently, a map $Y \to \hat{Y}_0$ in $\C$) such that
        
        \begin{enumerate}
            \item the maps $Y \to \hat{Y}_n$ are all weak equivalences,
            
            \item the matching maps $\hat{Y}_n \to M_{\pd \Delta^n}(\hat{Y})$ (defined by the same formulae as \S \ref{matchsn}) are fibrations in $\C$ for all $n \ge 0$;
            in particular, this includes the condition that $\hat{Y}_0$
            be fibrant.
        \end{enumerate}
    \end{definition}
    \begin{exercise}
        $\hat{Y}_1$ is a path object for $\hat{Y}_0$, via $\sigma_0 \co \hat{Y}_0 \to  \hat{Y}_1$ and $\hat{Y}_1 \xra{(\pd_0,\pd_1)} \hat{Y}_0 \by \hat{Y}_0$.
    \end{exercise}
    \begin{examples}\label{pathexamples}\ 
        \begin{enumerate}
            \item In $\Top$, we can take $\hat{Y}_n$ to be the space $Y^{|\Delta^n|}$  of maps from $|\Delta^n|$ to $Y$.
            \item In $s\Set$, if $Y$ is fibrant, we can take $\hat{Y}_n:= Y^{\Delta^n}$, where $(Y^K)_i:= \Hom_{s\Set}(\Delta^i \by K,Y)$.
            \item In cochain complexes, we can take $\hat{V}_n:= V\ten \CC^{\bt}(\Delta^n,\Z)$ (simplicial cochains on the $n$-simplex).
            \item In cdgas $\catdga$, we can take $\hat{A}_n:= \tau_{\ge 0}(A\ten\Omega^{\bt}(\Delta^n))$, where
                \[
                    \Omega^{\bt}(\Delta^n)= \Q[x_0, \ldots,x_n, \delta x_0, \ldots, \delta x_n]/(\sum x_i -1, \sum \delta x_i),
                \]
                for $x_i$ of degree $0$ (the polynomial de Rham complex of the $n$-simplex).

                The reason this works is that the matching object $M_{\pd \Delta^n}(\hat{A})$ is isomorphic to the cdga $\tau_{\ge 0}(A\ten\Omega^{\bt}(\pd \Delta^n))$, where
                \[
                \Omega^{\bt}(\pd\Delta^n)=  \Omega^{\bt}(\Delta^n)/(\prod_i x_i, \delta(\prod_i x_i));
                \]
                since $ \Omega^{\bt}(\Delta^n)\to \Omega^{\bt}(\pd\Delta^n)$ is surjective, the matching map $\hat{A}_n\to M_{\pd \Delta^n}(\hat{A})$ is surjective in strictly positive degrees, so a fibration.
        \end{enumerate}
    \end{examples}
    \begin{theorem}\label{Rmapthm}
        If $X$ is cofibrant and $\hat{Y}$
        is a fibrant simplicial resolution of $Y$, then the right function complex $\oR\Map_r(X,Y)$, given by
        \[
        n \mapsto \Hom_{\C}(X, \hat{Y}_n)
        \]
        gives a model for the right-derived functor $\oR \Map_{\C}$ of $\Hom \co \C^{\op} \by \C \to s\Set$.\index{mapping space $\oR\Map$}
    \end{theorem}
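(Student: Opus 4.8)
The plan is to establish the statement in three stages: that $\Map_r(X,Y)$ is a Kan complex, that the assignment is homotopy-invariant and so descends to $\Ho(\C)^{\op}\by\Ho(\C)$, and that the descended functor satisfies the universal property characterising the right-derived functor $\oR\Map_\C$. For the first stage, the key observation is that the matching objects $M_{\pd\Delta^n}$ and $M_{\Lambda^{n,k}}$ of \S\ref{matchsn} are finite limits, and $\Hom_\C(X,-)$ preserves limits, so $M_{\Lambda^{n,k}}(\Map_r(X,Y))\cong \Hom_\C(X, M_{\Lambda^{n,k}}\hat Y)$ and likewise for $\pd\Delta^n$. By the criterion recalled in \S\ref{matchsn}, $\Map_r(X,Y)$ is a Kan complex precisely when each partial matching map $\Hom_\C(X,\hat Y_n)\to \Hom_\C(X,M_{\Lambda^{n,k}}\hat Y)$ is surjective. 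Since $X$ is cofibrant, the map $\emptyset \to X$ is a cofibration and hence has the left lifting property against every trivial fibration; therefore it suffices to prove that each partial matching map $\hat Y_n \to M_{\Lambda^{n,k}}\hat Y$ in $\C$ is a trivial fibration.

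This last point is the technical heart of the argument, and I expect it to be the main obstacle. I would prove, more generally, that for a Reedy-fibrant simplicial object $\hat Y$ (which is exactly what the two defining conditions of a simplicial fibrant resolution encode) and any monomorphism $K\hookrightarrow L$ of simplicial sets, the induced map $\Hom(L,\hat Y)\to\Hom(K,\hat Y)$ is a fibration in $\C$, and a trivial fibration whenever $K\hookrightarrow L$ is a trivial cofibration. Applying this to the trivial cofibration $\Lambda^{n,k}\hookrightarrow\Delta^n$, and using $\Hom(\Delta^n,\hat Y)=\hat Y_n$, gives the desired trivial fibrations. The proof is an induction on skeleta of $L$: each cell attachment $\pd\Delta^m\hookrightarrow\Delta^m$ reduces the fibration claim to the hypothesis that the matching maps $\hat Y_m\to M_{\pd\Delta^m}\hat Y$ are fibrations, while acyclicity in the trivial-cofibration case follows by two-out-of-three from the levelwise weak equivalences $Y\to\hat Y_m$ together with the weak contractibility of the horn (forcing $M_{\Lambda^{n,k}}\hat Y\simeq Y\simeq\hat Y_n$). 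This Reedy bookkeeping is where all the real work lies.

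For homotopy-invariance, the same trivial-fibration input (via Ken Brown's lemma) shows that a weak equivalence of cofibrant objects in the first variable and a weak equivalence of resolutions in the second induce weak equivalences of the resulting Kan complexes, so $\Map_r$ factors through $\Ho(\C)^{\op}\by\Ho(\C)\to\Ho(s\Set)$; invariance in $X$ is cleanest to see by comparing $\Map_r$ with the two-sided function complex. The natural map $\Hom_\C(X,Y)\to\Hom_\C(X,\hat Y_0)=\Map_r(X,Y)_0$ induced by $Y\to\hat Y_0$ supplies the comparison transformation demanded of a right-derived functor. Finally, to identify the homotopy type with $\oR\Map_\C$, I would first compute path components: since $\hat Y_0$ is a fibrant replacement of $Y$ and, by the preceding exercise, $\hat Y_1$ is a path object for $\hat Y_0$, Quillen's \Cref{thm:maps-homotopy-cat} gives
\[
\pi_0\Map_r(X,Y)=\operatorname{coeq}\bigl(\Hom_\C(X,\hat Y_1)\rightrightarrows\Hom_\C(X,\hat Y_0)\bigr)=\Hom_{\Ho(\C)}(X,Y),
\]
which matches $\pi_0\oR\Map_\C(X,Y)$; the full simplicial homotopy type, and the universal factorisation property, then follow from the standard comparison showing that the right, left and two-sided function complexes are all weakly equivalent and thus model the same derived functor.
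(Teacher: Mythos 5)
Your first two stages are sound: the Reedy-style bookkeeping showing that a simplicial fibrant resolution $\hat Y$ makes each partial matching map $\hat Y_n \to M_{\Lambda^{n,k}}(\hat Y)$ a trivial fibration, and hence that $\Map_r(X,Y)$ is a Kan complex depending on $(X,Y)$ only up to weak equivalence, is exactly the content the paper outsources to \cite{DKfunction} and \cite[\S 5.4]{hovey}, and proving it skeleton-by-skeleton is a legitimate, more self-contained route. One small caution there: Reedy fibrancy is only condition (2) of the definition of a simplicial fibrant resolution; the acyclicity of the partial matching maps genuinely requires the levelwise weak equivalences of condition (1) as well. Your sketch does invoke these at the right moment, but your parenthetical identifying ``simplicial fibrant resolution'' with ``Reedy fibrant object'' conflates the two.

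The genuine gap is in your final stage. The theorem asserts that $\oR\Map_r$ satisfies the universal property defining the right-derived functor: every natural transformation from $\Hom_{\C}$ to a weak-equivalence-preserving functor must factor through $\Hom_{\C} \to \oR\Map_r$. Computing $\pi_0$ via Theorem \ref{thm:maps-homotopy-cat} does not determine the homotopy type, and the ``standard comparison'' of left, right and two-sided function complexes only shows that those various models agree with \emph{one another}; it does not show that any of them has the universal factorisation property, which is the actual claim being proved. The paper closes this by a diagonal argument you would need to reproduce (or replace): given $\Hom_{\C} \to F$ with $F$ preserving weak equivalences, the maps $F(X,Y)\to F(X,\hat Y_i)$ are weak equivalences because $Y \to \hat Y_i$ is, so $F(X,Y)\to\diag\,(i\mapsto F(X,\hat Y_i))$ is a weak equivalence of simplicial sets, while the levelwise maps $\Hom_{\C}(X,\hat Y_i)\to F(X,\hat Y_i)$ assemble on diagonals into a map $\oR\Map_r(X,Y)\to \diag\,(i\mapsto F(X,\hat Y_i))$; inverting the weak equivalence yields the required factorisation in the homotopy category. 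Without some such argument the universal property is asserted rather than established.
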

    \begin{proof}[Proof (sketch).]
        By \cite{DKfunction} or \cite[\S 5.4]{hovey},  function complexes  preserve weak equivalences, and are independent of the choice of resolution (so in particular we may assume $\hat{Y}$ is chosen functorially). There is an obvious natural transformation  $\Hom_{\C}  \to \oR \Map_{\C}$, so it suffices to prove universality.
        
        If we have a natural transformation $\Hom_{\C} \to F$ with $F$ preserving weak equivalences, then the maps $F(X,Y) \to F(X,\hat{Y}_i)$ are weak equivalences for all $i$, so the map  $F(X,Y) \to \diag (i \mapsto F(X,\hat{Y}_i))$ is a weak equivalence.  But we have a map  $\Hom_{\C}(X,\hat{Y}_i)\to F(X,\hat{Y}_i)$, so taking diagonals gives
        \[
            \oR\Map_r(X,Y) \to \diag (i \mapsto F(X,\hat{Y}_i)) \xla{\sim} F(X,Y),
        \]
        hence the required morphism in the homotopy category.
    \end{proof}
    
    Note that  derived functors send $\oR\Map_{\C}$ to $\oR\Map_{\cD}$, and that Quillen equivalences induce  weak equivalences on $\oR\Map$.

      \begin{examples}\label{mapex}
 Here are some explicit examples of mapping spaces of cdgas:
        \begin{enumerate}
            \item Consider the affine line $\bA^1= \Spec k[x]$.
                A model for $\oR\Map_{\catdga}(k[x],B_\bt)$ is given  by $n  \mapsto \z_0((\Omega^{\bt}(\Delta^n)\ten B)_\bt)$, since $k[x]$ is cofibrant.
                However, a smaller model is given by Dold--Kan denormalisation: $ \oR\Map_{\catdga}(k[x],B_\bt) \simeq N^{-1}B_\bt$.
            \item Consider the affine group $\GL_n= \Spec A$, where 
                \[
                    A := k[ x_{ij}]_{1 \le i,j, \le n}[\det(x_{ij})^{-1}].
                \]
                A cofibrant replacement for $A$ is given by $\tilde{A_\bt} := k[x_{ij},y,t]$ with $t$ in degree $1$ satisfying  $\delta t = y\det(x_{ij})-1$, so 
                \[
                    \Hom_{\catdga}(\tilde{A_\bt},B_\bt)= \{(M,c,h) \in \Mat_n(B_0)\by B_0 \by B_1 ~:~ \delta h = c\det M -1\}, 
                \]   
                and then $\oR\Map_{\catdga}(A,B_\bt)$ is given in simplicial level $r$   by applying this to $\tau_{\ge 0}((\Omega^{\bt}(\Delta^r)\ten B)_\bt)$. 
                
                However, when $B_\bt$ Noetherian, we may just take $\GL_n(\widehat{(\z_0(\Omega^{\bt}(\Delta^r)\ten B))_\bt})$ in level $r$, where $\widehat{(-)}$ is completion along $(\z_0(\Omega^{\bt}(\Delta^r)\ten B)_\bt)\to \H_0(B_\bt)$, using the (Quillen equivalent) complete model structure of \cite[Proposition \ref{drep-cNhat}]{drep}. 
                
                In fact,  since $\GL_n$ is Zariski locally affine space, instead of completing we can just localise away from $\H_0(B_\bt)$, and drop the Noetherian hypothesis; this follows by using the local model structure,  a special case of  \cite[Proposition \ref{DStein-locmodelprop}]{DStein}.
        \end{enumerate}
    \end{examples}
    
    %
    %

    \begin{remark}
        The expressions above for cdgas  adapt to dg  $\C^{\infty}$ and EFC algebras, using $\odot$ instead of $\ten$ and $\C^{\infty}(\R^n)$ or $\sO^{\hol}(\Cx^n)$ instead of $\Omega^0(\Delta^n) \cong R[x_1, \dots, x_n]$.
        
        
    \end{remark}
    \begin{lemma}\label{dgModsCat}
        In categories like $\catdgmU$ or $\catdgm$,  the simplicial abelian groups $\oR\Map(M,P)$ normalise to give $N\oR\Map(M,P) \simeq \tau_{\ge 0}\oR\HHom_A(M,P)$ for $\HHom$ the dg $\Hom$ functor.   
    \end{lemma}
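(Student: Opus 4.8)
The plan is to reduce the statement to an explicit model for $\oR\Map$ built from a cotensor resolution, and then to recognise the outcome as a Dold--Kan denormalisation. Throughout I work with the relevant projective model structure on $\catdgm$ (resp.\ $\catdgmU$) as in \Cref{projmodelstr}, in which fibrations are surjections in positive degrees, so that \emph{every} object is fibrant.

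First I would replace $M$ by a cofibrant replacement $\tilde M$; since both sides of the asserted equivalence are homotopy invariant this is harmless. Set $H_\bullet := \HHom_A(\tilde M, P)$. Because $\tilde M$ is cofibrant (hence homotopy-projective) and $P$ is fibrant, $H_\bullet$ computes $\oR\HHom_A(M,P)$. Next I would build a fibrant simplicial resolution of $P$ by cotensoring with the simplices, in the spirit of \Cref{pathexamples}: for a simplicial set $K$ put $P^{K} := \HHom_{\Ch}(N(\Z.K), P)$, where $N(\Z.K)$ is the normalised chain complex of $K$, and set $\hat P_n := P^{\Delta^n}$ (applying $\tau_{\ge 0}$ to stay inside the bounded category $\catdgm$, which changes nothing below since $\tilde M$ is non-negatively graded). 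The map $N(\Z.\Delta^n)\to \Z$ is a quasi-isomorphism of cofibrant complexes, so the maps $P\to\hat P_n$ are weak equivalences; and $N(\Z.\pd\Delta^n)\hookrightarrow N(\Z.\Delta^n)$ has free cokernel, so the matching maps $\hat P_n \to P^{\pd\Delta^n}=M_{\pd\Delta^n}(\hat P)$ are (levelwise split) fibrations. Thus $\hat P$ is a fibrant simplicial resolution and \Cref{Rmapthm} gives $\oR\Map(M,P)_n = \Hom_\C(\tilde M, \hat P_n)$.

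Now comes the computation. Using that a morphism in $\C$ is a degree-zero cycle of the internal hom, together with the standard adjunction interchanging the $\Ch$-cotensor with the $A$-linear internal hom, I obtain
\[
 \oR\Map(M,P)_n = \z_0\HHom_A(\tilde M, P^{\Delta^n}) = \z_0\HHom_{\Ch}(N(\Z.\Delta^n), H_\bullet) = \Hom_{\Ch}(N(\Z.\Delta^n), H_\bullet).
\]
The key elementary observation is that a chain map out of the non-negatively graded complex $N(\Z.\Delta^n)$ must send degree $0$ into $\z_0 H_\bullet$, hence factors uniquely through $\tau_{\ge 0}H_\bullet$; so the right-hand side equals $\Hom_{\Ch}(N(\Z.\Delta^n), \tau_{\ge 0}H_\bullet)$. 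Recognising the standard description $N^{-1}(C)_n = \Hom_{\Ch}(N(\Z.\Delta^n), C)$ of the Dold--Kan denormalisation, this says precisely $\oR\Map(M,P) \cong N^{-1}(\tau_{\ge 0}\oR\HHom_A(M,P))$ as simplicial abelian groups, and applying $N$ yields the claim.

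The main obstacle is the middle step: verifying that $\hat P_\bullet$ really is a fibrant simplicial resolution (the matching maps being fibrations rests on freeness of the cokernels $N(\Z.\Delta^n)/N(\Z.\pd\Delta^n)$), and simultaneously confirming that $H_\bullet = \HHom_A(\tilde M, P)$ genuinely computes $\oR\HHom$ for the chosen replacements; once these are secured, the remainder is the formal adjunction calculation together with the degree-zero truncation observation. As a sanity check on the truncation appearing on the right, note the consistency of homotopy groups $\pi_n\oR\Map(M,P)=\Ext^{-n}(M,P)=H_n\oR\HHom_A(M,P)$ for $n\ge 0$, matching the footnote to \Cref{cohoex}.
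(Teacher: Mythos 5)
Your argument is correct and is essentially the paper's first suggested route: the paper's function complex $\hat{P}(n)\cong P\ten \bar{C}^{\bt}(\Delta^n)$ is naturally isomorphic to your cotensor $\HHom_{\Ch}(N(\Z.\Delta^n),P)$ (normalised chains on $\Delta^n$ being finite free), and you have simply filled in the verification that it is a fibrant simplicial resolution together with the adjunction, truncation and Dold--Kan identifications. The paper also sketches an alternative, more abstract argument via the universal property of derived functors, but your concrete version is complete without it.
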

    \begin{proof}
        One approach is just to take the function complex $\hat{P}(n) \cong P\ten \bar{C}^{\bt}(\Delta^n)$ (normalised \emph{chains}\index{chains} on the $n$-simplex).

        Alternatively, note that   
        $\tau_{\le 0}\oR\HHom$ is the right-derived bifunctor of the composition of $\Hom$ with the inclusion of abelian groups  in non-negatively graded chain complexes. 
        
        Normalisation preserves weak equivalences, as does the forgetful functor from simplicial abelian groups to simplicial sets, so Dold--Kan denormalisation $N^{-1}$ gives the simplicial set-valued functor $N^{-1}\tau_{\ge 0}\oR\HHom_A(M,P)$ as the right-derived functor of $\Hom$, and thus 
        \[
            \oR\Map(M,P) \simeq N^{-1}\tau_{\ge 0}\oR\HHom_A(M,P).
        \]
    \end{proof}
    
    The following is a consequence of  Theorem \ref{Rmapthm}:
    
    \begin{corollary}
        If $F \co \cC \to \cD$ is left Quillen, with right adjoint $G$, then 
        \[
        \oR\Map_{\C}(A,\oR GB) \simeq \oR\Map_{\cD}(\oL F A,B);
        \]
        in particular, the derived functors $\oL F, \oR G$ give an adjunction of the associated infinity categories. 
    \end{corollary}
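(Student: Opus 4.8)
The plan is to reduce everything to the explicit function-complex model for $\oR\Map$ supplied by Theorem \ref{Rmapthm}, and then to transport the adjunction $\Hom_{\cD}(F-,-)\cong\Hom_{\C}(-,G-)$ through it levelwise. First I would replace $A$ by a cofibrant object $\tilde A$ and $B$ by a fibrant object (both harmless, since $\oR\Map$ and the derived functors $\oL F,\oR G$ depend on their arguments only up to weak equivalence); so assume $A$ cofibrant and $B$ fibrant, whence $\oL F A = F A$ and $\oR G B = G B$ as models. Choose a fibrant simplicial resolution $\hat B$ of $B$ as in Theorem \ref{Rmapthm}, so that $\oR\Map_{\cD}(FA,B)$ is modelled by $n\mapsto \Hom_{\cD}(FA,\hat B_n)$; here $FA$ is cofibrant because $F$, being left Quillen, preserves the initial object and cofibrations.

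The crux is to produce a fibrant simplicial resolution of $GB$ out of $\hat B$, and the natural candidate is the levelwise image $G\hat B$. I would verify its two defining conditions. For the weak equivalences $GB \to G\hat B_n$: the maps $B\to\hat B_n$ are weak equivalences between fibrant objects (each $\hat B_n$ is fibrant, being an iterated fibration over the fibrant matching objects $M_{\pd\Delta^n}(\hat B)$, starting from $M_{\pd\Delta^0}=\ast$), and right Quillen functors preserve weak equivalences between fibrant objects, as recorded in the remark following the Quillen derived-functor theorem. For the matching maps: since $G$ is a right adjoint it preserves all limits, and the matching object $M_{\pd\Delta^n}$ is such a limit, so $G\,M_{\pd\Delta^n}(\hat B) = M_{\pd\Delta^n}(G\hat B)$; as $G$ also preserves fibrations, it carries the fibration $\hat B_n \to M_{\pd\Delta^n}(\hat B)$ to a fibration $G\hat B_n \to M_{\pd\Delta^n}(G\hat B)$. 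Hence $G\hat B$ is a fibrant simplicial resolution of the (fibrant) object $GB$.

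Now Theorem \ref{Rmapthm} models $\oR\Map_{\C}(A,GB)$ by $n\mapsto \Hom_{\C}(A,G\hat B_n)$, and the adjunction gives bijections
\[
\Hom_{\C}(A, G\hat B_n)\;\cong\;\Hom_{\cD}(FA,\hat B_n)
\]
that are natural in $n$, hence assemble into an isomorphism of simplicial sets $\oR\Map_{\C}(A,\oR GB)\cong \oR\Map_{\cD}(\oL F A,B)$. Tracking the cofibrant replacement of $A$ and the fibrant replacement of $B$ through this identification, and using that $\oR\Map$ is invariant under such replacements, yields the asserted weak equivalence for arbitrary $A,B$.

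Finally, for the ``in particular'', I would observe that the construction above is natural in both $A$ and $B$, so it exhibits a natural equivalence of bifunctors $\oR\Map_{\C}(-,\oR G-)\simeq \oR\Map_{\cD}(\oL F-,-)$ on the associated $\infty$-categories; by the mapping-space characterisation of adjunctions, this is precisely the data of an adjunction $\oL F\dashv \oR G$. The main obstacle is the middle step --- checking that $G$ sends a fibrant simplicial resolution to a fibrant simplicial resolution --- which hinges on the two independent facts that a right adjoint preserves the matching-object limits and that a right Quillen functor preserves weak equivalences between fibrant objects; once these are in place, the levelwise adjunction isomorphism does the rest.
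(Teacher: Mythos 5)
Your argument is correct and is precisely the elaboration the paper intends: the text offers no proof beyond the remark that the corollary ``is a consequence of Theorem \ref{Rmapthm}'', and your route --- checking that $G$ carries a fibrant simplicial resolution of $B$ to one of $GB$ (via preservation of the matching-object limits and of fibrations, plus Ken Brown's lemma for the levelwise weak equivalences) and then applying the adjunction isomorphism levelwise --- is the standard way to cash that in.
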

    
    \begin{example}
        The homotopy fibre of $\oR\Map_{\catdga}(A,B\oplus I) \to \oR\Map_{\catdga}(A,B)$ over $f$ is $\oR\Map_{\catdgm[A]}(\bL^{A/k},f_*M)$ which is quasi-isomorphic to $ N^{-1}\tau_{\ge 0}\oR\HHom_A(\bL^{A/k},f_*M) $, so 
        \[
        \pi_i\oR\Map_{\catdgm[A]}(\bL^{A/k},f_*M) \cong \EExt^{-i}_A(\bL^{A/k},f_*M). 
        \]
        This accounts for all of the obstruction maps seen in \S \ref{obssn}.
    \end{example}
    
    Another feature of  $\oR\Map$ is that it interacts with homotopy limits in the obvious way, so
    
    \begin{align*}
        \oR\Map(A, \ho\Lim_{i \in I}B(i))&\simeq \ho\Lim_{i \in I}\oR\Map(A,B(i)) \\  \oR\Map(\ho\LLim_{j\in J}A(j), B)&\simeq \ho\Lim_{j \in J}\oR\Map(A(j),B).
    \end{align*}

    
    
    
\subsection{Simplicial algebras}\label{salgsn}

    If we don't want our base $k$ to contain $\Q$, then we have to use simplicial rings instead of dg-algebras, giving the primary viewpoint of \cite{Q}.

    
\subsubsection{Definitions}
   
    \begin{definition}
        For a commutative ring $R$, define the category $s\Alg_R$ to consist of \emph{simplicial commutative $R$-algebras},\index{simplicial!algebras} i.e. functors $A \to \Delta^{\op} \to \Alg_R$.
    \end{definition}
    
    Thus each $A_n$ is a commutative $R$-algebra and the operations $\pd_i,\sigma_i$ are $R$-algebra homomorphisms.
    
    \smallskip    
    
    Quillen \cite{QRat,QHA} gives $s\Alg_R$ a model structure in which fibrations and weak equivalences are inherited from the corresponding properties for the underlying simplicial sets.
    
    
    \begin{theorem}[Quillen]\label{normalgthm}
        For $\Q \subseteq R$, Dold--Kan denormalisation gives a right Quillen {\bf equivalence} $N \co s\Alg_R \to \catdga[R]$, where the multiplication on $NA$ is defined using shuffles (\S \ref{shufflesn}). 
    \end{theorem}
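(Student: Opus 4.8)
The plan is to realise $N$ as the right adjoint of a Quillen adjunction $F \dashv N$ and then prove the equivalence by reducing, via the generating free objects, to a symmetric-power computation in which the hypothesis $\Q \subseteq R$ is exactly what is needed. First I would set up the adjunction: limits in both $s\Alg_R$ and $\catdga[R]$ are created by the forgetful functors to simplicial $R$-modules and to chain complexes, and $N$ intertwines these forgetful functors with the additive Dold--Kan equivalence, so $N$ preserves all limits and (being accessible) admits a left adjoint $F$. To see that $F \dashv N$ is a Quillen pair it suffices, by \cref{lemma:left-quillen-right-quillen}, to check that $N$ is right Quillen, i.e. preserves fibrations and trivial fibrations. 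But fibrations and weak equivalences in $s\Alg_R$ are created in the underlying simplicial abelian groups, those in $\catdga[R]$ are created in chain complexes as in \cref{projmodelstr}, and under Dold--Kan a map of simplicial abelian groups is a Kan fibration (resp. weak equivalence) if and only if its normalisation is surjective in positive degrees (resp. a quasi-isomorphism). Hence $N$ preserves fibrations and weak equivalences, and in particular \emph{reflects} weak equivalences.

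Since every object is fibrant in both categories, and $N$ reflects weak equivalences, the standard criterion for a Quillen equivalence (e.g. \cite{hovey}) reduces the theorem to showing that the unit $\eta_A \co A \to NFA$ is a weak equivalence for every cofibrant $A \in \catdga[R]$. A cofibrant $A$ is a retract of a quasi-free cdga $(\Symm(V^{\#}),\delta)$ on a graded module $V^{\#}$ of generators, and both $F$ and $N$ together with $\eta$ respect the word-length filtration by symmetric powers; passing to the associated graded kills the cross-terms of $\delta$ and turns it into the linear part, so a filtration/convergence comparison reduces the claim to the free case $A = \Symm(V)$ on a cofibrant \emph{chain complex} $V$.

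For the free case I would identify $F(\Symm V) \cong \Symm_s(N^{-1}V)$, the free simplicial algebra on the denormalisation, by comparing representable functors: using $N U_s = U_{dg} N$ (normalisation commutes with the forgetful functors $U_s$, $U_{dg}$) and the Dold--Kan adjunction, one checks $\Hom_{s\Alg_R}(\Symm_s N^{-1}V, B) \cong \Hom_{\catdga[R]}(\Symm V, NB)$ naturally. Under this identification the unit becomes, in each symmetric-power weight $n$, the comparison map $\Symm^n(NM) \to N(\Symm^n_s M)$ with $M = N^{-1}V$, induced by the iterated Eilenberg--Zilber shuffle $\nabla \co (NM)^{\ten n} \to N(M^{\ten n})$ of \S\ref{shufflesn}. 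This shuffle map is a quasi-isomorphism, and its symmetry makes it $S_n$-equivariant for the sign action matching the Koszul signs of the graded symmetric power.

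The crux, and the main obstacle, is this last comparison, where $\Q \subseteq R$ is essential. The averaging idempotent $\tfrac{1}{n!}\sum_{\sigma \in S_n}\sigma$ exhibits $S_n$-coinvariants as a direct summand, so $(-)_{S_n}$ is exact and commutes both with the additive functor $N$ and with quasi-isomorphisms. Applying $(-)_{S_n}$ to the equivariant quasi-isomorphism $\nabla$ therefore gives a quasi-isomorphism $\Symm^n(NM) = ((NM)^{\ten n})_{S_n} \to (N(M^{\ten n}))_{S_n} = N(\Symm^n_s M)$, and summing over $n$ shows $\eta$ is a quasi-isomorphism on free objects, completing the reduction. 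The single genuinely non-formal ingredient is exactly this homotopy-invariance of symmetric powers, which fails outside characteristic zero (cf. the footnote to \cref{exp:free-graded}); by contrast the filtration/cellular induction passing from free objects to all cofibrant cdgas, and the bookkeeping of Koszul signs guaranteeing that $\nabla$ is honestly $S_n$-equivariant, are routine but should be carried out with care.
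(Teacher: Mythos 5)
The paper states Theorem \ref{normalgthm} without proof, attributing it to Quillen, so there is no in-house argument to compare against. Judged on its own, your proposal is the standard proof and correctly isolates the genuinely characteristic-zero ingredient: the formal part (existence of the left adjoint $F$; fibrations and weak equivalences on both sides detected on normalised chains, so $N$ is right Quillen and reflects weak equivalences; all objects fibrant, so the equivalence reduces to the unit $A \to NFA$ on cofibrant $A$), the identification of $F(\Symm V)$ with the free simplicial algebra on $N^{-1}V$ by comparing representable functors, the description of the unit in weight $n$ via the iterated shuffle $\nabla$, and the use of the averaging idempotent $\tfrac{1}{n!}\sum_{\sigma}\sigma$ to make $S_n$-coinvariants exact are all correct, and the last is exactly where $\Q \subseteq R$ enters.

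The step that does not survive scrutiny is the reduction from a general quasi-free $(\Symm(V),\delta)$ to the free case via the word-length filtration. That filtration is not $\delta$-stable --- a degree-$1$ generator $t$ may satisfy $\delta t = x^2-1$, with a nonzero component in $\Symm^0$ --- and even when it is stable it is infinite in each total degree as soon as there are degree-$0$ generators, so ``pass to the associated graded'' needs both a hypothesis and a convergence argument. Concretely, $(R[t],\, \delta t =1)$ with $t$ in degree $1$ is quasi-free and acyclic, but its word-length associated graded has homology $R\oplus R$, so the claimed comparison fails. The correct reduction is the cellular one you mention only in passing: filter the generators by chain degree, so that each stage is a pushout along a free extension, and induct up the resulting cell structure (retracts being harmless). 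Even then the induction step is not formal: $F$ preserves pushouts but $N$ does not preserve pushouts of simplicial algebras, so identifying $NF$ of a cell attachment with the corresponding extension of $NFA$ requires the Eilenberg--Zilber comparison between $N$ of a levelwise tensor product and the tensor product of normalisations (valid for levelwise flat factors). That K\"unneth-type input is the second, less visible, place the shuffle map of \S \ref{shufflesn} is used, and it should be stated rather than absorbed into ``routine.''
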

    
    \begin{remarks}
    The theorem tells us that cdgas and simplicial algebras have equivalent homotopy theory in characteristic $0$, but simplicial algebras still work in finite and mixed characteristic. Our focus has been on cdgas, though, because they give much more manageable objects --- the degeneracies in a simplicial diagram generate a lot of elements.
    
    For an explicit homotopy inverse to $N \co s\Alg_R \to \catdga[R]$, instead of taking the derived left Quillen functor, we can just take the model for $\oR\Map(R[x],-)$ from Examples \ref{mapex}.
    \end{remarks}
    
    \begin{remark}
    We can also consider simplicial   EFC-algebras and $\C^{\infty}$-algebras (i.e. simplicial diagrams in the respective categories of algebras, so all structures are defined levelwise). Dold--Kan normalisation again gives a right Quillen functor to dg EFC or dg $\C^{\infty}$-algebras, and  this is a right Quillen equivalence by \cite{nuitenThesis}, hence our focus on the dg incarnations.
    
    Cotangent complexes are formulated for any algebraic theory in \cite{Q}, so the results there can be applied directly to EFC and $\C^\infty$ settings, but again they reduce to the differential graded constructions by \cite{nuitenThesis}.
    \end{remark}

    \subsubsection{Simplicial modules}
    
    \begin{definition}
    Given a simplicial ring $A$, we define the category $s\Mod_A$ of \emph{simplicial $A$-modules} \index{simplicial!modules} to consist of $A$-modules $M$ in simplicial sets.
    \end{definition}
    Thus each $M_n$ is an $A_n$-module, with the obvious compatibilities between the face and degeneracy maps $\pd_i,\sigma_i$ on $A$ and on $M$.

    \begin{theorem}[Quillen] 
    For $A \in s\Alg_R$, Dold--Kan denormalisation gives a right Quillen {\bf equivalence} $N \co s\Mod_A \to \catdgm[NA]$, where the multiplication of $NA$ on $NM$ is defined using shuffles (\S \ref{shufflesn}). 
    \end{theorem}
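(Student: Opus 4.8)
The plan is to follow the template of the algebra-level statement, Theorem~\ref{normalgthm}, reducing everything to the underlying Dold--Kan equivalence once the module structure has been transported across the shuffle map. First I would pin down the functor: the $A$-action on a simplicial module $M$ is a map $\diag(A\uline{\ten}M)\to M$ of simplicial abelian groups, and normalising it through the Eilenberg--Zilber shuffle map of \S\ref{shufflesn} yields
\[
	NA\ten NM \xra{\nabla} N\diag(A\uline{\ten} M)\to NM,
\]
which makes $NM$ an $NA$-module; associativity and unitality are inherited from the lax symmetric monoidal coherence of $\nabla$, exactly as for the multiplication on $NA$ itself constructed before Theorem~\ref{normalgthm}. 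Thus $N$ is a well-defined functor $s\Mod_A \to \catdgm[NA]$.

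Next I would show that $N$ is right Quillen. Writing $U\co s\Mod_A\to s\Mod_R$ and $U'\co \catdgm[NA]\to \catdgm[R]$ for the forgetful functors, the model structures on source and target are created by $U$ and $U'$: fibrations and weak equivalences are detected on the underlying simplicial abelian groups, resp.\ chain complexes. Since $U'\circ N = N_s\circ U$, where $N_s\co s\Mod_R\to \catdgm[R]$ is ordinary Dold--Kan, and since $N_s$ is an equivalence of categories carrying the Kan--Quillen model structure precisely onto the projective one (using $\H_\ast N_s=\pi_\ast$ to match weak equivalences, together with the fact that a map of simplicial abelian groups is a Kan fibration iff its normalisation is surjective in positive degrees), it follows that $N$ preserves fibrations and trivial fibrations. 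A left adjoint $F$ exists by the adjoint functor theorem, both categories being presentable and $N$ preserving limits.

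Finally, the Quillen equivalence. Because $N_s$ is an equivalence and weak equivalences are detected on underlying objects, $N$ both preserves and reflects weak equivalences, and every object of $s\Mod_A$ is fibrant; it therefore suffices to check that the derived unit $P\to NFP$ is a weak equivalence for cofibrant $P\in\catdgm[NA]$. This is the one genuinely monoidal point, and I expect it to be the main obstacle: $N$ is only lax (not strong) monoidal, since $\nabla$ is a quasi-isomorphism but not an isomorphism, so $N$ is \emph{not} an equivalence on module categories and $F$ is not simply denormalisation. The cleanest route is to observe that the shuffle and Alexander--Whitney maps make $s\Mod_R$ and $\catdgm[R]$ into a weak monoidal Quillen equivalence; because both maps are natural quasi-isomorphisms (Eilenberg--Zilber), the standard transfer of such an equivalence to modules over a monoid applies to the monoid $A$ and yields the equivalence for $N\co s\Mod_A \to \catdgm[NA]$. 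Concretely, the derived unit is controlled by comparing the shuffle and Alexander--Whitney maps on the cofibrant generators of the form $NA\ten(\text{cells})$, where both are quasi-isomorphisms; everything else reduces formally to the Dold--Kan and Eilenberg--Zilber theorems and to the already-established algebra case.
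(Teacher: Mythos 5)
The paper states this result without proof, attributing it to Quillen and deferring details to \cite{Q}, so there is no in-text argument to diverge from; your proposal is correct and is the standard modern proof, namely the transfer of the Dold--Kan equivalence, equipped with its shuffle (lax monoidal) and Alexander--Whitney (lax comonoidal) structures, to module categories over a monoid \`a la Schwede--Shipley. You correctly isolate the two genuinely nontrivial points: that $N$ is only lax monoidal, so the left adjoint $F$ is \emph{not} simply $N^{-1}$ and the categories are not equivalent on the nose; and that, since every object of $s\Mod_A$ is fibrant and $N$ reflects weak equivalences, the whole question reduces to the derived unit $P \to NFP$ on cofibrant $P$. The one step you compress is the passage from the generating cofibrant objects of the form $NA\ten C$ --- where $F(NA \ten C) \cong A\ten N^{-1}C$ and the unit is identified with the shuffle map $NA\ten C \to N(A\ten N^{-1}C)$, a quasi-isomorphism by Eilenberg--Zilber --- to arbitrary cofibrant $P$; this needs the usual induction over the cell filtration, using that both $F$ and $N$ carry the pushouts attaching cells to homotopy pushouts (for $N$ because the relevant maps are levelwise split injections). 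That is precisely what the cited transfer theorem supplies, so the argument closes. Note also that, exactly as the paper's remark anticipates, your proof uses neither commutativity of $A$ nor any characteristic hypothesis, since the Eilenberg--Zilber quasi-isomorphism holds over an arbitrary base; the symmetric-algebra functors that obstruct the algebra-level statement in positive characteristic never appear.
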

    Note that this statement does {\it not} need any restriction on the characteristic, essentially because modules do not care whether an algebra is commutative; the analogous statement for $A$ a simplicial non-commutative ring is also true.

    \subsubsection{Consequences}
    The various constructions we have seen for dg-algebras carry over to simplicial algebras, extending results beyond characteristic $0$.  Such constructions include the cotangent complex \index{cotangent complex} $\bL^{S/R} \in s\Mod_S$ (equivalently, $\catdgm[NS]$), which  has the same properties for smooth morphisms, \'etale morphisms and regular embeddings as before, though the calculation in the proof of Theorem \ref{smoothcotthm} becomes a little dirtier. The cotangent complex  is then used to define Andr\'e--Quillen  cohomology $D^*$\index{Andr\'e--Quillen cohomology!$D^*(A,M)$}. In characteristic  $0$, these are all (quasi-)isomorphic to our earlier cdga constructions. For details, see \cite{Q}.
    
    Mapping spaces for simplicial algebras are in fact simpler to describe than those for dg-algebras, since a fibrant simplicial resolution of $A$ is given by $n \mapsto A^{\Delta^n}$, defined in the same way as for simplicial sets in Examples \ref{pathexamples}.    

    \subsection{\texorpdfstring{$n$}{n}-Hypergroupoids}\label{hgpdsn}
    
    References for this section include \cite{duskin,glenn,getzler}, or \cite{stacks2} for  relative and trivial hypergroupoids; we follow the treatment in \cite{stacksintro}.

    %
    %
    %
    %
    \begin{definition}\label{relhyp}
    Given   $Y\in s\Set$, define a \emph{relative  $n$-hypergroupoid}\index{relative hypergroupoid}  over $Y$ to be a morphism $f:X\to Y$ in $s\Set$, such that  the   relative partial matching maps
    $$
    X_m \to M_{\L^{m,k}} (X)\by_{M_{\L^{m,k}}(Y)}Y_m 
    $$
    are surjective for all $k,m$ (i.e. $f$ is a Kan fibration), and isomorphisms for all $m> n$. In the terminology of \cite{glenn}, this says that $f$ is a Kan fibration which is an exact fibration in all dimensions $>n$.
    
    When $Y=*$ (the constant diagram on a point), we simply say that $X$ is an \emph{$n$-hypergroupoid}.\index{hypergroupoid}
    \end{definition}

    In other words, the definition says,  ``Relative horn fillers exist for all $m$, and are unique for $m>n$'': 
    the dashed arrows in  figure \ref{fig:hypergroupoids} making the triangles commute always exist, and are unique for $m>n$.
    \begin{figure}[H]
    \centering

    \begin{tikzpicture}
    
    \draw[fill=black!20] (-.5,0) -- (.5, 0) -- (0,.707) --cycle;
    \draw (-.5, 2) -- (0, 2.707) -- (.5, 2);
    \node[rotate=-90] at (0, 1.3535) {$\subset$};
    
    \node (Lam) at (2,2.35) {$\Lambda^{m,k}$};
    \node (Del) at (2,.35) {$\Delta^m$};
    \node (X) at (4,2.35) {$X$};
    \node (Y) at (4,.35) {$Y$};
    
    \draw[->] (Lam) -- (X);
    \draw[right hook->] (Lam) -- (Del);
    \draw[->, dashed] (Del) -- (X);
    \draw[->] (Del) -- (Y);
    \draw[->] (X) -- (Y);
    
    \end{tikzpicture}
    \caption{Horn-filling conditions}
    \label{fig:hypergroupoids}
    \end{figure}
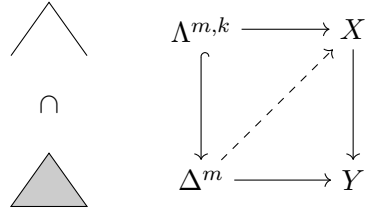

    \begin{examples}\label{ex:hgpd}\
      \begin{enumerate}
    \item
    A $0$-hypergroupoid $X$ is just a set $X=X_0$ regarded as a constant simplicial object, in the sense that we set $X_m=X_0$ for all $n$.
    
    \item
    \cite[\S 2.1]{glenn} (see also \cite[Lemma I.3.5]{sht}):  $1$-hypergroupoids are precisely nerves \index{nerve of a groupoid}
    $B\Gamma$ of groupoids $\Gamma$,  given by
    \[
    (B\Gamma)_n= \coprod_{x_0, \ldots, x_n}\Gamma(x_0,x_1)\by \Gamma(x_1,x_2)\by\ldots  \by \Gamma(x_{n-1},x_n),       
    \]
    with the face maps $\pd_i$ given by multiplications or discarding the ends, and the degeneracy maps $\sigma_i$ by inserting identity maps.
    %
    
    \item\label{ex:0hgpd} A relative $0$-hypergroupoid $f\co X \to Y$ is a Cartesian morphism,\index{Cartesian!morphism}
    in the sense that the maps
    $$
    X_n\xra{(\pd_i, f)} X_{n-1}\by_{Y_{n-1}, \pd_i}Y_n
    $$
    are all isomorphisms.
    \[
    \begin{CD}
    X_n @>f>> Y_n\\
    @V{\pd_i}VV @VV{\pd_i}V\\
    X_{n-1} @>f>> Y_{n-1}.
    \end{CD}
    \]
    
    Given $y \in Y_0$, we can write $F(y):= f_0^{-1}\{y\}$, and observe that $f$ is equivalent to a local system on $Y$ with fibres $F$.
    \end{enumerate}
    \end{examples}

    \begin{properties}\label{hypergroupoidprops}\
      \begin{enumerate}
    \item  For an $n$-hypergroupoid $X$, we have $\pi_mX=0$ for all $m>n$.
    
    \item Conversely, if $Y \in s\Set$ with $\pi_mY=0$ for all $m>n$, then  there exists a weak equivalence $Y \to X$ for some $n$-hypergroupoid $X$ (given by taking applying the fundamental $n$-groupoid construction of \cite{glenn} to a fibrant replacement).
    
    \item \label{truncate}\cite[Lemma \ref{stacks2-truncate}]{stacks2}: An $n$-hypergroupoid $X$ is completely determined by its truncation $X_{\le n+1}$. Explicitly, $X= \cosk_{n+1}X$, where the $m$-coskeleton $\cosk_mX$  has the universal property that $\Hom_{s\Set}(Y,\cosk_mX)\cong \Hom(Y_{\le m},X_{\le m})$ for all $Y \in s\Set$, so in particular has  $(\cosk_mX)_i= \Hom((\Delta^i)_{\le m}, X_{\le m})$. \index{coskeleton $\cosk$}
    
    Moreover, a simplicial set of the form $\cosk_{n+1}X$ is an $n$-hypergroupoid if and only if  it satisfies the conditions of Definition \ref{relhyp} up to level $n+2$. 
    
    When $n=1$, these statements amount to saying that a groupoid is uniquely determined by its objects (simplicial level $0$), morphisms and identities (level $1$) and multiplication (level $2$). However, we do not know we have a groupoid until we check associativity (level $3$). 
    
    \item Under the Dold--Kan correspondence between non-negatively graded chain complexes and simplicial abelian groups, $n$-hypergroupoids in abelian groups correspond to chain complexes concentrated in degrees $[0,n]$. One implication is easy to see because all simplicial groups are fibrant and $N_mA= \ker(A_m \to M_{\L^{m,0}}(A))$; the reverse implication uses the characterisation
    $N_mA \cong A_m/\sum \sigma_iA_{m-1}$. 
    
    \end{enumerate}
    \end{properties}
    %
    %
    %

    \begin{digression}
    There are  also versions for categories instead of groupoids, with just inner horns --- drop the conditions for $\L^{m,0}$ and $\L^{m,m}$. These give a model for $n$-categories (i.e. $(n,1)$-categories) instead of $n$-groupoids (i.e. $(n,0)$-categories). Taking $n = \infty$ then gives  Boardman and Vogt's weak Kan complexes \index{weak Kan complex} \cite{BoardmanVogt}, called quasi-categories \index{quasi-category} by Joyal \cite{joyalQCatKan}.
    
    Nowadays, these are often known simply as $\infty$-categories following the usage in \cite{luriehighertopoi,lurieSAG}, whereas \cite{lurieInftyTopoi,lurie} use that term exclusively for simplicial categories, which give an equivalent theory by \cite{joyalQCatSCat}. While quasi-categories lead to efficient proofs in the general theory of $\infty$-categories, they tend to be less convenient when working in a specific $\infty$-category. 
    \end{digression}
    
    \subsubsection{Trivial hypergroupoids}
    
    When is a groupoid contractible? When does a relative hypergroupoid correspond to an equivalence?
    
    \begin{definition}\label{trelhyp}
    Given   $Y\in s\Set$, define a \emph{trivial  relative  $n$-hypergroupoid} \index{hypergroupoid!trivial relative} over $Y$ to be a morphism $f:X\to Y$ in $s\Set$, such that  the   relative  matching maps
    $$
    X_m \to M_{\pd \Delta^m} (X)\by_{M_{\pd \Delta^m} (Y)}Y_m 
    $$
    are surjective for all $m$ (i.e. $f$ is a trivial Kan fibration), and isomorphisms for all $m\ge n$. 
    \end{definition}
    
    In other words, 
    the dashed arrows in figure \ref{fig:trivhypergroupoids}  making the triangles commute always exist, and are unique for $m\ge n$.
    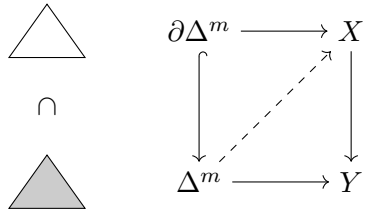
\begin{figure}[H]
    \centering

    \begin{tikzpicture}
    
    \draw[fill=black!20] (-.5,0) -- (.5, 0) -- (0,.707) --cycle;
    \draw (-.5, 2) -- (0, 2.707) -- (.5, 2) --cycle;
    \node[rotate=-90] at (0, 1.3535) {$\subset$};
    
    \node (Lam) at (2,2.35) {$\pd\Delta^{m}$};
    \node (Del) at (2,.35) {$\Delta^m$};
    \node (X) at (4,2.35) {$X$};
    \node (Y) at (4,.35) {$Y$};
    
    \draw[->] (Lam) -- (X);
    \draw[right hook->] (Lam) -- (Del);
    \draw[->, dashed] (Del) -- (X);
    \draw[->] (Del) -- (Y);
    \draw[->] (X) -- (Y);
    
    \end{tikzpicture}
    \caption{Simplex-filling conditions}
    \label{fig:trivhypergroupoids}
    \end{figure}

    Note that if $X$ is a trivial $n$-hypergroupoid over a point, then $X= \cosk_{n-1}X$, so $X$ is determined by $X_{<n}$. The converse needs conditions to hold for $X_{<n}$. 
    
    \begin{examples}\
    \begin{enumerate}
    \item A trivial relative $0$-hypergroupoid is an isomorphism.
    
    \item A trivial $1$-hypergroupoid over a point is the nerve of a contractible groupoid.
    \end{enumerate}
    \end{examples}

    \clearpage  
    
    \section{Geometric \texorpdfstring{$n$}{n}-stacks}\label{stacksn}
    
    References for this section are \cite{stacksintro, stacks2}. The  approach we will be taking was first postulated by Grothendieck in \cite{pursuingstacks}. Familiarity with the theory of algebraic stacks \cite{DeligneMumford,Artin,Champs} is not essential to follow this section, as we will construct everything from scratch in  a more elementary way.
    
    \medskip
    
    So far, we've mostly looked at derived affine schemes; they arise as homotopy limits of affine schemes.
    
    Now, we want to glue or take quotients, so we want homotopy colimits, which means we look to enrich objects in the opposite direction.

    \begin{warning}
    Whereas the simplicial algebras of \S \ref{salgsn}  correspond to covariant functors from $\Delta$ to affine schemes, i.e. to \emph{cosimplicial}\index{cosimplicial} affine schemes
    \[
    \xymatrix@1{ X^0 \ar@<1ex>[r]^{\pd^0} \ar@<-1ex>[r]_{\pd^1} & \ar@{.>}[l]|{\sigma^0} X^1 \ar[r] \ar@/^/@<0.5ex>[r] \ar@/_/@<-0.5ex>[r] & \ar@{.>}@<0.75ex>[l] \ar@{.>}@<-0.75ex>[l]   
    X^2\ar@/^1pc/[rr] \ar@/_1pc/[rr] \ar@{}[rr]|{\cdot} \ar@{}@<1ex>[rr]|{\cdot} \ar@{}@<-1ex>[rr]|{\cdot} && X^3{}   \ar@/^1.2pc/[rr] \ar@/_1.2pc/[rr]\ar@{}[rr]|{\cdot} \ar@{}@<1ex>[rr]|{\cdot} \ar@{}@<-1ex>[rr]|{\cdot}& & {\phantom{A}}\cdots,}
    \]    
    (one model for derived affine schemes), we now look at simplicial affine schemes\index{simplicial!affine schemes $s\Aff$}
    \[
    \xymatrix@1{ Y_0 \ar@{.>}[r]|{\sigma_0}& \ar@<1ex>[l]^{\pd_0} \ar@<-1ex>[l]_{\pd_1} Y_1 \ar@{.>}@<0.75ex>[r] \ar@{.>}@<-0.75ex>[r]  & \ar[l] \ar@/^/@<0.5ex>[l] \ar@/_/@<-0.5ex>[l] 
    Y_2 &  &\ar@/^1pc/[ll] \ar@/_1pc/[ll] \ar@{}[ll]|{\cdot} \ar@{}@<1ex>[ll]|{\cdot} \ar@{}@<-1ex>[ll]|{\cdot}  Y_3 & \ldots&\ldots}
    \]   
    as models for higher stacks. These constructions behave {\it very} differently from each other.
    \end{warning}
    
    \begin{digression}[Quasi-isomorphisms and rational homotopy theory]
    At this point, someone usually asks whether we can replace these simplicial schemes with cdgas, and the answer is no. Although denormalisation gives a right Quillen equivalence from cdgas in non-negative {\it cochain} degrees to cosimplicial algebras (an analogue of Theorem \ref{normalgthm}), this would only be applicable if we were willing to declare morphisms $X \to Y$ to be equivalences whenever they induce isomorphisms $\H^*(Y,\sO_Y)\to \H^*(X,\sO_X)$.
    
    Infinitesimally, there is a correspondence between pro-Artinian cdgas and derived higher stacks, 
    as in \cite{hinstack} and \cite[\S\S 4.5--4.6]{ddt1}, but even this requires a more subtle notion of equivalence than quasi-isomorphism (see \Cref{dglacomp}).
    
    Anyone thinking that taking cohomology isomorphisms sounds as harmless as rational homotopy theory \cite{QRat,Sullivan} should reflect that it would  force the projective spaces $\bP^n$ and the stacks $B\GL_n$ to all be equivalent to points. If you have to do that, please don't try to call it algebraic geometry. While it is possible to embed rational homotopy types (and their generalisation to schematic or pro-algebraic homotopy types) into categories of higher stacks as in \cite{chaff,lurieDAG13}, this tends to be inefficient, with Koszul duality and pro-nilpotent Lie models \cite{htpy,BFMTLieModels}  providing more tractable constructions. 
    \end{digression}
    
    \medskip  
    Simplicial resolutions of schemes will be familiar to anyone who has computed \v Cech cohomology. Given a quasi-compact   scheme $Y$ which is semi-separated \index{semi-separated} (i.e. the diagonal map $Y \to Y \by Y$ is affine
    ), 
    we may take a finite affine cover $U=\coprod_i U_i$ of $Y$, and define the simplicial affine scheme $\check{Y}$ to be the \v Cech nerve $\check{Y}:= \cosk_0(U/Y)$. Explicitly,  \index{Cech nerve@\v Cech nerve}
    \begin{eqnarray*}
    \check{Y}_n= \overbrace{U\by_{Y}U \by_{Y} \ldots  \by_{Y}U}^{n+1}
    = \coprod_{i_0, \ldots, i_n} U_{i_0}\cap \ldots \cap U_{i_n},
    \end{eqnarray*}
    so $\check{Y}_n$ is an affine scheme, and $\check{Y}$ is the unnormalised \v Cech resolution of $Y$.\footnote{The nerve of a groupoid that we saw in \S \ref{hgpdsn} is also a form of \v Cech nerve, since  $B\Gamma \cong \cosk_0(\Ob \Gamma/\Gamma)$ provided all fibre products in the \v Cech nerve are taken as $2$-fibre products of groupoids; here, $\Ob \Gamma$ is the  groupoid of objects of $\Gamma$ and only identity morphisms.}
    
    Given  a quasi-coherent sheaf $\sF$ on $Y$, we can then form a cosimplicial abelian group $\check{C}^n(Y, \sF):= \Gamma(\check{Y}_n, \sF)$, and of course Zariski cohomology is given by\footnote{The Dold--Kan normalisation gives a quasi-isomorphic subcomplex, restricting to terms for which the indices $i_0, \ldots,i_n$ are all distinct. The standard \v Cech complex (with $i_0<\ldots <i_n$) is a quasi-isomorphic quotient of that.}
    \[
    \H^i(Y, \sF) \cong \H^i (\check{C}^{\bt}(Y, \sF), \sum_i (-1)^i\pd^i ).      
    \]

    Likewise, if $\fY$ is a  quasi-compact semi-separated Artin stack, we can choose a presentation $U \to \fY$ with $U$ an affine scheme, and take the \v Cech nerve $\check{Y}:= \cosk_0(U /\fY)$, so \index{coskeleton $\cosk$}
    \[
    \check{Y}_n= \overbrace{U\by_{\fY}U \by_{\fY} \ldots  \by_{\fY}U}^{n+1}.       
    \]
    For example, if $G$ is an affine group scheme acting on an affine scheme $U$, we can take the quotient stack $\fY=[U/G]$ with presentation $U \to [U/G]$, and then we get  $\check{Y}_n\cong U \by G^n$:
    \[
    U \Leftarrow U \by G \Lleftarrow U \by G \by G  \ldots .
    \]    
    Resolutions of this sort were used by Olsson in \cite{olssartin} to study quasi-coherent sheaves on Artin stacks, fixing an error in \cite{Champs}. They also appear extensively in the theory of cohomological descent \cite[Expos\'e vbis]{SGA4.2}. The analogous notion in differential geometry is a differentiable stack, with a specific presentation of the form $U\by_{\fY}U \Rightarrow U$ corresponding to a Lie groupoid; Deligne--Mumford stacks roughly correspond to orbifolds. \index{orbifold}

    This motivates the following questions, which we will address in the remainder of the notes:
    \begin{itemize}
    \item Which simplicial affine schemes correspond to schemes, Artin stacks or Deligne--Mumford stacks in this way?
    
    \item What about higher stacks? 
    
    \item What about derived schemes and derived stacks?
    
    \item How do we then define morphisms?
    
    \item How can we characterise quasi-coherent sheaves in terms of these resolutions?
    \end{itemize}
 
    \subsection{Definitions}
    
    Given a simplicial set $K$ and a simplicial affine scheme \index{simplicial!affine schemes $s\Aff$} $X$ (i.e. a functor $\Delta^{\op} \to \Aff$), there is an affine scheme $M_K(X)$ (the $K$-matching object) \index{matching!object} with the property that for all rings $A$, we have $M_K(X)(A)= M_K(X(A))$, i.e. $\Hom_{s\Set}(K,X(A))$. Explicitly, when $K= \L^{m,k}$ this is given by the equaliser of a diagram
    \[
    \prod_{\substack{0\le i \le m\\i \ne k}} X_{m-1} \rightrightarrows    \prod_{\substack{0\le i<j \le m\\i,j \ne k}} X_{m-2},    
    \]
    and when $K=\pd\Delta^m$ it is given by the equaliser of a diagram
    \[
    \prod_{0\le i \le m} X_{m-1} \rightrightarrows    \prod_{0\le i<j \le m} X_{m-2}  
    \]
    for $m>0$,
    the idea being that we have to specify a value for each face of $\L^{n,k}$ or $\pd\Delta^n$ in such a way that they agree on the overlaps. We also have $M_{\pd\Delta^0}X=M_{\emptyset}X\cong \ast$.
    
    The following definition gives  objects which can be used  to model higher stacks, an idea  originally due to Grothendieck \cite[\S 112, p.463]{pursuingstacks}:

    \begin{definition}\label{npdef}
        Define an \emph{Artin} (resp. \emph{Deligne--Mumford})  \emph{$n$-hypergroupoid}\index{Artin $n$-hypergroupoid}\index{Deligne--Mumford  $n$-hypergroupoid} to be a simplicial affine scheme $X$ for which the    partial matching maps
        $$
        X_m \to M_{\L^{m,k}}( X)
        $$
        are   smooth (resp. \'etale) surjections   for all $m,k$ (i.e. $m \ge 1$ and $0\le k \le m$), and isomorphisms  for all $m>n$ and all $k$.
    \end{definition}
    

    \begin{remark}[Generalisations to other geometries]
    Note that hypergroupoids can be defined in any category containing pullbacks along covering morphisms.   
    
    In \cite{zhu},
    Zhu uses this 
    to define Lie $n$-groupoids \index{Lie $n$-groupoid}
    (taking the category of manifolds, with coverings given by surjective submersions), and hence differentiable $n$-stacks. \index{differentiable $n$-stack}
    A similar approach could be used to define higher topological stacks \index{topological stack} (generalising \cite{Noohi1}), taking surjective local fibrations as the coverings in the category of topological spaces.\footnote{This is in marked contrast to the derived story, there being no non-trivial notion of a derived topological space: see \href{https://mathoverflow.net/questions/291093/derived-topological-stacks}{mathoverflow.net/questions/291093/derived-topological-stacks}.} 
    
    Similar constructions can be made in  non-commutative geometry  \cite{NCstacks} (the main difficulty is in deciding what a surjection should be), and in synthetic differential geometry and analytic geometry.
    In the last two, descent can become more complicated than for algebraic geometry, essentially because affine objects are no longer compact.

    We could also  extend our category to allow formal affine schemes as building blocks, allowing us to model functors  such as  the de Rham stack $X_{\dR}$ of \cite[\S 7]{Simfil}.

    The reasons we take affine schemes as our building blocks in these notes, rather than schemes or algebraic spaces, are twofold:
    firstly, we know what a derived affine scheme is, but the other two are tricky, so this will generalise readily; secondly,
    quasi-coherent sheaves and quasi-coherent cohomology work much  better if we can reduce to affine objects. From a conceptual point of view, it also feels more satisfying to reduce to an algebraic theory in an elementary way.
    \end{remark}
    
    \begin{digression}
    The  main reason for affine objects behaving  differently in algebraic geometry than in other geometries is that the Zariski topology has more points than the analytic and smooth topologies. In analytic (resp. differential) geometry, the 
    EFC- (resp $\C^{\infty}$-) ring\footnote{Note that these are even  finitely presented in these settings, being isomorphic to  $\sO^{\hol}(\Cx)/(\exp z -1)$ and $\C^{\infty}(\R)/(\sin x)$, respectively.} 
    $\Cx^{\N}$ (resp. $\R^{\N}$)
    usually corresponds to the discrete space $\N$. By contrast,  $\Spec (\Cx^{\N})$ is the Stone--\v Cech compactification $\beta \N$ of $\N$, 
    with the corona $\beta \N \setminus \N$ being $\Spec(\Cx^{\N}/\Cx^{\infty}) 
    $,
    for 
    the ideal 
    $\Cx^{\infty}$
    of finite sequences. 
    %
    A solution in the analytic setting is to take the  building blocks to be compact Stein spaces \cite[Proposition 11.9.2]{TaylorCV} endowed with overconvergent functions.  
    
    This seems a lot of effort to exclude points Grothendieck taught us to embrace, 
    so a speculative alternative solution might 
    allow  a compact building block for every EFC-ring, with the space associated to a Stein algebra $\sO^{\hol}(X)$ perhaps being $\im(\beta X \to \Spec( \sO^{\hol}(X)))$  with the quotient topology
    ; 
classical     Stein spaces could still be built as countable nested unions of compact Stein spaces.
    \end{digression}

    \begin{remark}
    Other generalisations of higher stacks exist by taking more structured objects than simplicial sets as the foundation; for details see \cite{balchinAugmentedHAG}.
    \end{remark}

    \begin{examples}\label{hgpdex}\
     \begin{enumerate}
    \item The \v Cech nerve  of a quasi-compact semi-separated scheme, as considered at the start of this section, gives a DM (in fact Zariski) $1$-hypergroupoid. The same construction  for an affine \'etale cover of a  quasi-compact  semi-separated algebraic space also gives a DM $1$-hypergroupoid. (Imposing the extra condition that $X_1 \to M_{\pd \Delta^1}(X)$ be an immersion characterises the simplicial affine schemes corresponding to such nerves.) 
    
    \item  The \v Cech nerve of a quasi-compact  semi-separated DM stack is a DM $1$-hypergroupoid.
    
    \item The \v Cech nerve of a quasi-compact  semi-separated Artin stack is an Artin $1$-hypergroupoid. This applies to $BG$ or $[U/G]$ for smooth affine group schemes $G$ (e.g. $\GL_n$).
    
    \item  Given a smooth affine commutative group scheme $A$ (e.g. $\bG_m$, $\bG_a$), we can form a simplicial affine scheme $K(A,n)$ as follows. First take $A[n]$, regarded as a chain complex of commutative group schemes, then apply the Dold--Kan denormalisation functor to give a simplicial commutative group scheme 
    \[
    K(A,n):= N^{-1}A[n],
    \]
    which is given in level $m$ by $K(A,n)_m \cong A^{\binom{m}{n}}$. This is an example of an Artin $n$-hypergroupoid, and will give rise to an Artin $n$-stack.

    \end{enumerate}
    \end{examples}

    We also have a relative notion: 
    \begin{definition}\label{npreldef}
    Given  $Y\in s\Aff$, define a (relative) \emph{Artin (resp. DM) $n$-hypergroupoid}  over $Y$ to be a morphism $X\to Y$ in $s\Aff$ for which  the   partial matching maps \index{Artin $n$-hypergroupoid!relative} \index{Deligne--Mumford  $n$-hypergroupoid!relative}
    $$
    X_m \to M_{\L^{m,k}} (X)\by_{M_{\L^{m,k}} (Y)}Y_m 
    $$
    are  smooth (resp. \'etale) surjections for all  $k,m$, and are isomorphisms for all $m>n$ and all $k$. 
    \end{definition}
    
    The following gives rise a notion of equivalence for hypergroupoids:   
    
    \begin{definition}\label{npreldeftriv}
    Given     $Y \in s\Aff$, define a \emph{trivial Artin (resp. DM)  $n$-hypergroupoid}  over $Y$ to be a morphism $X\to Y$ in  $s\Aff$ 
    for which  the matching maps \index{Artin $n$-hypergroupoid!trivial} \index{Deligne--Mumford  $n$-hypergroupoid!trivial}
    $$
    X_m \to M_{\pd \Delta^m} (X)\by_{M_{\pd \Delta^m} (Y)}Y_m 
    $$
    are   smooth (resp. \'etale) surjections  for all  $m\ge 0$, and are isomorphisms for all $m\ge n$.
    
    When $n=\infty$,  this is called a smooth (resp. \'etale) simplicial hypercover. \index{simplicial!hypercover}
    \end{definition}
    Note in particular that the $m=0$ term above implies that $X_0 \to Y_0$ is a smooth (resp. \'etale) surjection.
    
    \begin{example}
    
    Let $\{V_j\}_j$  and $\{U_i\}_i$ be finite  open affine covers  of a semi-separated quasi-compact scheme $Y$, and set $V:=\coprod_j V_j$ and $U:=\coprod_i U_i$. Then for  $W:=U\by_YV= \coprod_{i,j} U_i \cap V_j$, the morphisms
    \begin{align*}
    \cosk_0( W/Y) &\to \cosk_0(U/Y) \\
    (W \Leftarrow W\by_YW \Lleftarrow W\by_YW\by_YW \ldots) &\to (U \Leftarrow U\by_YU \Lleftarrow U\by_YU\by_YU \ldots)
    \end{align*}
    and $  \cosk_0( W/Y) \to \cosk_0(V/Y)$ are
    trivial relative DM (in fact Zariski) $1$-hypergroupoids.
    \end{example}
    
    \begin{example}\label{calcHomSch}
    If we think about how we calculate morphisms between schemes or algebraic spaces, we first take affine covers $U$ of $X$ and $V$ of $Y$, then seek a refinement $U'$ of $U$ and a map $U' \to V$ such that values on overlaps agree. 
    
    A morphism of simplicial affine schemes from the \v Cech nerve $\check{X}:= \cosk_0(U/X)$ to $\check{Y}:= \cosk_0(V/Y)$ is just such a map in the case $U'=U$, i.e. a map  $X \to Y$ which admits a lift to a map $U \to V$.  We can then account for refinements $U'$ of $U$ by replacing $\hat{X}$ with trivial Zariski or DM $1$-hypergroupoids   $X' \to \check{X}$ over $\check{X}$, giving a filtered colimit expression
    \[
    \Hom_{\Sch}(X,Y)= \LLim_{X' \to \check{X}}\Hom_{s\Aff}(X', \check{Y}).       
    \]
    \end{example}
    
    \subsection{Main results}
    
    For our purposes, we can use the following as the definition of an $(n-1)$-geometric stack. It is a special case of \cite[Theorem \ref{stacks2-bigthm}]{stacks2}.
    
    \begin{warning}[Terminology]
    Beware that \cite{stacks2,stacksintro} use the terminology from an earlier version of \cite{hag2} in which the indices were $1$ higher for strongly quasi-compact objects, so that $n$-geometric in \cite{stacks2} corresponds to $(n-1)$-geometric in the final version of  \cite{hag2}, whereas our terminology in these notes conforms with the latter.
    \end{warning}

    \begin{theorem}\label{bigthm}\index{n-geometric@$n$-geometric!Artin stack} \index{higher Artin stack}
        The homotopy category of  strongly quasi-compact\index{strongly quasi-compact}
        $(n-1)$-geometric  Artin  stacks is given by taking the full subcategory of $s\Aff$ consisting of  Artin  $n$-hypergroupoids $X$, and formally inverting the trivial relative   Artin  $n$-hypergroupoids $X \to Y$. 
        
        \medskip
        In fact, a model for the $\infty$-category of strongly quasi-compact $(n-1)$-geometric   Artin  stacks is given by the relative category $(\C,\cW)$ with $\C$ the full subcategory of $s\Aff$ on  Artin  $n$-hypergroupoids $X$ and $\cW$ the subcategory of trivial relative Artin $n$-hypergroupoids  $X \to Y$.
        
        \medskip 
        The same results hold true if we substitute ``Deligne--Mumford'' for ``Artin''  throughout.\index{n-geometric@$n$-geometric!Deligne--Mumford stack}\index{higher Deligne--Mumford stack}
    \end{theorem}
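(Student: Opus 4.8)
The plan is to realise each simplicial affine scheme as a higher stack and then match the two relative-category structures. Concretely, compose the levelwise Yoneda embedding of $s\Aff$ into simplicial presheaves on affine schemes with the homotopy colimit (diagonal) functor, sending a simplicial affine scheme $X$ to $|X| := \ho\LLim_{m \in \Delta^{\op}} X_m$, viewed as an object of the $\infty$-category of higher stacks for the smooth (resp. \'etale) descent topology. The goal is to show that $|{-}|$ carries Artin (resp. DM) $n$-hypergroupoids to strongly quasi-compact $(n-1)$-geometric stacks, carries trivial relative $n$-hypergroupoids to equivalences, and thereby induces the asserted equivalence of homotopy categories, which we then upgrade to an equivalence of $\infty$-categories.

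First I would prove by induction on $n$ that $|X|$ is $(n-1)$-geometric whenever $X$ is an Artin $n$-hypergroupoid. The smoothness of the partial matching maps $X_m \to M_{\L^{m,k}}(X)$ supplies the smooth atlas $X_0 \to |X|$, while the isomorphism condition for $m > n$ bounds the geometric level; the fibre product $X_0 \by_{|X|} X_0$ is presented by a lower hypergroupoid obtained by a d\'ecalage construction on $X$, to which the inductive hypothesis applies. For essential surjectivity I would run the converse: given a strongly quasi-compact $(n-1)$-geometric stack $\fX$, choose a smooth (resp. \'etale) atlas $U \to \fX$ by a single affine scheme and form the \v Cech nerve $\cosk_0(U/\fX)$; quasi-compactness together with the inductive description of the diagonal forces this to be an Artin (resp. DM) $n$-hypergroupoid with realisation $\fX$, exactly as in the classical discussion preceding \Cref{hgpdex}.

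Next I would identify the weak equivalences. That a trivial relative hypergroupoid $X \to Y$ realises to an equivalence $|X| \to |Y|$ is a descent/hypercover statement: the matching maps being smooth (resp. \'etale) surjections make $X \to Y$ a hypercover degreewise, and descent for the topology forces the induced map of homotopy colimits to be an equivalence. Conversely, that every equivalence of realisations is witnessed by a zigzag through trivial relative hypergroupoids is handled by refinement: two atlases admit a common refinement, producing trivial relative hypergroupoids over each, precisely as in the scheme-level computation of \Cref{calcHomSch}.

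The hard part will be full faithfulness, i.e. that inverting the trivial relative hypergroupoids computes the correct morphism spaces and not merely the correct objects. Here the plan is to establish a homotopy calculus of (left) fractions for the class $\cW$: I would verify that trivial relative Artin (resp. DM) $n$-hypergroupoids are stable under composition and homotopy base change, and that any map $|X| \to |Y|$ of stacks lifts, after replacing $X$ by some trivial relative hypergroupoid $X' \to X$, to an honest simplicial map $X' \to Y$ --- the higher analogue of passing to a refinement in \Cref{calcHomSch}. The crux is checking that such lifts are unique up to the appropriate homotopy and that the resulting category of fractions agrees with the localisation at the $\infty$-categorical level; once the calculus of fractions is in place, the equivalence of homotopy categories, and then of $\infty$-categories via the mapping-space description of \S\ref{mappingsn}, follows formally. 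The Deligne--Mumford case is identical, substituting \'etale for smooth throughout, since none of the matching-map or descent arguments are sensitive to the choice of covering class beyond its stability under composition and base change and its satisfaction of descent.
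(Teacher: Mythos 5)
Your overall architecture --- a realisation functor to hypersheaves, geometricity of $|X|$ by induction on $n$ using the partial matching maps and a d\'ecalage-type presentation of $X_0\by_{|X|}X_0$, hypercovers inducing equivalences by descent, and mapping spaces computed by localisation at refinements of the source --- is genuinely the strategy of the cited proof in \cite{stacks2}, and the full-faithfulness step you describe is essentially what Definition \ref{univcoverdef} and Theorem \ref{duskinmor} package (though the ``calculus of fractions'' there requires transfinite towers of trivial relative hypergroupoids, not a single refinement, and the cofinality argument is where the work lies).

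However, your essential surjectivity argument has a genuine gap. You claim that for a strongly quasi-compact $(n-1)$-geometric stack $\fX$ one chooses an affine atlas $U\to\fX$ and that the \v Cech nerve $\cosk_0(U/\fX)$ is then an Artin $n$-hypergroupoid. For $n\ge 2$ this fails at the first hurdle: the terms $U\by_{\fX}\cdots\by_{\fX}U$ are only $(n-2)$-geometric stacks, not affine schemes, so $\cosk_0(U/\fX)$ is not even an object of $s\Aff$. The paper itself exhibits the obstruction: for $G$ an elliptic curve, $BG$ is $1$-geometric and the \v Cech nerve of $\ast\to BG$ has $m$-simplices $G^m$, which are proper and non-affine; producing an honest affine Artin $2$-hypergroupoid resolving $BG^{\sharp}$ requires the auxiliary construction $BG\by_{G^{\Delta_1^{\bt}}}U^{\Delta_1^{\bt}}$ of Examples \ref{hgpdex2}. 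In general one must iterate atlas choices through all the higher diagonals, which is why the algorithm referred to there takes $2^n-1$ steps; this iterative resolution is the hardest part of the proof of Theorem \ref{bigthm} and is entirely absent from your sketch. Your argument as written only establishes essential surjectivity for $n=1$ (i.e.\ for $0$-geometric stacks, where the affine-diagonal hypothesis makes the \v Cech nerve affine levelwise).
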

    In particular, this means we obtain the simplicial category of such stacks  by simplicial localisation of Artin/DM $n$-hypergroupoids at the class of  trivial relative Artin/DM $n$-hypergroupoids.

    \begin{examples}\label{hgpdex2}
        We have already seen several fairly standard examples of hypergroupoid resolutions in Examples \ref{hgpdex}, and we now describe some more involved constructions, for those who are interested.
    \begin{enumerate}
    
    \item
    The method of split resolutions from \cite[\S 6.2]{Hodge3} can be
    adapted to give resolutions for schemes, algebraic spaces, and even Deligne--Mumford $n$-stacks, but not for Artin stacks because the diagonal of a smooth morphism is not smooth.
    
    \item Take a smooth group scheme $G$ which is quasi-compact and  semi-separated, but not affine, for instance an elliptic curve. The simplicial scheme $BG$ (given by $(BG)_m=G^m$) is an Artin $1$-hypergroupoid in non-affine schemes which resolves the classifying stack $BG^{\sharp}$ of $G$.
    
    Next, we have to take a finite affine cover $\{U_i\}_{i \in I}$ for $G$ and set $U=\coprod_i U_i$, writing $p \co U \to G$. To proceed further, we introduce the simplicial schemes $U^{\Delta_r^{\bt}}$ ({\it not} to be confused with the simplicial schemes $U^{\Delta^r}$ we meet in \S \ref{dcotsn}), which are given by $(U^{\Delta_r^{\bt}})_m:= U^{\Delta^m_r}\cong U^{\binom{m+r+1}{m}}$, and have the property that maps $X \to U^{\Delta_r^{\bt}}$ correspond to maps $X_r \to U$.
    
    We can then form an affine Artin  $2$-hypergroupoid resolving $BG^{\sharp}$ by taking
    \[
    BG\by_{G^{\Delta_1^{\bt}}}U^{\Delta_1^{\bt}}
    \]       
    Explicitly, this looks like
    \[
    p^{-1}(e) \Leftarrow p^{-1}(e)^2 \by U \Lleftarrow p^{-1}(e)^3 \by \{(x,y,z) \in U^3 ~:~ p(x)p(y)=p(z)\} \quad \cdots ,
    \]
    with the affine scheme in level $m$ being 
    $$
    \{ \uline{x} \in \prod_{0 \le i \le j \le m} U ~:~ p(x_{ij})p(x_{jk})=p(x_{ik}) ~\forall i \le j \le k\}$$
    (in particular, $p(x_{ii})=e$ for all $i$).
    
    \item As a higher generalisation of the previous example, if $G$ is moreover commutative, then we can form the simplicial scheme $K(G,n)=N^{-1}G[n]$, which is an Artin $n$-hypergroupoid in non-affine schemes, and then form the resolution
    \[
    K(G,n)\by_{G^{\Delta_n^{\bt}}}U^{\Delta_n^{\bt}}
    \]
    to give an affine Artin  $(n+1)$-hypergroupoid.
    \end{enumerate}
    Most examples are however not so simple, and the algorithm from \cite{stacks2} takes $2^n -1$ steps to construct an $n$-hypergroupoid resolution in general.
    \end{examples}
    
    \bigskip    An $\infty$-stack over $R$ is a functor $\Alg_R \to s\Set$ satisfying various conditions, so we need to associate such functors to Artin/DM hypergroupoids. The solution (not explicit) is to take
    \[
    X^{\sharp}(A)= \oR\Map_{\cW}(\Spec A, X),       
    \]
    where $\oR\Map_{\cW}$ is the right-derived functor of $\Hom_{s\Aff}$, with respect to trivial  Artin/DM $n$-hypergroupoids.

    \subsubsection{Morphisms}\label{nstackmorphismsn}
    
    We now give a more explicit description of mapping spaces, generalising Example \ref{calcHomSch}.
    
    \begin{definition}
        Define the \emph{simplicial $\Hom$ functor}\index{simplicial!Hom@$\Hom$ functor}    on    simplicial affine schemes by letting $\HHom_{s\Aff}(X,Y)$ be the simplicial set given by 
        \[
        \HHom_{s\Aff}(X,Y)_n:= \Hom_{s\Aff}(\Delta^n \by X, Y),        
        \]
        where $(X\by \Delta^n)_i$ is given by the coproduct of $\Delta^n_i$ copies of $X_i$.
    \end{definition}
    
    %
    %

    \begin{definition}\label{univcoverdef}
        Given $X \in s\Aff$, say that an inverse system $\tilde{X}= (\tilde{X}(0) \la \tilde{X}(0) \la \ldots) $ (possibly transfinite) over $X$ is an \emph{$n$-Artin} (resp. \emph{$n$-DM}) \emph{universal cover}\footnote{Think of this as being somewhat similar to  a universal cover of a topological space.} if:\index{universal cover}
        \begin{enumerate}
            \item the morphisms $\tilde{X}_0 \to X$ and $\tilde{X}_{a+1}\to \tilde{X}_a$ are trivial Artin (resp. DM) $n$-hypergroupoids;
            \item for any limit ordinal $a$, we have $\tilde{X}(a) = \Lim_{b<a}\tilde{X}(b)$;
            \item for every $a$ and every trivial  Artin (resp. DM)  $n$-hypergroupoid $Y \to \tilde{X}(a)$, there exists $b \ge a$ and a
            factorisation
            \[
            \xymatrix{ \tilde{X}(b) \ar[rr] \ar@{-->}[dr] && \tilde{X}(a)\\  & Y. \ar[ur]  }
            \]
        \end{enumerate}
    \end{definition}
    
    These always exist, by \cite[Proposition \ref{stacks2-procofibrant}]{stacks2}. Moreover,  \cite[Corollary \ref{stacks2-procofibrantet}]{stacks2} shows that every   $n$-DM universal cover is in fact an $n$-Artin universal cover.
    
    \begin{definition}\label{hyperdef}
        Given an  Artin $n$-hypergroupoid $Y$ and $X \in s\Aff$, define 
        \[
        \HHom_{s\Aff}^{\sharp}(X,Y):= \LLim_i  \HHom_{s\Aff}(\tilde{X}(i), Y),
        \]
        \index{HomsAffsharp@$\HHom_{s\Aff}^{\sharp}$}
        where the colimit runs over the objects $\tilde{X}(i)$ of any $n$-Artin universal cover $\tilde{X} \to X$. 
    \end{definition}

    The following is \cite[Corollary \ref{stacks2-duskinmor}]{stacks2}:
    \begin{theorem}\label{duskinmor}
    If $X \in s\Aff$ and $Y$ is an  Artin $n$-hypergroupoid, then the derived $\Hom$ functor on the associated hypersheaves (a.k.a. $n$-stacks) $X^{\sharp}, Y^{\sharp}$  is given 
    by
    \[
    \oR \Map(X^{\sharp}, Y^{\sharp}) \simeq  \HHom_{s\Aff}^{\sharp}(X,Y).   
    \]      
    \end{theorem}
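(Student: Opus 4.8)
The plan is to recognise the right-hand side as a hammock/function-complex model for the derived mapping space in the relative category $(\C,\cW)$ of Theorem \ref{bigthm}, with the $n$-Artin universal cover $\tilde{X}$ playing the role of a (pro-)cofibrant replacement of the source $X$ and the Artin $n$-hypergroupoid $Y$ playing the role of a fibrant target. First I would fix the homotopical framework: by Theorem \ref{bigthm} the relative category of Artin $n$-hypergroupoids with trivial relative Artin $n$-hypergroupoids inverted models the $\infty$-category of stacks, so $\oR\Map(X^\sharp,Y^\sharp)$ is the derived mapping space computed in the localisation $L_\cW s\Aff$ (extended across all of $s\Aff$ on the source side, since $X$ is arbitrary while $X^\sharp$ is its hypersheafification). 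The derived mapping space admits a hammock description assembled from zigzags $X \xleftarrow{\sim} \tilde X(i) \to Y$ whose backward leg is a composite of trivial hypergroupoids (hence lies in $\cW$) and whose forward leg is an honest map of simplicial affine schemes; the simplicial direction is supplied by the cylinder $\Delta^n\by(-)$, so each such zigzag contributes the simplicial set $\HHom_{s\Aff}(\tilde X(i),Y)$.

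Next I would show that the universal-cover tower is cofinal among all such resolutions, which is exactly what converts the naive simplicial $\Hom$ into the derived functor. The cofinality clause (3) of Definition \ref{univcoverdef} guarantees that every trivial Artin $n$-hypergroupoid refinement $Y'\to\tilde X(a)$ is dominated by a later stage $\tilde X(b)$, while clause (1) makes each transition $\tilde X(a+1)\to\tilde X(a)$ a trivial hypergroupoid; together these say the tower is cofinal in the category of trivial-hypergroupoid resolutions of $X$, so that the filtered colimit $\LLim_i \HHom_{s\Aff}(\tilde X(i),Y)$ performs the localisation at $\cW$ on the source side. By Definition \ref{hyperdef} this colimit is precisely $\HHom^\sharp_{s\Aff}(X,Y)$, so modulo the homotopical input above the two sides agree.

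The key supporting fact, and where I expect the main obstacle to lie, is that $\HHom_{s\Aff}(\tilde X(i),Y)$ is already the correct \emph{un}derived mapping space out of each stage — that it is a Kan complex, and that the transition maps induce weak equivalences in the colimit. Both reduce to lifting problems: because the partial matching maps of $Y$ are smooth (resp. \'etale) surjections, and isomorphisms above level $n$, they have the right lifting property against the maps defining trivial hypergroupoids, so the horns and simplex-boundaries appearing in $\HHom_{s\Aff}(\tilde X(i),Y)$ can be filled, and a trivial relative hypergroupoid $\tilde X(j)\to\tilde X(i)$ induces an equivalence on simplicial Homs into $Y$. Marshalling these lifting properties coherently across the whole transfinite tower — ensuring the fillers are compatible enough that the colimit neither loses nor manufactures homotopy — is the technical heart of the argument, and is precisely the content imported from \cite[Corollary \ref{stacks2-duskinmor}]{stacks2}.
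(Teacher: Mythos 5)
The paper gives no proof of this statement at all --- it is quoted verbatim from \cite[Corollary \ref{stacks2-duskinmor}]{stacks2} --- so there is nothing to compare line by line; your first two paragraphs, identifying the universal cover as a cofinal tower of trivial-hypergroupoid resolutions whose filtered colimit performs the localisation at $\cW$ on the source side, do match the strategy of that reference. Your third paragraph, however, contains a genuine error which, if true, would make the whole construction redundant: a trivial relative Artin $n$-hypergroupoid $\tilde X(j)\to\tilde X(i)$ does \emph{not} induce a weak equivalence $\HHom_{s\Aff}(\tilde X(i),Y)\to\HHom_{s\Aff}(\tilde X(j),Y)$. If it did, the colimit would collapse onto its first term and the theorem would read $\oR\Map(X^{\sharp},Y^{\sharp})\simeq\HHom_{s\Aff}(X,Y)$, which is false: in Example \ref{BGsharpex}, $\HHom_{s\Aff}(\check{X},BG)$ is the nerve of the groupoid of $G$-torsors trivialised on the chosen cover $U$, and refining $U$ genuinely adds new connected components. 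The flaw in your supporting lifting argument is that smooth and \'etale surjections of affine schemes, unlike surjections of sets, do not admit sections, so the partial matching maps of $Y$ have no useful right lifting property against the cofibrations $\L^{m,k}\by Z\to\Delta^m\by Z$ in $s\Aff$; this failure is precisely why representable simplicial presheaves are not hypersheaves and why the colimit over refinements is needed at all.

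The content of the cited corollary is therefore the opposite of what you assert: the individual terms $\HHom_{s\Aff}(\tilde X(i),Y)$ do \emph{not} each compute the derived mapping space, the transition maps genuinely change the homotopy type, and the theorem says that the filtered colimit over the cofinal tower --- which is carrying out hypersheafification of the source, or equivalently acting as a fibrant--cofibrant replacement in a local model structure --- converges to $\oR\Map(X^{\sharp},Y^{\sharp})$. Relatedly, your implicit reduction of the hammock localisation to length-two zigzags $X\xleftarrow{\sim}\tilde X(i)\to Y$ is not automatic for a bare relative category and is itself part of what must be proved; it cannot simply be assumed as the starting point.
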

    
    \begin{remarks}\label{higherrks}
    Given a ring $A$, set $X=\Spec A$ and note that $\HHom_{s\Aff}^{\sharp}(X,Y)= Y^{\sharp}(A)$, the hypersheafification of the functor $Y: \Alg_R \to s\Set$, so we can take Definition \ref{hyperdef} as a definition of hypersheafification for Artin hypergroupoids, giving an explicit description of $Y^{\sharp}$.
    The $n=1$ case should be familiar as the standard definition of sheafification.      
    
    Between them,  Theorems \ref{bigthm} and \ref{duskinmor} recover the simplicial category  of strongly quasi-compact $(n-1)$-geometric Artin stacks, with Theorem \ref{bigthm}
    giving the objects and Theorem \ref{duskinmor} the morphisms. We could thus take those theorems to be a definition of that simplicial category. 
    \end{remarks}

    
    %
    %
    
    \subsubsection{Truncation considerations} \label{nstacktruncationsn}
    
    \begin{remark}
    Recall from Properties \ref{hypergroupoidprops} that an $n$-hypergroupoid $Y$ is determined by $Y_{\le n+1}$, and in fact that $Y\cong \cosk_{n+1}Y$. This implies that   
    \[
    \Hom_{s\Aff}(X,Y) \cong  \Hom_{s_{\le n}\Aff}(X_{\le n+1}, Y_{\le n+1})
    \]
    for any $X$, where $s_{\le n}\Aff$ is the category of \emph{$n$-truncated simplicial schemes},\index{truncated simplicial schemes $s_{\le n}\Aff$}\footnote{This terminology should not be confused with the notion of $n$-truncated stacks in \Cref{truncated}.}
    i.e. $\Aff$-valued contravariant functors from the full subcategory of $\Delta$ on objects $\oO, \ldots \on$.
    
    Thus
    \[
    \Hom_{s\Aff}(X\by \Delta^m,Y) \cong  \Hom_{s_{\le n}\Aff}( (X \by \Delta^m)_{\le n+1}, Y_{\le n+1}),
    \]
    which greatly simplifies the calculation of $\HHom(X,Y)$.
    \end{remark}
    
    \begin{example}
    A trivial  $0$-hypergroupoid is just an isomorphism, so the $0$-DM universal cover of any $X$ is just $X$.
    For  $Y$ an affine scheme and any $X \in s\Aff$, this means that
    \begin{align*}
    \HHom_{s\Aff}^{\sharp}(X,Y) &= \HHom_{s\Aff}(X,Y),\\
    &=\HHom_{s_{\le 1}\Aff} (X_{\le 1},Y_{\le 1}):
    \end{align*}
    \[
    \xymatrix{ X_0 \ar@{.>}[r]\ar[d] & \ar@<1ex>[l]^{\pd_0} \ar@<-1ex>[l]_{\pd_1} X_1 \ar[d]
    \\
    Y \ar@{.>}[r]  & \ar@<1ex>[l]^{\id} \ar@<-1ex>[l]_{\id} Y. 
    }
    \]

    In level $0$, this is just 
    the equaliser of $\Hom(X_0,Y) \Rightarrow \Hom(X_1,Y)$. Thus  $\Hom_{s\Aff}(X,Y)=\Hom(\pi_0^{\Aff}X,Y)$, where $\pi_0^{\Aff}X$ is the equaliser of $X_1 \Rightarrow X_0$ in the category of affine schemes, which when $X$ is the \v Cech nerve of a scheme, algebraic space, or even algebraic stack $Z$ is given by  $\pi_0^{\Aff}X \cong \Spec \Gamma(Z,\sO_Z)$.

    For higher $n$, we get 
    \[
    \HHom(X,Y)_n \cong \Hom(\pi_0^{\Aff}(X\by \Delta^n),Y)\cong\Hom(\pi_0^{\Aff}X,Y) ,
    \]
    so we have a simplicial set with constant simplicial structure:
    \[
    \HHom_{s\Aff}^{\sharp}(X,Y)\cong \Hom(\pi_0^{\Aff}X,Y).
    \]
    \end{example}

    %
    %
    
    \begin{example}
    Take a $1$-hypergroupoid $Y$ and  an affine scheme $U$, then look at $\HHom_{s\Aff}^{\sharp}(U,Y)$. Relative $1$-hypergroupoids over $U$ are just \v Cech nerves $\cosk_0(U'/U) \to U$ for \'etale surjections $U' \to U$:
    \[
    (U' \Leftarrow U'\by_UU' \Lleftarrow U'\by_UU'\by_UU' \ldots)\to U.
    \]
    Then
    \begin{align*}
    \HHom_{s\Aff}^{\sharp}(U,Y) &= \LLim_{U'}\HHom_{s\Aff}(\cosk_0(U'/U),Y),\\
    &= \LLim_{U'} \HHom_{s_{\le 2}\Aff}(\cosk_0(U'/U)_{\le 2},Y_{\le 2}),
    \end{align*}
    so an element $f$ of the mapping space consists of maps $f_i$ making the  diagram
    \[
    \xymatrix{ U' \ar@{.>}[r]\ar[d]^{f_0} & \ar@<1ex>[l] \ar@<-1ex>[l] U'\by_UU' \ar@{.>}@<0.75ex>[r] \ar@{.>}@<-0.75ex>[r] \ar[d]^{f_1}  & \ar[l] \ar@/^/@<0.5ex>[l] \ar@/_/@<-0.5ex>[l] 
    U'\by_UU'\by_UU'\ar[d]^{f_2}  
    \\
    Y_0 \ar@{.>}[r] & \ar@<1ex>[l] \ar@<-1ex>[l] Y_1 \ar@{.>}@<0.75ex>[r] \ar@{.>}@<-0.75ex>[r]  & \ar[l] \ar@/^/@<0.5ex>[l] \ar@/_/@<-0.5ex>[l] 
    Y_2
    }
    \]
    commute.
    
    When $Y$ is a \v Cech nerve  $\cosk_0(V/Z)$ (for $V$ a cover of a scheme, algebraic space or algebraic stack $Z$), the target becomes
    \[
    \xymatrix@1{
    V \ar@{.>}[r] & \ar@<1ex>[l] \ar@<-1ex>[l] V\by_ZV \ar@{.>}@<0.75ex>[r] \ar@{.>}@<-0.75ex>[r]  & \ar[l] \ar@/^/@<0.5ex>[l] \ar@/_/@<-0.5ex>[l] 
    V\by_ZV\by_ZV,
    } 
    \]
    giving the expected data for a morphism $f$, as in the expression of \cite[Lemma 3.2]{Champs} for  $U$-valued points of the stack $Z\simeq [V\by_ZV \Rightarrow V]$. Note that when $Y$ is an algebraic space, $f_0$ determines $f_1$ because then the map $Y_1 \to M_{\pd\Delta^1}(Y)$ (i.e. $V\by_ZV  \to V \by V$) is an immersion, so even $f_2$ then becomes redundant. 
    \end{example}
    
    \begin{example}\label{BGsharpex}
    
    
    \smallskip    
    Now let $X$ be a semi-separated scheme and set $\check{X}:= \cosk_0(U/X)$, for $U \to X$ an \'etale cover. Let $Y=BG$, for $G$ a smooth affine group scheme (e.g. $\GL_n$, $\bG_m$ or $\bG_a$ but not $\mu_p$ in characteristic $p$).
    Set
    \[
    \check{\CC}^n(X,G):= \Gamma(\check{X}_n,G(\sO_X)) = \Hom(\overbrace{U\by_X \ldots \by_XU}^{n+1}, G);     
    \]
    for $\bG_a$ and $\bG_m$, this gives $\Gamma(\check{X}_n,\O_X)$ and $\Gamma(\check{X}_n, \O_X^*)$ respectively. 
    
    \smallskip    Then 
    $
    \Hom_{s\Aff}(\check{X}, BG)
    $
    is isomorphic to 
    \[
    \check{Z}^1(U,G)= \{\omega \in \check{C}^1(U,G)\,:\, \pd^2\omega \cdot \pd^0\omega = \pd^1\omega \in \check{C}^2(U,G)\};
    \]
    in other words, $\omega$ satisfies the cocycle condition,
    so determines a $G$-torsor $P$ on $X$ with  $P\by_XU \cong G\by U$. Meanwhile, 
    \[
    \Hom_{s\Aff}(\Delta^1 \by \check{X}, BG)\cong \{(\omega_0,g,\omega_1) \in \check{Z}^1(U,G)\by \check{C}^0(U,G)\by \check{Z}^1(U,G)~:~ 
    \pd^1(g)\cdot \omega_0=\omega_1 \cdot \pd^0(g)\},
    \]
    so $g$ is a  gauge transformation between $\omega_0$ and $\omega_1$; on the corresponding $G$-torsors, this amounts to giving an isomorphism $\phi \co P_1 \cong P_2$ (which need not respect the trivialisation on $U$).
    
    \smallskip Thus  $\HHom_{s\Aff}(\check{X},BG)$ is the nerve of the groupoid $[\check{Z}^1(U,G)/\check{C}^0(U,G)]$ of $G$-torsors  on $X$ which become trivial on pullback to $U$, and   passing to the colimit over all \'etale covers $U'$ of $X$, we get that $\HHom^{\sharp}_{s\Aff}(\check{X},BG)$ is equivalent to the nerve of the groupoid of all \'etale $G$-torsors on $X$, as expected. 
    \end{example}
    
    \begin{example}
    For $E$ an elliptic curve, the \v Cech complex of $BE$ is an  Artin $2$-hypergroupoid, but  $\Map(X, (BE)^{\sharp})$ still classifies $E$-torsors on $X$.      
    \end{example}
    
    \begin{examples}
    Example  \ref{BGsharpex} tells us that
    \[
    \pi_i(BG)^{\sharp}(X)= \varinjlim_{X'} \H^{1-i}(X', G) = \H^{1-i}_{\et}(X,G)       
    \]
    for $i=0,1$, where $X' \to X$ runs over \'etale hypercovers.
    
    If $A$ is a smooth commutative affine group scheme (such as $\bG_m, \bG_a$ or $\mu_n$ for $n^{-1} \in R$), we can generalise this to the higher $K(A,n)$. We have
    \[
    \Hom_{s\Aff}(\check{X}, K(A,n))\cong \z^nN\check{C}(X,A),       
    \]
    and using a path object for $K(A,n)$, we then get
    \[
    \pi_i\HHom_{s\Aff}(X,K(A,n)^{\sharp})\cong \pi_iK(A,n)^{\sharp}(X)\cong \H^{n-i}_{\et}(X,A),      
    \]
    which is reminiscent of Brown representability in topology. Note that for $A=\bG_a$, this is just $ \H^{n-i}(X,\sO_X)$.
    \end{examples}

    \begin{definition}\label{truncated}
        We say that a functor $F \co \Alg_k \to s\Set$ is \emph{$n$-truncated}\index{n-truncated@$n$-truncated} if $\pi_iF(A)=0
        $
        for all $i>n$ and all $A \in \Alg_k$.
    \end{definition}
    For a hypergroupoid $X$, $n$-truncatedness of $X^{\sharp}$ amounts to saying that the  the maps
    \[
    X_i \to M_{\pd\Delta^i}(X)
    \]
    are monomorphisms (i.e. immersions) of affine schemes for all $i>n$.
    
    \subsubsection{Terminological warnings}
    \begin{warning}\label{cflurie}
        Beware that
        there are slight differences in terminology between \cite{hag2} and \cite{lurie}. In the former,  affine schemes are $(-1)$-representable, so arbitrary schemes might only be $1$-geometric, while Artin stacks are $0$-geometric stacks if and only if they have affine diagonal. In the latter, algebraic spaces are $0$-stacks.

        An $n$-stack $\fX$ in the sense of \cite{lurie} is an $n$-truncated functor which is $m$-geometric for some $m$. 
        
        It follows easily from Property \ref{hypergroupoidprops}.(\ref{truncate}) that every $n$-geometric stack in \cite{hag2} is $(n+1)$-truncated;
        conversely,  any  $n$-truncated stack $\fX$ is $(n+1)$-geometric.\footnote{This is {\it not} a typo for $(n-1)$; a non-semi-separated scheme such as $\bA^2\cup_{\bA^2\setminus\{0\}}\bA^2$ is $0$-truncated but $1$-geometric, while an affine scheme is $0$-truncated and $(-1)$-geometric.} Any Artin stack with affine diagonal (in particular any separated algebraic space) is $0$-geometric.
        
        The conditions can be understood in terms of higher diagonals. If for simplicial sets (equivalently, topological spaces) $K$, we write $\fX^{h K}$ for the functor $A \mapsto \oR\Map_{s\Set}(K,\fX(A))$, then we get $\fX^{h S^n} \simeq \fX\by^h_{\fX^{h S^{n-1}}}\fX$, and we think of the diagonal map $\fX \to \fX^{h S^n}$ as the $n$th higher diagonal. \index{higher diagonal}
        
        Being $n$-geometric then amounts to saying that the higher diagonal morphism $\fX \to \fX^{h S^n}$ to the iterated loop space \index{loop space!iterated}
        is affine (where $S^{-1}=\emptyset$ and $S^0=\{-1,1\}$), while being $n$-truncated amounts to saying that the morphism $\fX \to \fX^{S^{n+1}}$ is an equivalence.
        
        If we took quasi-compact, quasi-separated algebraic spaces instead of  affine schemes in Definition \ref{npdef}, then Theorem \ref{bigthm} would adapt to give a characterisation of $n$-truncated Artin stacks. Our main motivation for using affine schemes as the basic objects is that they will be easier to translate to a derived setting.
    \end{warning}  
    
    \begin{remark}\label{qcmpt}
    The strong quasi-compactness \index{strongly quasi-compact} condition in Theorem \ref{bigthm} is terminology from \cite{toenvaquie} which amounts to saying that the objects are quasi-compact, quasi-separated, and so on (all the higher diagonals $\fX \to \fX^{h S^n}$ in  \Cref{cflurie}
    are quasi-compact). We can drop that assumption if we expand our category of building blocks by allowing arbitrary disjoint unions of affine schemes.
    \end{remark}

    \begin{warning}[More obscure]
        The situation is further complicated by earlier versions of \cite{hag2} using higher indices, and the occasional use as in \cite{toencrm} of $n$-algebraic stacks,  intermediate between $(n-1)$-geometric and $n$-truncated stacks. In \cite{toenseattle}, $n$-geometric Artin stacks are simply called $n$-Artin stacks, and distinguished from Lurie's Artin $n$-stacks. Finally, beware that \cite{lurietannaka} and its derivatives refer to geometric stacks, by which they mean $0$-geometric stacks (apparently in the belief this is standard algebro-geometric terminology for semi-separated); incidentally, the duality there also does not carry its customary meaning, not being an equivalence.    
    \end{warning}

\subsection{Quasi-coherent sheaves and complexes}\label{qucohsn1}
    
 The notion of quasi-coherent sheaves on a scheme, and complexes of such sheaves is generalised to higher stacks  in \cite{hag2,lurie}. Rather than repeat those definitions, we now give more concrete characterisations of the same concepts.
     
    The following is part of  \cite[Corollary \ref{stacks-qcohequiv}]{stacks2}:
    \begin{proposition}
        For an Artin $n$-hypergroupoid $X$, giving a quasi-coherent sheaf (a.k.a Cartesian sheaf) 
        on the associated $n$-geometric stack $X^{\sharp}$ is equivalent to giving a Cartesian quasi-coherent sheaf on $X$, i.e.:\index{Cartesian!module}
        \begin{enumerate}
            \item a quasi-coherent sheaf $\sF^n$ on $X_n$ for each $n$, and
            \item isomorphisms $\pd^i\co \pd_i^*\sF^{n-1} \to \sF^n$ for all $i$ and $n$, satisfying the usual cosimplicial identities
            \[
            \pd^j\pd^i=\pd^{i+1}\pd^j \qquad j\le i.
            \]
        \end{enumerate}
    \end{proposition}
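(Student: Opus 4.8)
The plan is to present both sides as the outcome of smooth descent and then match them. One direction is formal: restricting a quasi-coherent sheaf on $X^{\sharp}$ along the tautological maps $X_m \to X^{\sharp}$ produces quasi-coherent sheaves $\sF^m$ on each affine $X_m$ together with coherence isomorphisms encoding Cartesianness, so the real content is that this restriction functor is an equivalence of categories.

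First I would invoke smooth descent for quasi-coherent sheaves on Artin stacks, which is built into the definitions of \cite{hag2,lurie}. Since $X_0 \to X^{\sharp}$ is a smooth atlas and, by the construction $X^{\sharp}(A)=\oR\Map_{\cW}(\Spec A, X)$ together with Theorem \ref{bigthm}, the simplicial scheme $X_{\bullet}$ resolves $X^{\sharp}$, one obtains an identification of $\QCoh(X^{\sharp})$ with the limit over $\Delta$ of the categories $\QCoh(X_m)$ along the pullback functors $\pd_i^{\ast}$ and $\sigma_i^{\ast}$. Fullness, faithfulness and essential surjectivity of the restriction functor are then all packaged into this limit presentation.

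Next I would unwind the limit. Because the structure maps of an Artin $n$-hypergroupoid are smooth, hence flat, each pullback $\pd_i^{\ast}$ is exact, and the limit of the cosimplicial diagram of module categories is precisely the category of families $(\sF^m)$ equipped with isomorphisms $\pd^i\co \pd_i^{\ast}\sF^{m-1}\xra{\sim}\sF^m$ subject to the cosimplicial identities $\pd^j\pd^i=\pd^{i+1}\pd^j$ for $j\le i$. No higher coherence survives, because $\QCoh$ is a sheaf of ordinary $1$-categories and we are classifying sheaves rather than complexes; the cosimplicial (cocycle) identities then exhaust the descent datum. This yields exactly the Cartesian quasi-coherent sheaves of the statement.

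Finally --- and this is the crux --- I would check that the assignment $X \mapsto \QCoh_{\cart}(X)$ is invariant under the weak equivalences of Theorem \ref{bigthm}, i.e.\ that a trivial relative Artin $n$-hypergroupoid $f\co X \to Y$ induces an equivalence on Cartesian sheaves; this is what legitimises identifying $\QCoh_{\cart}(X)$ with $\QCoh(X^{\sharp})$ rather than with an artefact of the chosen resolution. The hard part is controlling this descent: $f$ is a smooth hypercover that becomes an isomorphism in high simplicial degrees, and I would argue by induction on the hypergroupoid level, using the matching-object description of the partial matching maps together with faithfully flat descent at each stage to manufacture a quasi-inverse. The flatness of all structure maps is exactly what drives both this invariance and the collapse of higher coherences in the previous step; without it the cosimplicial identities would fail to capture the full descent data.
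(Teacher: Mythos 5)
The paper does not actually prove this proposition: it is quoted verbatim from \cite[Corollary \ref{stacks-qcohequiv}]{stacks2}, so there is no in-text argument to compare against, only the strategy of that reference. Measured against it, your sketch has the right ingredients but the logical order undercuts the argument. The identification $\QCoh(X^{\sharp})\simeq \ho\Lim_{m\in\Delta}\QCoh(X_m)$ for a \emph{hypergroupoid} resolution is not ``built into the definitions'' of \cite{hag2,lurie}: there, quasi-coherent sheaves on a geometric stack are defined by iterated descent along \v Cech nerves of atlases, and for $n\ge 2$ the simplicial scheme $X_\bullet$ is not the \v Cech nerve of $X_0\to X^{\sharp}$ (that nerve has the non-affine $(n-2)$-geometric stack $X_0\by^h_{X^{\sharp}}X_0$ in level one). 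Invoking the limit formula over $\Delta$ in your second paragraph therefore essentially assumes the statement being proved. The real content --- which you correctly isolate in your final paragraph but defer --- is (i) that pullback of Cartesian sheaves along a trivial relative Artin $n$-hypergroupoid is an equivalence, proved by reducing a Cartesian sheaf to descent data on $X_{\le 2}$ and applying faithfully flat descent along the smooth surjective matching maps, and (ii) the comparison of the resulting localisation-invariant functor with the inductive atlas definition. That is the route taken in \cite{stacks2}; your write-up should put (i) first and derive the limit formula as a consequence rather than an input.

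Two smaller points need attention. First, smoothness (hence flatness) of the individual face maps $\pd_i$ is not part of Definition \ref{npdef}, which only constrains the partial matching maps; it follows by the standard induction expressing the projection $M_{\L^{m,k}}(X)\to X_{m-1}$ as a composite of pullbacks of lower-dimensional partial matching maps, and you use it both for exactness of $\pd_i^*$ and for the descent step, so it should be stated. Second, the homotopy limit over $\Delta$ naturally produces coherence isomorphisms for the degeneracies as well as the faces, whereas the proposition records only the $\pd^i$; you need the (easy but non-vacuous) observation that the $\sigma^i$ are uniquely determined by the $\pd^i$ via the simplicial identities $\pd_i\sigma_i=\id$, so that no data is lost in the stated normal form.
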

    
    
    It is not too hard to see that $\sF$ is determined by the sheaf $\sF^0$ on $X_0$  and the isomorphism $\theta \co \pd_0^*\sF^0 \cong \pd_1^*\sF^0$ on $X_1$, which satisfies the  cocycle condition $(\pd_2^*\theta) \circ (\pd_0^*\theta) = \pd_1^*\theta$  on $X_2$.
    
    This has the following generalisation (again, \cite[Corollary \ref{stacks-qcohequiv}]{stacks2}):
    \begin{proposition}\label{hCartprop}
        For an Artin $n$-hypergroupoid $X$, giving a quasi-coherent complex on the associated $n$-geometric stack $X^{\sharp}$ is equivalent to giving a homotopy-Cartesian module, i.e.:\index{homotopy-Cartesian!module}
        \begin{enumerate}
            \item a complex $\sF_{\bt}^n$ of quasi-coherent sheaves  on $X_n$ for each $n$, 
            \item quasi-isomorphisms $\pd^i\co \pd_i^*\sF_{\bt}^{n-1} \to \sF_{\bt}^n$ for all $i$ and $n$, and
            \item morphisms  $\sigma^i\co \sigma_i^*\sF_{\bt}^{n+1} \to \sF_{\bt}^n$ for all $i$ and $n$,
        \end{enumerate}
        where the operations $\pd^i, \sigma^i$ satisfy the usual cosimplicial identities,\footnote{\label{nosenote}These identities are all required to hold on the nose, so that for instance $\sigma^i \circ \pd^i $ must equal the identity, not just be homotopic to it.}
        and a morphism $\{\sE^n_{\bt}\}_n \to \{\sF^n_{\bt}\}_n$ is  a weak  equivalence if the maps $\sE^n_{\bt} \to \sF^n_{\bt}$ are all quasi-isomorphisms. 
    \end{proposition}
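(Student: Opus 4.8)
The plan is to realise both sides of the claimed equivalence as presentations of the same homotopy limit, reducing everything to smooth (hence flat) hyperdescent for quasi-coherent complexes. The starting point is that quasi-coherent complexes form a hypersheaf of $\infty$-categories for the smooth topology: on an affine $\Spec A_m$ they are just the modules $\catdgmU[A_m]$ (equivalently, the associated derived category), and faithfully flat descent, upgraded to the homotopical setting, shows that for a smooth hypercover the quasi-coherent complexes on the base are recovered as the homotopy limit of the module categories over the pieces. Since $X^{\sharp}$ is by construction the hypersheafification of $X$ and the matching maps of an Artin $n$-hypergroupoid are smooth surjections, this hyperdescent identifies the intrinsic $\QCoh(X^{\sharp})$ of \cite{hag2,lurie} with the homotopy limit $\holim_{m \in \Delta} \catdgmU[A_m]$ of the cosimplicial diagram of module categories attached to the pullback functors $\pd_i^{*}, \sigma_i^{*}$.

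Second, I would give a strict model for this cosimplicial homotopy limit. Equip each $\catdgmU[A_m]$ with its projective model structure (\Cref{projmodelstr}); the pullback functors are left Quillen and, because smooth maps are flat, preserve quasi-isomorphisms, so the diagram is one of model categories and left Quillen functors. The homotopy limit of such a diagram is modelled by the category of sections that are homotopy-Cartesian: precisely a complex $\sF^m_{\bt}$ over each $X_m$, comparison maps $\pd^i,\sigma^i$ obeying the cosimplicial identities, with the face comparisons required to be quasi-isomorphisms, and with weak equivalences the levelwise quasi-isomorphisms. The degeneracy maps $\sigma^i$ are what make the strict, on-the-nose identities (cf. footnote~\ref{nosenote}) adequate: exactly as in the Dold--Kan and diagonal discussions of \S\ref{simpsn}, a full (co)simplicial structure rectifies the coherence data that a merely semi-cosimplicial section would leave only up to homotopy.

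Third, and this is where well-definedness really bites, I would prove that the homotopy category (and simplicial localisation) of homotopy-Cartesian modules is invariant under trivial relative Artin $n$-hypergroupoids $X \to Y$, since these are precisely the maps inverted in forming $X^{\sharp}$. This is again descent: such a map is built from smooth surjections on the matching maps, so pullback $\sE \mapsto \sE|_X$ should be fully faithful with essential image the homotopy-Cartesian modules, by faithfully flat descent applied levelwise together with a Reedy-type induction up the tower. Combined with the first two steps, the invariant assignment then agrees with $\QCoh(X^{\sharp})$, and the coskeletal identity $X \cong \cosk_{n+1}X$ of Properties~\ref{hypergroupoidprops} lets one work throughout with bounded cosimplicial data rather than the full totalisation.

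The main obstacle is the rectification in the second step together with effectivity of descent in the third: one must check that the strictly cosimplicial, levelwise-(co)fibrant homotopy-Cartesian modules genuinely \emph{compute} the homotopy limit, not merely map to it, and that every descent datum is effective. Concretely this requires functorial (co)fibrant replacement compatible with the cosimplicial structure, and a careful Reedy argument climbing the matching tower $\sF^m \to M_{\pd\Delta^m}(\sF)$, with the smoothness (hence flatness) of the hypergroupoid maps invoked repeatedly to keep pullbacks exact so that homotopy-Cartesianness is preserved along the way. The remaining comparisons — that levelwise quasi-isomorphisms are the correct weak equivalences, and that the module-theoretic descent matches the stacky definition of $\QCoh$ — are then routine.
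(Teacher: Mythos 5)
The paper does not actually prove this proposition in the text: it is quoted verbatim from \cite{stacks2}, so the only meaningful comparison is with the argument there. Your outline --- identify quasi-coherent complexes on $X^{\sharp}$ with $\holim_{m\in\Delta}\catdgmU[\sO(X_m)]$ by smooth hyperdescent, rectify that homotopy limit by strict cosimplicial sections whose face comparisons are quasi-isomorphisms, and then check invariance under trivial relative Artin $n$-hypergroupoids --- is the right strategy and is essentially the shape of the proof in the cited source. Two remarks. First, your opening step silently uses both that $X^{\sharp}$ is the homotopy colimit of its levels $X_m$ as a stack (which is bound up with Theorem \ref{bigthm}) and flat hyperdescent for quasi-coherent complexes from \cite{hag2,lurie}; these are legitimate external inputs, but they carry real content and should be invoked explicitly rather than folded into ``faithfully flat descent, upgraded to the homotopical setting.'' Second, and more importantly, the two points you flag as obstacles --- that strict, on-the-nose cosimplicial sections genuinely \emph{compute} the homotopy limit rather than merely mapping to it, and that every descent datum is effective --- are not a routine residue: they are where essentially all of the work in \cite{stacks2} lives. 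The mechanism there is to put a Reedy-type model structure on modules over the cosimplicial algebra $\sO(X)$, exhibit the homotopy-Cartesian objects as computing a localisation, and climb the matching tower using that the (partial) matching maps of the hypergroupoid are smooth, hence flat, surjections; the degeneracies are exactly what make the strict identities rectifiable, as you correctly observe. So the proposal identifies the correct decomposition and the correct key lemmas, but as written it defers rather than closes the rectification and effectivity steps, and would need them carried out (or explicitly cited) to count as a proof.
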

    
    Note that because the maps $\pd_i$ are all smooth, they are flat, so we do not need to left-derive the pullback functors $\pd_i^*$. Also  note that because the maps $\pd^i$ are only quasi-isomorphisms, they do not have inverses, which is why we have to include the morphisms $\sigma^i$ as additional data. The induced morphisms $\sigma^i \co \oL \sigma_i^*\sF_{\bt}^{n+1} \to \sF_{\bt}^n$ are then automatically quasi-isomorphisms. We can also rephrase the quasi-isomorphism condition as saying that $\{\H_i(\sF_{\bt}^n)\}_n$ is a Cartesian sheaf on $X$ for all $i$.

    
    \begin{remark} 
        Inclusion gives a canonical functor $\cD(\QCoh(\sO_X)) \to \Ho(\mathrm{Cart}(\sO_X))$ from the derived category of complexes of quasi-coherent sheaves on $X$  to the homotopy category of homotopy-Cartesian complexes, and this is an equivalence when $X$ is 
        a quasi-compact semi-separated scheme, by \cite[Theorem 4.5.1]{huettemann}.
        
        Under the same hypotheses, $\cD(\QCoh(\sO_X))$  is in turn  equivalent to the derived category  $\cD_{\QCoh}(\sO_X)$ of complexes of sheaves of $\sO_X$-modules with quasi-coherent homology {\it sheaves}, by \cite[Corollary 5.5]{bokstedtneeman} (or just \cite[Exp. II, Proposition 3.7b]{sga6} if $X$ is Noetherian).
        
        To compare  $\Ho(\mathrm{Cart}(\sO_X))$ and  $\cD_{\QCoh}(\sO_X)$ directly, observe that since  sheafification is exact, it gives  us a functor $\Ho(\mathrm{Cart}(\sO_X))\to \cD_{\QCoh}(\sO_X)$. This is always an equivalence, with quasi-inverse given by the derived right adjoint, sending each  complex of sheaves to an injective resolution (or more precisely, to a fibrant replacement in the injective model structure).
        
        
        
    \end{remark}
    
\subsubsection{Inverse images}\label{Lfstarsn}
    
    Given a morphism $f\co X \to Y$ of Artin $n$-hypergroupoids, inverse images of quasi-coherent sheaves are easy to compute: if $\sF$ is a Cartesian quasi-coherent sheaf on $Y$, then the sheaf $f^*\sF$ on $X$ given levelwise by $(f^*\sF)^n:= f_n^*\sF^n$ is also Cartesian. Similarly, if  $\sF_{\bt}$ is a homotopy-Cartesian quasi-coherent complex on $Y$, then the there is a complex $\oL f^*\sF_{\bt}$ on $X$, given levelwise by $(\oL f^*\sF_{\bt})^n \simeq \oL f_n^*\sF_{\bt}^n$, which is also homotopy-Cartesian.
    \index{derived pullback functor $\oL f^*$}
    
\subsubsection{Derived global sections}
    
    Direct images are characterised as right adjoints to inverse images, but are much harder to construct, because taking $f_*$ levelwise destroys the Cartesian property. There is an explicit description in  \cite[\S \ref{stacks-directsn}]{stacks2}  of the derived direct image functor $\oR f_*^{\cart}$.
    
    The special case of derived global sections is much easier to describe. If $\sF_{\bt}$ is homotopy-Cartesian on $X$, then $\oR\Gamma(X^{\sharp},\sF_{\bt})$ is just the {\it product} total complex of the double complex
    \[
        \Gamma(X_0,\sF_{\bt}^0) \xra{\pd^0-\pd^1} \Gamma(X_1,\sF_{\bt}^1)\xra{\pd^0-\pd^1+\pd^2} \Gamma(X_2,\sF_{\bt}^2) \to \ldots 
    \]
    (or equivalently of its Dold--Kan normalisation, restricting to $\bigcap_i \ker \sigma^i$ in each level).
    
\subsection{Hypersheaves}
    
    \begin{definition}
        A functor 
        \[
        \sF \co \Aff^{\op} \to \C
        \]
        to a model category (or more generally an $\infty$-category) $\C$
        is said to be an \emph{\'etale hypersheaf} \index{hypersheaf} if for any trivial DM $\infty$-hypergroupoid (a.k.a.   \'etale hypercover) $U_{\bt}\to X$, the map
        \[
        \sF(X) \to \ho\Lim_{n \in \Delta} \sF(U_n)
        \]
        is a weak equivalence, and for any  $X,Y$ the map
        \[
        \sF(X \sqcup Y) \to \sF(X) \by \sF(Y)
        \]
        is a weak equivalence, with $\sF(\emptyset)$ contractible.
        
    \end{definition}
    %
    Note that  since $U_{\bt}$ is a simplicial scheme, contravariance means that the functor  $n \mapsto \sF(U_n)$ is a {\it cosimplicial} diagram in $\C$ (i.e. a functor $\Delta  \to \C$). On the category $\C^{\Delta}$ of cosimplicial objects, there is a functor $\pi^0$ which sends a cosimplicial object $A^{\bt}$ to the equaliser of $\pd^0,\pd^1\co A^0 \to A^1$, and then 
    $\holim _{n \in \Delta}$ is its right-derived functor $\oR \pi^0$.

    \begin{examples}\label{categs}\
        \begin{enumerate}
            \item 
            If $\C$ is a category with trivial model structure (all morphisms are fibrations and cofibrations, isomorphisms are the only weak equivalences), then $\ho\Lim_{n \in \Delta} A^n=\pi^0A$, 
            so  hypersheaves in $\C$ are precisely $\C$-sheaves.
            
            \item  
            For the category $\Ch$ of unbounded chain complexes and $V \in \Ch^{\Delta}$, 
            \[
            \ho\Lim_{n \in \Delta}V^n\simeq \Tot^{\Pi}(V^0 \xra{\pd^0-\pd^1} V^1 \to \ldots), 
            \]
            the product total complex of the associated double complex, with reasoning as in \S \ref{sympsn}. 
            
            \item\label{categsSet}
            On the category $(s\Set)^{\Delta}$ of cosimplicial simplicial sets, $ \ho\Lim_{n \in \Delta}$ is the functor $\oR\Tot_{s\Set} $, where $\oR$ means ``take (Reedy) fibrant replacement first'', and $\Tot_{s\Set} $ is the total complex $\Tot$ of a cosimplicial space defined in \cite[\S VII.5]{sht};   see \Cref{fig:TotsSet}.
            \begin{figure}[H]
            \begin{tikzpicture}[scale=1]
            \newcommand\ptSize{2pt}
            \node at (1.5, 1.5) {In $X^0$:};
            \draw[fill=black] (3, 1.5) circle (\ptSize); 
            \node at (3, 1.7) {$x_0$};
            \node at (0,0) {In $X^1$:}; 
            \draw[fill=black] (2,0) circle (\ptSize);
            \draw[fill=black] (4,0) circle (\ptSize);
            \draw (2, 0) -- (4, 0);
            \node at (1.5, 0) {$\pd^0x_0$};
            \node at (4.5, 0) {$\pd^1x_0$};
            \node at (3, 0.2) {$x_1$};
            
            \draw[fill=black!20] (8.5, 1.7) -- (7, 0) -- (10, 0) -- cycle;
            \node at (6, 1.5) {In $X^2$:};
            \draw[fill=black] (7, 0) circle (\ptSize);
            \draw[fill=black] (10, 0) circle (\ptSize);
            \draw[fill=black] (8.5,1.7) circle (\ptSize);
            \node at (8.5, -0.25) {$\pd^1x_1$};
            \node at (7.3, 1) {$\pd^0x_1$};
            \node at (9.7, 1) {$\pd^2x_1$};
            \node at (8.5, 0.7) {$x_2$};
            \node at (12, 0.5) {$\ldots\ldots$};
            \end{tikzpicture}
            \caption[An element $x$ of $\Tot_{s\Set}X^{\bt}$.]{an element $x$ of $\Tot_{s\Set}X^{\bt}$.\footnotemark}
            \label{fig:TotsSet}
            \end{figure}
            \footnotetext{Note that the vertices of the $2$-simplex match up because $\pd^0\pd^0x_0=\pd^1\pd^0x_0$,\, $\pd^0\pd^1x_0 =\pd^2\pd^0x_0$ and $\pd^1\pd^1x_0=\pd^2\pd^0x_0$.}
            
            Explicitly, 
            $$
            \Tot_{s\Set} X^{\bt} =\{ x \in \prod_n (X^n)^{\Delta^n}\,:\, \pd^i_Xx_n = (\pd^i_{\Delta})^*x_{n+1},\,\sigma^i_Xx_n = (\sigma^i_{\Delta})^*x_{n-1}\}.
            $$ 
            Homotopy groups $\pi_{i} \oR\Tot_{s\Set} X^{\bt}$ of the total space are related to the homotopy groups $\pi_{i+n}X^n$ by  a spectral sequence given in \cite[\S VII.6]{sht}.

            %
            %
            \item   For the 
            category 
            $\catdga$ of  non-negatively graded cdgas,  a model for $\ho\Lim_{n \in \Delta}$ is given by taking good truncation of the functor of Thom--Sullivan (a.k.a Thom--Whitney) cochains \cite[\S 4]{HinSch}, defined using de Rham polynomial forms. 
        \end{enumerate}
    \end{examples}
    \begin{remarks}
        
        A  sheaf $\sF$ of modules is a hypersheaf when regarded as a presheaf of non-negatively graded chain complexes, but not a hypersheaf  when regarded as a presheaf of unbounded or non-positively graded chain complexes unless $\H^i(U, \sF)=0$ for all $i>0$ and all $U$. Beware that the sheafification (as opposed to hypersheafification) of a hypersheaf will not, in general, be a hypersheaf.
    \end{remarks}

    The construction $X \leadsto X^{\sharp}$ introduced after Theorem \ref{bigthm} is an example of hypersheafification:
    
    \begin{definition}
        Given a functor $\sF\co \Aff^{\op} \to s\Set$, the \emph{\'etale hypersheafification} $\sF^{\sharp}$ of $\sF$ \index{hypersheafification $\sF^{\sharp}$}
        is the universal  \'etale hypersheaf $\sF^{\sharp}$ equipped with a map
        $
        \sF\to \sF^{\sharp}
        $ in the homotopy category of simplicial presheaves.
    \end{definition}
    
    In other words, hypersheafification is the derived left adjoint to the forgetful functor from hypersheaves to presheaves.
    
    \begin{warning}
        Terminology is disastrously inconsistent between references. Hypersheaves are often known as $\infty$-stacks or $\infty$-sheaves, but are referred to as stacks in \cite{hag1}, and as sheaves in \cite{luriehighertopoi,lurie} (where {\it stacks} refer only to algebraic stacks). They are also sometimes known as homotopy sheaves, but we avoid this terminology for fear of  confusion  with homotopy groups of a simplicial sheaf.     
    \end{warning}
    
\subsection{The conventional approach to higher stacks}
    
    Instead of defining $n$-stacks using hypergroupoids, To\"en--Vezzosi and Lurie \cite{hag2,lurie} use an
    inductive definition, building on the key properties established by Hirschowitz and Simpson in \cite{hirschowitzsimpson} to flesh out the approach set out by Simpson in \cite{simpsonalggeomnstacks}, which he attributed to Walter. 
    
    %
    %
    
    
    In that approach, $n$-geometric stacks are  defined inductively by saying that an \'etale hypersheaf $F$ is  $n$-geometric  if 
    \begin{enumerate}
        \item there exists a smooth covering  $\coprod_i U_i \to F$ from a family $\{U_i\}_i$ of affine schemes, and
        \item the diagonal $F \to F\by F$ is relatively representable by $(n-1)$-geometric stacks.
    \end{enumerate}
    
    If we take the families $\{U_i\}_i$ to be finite at  each stage in the definition above, then we obtain the definition of a strongly quasi-compact $n$-geometric stack. In the final version of \cite{hag2}, the induction starts by setting affine schemes to be $(-1)$-geometric.

    
    Beware that the  $n$-stacks of \cite{lurie} are indexed slightly differently, taking $0$-stacks to be  algebraic spaces, leading to the differences explained in  \Cref{cflurie}.
    
    By \cite[Theorem \ref{stacks2-bigthm}]{stacks2} (see Theorem \ref{bigthm} above),  $n$-geometric stacks correspond to Artin $(n+1)$-hypergroupoids.
    
    \medskip
    In practice, this inductive definition feels like a halfway house which iterates the least satisfactory aspects of the definition of an algebraic stack. To prove a general statement about geometric $n$-stacks, it is usually easier to work with hypergroupoids,  while representability theorems (see \S \ref{repsn}) tend to be the simplest means of proving that a given hypersheaf is a geometric $n$-stack.
    
\clearpage
\section{Derived geometric \texorpdfstring{$n$}{n}-stacks}\label{dstacksn} 
    
    There is nothing special about affine schemes as building blocks, so now we will use derived affine schemes. The only real change is that we have to replace all limits with homotopy limits, but since homotopy limits are practically useless without a means to compute them, we will start out with a more elementary characterisation. 
    
    The constructions and results prior of \S\S \ref{dstackdefsn}--\ref{dcotsn},\ref{addendum} 
    all have natural analogues in differential and analytic contexts, by using dg $\C^{\infty}$ or EFC rings instead of derived commutative rings, replacing smooth morphisms with submersions and \'etale morphisms with local diffeomorphisms or local biholomorphisms, respectively.
    
\subsection{Definitions}\label{dstackdefsn}

    \begin{notation}
        For the entire section we let $R$ be a commutative ring.
        Since we want statements applying in all characteristics, we will let the category  $d\Aff_R$ of derived affine schemes \index{derived affine schemes $d\Aff_R$}  be the opposite category to either cdgas  $\catdga[R]$ (if $\Q \subseteq R$) or to simplicial algebras $s\Alg_R$; denote that opposite category by $d\Alg_R$.\index{derived algebras $d\Alg_R$}  
    \end{notation}
    
    We give this the opposite model structure, so a morphism $\Spec B \to \Spec A$ in $d\Aff_R$ is a \emph{fibration/cofibration/weak equivalence}\index{weak equivalence!in $d\Aff_R$}\index{fibration!in $d\Aff_R$}\index{cofibration!in $d\Aff_R$} if and only if
    $A \to B$ is a cofibration/fibration/weak equivalence of cdgas or simplicial algebras;
    in particular, a morphism is a fibration if the corresponding map of cdgas or simplicial algebras is a retract of a quasi-free map\footnote{or at least an ind-quasi-smooth map in the original sense of \Cref{rem:qusmooth}, if you prefer to use the Henselian model structure of \cite[Proposition \ref{DStein-locmodelprop}]{DStein}, as discussed in  \Cref{hensel}}.
    
    \begin{notation}
        From now on, in order to have uniform notation for the simplicial and dg settings, we denote the homotopy groups $\pi_iA = \H_i(A, \sum (-1)^i \pd_i)$ of a simplicial algebra $A$  by $\H_iA$. Beware that this notation is highly abusive, since these are the homotopy groups, {\it not} the homology groups, of the underlying simplicial set.   
    \end{notation} 
    
    \begin{definition}
        We  define the category   
        $sd\Aff_R$ of \emph{simplicial derived affine schemes}\index{simplicial!derived affine schemes $sd\Aff$} to be $(d\Aff_R)^{\Delta^{\op}}$, so it consists of  diagrams
        \[
        \xymatrix@1{ X_0 \ar@{.>}[r]|{\sigma_0}& \ar@<1ex>[l]^{\pd_0} \ar@<-1ex>[l]_{\pd_1} X_1 \ar@{.>}@<0.75ex>[r] \ar@{.>}@<-0.75ex>[r]  & \ar[l] \ar@/^/@<0.5ex>[l] \ar@/_/@<-0.5ex>[l] 
        X_2 &  &\ar@/^1pc/[ll] \ar@/_1pc/[ll] \ar@{}[ll]|{\cdot} \ar@{}@<1ex>[ll]|{\cdot} \ar@{}@<-1ex>[ll]|{\cdot}  X_3 & \ldots&\ldots}
        \] 
        for derived affine schemes $X_i$.
        Equivalently, this is the opposite category to the category of cosimplicial cdgas or of cosimplicial simplicial algebras.
    \end{definition}
    
    Recall that we write $\pi^0\Spec (A_\bt) = \Spec \H_0(A_\bt)$ for $A_\bt \in \catdga[R]$, and similarly write $\pi^0\Spec A := \Spec (\pi_0A)$ for $A \in s\Alg_R$. 
    

    \begin{definition}\label{dnpdefh}
        We say that a simplicial derived affine scheme $X$ is a \emph{homotopy 
        derived Artin (resp. DM) $n$-hypergroupoid}  \index{derived Artin $n$-hypergroupoid!homotopy} \index{derived DM $n$-hypergroupoid!homotopy} 
        if:
        \begin{enumerate}
        
        \item the  simplicial affine scheme $\pi^0X$ is an Artin (resp. DM) $n$-hypergroupoid (Definition \ref{npdef});
        
        \item the sheaves $\H_j(\sO_{X})$ on $\pi^0X$ are all Cartesian;  explicitly, for the morphisms $\pd_i \co X_{m+1} \to X_m$, we have isomorphisms
        \[
        \pd^i \co  \pd_i^{-1}\H_j(\sO_{X_m})\ten_{\pd_i^{-1}\H_0(\sO_{X_m})}\H_0(\sO_{X_{m+1}}) \to  \H_j(\sO_{X_{m+1}})
        \]
        for all $i,m,j$. 
        \end{enumerate}
    \end{definition} 
    
    Equivalently, the second condition  says that 
    the morphisms $\pd_i \co X_{m+1} \to X_m$ are strong  for all $i,m$, hence homotopy-smooth (resp. homotopy-\'etale).
    
    As in the underived setting, we have a relative notion:
    
    \begin{definition}\label{dnpreldefh}
        Given   $Y \in sd\Aff$, a morphism $f \co X\to Y$ of simplicial derived affine schemes is said to be a \emph{homotopy 
        derived Artin (resp. DM) $n$-hypergroupoid}  \index{derived Artin $n$-hypergroupoid!homotopy} \index{derived DM $n$-hypergroupoid!homotopy} 
        over $Y$ if:
        \begin{enumerate}
        
        \item the morphism $\pi^0X \to \pi^0Y$ of simplicial affine schemes is an Artin (resp. DM) $n$-hypergroupoid (Definition \ref{npreldef});
        
        \item \label{dnpreldefh:strong} the morphisms $(f,\pd_i) \co X_{m+1} \to Y_{m+1}\by^h_{\pd_i,Y_m}X_m$ are strong (Definition \ref{strongdef}) for all $i,m$.
        
        \end{enumerate}
        
        
        The morphism $X\to Y$ is then said to be homotopy-smooth (resp. homotopy-\'etale) if in addition $X_0 \to Y_0$ is  homotopy-smooth (resp. homotopy-\'etale), and surjective if $\pi^0X_0 \to \pi^0Y_0$ is a surjective morphism of affine schemes.
    \end{definition}  
    
    
    \begin{remarks}\
        \begin{enumerate}
            %
            
            \item If $Y$ is itself a  homotopy derived Artin  $N$-hypergroupoid, then Condition (\ref{dnpreldefh:strong}) in Definition \ref{dnpreldefh} reduces to saying that the morphisms $\pd_i \co X_{m+1} \to X_m$ are all strong, by Exercises \ref{strongex}. 
            
            \item If $X_0$ is underived  in the sense that $X_0 \simeq \pi^0X_0$, and if $X$ is a homotopy derived Artin $n$-hypergroupoid, then the morphism  $\pi^0 X \to X$ is a weak equivalence
            by homotopy-smoothness, because  anything homotopy-smooth  over an underived base is itself underived so the maps $\pi^0X_m \to X_m$ are all quasi-isomorphisms. 
            
            \item Similarly, if $Y_0$ is underived then for $X \to Y$ to be homotopy-smooth (resp. homotopy-\'etale)   just means that $X_0$ is quasi-isomorphic to a smooth (resp. \'etale) underived affine scheme over $Y_0$. 
        \end{enumerate} 
    \end{remarks}
    \begin{examples}\label{htpyhgpdex}\
        \begin{enumerate}
            \item  Every Artin/DM $n$-hypergroupoid is a homotopy derived Artin/DM $n$-hypergroupoid.
            
            \item Saying that $X$ is a homotopy $0$-hypergroupoid is equivalent to saying that $X_0 \to X$ 
            is a weak equivalence, i.e. that  $X$ 
            is equivalent to a derived affine scheme  (with constant simplicial structure).
            
            \item
            If a smooth affine group scheme $G$ acts on a derived affine scheme $U$, then the simplicial derived affine scheme
            \[
            B[U/G]:= (U \Leftarrow U \by G \Lleftarrow U \by G \by G \ldots )
            \]   
            is a homotopy derived Artin
            $1$-hypergroupoid. 
            
            \item\label{derschex} 
            If $(\pi^0X,\sO_X)$ is a derived scheme (Definition \ref{def:dersch}), with $\pi^0X$ quasi-compact and semi-separated, take a finite cover $\cU =\{U_i\}_{i \in I}$ of $\pi^0X$ by open affine subschemes, and consider the simplicial derived affine scheme $\check{X}_{\cU}$ given by the \v Cech nerve \index{Cech nerve@\v Cech nerve}
            \[
            (\check{X}_{\cU})_m := \Spec (\prod_{i_0, \ldots, i_m \in I}\Gamma(U_{i_0}\cap \ldots \cap U_{i_m},\sO_X))
            \]
            with the obvious face and degeneracy maps.
            
            Since $\pi^0\check{X}_{\cU}$ is the \v Cech nerve of $\coprod U_i$ over $\pi^0X$ and $\sO_X$ is homotopy-Cartesian by definition, it follows that $\check{X}_{\cU}$ is a homotopy derived DM (in fact Zariski) $1$-hypergroupoid. Similar statements hold for semi-separated derived algebraic spaces and derived DM stacks with affine diagonal.
        \end{enumerate}
    \end{examples}
    
    As in the underived setting,  we have the following notion giving rise to equivalences for hypergroupoids: 
    
    \begin{definition}\label{dnpreldefhtriv}
        Given   $Y \in sd\Aff$, a morphism $f \co X\to Y$ in $sd\Aff$ is a \emph{homotopy  trivial
        derived Artin (resp. DM) $n$-hypergroupoid}   \index{derived Artin $n$-hypergroupoid!homotopy trivial} \index{derived DM $n$-hypergroupoid!homotopy trivial}
        over $Y$ if and only if:
        \begin{enumerate}
        
            \item the morphism $\pi^0f \co \pi^0X \to \pi^0Y$ of simplicial affine schemes is a trivial Artin (resp. DM) $n$-hypergroupoid;
            
            \item for all $j,m$, the maps $\H_0(\sO_{X_m})\ten_{f^{-1}\H_0(\sO_{Y_m})}f^{-1}\H_j(\sO_{Y_m})\to \H_j(\sO_{X_m})$ are isomorphisms.
        \end{enumerate}
    \end{definition}
    
    Because the morphisms $\pi^0f_m$ are all smooth (resp. \'etale), note that the second condition is equivalent to saying that the maps $f_m$ are all strong, hence homotopy-smooth (resp. homotopy-\'etale).
    
    \begin{example}\label{ex:derschhgpdtriv}
        If $(\pi^0X,\sO_X)$ is a derived scheme, with $\pi^0X$ quasi-compact and semi-separated, take finite covers $\cU =\{U_i\}_{i \in I}$ and $\cV=\{V_j\}_{j \in J}$ of $\pi^0X$ by open affine subschemes, and let $\cW:=\{U_i \cap V_j\}_{(i,j) \in I \by J}$. Then in the notation of Example \ref{htpyhgpdex}.(\ref{derschex}),  the resulting morphisms
        \begin{align*}
        \check{X}_{\cU} \la \check{X}_{\cW} \to \check{X}_{\cV} 
        \end{align*}
        are both trivial  DM (in fact Zariski) $1$-hypergroupoids.
    \end{example}

    \begin{warning}\label{reedyfibrant}
        A homotopy {\it derived} Artin/DM $n$-hypergroupoid $X$ 
        isn't determined by $X_{\le n+1}$ (whereas Property \ref{hypergroupoidprops}.(\ref{truncate}) gives $X \cong \cosk_{n+1}(X)$ for underived Artin $n$-hypergroupoids).\footnote{The reason the previous, underived argument fails is that  a section need not be a weak equivalence if its left inverse is the homotopy pullback of a section.}
        
        However, a homotopy {\it trivial} derived Artin $n$-hypergroupoid $X$ over $Y$ does satisfy $X \simeq \cosk_{n-1}^h(X)\by^h_{\cosk_{n-1}^h(Y)}Y$ for the homotopy $(n-1)$-coskeleton $\cosk^h_{n-1}$ (the right-derived functor of $\cosk_{n-1}$), so is determined by $X_{< n}$ over $Y$, up to homotopy.
    \end{warning}
    
\subsection{Main results}   
    
\subsubsection{Derived stacks}
    For our purposes, we can use the following as the definition of a derived $(n-1)$-geometric stack. It is a special case of \cite[Theorem \ref{stacks2-bigthm}]{stacks2}, as strengthened in \cite[Theorem 
    5.11]{stacksintro}.\footnote{As in Theorem \ref{bigthm}, we are using the terminology from later versions of \cite{hag2}, so indices are $1$ higher
    than in  \cite{stacks2,stacksintro}.}

    \begin{theorem}\label{bigthm2}\index{n-geometric@$n$-geometric!derived Artin stack} \index{derived Artin stack}
    The homotopy category of  strongly quasi-compact $(n-1)$-geometric derived   Artin  stacks is given by taking the full subcategory of $sd\Aff$ consisting of homotopy derived   Artin  $n$-hypergroupoids $X$, and formally inverting the homotopy trivial relative   Artin  $n$-hypergroupoids $X \to Y$. 
    
    \medskip
    In fact, a model for the $\infty$-category of strongly quasi-compact $(n-1)$-geometric derived  Artin  stacks is given by the relative category $(\C,\cW)$ with $\C$ the full subcategory of $sd\Aff$ consisting of homotopy derived Artin  $n$-hypergroupoids $X$ and $\cW$ the subcategory of homotopy trivial relative derived Artin $n$-hypergroupoids  $X \to Y$.
    
    \medskip
    The same results hold true if we substitute ``Deligne--Mumford'' for ``Artin''  throughout.\index{n-geometric@$n$-geometric!derived Deligne--Mumford stack}\index{derived Deligne--Mumford stack}
    \end{theorem}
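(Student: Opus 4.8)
The plan is to mirror the proof of the underived Theorem \ref{bigthm}, carrying along both the underlying simplicial affine scheme $\pi^0X$ and the homotopy-Cartesian structure sheaf $\sO_X$, and to bootstrap the derived statement from its underived counterpart using the Postnikov and obstruction-theoretic machinery of \S\ref{postnikovsn} and \S\ref{cotsn}; the fully rigorous version is \cite[Theorem \ref{stacks2-bigthm}]{stacks2}, strengthened in \cite[Theorem 5.11]{stacksintro}. First I would verify that $(\C,\cW)$ is genuinely a relative category, with $\C$ the homotopy derived Artin (resp. DM) $n$-hypergroupoids and $\cW$ the homotopy trivial relative ones: closure of $\cW$ under composition follows from the strong-morphism bookkeeping of Exercises \ref{strongex} together with stability of homotopy-smooth (resp. homotopy-\'etale) maps, while two-out-of-three reduces, via the projection $\pi^0$ and the homology-Cartesian conditions, to the underived statement already contained in Theorem \ref{bigthm}.

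Next I would construct the comparison functor $X \mapsto X^{\sharp}$, where $X^{\sharp}(A):=\oR\Map_{\cW}(\Spec A,X)$ exactly as after Theorem \ref{bigthm}, and prove by induction on $n$ that $X^{\sharp}$ is a strongly quasi-compact $(n-1)$-geometric derived Artin stack in the inductive sense of the conventional approach described above. The base case $n=0$ is Examples \ref{htpyhgpdex}.(2), which identifies homotopy $0$-hypergroupoids with derived affine schemes. For the inductive step, the \v Cech-nerve shape of a homotopy derived $n$-hypergroupoid exhibits a smooth atlas $X_0 \to X^{\sharp}$ whose iterated homotopy fibre products are modelled by the degreewise $(n-1)$-hypergroupoid structure of $X_{\bullet+1}$; the strongness condition of Definition \ref{dnpreldefh}.(\ref{dnpreldefh:strong}) is precisely what forces the relevant structural maps to be homotopy-smooth, and hence makes the higher diagonals of $X^{\sharp}$ representable by $(n-2)$-geometric stacks.

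For the converse I would resolve a given derived geometric stack $F$ by taking a derived \v Cech nerve of an affine atlas and then iterating the resolution algorithm of \cite{stacks2} in the homotopy category, following Examples \ref{hgpdex2} but with homotopy fibre products throughout; Warning \ref{reedyfibrant} flags that one must use Reedy-fibrant, homotopy-coskeletal models rather than strict coskeleta. These two constructions are then mutually quasi-inverse once $\cW$ is inverted, since any two atlas resolutions admit a common refinement by homotopy trivial hypergroupoids, generalising Example \ref{ex:derschhgpdtriv}.

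The hard part will be the second assertion, that $(\C,\cW)$ models the entire $\infty$-category and not merely its homotopy category. This demands that the simplicial localisation $L_{\cW}\C$ compute the correct derived mapping spaces, matching $\oR\Map$ as described through universal covers in Theorem \ref{duskinmor}. I would establish this by lifting the underived equivalence of mapping spaces across the Postnikov tower of $\sO_X$ one homotopy square-zero extension at a time, the obstructions and the torsors of lifts being governed by the Andr\'e--Quillen groups $D^{*}$ of \S\ref{cotsn}; controlling these deformations uniformly, so that invertibility of homotopy trivial hypergroupoids and the agreement of mapping spaces hold simultaneously at every truncation level, is the crux of the argument and the point at which one genuinely leans on the results of \cite{stacks2,stacksintro}.
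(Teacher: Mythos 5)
The paper does not prove this theorem from scratch: it is presented as a special case of \cite[Theorem \ref{stacks2-bigthm}]{stacks2}, strengthened in \cite[Theorem 5.11]{stacksintro}, and the text even offers the statement as a usable \emph{definition} of a strongly quasi-compact $(n-1)$-geometric derived Artin stack. The only genuinely new ingredient the paper supplies is in the addendum \S \ref{addendum}: the cited references establish the result for the strict, Reedy-fibrant derived $n$-hypergroupoids of Definition \ref{dnpreldef} (restated as Theorem \ref{bigthm2a}), and Lemma \ref{cfstacks2} shows that the ``homotopy'' definitions appearing in the statement --- phrased via conditions on $\pi^0X$ together with strongness of the face maps --- agree with the homotopy-matching-map definitions of \cite{stacks2}, so that Reedy fibrant replacement transports the cited theorem into the stated form. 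Your sketch takes a different route: a ground-up induction on $n$ via atlases and homotopy fibre products, a resolution algorithm for essential surjectivity, and a Postnikov/obstruction-theoretic comparison of mapping spaces. As a roadmap for how \cite{stacks2} itself proceeds this is broadly plausible, and you are candid that the crux is delegated to those references (exactly as the paper delegates it); what your outline is missing is the one step the paper actually carries out in detail, namely the comparison of the two notions of homotopy derived hypergroupoid, which turns on the observation that a morphism $W \to Z$ in $d\Aff$ is strong precisely when $\pi^0W \to W\by^h_Z\pi^0Z$ is a weak equivalence. Citing Warning \ref{reedyfibrant} gestures at the issue but does not discharge it: without that definitional comparison (or a direct argument handling the non-Reedy-fibrant objects admitted by Definition \ref{dnpreldefh}), your argument only addresses an a priori different class of objects from the one in the statement.
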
       
    %
    %
    In particular, this means we obtain the simplicial category of such derived stacks  by simplicial localisation of homotopy derived $n$-hypergroupoids at the class of homotopy trivial relative derived $n$-hypergroupoids.
    
    \begin{remark}\label{qcmpt2}
    We can extend Theorem \ref{bigthm2} to non-quasi-compact objects if we expand our 
    $\infty$-category of building blocks by allowing arbitrary disjoint unions of derived affine schemes (which form a  full subcategory of the ind-category $\ind(d\Aff)$). 
    \end{remark}
    
    An derived $\infty$-stack over $R$ is a functor $d\Alg_R \to s\Set$ satisfying various conditions, so we need to associate such functors to homotopy derived Artin/DM $n$-hypergroupoids. Similarly to the  underived setting,  the solution (not explicit) is to take
    \[
    X^{\sharp}(A)= \oR\Map_{\cW}(\Spec A, X),       
    \]\index{Xsharp@$X^{\sharp}$}
    where $\oR\Map_{\cW}$ is the right-derived functor of $\Hom_{sd\Aff}$ with respect to homotopy trivial derived  Artin/DM $n$-hypergroupoids.
    When $X$ is a $0$-hypergroupoid, we simply write $\oR \Spec A := (\Spec A)^{\sharp}$; this is just given by the functor $\oR\Map_{d\Alg}(A,-)$.
    \index{RSpec@$\oR \Spec$}
    
    We will describe the structure of the mapping spaces  $\oR \Map(X^{\sharp}, Y^{\sharp})$ in \S \ref{morphismsrevsn}, then give 
    explicit  formulae for the \S \ref{dmorphismsn}.

    \begin{definition}
        A derived stack $F\co d\Alg_R \to s\Set$ is said to be \emph{$n$-truncated} \index{n-truncated@$n$-truncated} if the restriction $\pi^0F\co \Alg_R \to s\Set$ is so.     \end{definition}
    
    \begin{warning}
        Beware that this does {\it not} mean that $\pi_iF(A)=0$ for $A \in d\Alg_R$ and $i>n$; that is only true if $A$ is underived, i.e. $A \in \Alg_R$. We have already seen in Examples \ref{mapex} that the first statement fails even for the affine line, with $\pi_i(\bA^1)^{\sharp}(A)\cong \H_i(A)$. 
    \end{warning}
    
    Since $n$-truncation is a condition on the restriction to underived algebras, \Cref{cflurie} (on the relation between $n$-geometric and $n$-truncated) applies in the derived setting in exactly the same way.
    
    \subsubsection{Quasi-coherent complexes}\label{qucohsn2}
    
    For derived $n$-stacks, the behaviour of quasi-coherent complexes is entirely similar to that for $n$-stacks in \S \ref{qucohsn1}.    
    
    We take a  homotopy derived Artin $n$-hypergroupoid $X$:
    \[
    \xymatrix@1{ X_0\, \ar@{.>}[r]|{\sigma_0}& \ar@<1ex>[l]^{\pd_0} \ar@<-1ex>[l]_{\pd_1}\, X_1\, \ar@{.>}@<0.75ex>[r] \ar@{.>}@<-0.75ex>[r]  & \ar[l] \ar@/^/@<0.5ex>[l] \ar@/_/@<-0.5ex>[l] 
    \,X_2\, &  &\ar@/^1pc/[ll] \ar@/_1pc/[ll] \ar@{}[ll]|{\cdot} \ar@{}@<1ex>[ll]|{\cdot} \ar@{}@<-1ex>[ll]|{\cdot} \, X_3\, & \ldots&\ldots,}
    \] 
    for derived affine schemes $X_i$.
    
    Equivalently, writing $O(X)^i_{\bt}$ for the cdga $O(X_i)_{\bt}$ associated to $X_i$,\footnote{Note that contravariance produces cosimplicial objects from simplicial objects, so turns subscripts into superscripts.} we have a cosimplicial cdga
    \[
    \xymatrix@1{ O(X)_{\bt}^0 \,\ar@<1ex>[r]^{\pd^0} \ar@<-1ex>[r]_{\pd^1}&\ar@{.>}[l]|{\sigma^0}\,  O(X)_{\bt}^1\, \ar[r] \ar@/^/@<0.5ex>[r] \ar@/_/@<-0.5ex>[r]   & \ar@{.>}@<0.75ex>[l] \ar@{.>}@<-0.75ex>[l] 
    \,O(X)_{\bt}^2 \,  \ar@/^1pc/[rr] \ar@/_1pc/[rr] \ar@{}[rr]|{\cdot} \ar@{}@<1ex>[rr]|{\cdot} \ar@{}@<-1ex>[rr]|{\cdot} & & \, O(X)_{\bt}^3 \,& \ldots&\ldots,}
    \]
    so we can look at modules 
    \[
    \xymatrix@1{ M_{\bt}^0 \ar@<1ex>[r]^{\pd^0} \ar@<-1ex>[r]_{\pd^1}&\ar@{.>}[l]|{\sigma^0}  M_{\bt}^1\ar[r] \ar@/^/@<0.5ex>[r] \ar@/_/@<-0.5ex>[r]   & \ar@{.>}@<0.75ex>[l] \ar@{.>}@<-0.75ex>[l] 
    M_{\bt}^2   \ar@/^1pc/[rr] \ar@/_1pc/[rr] \ar@{}[rr]|{\cdot} \ar@{}@<1ex>[rr]|{\cdot} \ar@{}@<-1ex>[rr]|{\cdot} & & M_{\bt}^3 & \ldots&\ldots,}
    \]
    over it, with each $M^r_{\bt}$ being an $O(X)^r_{\bt}$-module in chain complexes.
    
    As in the underived setting of Proposition \ref{hCartprop}, \cite[Corollary \ref{stacks-qcohequiv}]{stacks2} says that giving a quasi-coherent complex on the associated derived $n$-geometric stack $X^{\sharp}$ is equivalent to giving a module on $X$ which is homotopy-Cartesian:
    
    \begin{definition}\label{hcartdef2}
   We define a \emph{homotopy-Cartesian module} \index{homotopy-Cartesian!module} $\sF$ on our homotopy derived Artin $n$-hypergroupoid $X$ to consist of
    \begin{enumerate}
    \item an $O(X)^m_{\bt}$-module $\sF_{\bt}^m$ in chain  complexes 
    for each $m$, and
    \item morphisms $\pd^i\co \pd_i^*\sF_{\bt}^{m-1} \to \sF_{\bt}^m$ and $\sigma^i\co\sigma_i^*\sF_{\bt}^{m+1} \to \sF_{\bt}^m$, for all $i$ and $m$, satisfying the usual cosimplicial identities\cref{nosenote}, such that
    
    \item the quasi-coherent sheaves
    $(m \mapsto \H_j(\sF^m_{\bt}))$ on the simplicial scheme $(m \mapsto \pi^0X_m)$
    are Cartesian for all $j$, i.e. the maps
    \[
    \pd^i\co  (\pi^0\pd_i)^*\H_j(\sF^{
    m-1}_{\bt}) \to \H_j(\sF^m_{\bt}),
    \]
    for $\pi^0\pd_i\co \pi^0X_m \to \pi^0X_{m-1}$,
    are isomorphisms of quasi-coherent sheaves on $\pi^0X_m$ (i.e. of $\H_0O(X_m)$-modules) for all $i,j,m$.
    \end{enumerate}
    A morphism $\{\sE^m_{\bt}\}_m \to \{\sF^m_{\bt}\}_m$ is a weak  equivalence if the maps $\sE^m_{\bt} \to \sF^m_{\bt}$ are all quasi-isomorphisms. 
    \end{definition}
    
    \begin{notation}
    Here, we are writing $(\pi^0\pd_i)^*\H_j(\sF^{
    m-1}_{\bt})$ for
    \[
    \pd_i^{-1}\H_j(\sF^{
    m-1}_{\bt})\ten_{\pd_i^{-1}\H_0(\sO_{X_{m-1,\bt}})}\H_0(\sO_{X_m,\bt}).
    \]
    
    Since $\pi^0X_m=\Spec \H_0(O(X)^m_{\bt})$, we are also associating quasi-coherent sheaves on the simplicial scheme
    \[
    (\pi^0X_0 \Leftarrow \pi^0X_1 \Lleftarrow \pi^0X_2 \cdots) 
    \]
    to the cosimplicial module 
    \[
    (\H_j(\sF^0_{\bt}) \Rightarrow \H_j(\sF^1_{\bt}) \Rrightarrow  \H_j(\sF^2_{\bt}) \cdots)
    \]
    over the cosimplicial ring
    \[
    (\H_0(O(X)^0_{\bt}) \Rightarrow \H_0(O(X)^1_{\bt}) \Rrightarrow  \H_0(O(X)^2_{\bt}) \cdots).
    \]
    \end{notation}
    
    \begin{remark}
    Note that because the maps $\pd_i$ are homotopy-smooth, the Cartesian condition in \Cref{hcartdef2} is equivalent to saying that the composite maps 
    $$ 
    \oL\pd_i^*\sF_{\bt}^{m} \to \pd_i^*\sF_{\bt}^{m} \xra{\pd^i} \sF_{\bt}^{m+1}
    $$  are quasi-isomorphisms, which implies that the  morphisms  $\sigma^i\co \oL\sigma_i^*\sF_{\bt}^{m+1} \to \sF_{\bt}^m$ are also automatically quasi-isomorphisms. We have  to  left-derive the pullback functors $\pd_i^*$ in this version of the statement because homotopy-smoothness does not imply quasi-flatness. 
    \end{remark}
    

    
    \bigskip
    When $X$ is an Artin $n$-hypergroupoid with no derived structure, observe that the statement above just recovers Proposition \ref{hCartprop}. We now consider simple examples with derived structure.
 
    \begin{example}
    Take a derived scheme $(\pi^0X,\sO_{X})$ with $\pi^0X$ quasi-compact and semi-separated, and let $\sF_{\bt}$ be a homotopy-Cartesian presheaf of $\sO_{X}$-modules in chain  complexes, in the sense of  Definition \ref{derschCart}. Then for any finite affine cover $\cU:=\{U_i\}_{i \in I}$ of $\pi^0X$, we can form chain complexes
    \[
    \check{\CC}^m(\cU,\sF_{\bt}):= \prod_{i_0, \ldots, i_m \in I}\Gamma(U_{i_0}\cap \ldots \cap U_{i_m},\sF_{\bt}),
    \]
    and these fit together to give a cosimplicial chain complex
    \[
    \xymatrix@1{ \check{\CC}^0(\cU,\sF_{\bt})\, \ar@<1ex>[r]^{\pd^0} \ar@<-1ex>[r]_{\pd^1}&\ar@{.>}[l]|{\sigma^0}\,   \check{\CC}^1(\cU,\sF_{\bt}) \,\ar[r] \ar@/^/@<0.5ex>[r] \ar@/_/@<-0.5ex>[r]   & \ar@{.>}@<0.75ex>[l] \ar@{.>}@<-0.75ex>[l] 
    \,\check{\CC}^2(\cU,\sF_{\bt})\,   \ar@/^1pc/[rr] \ar@/_1pc/[rr] \ar@{}[rr]|{\cdot} \ar@{}@<1ex>[rr]|{\cdot} \ar@{}@<-1ex>[rr]|{\cdot} & & \,\check{\CC}^3(\cU,\sF_{\bt})\, & \ldots&\ldots,}
    \]
    which is a module over the cosimplicial cdga
    \[
    \xymatrix@1{ \check{\CC}^0(\cU,\sO_{X,\bt})\, \ar@<1ex>[r]^{\pd^0} \ar@<-1ex>[r]_{\pd^1}&\ar@{.>}[l]|{\sigma^0}   \,\check{\CC}^1(\cU,\sO_{X,\bt})\, \ar[r] \ar@/^/@<0.5ex>[r] \ar@/_/@<-0.5ex>[r]   & \ar@{.>}@<0.75ex>[l] \ar@{.>}@<-0.75ex>[l] 
    \,\check{\CC}^2(\cU,\sO_{X,\bt})\,  \ar@/^1pc/[rr] \ar@/_1pc/[rr] \ar@{}[rr]|{\cdot} \ar@{}@<1ex>[rr]|{\cdot} \ar@{}@<-1ex>[rr]|{\cdot} & & \,\check{\CC}^3(\cU,\sO_{X,\bt})\, & \ldots&\ldots.}
    \]
    The latter is just $O(\check{X}_{\cU})$ for the homotopy derived  DM $1$-hypergroupoid $\check{X}_{\cU}$
    from   Example \ref{htpyhgpdex}.(\ref{derschex}), and the homotopy-Cartesian hypothesis on $\sF_{\bt}$ ensures that $\check{\CC}^{\bt}(\cU,\sF_{\bt})$
    is a  homotopy-Cartesian module  on  $\check{X}_{\cU}$.
    \end{example}  
    
    \begin{example}
    Let's look at what happens when $X$ is a homotopy derived  $0$-hypergroupoid, so the morphisms $\pd_i \co X_m \to X_{m-1}$, $\sigma_i\co X_{m-1} \to X_m$ are all quasi-isomorphisms. Then \Cref{hcartdef2} simplifies to say that  a homotopy-Cartesian module on $X$ is an $\{O(X)^m_{\bt}\}_m$-module $\{\sF_{\bt}^m\}_m$ for which the morphisms $\pd^i \co \sF^m_{\bt} \to \sF^{m+1}_{\bt}$ (and hence $\sigma^i \co\sF^{m+1}_{\bt}\to \sF^m_{\bt}$) are all quasi-isomorphisms.
    
    \smallskip  This gives us an equivalence of $\infty$-categories between homotopy-Cartesian modules on $X$ and $O(X)^0_{\bt}$-modules in chain complexes. The correspondence sends a module $\{\sF^m_{\bt}\}_m$ over $X$ to the $O(X_0)_{\bt}$-module $\sF^0_{\bt}$, with quasi-inverse functor given by the right adjoint, which sends an $O(X_0)_{\bt}$-module $\sE_{\bt} $ to $(\sE_{\bt} \Rightarrow \sE_{\bt} \Rrightarrow \sE_{\bt}\cdots)$, i.e. to itself, given constant cosimplicial structure, with the $O(X_n)_{\bt}$-actions coming via the degeneracy maps in $X$. The unit $\{\sF^m_{\bt}\}_m \to \{\sF^0_{\bt}\}_m$ of the adjunction is then manifestly a levelwise quasi-isomorphism by the reasoning above, because $\sF$ is homotopy-Cartesian. 
    \end{example}  
    
    \begin{definition}
    As in the underived setting of \S \ref{Lfstarsn}, for any morphism $f \co X_{\bt} \to Y_{\bt}$ of homotopy derived Artin $n$-hypergroupoids we have a \emph{derived pullback functor} $\oL f^*$ \index{derived pullback functor $\oL f^*$} on quasi-coherent complexes, given levelwise by $(\oL f^*\sF_{\bt})^m \simeq \oL f_m^*\sF_{\bt}^m$.
    \end{definition}
    

    
    %
    
    

    \subsection{Tangent and obstruction theory}
    We follow the treatment in \cite[\S \ref{drep-tgtsn}]{drep}.


    \begin{lemma}\label{hgslemma}
    Given a derived $n$-geometric Artin  stack $F \co d\Alg_R \to s\Set$ and maps $A \to B \la C$ in $d\Alg_R$, with $A \onto B$ a surjection with nilpotent kernel, we have a weak equivalence
    \[
    F(A\by_BC) \xra{\sim} FA\by^h_{FB}FC.       
    \]
    \end{lemma}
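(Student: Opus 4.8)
The plan is to recognise the statement as an instance of \emph{infinitesimal cohesion} and to prove it by induction on the geometric level, reducing to the representable case. First note that since $A \onto B$ is a levelwise surjection it is a fibration in $d\Alg_R$ (surjections of cdgas are surjective in positive degrees, and surjections of simplicial algebras are Kan fibrations), and both model structures are right proper; hence the strict pullback $A\by_B C$ already computes the homotopy fibre product $A\by^h_B C$. Dually, the square of derived affine schemes with vertices $\Spec B, \Spec A, \Spec C, \Spec(A\by_B C)$ is a homotopy pushout in $d\Aff_R$ in which $\Spec B \into \Spec A$ is a nilpotent (infinitesimal) thickening, and the assertion is precisely that $F$ carries this particular pushout to a homotopy pullback of spaces.

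The base case is $F$ representable, say $F = \oR\Spec S = \oR\Map_{d\Alg_R}(S,-)$. Here the displayed compatibility of $\oR\Map$ with homotopy limits gives $F(A\by^h_B C)\simeq FA\by^h_{FB}FC$ for \emph{every} homotopy pullback, so in particular for ours; note that nilpotency of the kernel is not needed at this stage, and enters only in the inductive step.

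For the inductive step, take $F$ to be $(n-1)$-geometric with $n\ge 1$ and present it by Theorem \ref{bigthm2} as $F = X^\sharp$ for a homotopy derived Artin (resp. DM) $n$-hypergroupoid $X$, so that $X_0^\sharp \to F$ is a smooth (resp. \'etale) atlas, the terms of the associated \v Cech-type resolution are lower-geometric, and $F \simeq \ho\LLim_{m \in \Delta^{\op}} X_m^\sharp$. By the inductive hypothesis each $X_m^\sharp$ carries our pushout to a pullback. The comparison map $F(A\by_B C)\to FA\by^h_{FB}FC$ always exists, and the task is to show it is an equivalence by checking surjectivity and injectivity on homotopy groups at every basepoint.

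The main obstacle is this last step, namely passing the pullback through the realisation, which is false for general pushouts and relies crucially on the nilpotency of $A\onto B$. The mechanism is infinitesimal lifting: because $\Spec B \into \Spec A$ is an infinitesimal thickening, the smooth (resp. \'etale) atlas and all the higher fibre products $X_m$ lift essentially uniquely along it, by formal smoothness and topological invariance of the smooth (resp. \'etale) small site under nilpotent immersions (this is the content of homotopy-smoothness in Definition \ref{strongdef}). Thus a compatible family of $A$- and $C$-points of $F$ agreeing over $B$ may, after passage to a cover, be lifted compatibly to the $X_m$, where the inductive hypothesis supplies a point over $A\by_B C$; the hypersheaf property of $X^\sharp$ then descends these to the required point of $F(A\by_B C)$, using that $A\by_B C\onto C$ is again a nilpotent surjection (a base change of $A\onto B$), so that the \'etale (resp. smooth) covers entering the descent for $\Spec(A\by_B C)$ coincide with those for $\Spec C$. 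Surjectivity on homotopy groups comes from the existence of these lifts and injectivity from their essential uniqueness; since filtered homotopy colimits of simplicial sets commute with the finite homotopy limit $\by^h$, the passage to covers is harmless. The same argument applies verbatim with ``Deligne--Mumford'' and \'etale in place of ``Artin'' and smooth, and in the differential and analytic settings with submersions and local diffeomorphisms (resp.\ biholomorphisms).
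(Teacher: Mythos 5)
Your proposal is correct in substance but takes a genuinely different route from the paper's proof (Corollary \ref{hgscor}). You run an induction on geometric level, writing $F\simeq \ho\LLim_{m\in\Delta^{\op}}X_m^{\sharp}$ and then commuting the homotopy pullback past the realisation by hand, producing and comparing lifts level by level via infinitesimal lifting of smooth/\'etale morphisms; this is essentially the argument forced on you by the inductive (atlas-plus-diagonal) definition of geometric stacks, and the realisation-commutation step --- which you rightly flag as the main obstacle --- is where nearly all the remaining work lives (coherence of the lifts across simplicial degrees, essential uniqueness up to homotopy, basepoint bookkeeping). The paper instead uses the hypergroupoid presentation of Theorem \ref{bigthm2a} to avoid the induction altogether: because the partial matching maps of a (Reedy fibrant) derived Artin $(n+1)$-hypergroupoid $X$ are homotopy-smooth surjections, infinitesimal lifting along the nilpotent surjection $A\onto B$ says precisely that $X(A)\to X(B)$ is a Kan fibration of simplicial sets, so the strict pullback $X(A)\by_{X(B)}X(C)$ already computes the homotopy pullback, and it equals $X(A\by_BC)$ since each $X_m$ is representable and hence preserves limits. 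All the delicacy is then concentrated in the single passage to hypersheafifications, handled by unique lifting of \'etale morphisms along nilpotent extensions --- the same ingredient you invoke to identify covers of $\Spec(A\by_BC)$ with those of $\Spec C$. Both arguments thus rest on the same two inputs (infinitesimal lifting and invariance of \'etale covers under nilpotent thickenings); the hypergroupoid route packages the first as a one-line Kan-fibration observation, whereas your route must thread it through a $\Delta^{\op}$-colimit, which is the part of your sketch that would need the most work to complete. Your base case, and the observation that $A\by_BC=A\by^h_BC$ because surjections are fibrations, are both fine.
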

    For a proof, see Corollary \ref{hgscor}.
    %
\begin{definition}
    As in \cite{drep}, we call functors satisfying the conclusion of Lemma \ref{hgslemma}  \emph{homotopy-homogeneous}\index{homotopy-homogeneous}, by analogy with the notion of homogeneity \cite{Man2}; it is best thought of as a derived form of Schlessinger's conditions.
    \end{definition}
    
    \begin{remark}[Terminology] Recent sources  tend to use phrases like ``infinitesimally cohesive on one factor'' for this notion (or a slight variant), because the
    notion of infinitesimally cohesive \index{infinitesimally cohesive} in \cite{lurie} imposes the nilpotent surjectivity condition to $C \to B$ as well;  our notion of homotopy-homogeneity more closely resembles Artin's generalisation \cite[2.2 (S1)]{Artin} of Schlessinger's conditions,\index{Schlessinger's conditions} which unsurprisingly leads to a more usable representability theorem.
    \end{remark}
    
    The long exact sequence of a homotopy fibre product (\Cref{lesfibreprod}) immediately gives the following: 
    \begin{lemma}
    If $F$ is homotopy-homogeneous, then we have a surjection
    \[
    \pi_0F(A\by_BC) \onto \pi_0FA\by_{\pi_0FB}\pi_0FC
    \]
    for all  maps $A \to B \la C$ in $d\Alg_R$ with $A \onto B$ a nilpotent surjection, and a weak equivlaence
    \[
    \pi_0F(A\by C) \xra{\sim} \pi_0FA \by \pi_0FC.
    \]
    \end{lemma}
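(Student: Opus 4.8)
The plan is to deduce both statements from the homotopy-homogeneity equivalence of Lemma \ref{hgslemma} together with the explicit form of the long exact sequence of a homotopy fibre product recorded in \Cref{lesfibreprod}.

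First I would establish the surjection. Since $A \onto B$ is a nilpotent surjection, Lemma \ref{hgslemma} provides a weak equivalence $F(A\by_BC) \xra{\sim} FA\by^h_{FB}FC$. Applying $\pi_0$, which sends weak equivalences of simplicial sets to bijections of sets, identifies $\pi_0F(A\by_BC)$ with $\pi_0(FA\by^h_{FB}FC)$. The explicit form of \Cref{lesfibreprod} asserts precisely that for any homotopy fibre product the comparison map $\pi_i(X\by^h_YZ)\to \pi_iX\by_{\pi_iY}\pi_iZ$ is surjective for every $i\ge 0$; taking $i=0$ with $(X,Y,Z)=(FA,FB,FC)$ yields the surjection $\pi_0(FA\by^h_{FB}FC)\onto \pi_0FA\by_{\pi_0FB}\pi_0FC$. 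Composing the two maps gives the first claim.

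For the product statement I would specialise to the terminal object $B=0$ (the zero ring), so that $A\by C = A\by_0 C$. Here I use, beyond bare homotopy-homogeneity, that $F$ preserves finite products; for the geometric stacks of interest this holds because such $F$ is an \'etale hypersheaf, and since $\Spec(A\by C)=\Spec A\sqcup \Spec C$ the disjoint-union axiom gives $F(A\by C)\simeq FA\by FC$, while the empty scheme $\Spec 0 = \emptyset$ forces $F(0)\simeq \ast$. As $\pi_0$ preserves finite products of simplicial sets, this immediately yields the bijection $\pi_0F(A\by C)\cong \pi_0FA\by \pi_0FC$; for sets ``weak equivalence'' simply means bijection. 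Equivalently, this is the degenerate $B=0$ case of the first part: reading $F(A\by C)\simeq FA\by^h_{F0}FC$ as a homotopy fibre product over the contractible base $F(0)\simeq\ast$, the groups $\pi_iF(0)$ vanish, so the double-coset fibres described in \Cref{lesfibreprod} collapse to points and the $i=0$ surjection upgrades to a bijection.

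The routine content is light. The two points needing care are, first, the basepoint bookkeeping in \Cref{lesfibreprod}: since $\pi_0$ is only a pointed set, the surjectivity statement must be read componentwise over all compatible choices of basepoint rather than over a single connected component; and second, the observation that the product assertion genuinely needs product-preservation as an extra ingredient, since neither leg of $A\by_0 C$ is a nilpotent surjection and homotopy-homogeneity alone does not apply at $B=0$. I expect this mild basepoint bookkeeping, and flagging the product-preservation hypothesis, to be the only obstacles.
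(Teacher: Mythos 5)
Your argument for the first assertion is exactly the paper's: homotopy-homogeneity identifies $F(A\by_BC)$ with $FA\by^h_{FB}FC$, and the explicit description in \Cref{lesfibreprod} gives surjectivity of $\pi_0$ of a homotopy fibre product onto the fibre product of the $\pi_0$'s, with fibres the double cosets $\pi_1(FA,x)\backslash \pi_1(FB;\bar{x},\bar{z})/\pi_1(FC,z)$. For $i=0$ there is no basepoint to track, so the bookkeeping you worry about is vacuous.

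Your handling of the product clause is the more substantive point, and you are right to flag it. The projection $A\by C \onto 0$ onto the terminal (zero) ring is a nilpotent surjection only when $A=C=0$, so homotopy-homogeneity as defined here (the conclusion of Lemma \ref{hgslemma}, quantified only over nilpotent surjections $A \onto B$) genuinely does not cover the case $B=0$: a constant functor with value a fixed simplicial set $K$ with $|\pi_0 K|\geq 2$ is homotopy-homogeneous in this sense, yet the diagonal $\pi_0 K \to \pi_0 K \by \pi_0 K$ is not a bijection, so the product clause fails for it. The text derives the whole lemma ``immediately'' from \Cref{lesfibreprod}, implicitly using that the functors in question are derived geometric stacks, hence \'etale hypersheaves, so that $F(A\by C)\simeq FA\by FC$ with $F(0)$ contractible --- precisely the extra input you supply via $\Spec(A\by C)=\Spec A \sqcup \Spec C$ (or, equivalently, via the half-exactness condition (2) of Definition \ref{halfexact}). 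So your proof is correct, agrees with the paper on the first clause, and on the second clause is slightly more careful than the text: the product statement should be read as using finite-product preservation rather than homogeneity alone.
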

    You may recognise these as  generalisations of two  of Schlessinger's conditions \cite{Sch}\index{Schlessinger's conditions}, the third being a finiteness constraint.

    \begin{example}
    For a homotopy derived $0$-hypergroupoid given by a derived affine scheme $U$ (with constant simplicial structure), the associated derived stack is given by $U^{\sharp} \simeq \oR\Map_{d\Aff}(-,U)$, a functor for which we've seen these results before, with tangent and obstruction theory as in \S \ref{tgtsn1}.  
    \end{example}

\subsubsection{Tangent spaces}

    Now, take $A \in \catdga[R]$ and $M \in  \catdgm[A]$, with $F \co \catdga[R] \to s\Set$. We regard $A \oplus M$ as an object of $\catdga[R]$ by setting the product of elements $M$ to be $0$.
    \begin{definition}
        For $x \in F(A)_0$, define the \emph{tangent space}\index{tangent space} of $F$ at $x$ with coefficients in $M$ to be the homotopy fibre  $T_x(F,M):= F(A\oplus M)\by^h_{F(A)}\{x\}$. 
    \end{definition}

    If $F$ is homotopy-homogeneous, then we have an additive structure on tangent spaces $T_x(F,M) \in \Ho(s\Set)$ via the composition
    \begin{align*}
    &F(A\oplus M)\by^h_{FA}F(A\oplus M)\\ 
    &\simeq F((A\oplus M)\by_A(A\oplus M))\\
    &=F(A\oplus (M\oplus M)) \xra{\phantom{xxx}+\phantom{xxx}} F(A \oplus M).
    \end{align*}
    
    Moreover we have a short exact sequence $0 \to M \to \cone(M \to M) \to M[-1] \to 0$, so $M= \cone(M \to M)\by_{M[-1]}0$, and thus  $$
    F(A \oplus M)\simeq F(A \oplus \cone(M \to M))\by_{F(A\oplus M[-1])}^hF(A),
    $$
    since $F$ is  homotopy-homogeneous and  $A \oplus \cone(M \to M)\to A \oplus M[-1]$ is a square-zero extension.
    
    If $F$ is also \emph{homotopy-preserving}\index{homotopy-preserving} in the sense that it preserves weak equivalences,\index{homotopy-preserving} then  $F(A \oplus \cone(M \to M))\simeq F(A)$, so we have 
    \[
    F(A \oplus M)\simeq F(A)\by_{F(A\oplus M[-1])}^hF(A).
    \]
    
    Taking homotopy fibres over $x \in F(A)$,  we then get 
    $$
    T_x(F,M) =  0\by_{T_x(F,M[-1])}^h0,
    $$      
    which is a loop space,
    so $T_x(F,M[-1])$ deloops $T_x(F,M)$ and 
    $$
    \pi_i T_x(F,M)\cong \pi_{i+n} T_x(F,M[-n]).
    $$
    
    \begin{definition}\label{totcohodef}
        We can thus define \emph{tangent cohomology groups} by\index{tangent cohomology}\index{Andr\'e--Quillen cohomology!generalised $D^*_x(F,M)$}
        $$
        D^{n-j}_x(F,M):= \pi_jT_x(F, M[-n]).
        $$
    \end{definition}
    These generalise the Andr\'e--Quillen cohomology groups of derived affine schemes.

    \begin{definition}
        For a homotopy-preserving homotopy-homogeneous functor $F \co d\Alg \to s\Set$ and an element $x \in F(k)$ for $k$ a field, define the \emph{dimension of $F$ at $x$}\index{dimension} to be the Euler characteristic  $\dim_x(F):= \sum (-1)^i\dim (D^i(F,k))$, when finite.       
    \end{definition}
    
    \begin{examples}\
      \begin{enumerate}
    \item If $F$ is the derived stack associated to 
    a dg-scheme $X$ and $x \in X(A)$, then $D^i_x(F,M)\cong \Ext^i_{\sO_X}(\bL^X, \oR x_* M) \cong \Ext^i_A(\oL x^*\bL^X, M)$. 
    
    When $X$ is a dg-manifold, the dimension of $F$ at $x \in X(k)$ is therefore  the Euler characteristic $\dim_x(F)=\chi(x^*\Omega^1_X)$, the alternating sum of the number of generators of $(\sO_{X,\bt})_x$  in each degree.  When  $X$ is just a smooth underived scheme, this is simply $\dim x^*T_X =\dim X$ .
    
    \item If $V$ is a cochain complex in degrees $\ge -n$, finite-dimensional over $k$, then 
    \[
    F \co A \mapsto N^{-1}\tau_{\ge 0} \Tot^{\Pi}(V\ten_k A)       
    \]
    (Dold--Kan denormalisation of good truncation) is represented by an $n$-hypergroupoid over $k$, with $D^i_x(F^{\sharp},M)\cong \H^i(\Tot^{\Pi}(V\ten_kM))$ for all $i$. At all points $x \in F(k)= N^{-1}\tau_{\ge 0}V$, we thus have $D^i_x(F^{\sharp},k)\cong \H^i(V)$, so  $\dim_x(F) \cong \chi(V)$, when finite. 
    \end{enumerate}

    \end{examples}

\subsubsection{The long exact sequence of obstructions}\label{lesobssn}

Take a square-zero extension $g \co A \to B$ of commutative rings,  with kernel $I$.
If $F$ is a homotopy-preserving homotopy-homogeneous functor, then  there is a long exact sequence of groups and sets:\index{long exact sequence of groups and sets} 
    
    $$\xymatrix@R=0ex{
    \cdots \ar[r]^-{} &\pi_n(FA,y) \ar[r]^-{g_*}&\pi_n(FB,x) \ar[r]^-{u}& D^{1-n}_{y}(F,I) \ar[r]^-{e_*} &\pi_{n-1}(FA,y)\ar[r]^-{g_*}&\cdots\\ &\cdots \ar[r]^-{g_*}&\pi_1(FB,x) \ar[r]^-{u}& D^0_{y}(F,I)  \ar[ddll]|-{\,-*y\,}\\
    \\
    & \pi_0(FA)\ar[r]_{g_*} & \pi_0(FB) \ar[r]_-{u} & \Gamma(FB,D^1(F, I)).
    }
    $$
    The first part is the sequence associated to   the homotopy fibre sequence $ T_x(F,I) \to F(A) \to F(B)$ as in \cite[Lemma I.7.3]{sht}, but
    the non-trivial content here is in the final map $u$ which gives rise to obstructions.\footnote{This phenomenon of central and abelian extensions giving rise to such obstructions arises in many branches of algebra and topology --- see \cite{obsrec2} for a more general algebraic formulation.}
    
    \medskip
    Here are the details of the construction (following  \cite[Proposition \ref{drep-obs}]{drep}). 
    Let $C=C(A,I)$ be the mapping cone of $ I \to A$. Then $C \to B$ is a square-zero acyclic surjection, so $FC \to FB$ is a weak equivalence, and thus $\pi_i(FC)  \to \pi_i(FB)$ is an isomorphism for all $i$.
    Now,
    \[
    A = C\by_{B \oplus I[-1]}B,
    \]
    and since $C\to B \oplus I[-1]$ is surjective this gives, for $y \in FB$,
      a map
    $$
    p': (FC)\by^h_{(FB)}\{y\} \to T_y(F,I[-1])
    $$
    in the homotopy category of simplicial sets,     with homotopy fibre $(FA)\by^h_{(FB)}\{y\} $ over $0$. The sequence above is just induced by the long exact sequences \cite[Lemma I.7.3]{sht} associated to the homotopy fibre sequences $ (FA)\by^h_{(FB)}\{y\} \to \{y\} \xra{p'}  T_y(F,I[-1])$.

    \begin{lemma}\label{detectweak}
    A morphism $F \to G$ of  $n$-geometric derived stacks over $R$ 
    is a weak equivalence if and only if
    \begin{enumerate}
    \item $\pi^0f\co \pi^0F(B) \to \pi^0G(B)$ is a weak equivalence of functors $\Alg_R \to s\Set$,  
    and 
    \item for all discrete $A$ (i.e. $A \in \Alg_R$), all $A$-modules $M$  and all $x \in F(A)$, the maps $f\co D^i_x(F,M)\to D^i_{f(x)}(G,M)$ are isomorphisms for all $i>0$. 
    (Note that for $i\le 0$, we already know that these maps are isomorphisms,  from the first condition.)
    \end{enumerate}
    \end{lemma}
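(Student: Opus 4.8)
The ``only if'' direction is immediate, since any weak equivalence $f$ induces isomorphisms on the homotopy groups of every space $T_x(F,M)$ constructed functorially from $F$, hence on $\pi^0$ and on all $D^i_x(F,M)$. For the converse, the plan is to deduce from conditions (1) and (2) that $F(A)\to G(A)$ is a weak equivalence for every $A \in d\Alg_R$. Both $F$ and $G$ are homotopy-homogeneous (Lemma \ref{hgslemma}) and homotopy-preserving, and I would exploit the Postnikov tower $A = \varprojlim_m (P_mA)$ of \S\ref{postnikovsn}, which builds $A$ out of the discrete algebra $\H_0A$ through a sequence of square-zero extensions.

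First I would reduce to the truncations. As $n$-geometric derived stacks are nilcomplete (convergent along Postnikov towers), we have $F(A)\simeq \ho\Lim_m F(P_mA)$ and likewise for $G$; so it suffices to prove that $F(P_mA)\to G(P_mA)$ is a weak equivalence for all $m$ and then pass to the homotopy limit. I prove this by induction on $m$. The base case $m=0$ is the map $F(\H_0A)\to G(\H_0A)$, a weak equivalence by (1) because $\H_0A$ is discrete. For the inductive step, Lemma \ref{postfact} factors $P_mA \to P_{m-1}A$ as a trivial fibration $P_mA \to C$ followed by a square-zero extension $C \to B := P_{m-1}A$ with kernel $I = \H_m(A)[-m]$; homotopy-preservation gives $F(P_mA)\simeq F(C)$, so it is enough to compare $F(C)\to G(C)$. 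To this end I would apply the long exact sequence of obstructions of \S\ref{lesobssn} to the square-zero extension $C \to B$ for both $F$ and $G$, and compare the two sequences via $f$.

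The crux is to show that $f$ induces isomorphisms on every term $D^{1-j}_y(F,I)$ appearing in that sequence. Here I would use two reductions. The shift relation built into Definition \ref{totcohodef} gives $D^{p}_y(F, \H_m(A)[-m]) \cong D^{p+m}_y(F, \H_m A)$, reducing to the \emph{discrete} coefficient module $\H_m A$. For the base, given any discrete $\H_0B$-module $M$ and a point $y \in F(B)$ with image $y_0 \in F(\H_0B)$, the module structure on $B \oplus M$ factors through $\H_0B$, so $B\oplus M \cong B\by_{\H_0B}(\H_0B\oplus M)$; since $\H_0B\oplus M \to \H_0B$ is a square-zero (hence nilpotent) surjection and all cdgas are fibrant, this strict pullback is a homotopy pullback, and homotopy-homogeneity yields
\[
F(B\oplus M)\;\simeq\; F(B)\by^h_{F(\H_0B)}F(\H_0B\oplus M).
\]
Taking homotopy fibres over $y$ gives $T_y(F,M)\simeq T_{y_0}(F,M)$, and hence $D^i_y(F,M)\cong D^i_{y_0}(F,M)$. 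Thus every tangent term reduces to the discrete base $\H_0B = \H_0A$ with discrete coefficients, where condition (2) supplies the isomorphism for $i>0$, while for $i\le 0$ one has $D^i_{y_0}(F,M)=\pi_{-i}T_{y_0}(F,M)$, computed from the evaluations $F(\H_0A\oplus M)$ and $F(\H_0A)$ on discrete algebras, on which $f$ is a weak equivalence by (1). Combined with the inductive isomorphism $\pi_*(FB)\cong\pi_*(GB)$, this makes $f$ an isomorphism on all but the $\pi_*(FC)$-terms of the two long exact sequences.

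The main obstacle will be concluding from this comparison that $\pi_*(FC)\to\pi_*(GC)$ is an isomorphism. In the abelian range this is a routine five-lemma, but the bottom of the sequence is a \emph{long exact sequence of groups and pointed sets} (ending in $\Gamma(FB, D^1(F,I))$, with the actions described in \Cref{lesfibreprod}), so bijectivity on $\pi_0$ must be extracted by a direct obstruction argument: a component of $GC$ descends to one of $GB\cong FB$, which lifts to $FB$, and its obstruction to lifting to $FC$ lives in $D^1(F,I)$ and maps isomorphically to the (vanishing) $G$-obstruction, hence vanishes, giving surjectivity; injectivity is handled symmetrically using the transitive action of $D^0$. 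The other ingredient needing justification is the nilcompleteness used to exchange $F$ with the homotopy limit of the Postnikov tower, which is where the $n$-geometric hypothesis is genuinely used.
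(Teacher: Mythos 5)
Your proof is correct and follows essentially the same route as the paper's: induction up the Postnikov tower using Lemma \ref{postfact}, the long exact sequence of obstructions of \S\ref{lesobssn}, and nilcompleteness to pass to the limit. The extra details you supply --- reducing the tangent cohomology terms at points of $F(P_{m-1}A)$ to discrete base points and discrete coefficients via homotopy-homogeneity, and the obstruction argument at the $\pi_0$ end of the sequence of groups and sets --- are exactly the steps the paper's terse proof leaves implicit, and you have carried them out correctly.
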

    \begin{proof} 
    We need to show that $F(B) \to G(B)$ is a weak equivalence for all $B \in d\Alg$, which we do by  working up the Postnikov tower $B=\Lim_i P_iB$. 
    
Since     $P_{i+1}B \to P_iB$ is weakly equivalent to  a square-zero extension with kernel $(\H_{i+1}B)[i+1]$ by \Cref{postfact}, the long exact sequence of obstructions gives inductively (on $j$) that $\pi_jF(P_iB) \cong \pi_jG(P_iB)$ for all $i,j$. 
    To complete the proof, note that $F(B) \simeq \ho\Lim_k F(P_kB)$ and similarly for $G$.
    \end{proof}

    Note that we could relax both conditions in Lemma \ref{detectweak} by asking that they only hold for reduced discrete algebras, and then apply a further induction to the quotients of $\H_0(B)$ by powers of its nilradical. Also note that the proof applies to any homotopy-preserving homotopy-homogeneous functors $F$ which satisfy $F(B) \simeq \ho\Lim_k F(P_kB) $, a condition called \emph{nilcompleteness} \index{nilcomplete} in \cite{lurie}.
    
    \subsubsection{Sample application of derived deformation theory --- semiregularity}

    We now give an application from \cite{semireg2}.\footnote{explanatory slides available at \href{http://www.maths.ed.ac.uk/~jpridham/semiregslide.pdf}{www.maths.ed.ac.uk/$\sim$jpridham/semiregslide.pdf}}
    Take:
    \begin{itemize}
    \item a smooth proper variety $X$ over a field $k$ of characteristic $0$,
    \item a square-zero extension  $A \to B$ of $k$-algebras with kernel $I$,
    \item a closed LCI subscheme $Z \subset X\ten B$  of codimension $p$, flat over $B$.
    \end{itemize}
    Then the obstruction to lifting $Z$ to a subscheme of $X \ten A$ lies in $\H^1(Z, \sN_{Z/X})\ten I$. Bloch \cite{blochSemiregularity} defined a semiregularity map
    \[
    \tau\co \H^1(Z, \sN_{Z/X}) \to \H^{p+1}(X, \Omega^{p-1}_X), 
    \]
    and conjectured that it annihilates all obstructions, giving a reduced obstruction space. There is also a generalisation where $X$ deforms, and then $\tau$ measures the obstruction to deforming the Hodge class $[Z]$; this corresponds to relaxing the requirement that $k$ be a field.
    These conjectures were extended to perfect complexes in place of $\sO_Z$ by \cite{BuchweitzFlenner}.
    
    \medskip 
    
    In \cite{semireg2}, the conjectures were proved, and extended to more general $X$, by interpreting $\tau$ as the tangent map of a  morphism of homotopy-preserving homotopy-homogeneous functors, then factoring through something unobstructed. In more detail, the Chern character $\ch_p$  gives a map from the moduli functor to
    \[
    \cJ_X^p(A):=    ( \oR\Gamma(X, \Tot^{\Pi}\oL\Omega^{\bt}_{X_A/k})\by^h_{ \oR\Gamma(X, \Omega^{\bt}_{X_A/A})}\oR\Gamma(X, F^p\Omega^{\bt}_{X_A/A}))[2p],
    \]
    where $X_A=X\ten_kA$, and this functor has derived tangent complex
    \[
    ( \{0\}\by^h_{\oR\Gamma(X, \Omega^{\bt}_{X})}\oR\Gamma(X, F^p\Omega^{\bt}_{X}))[2p] \simeq \oR\Gamma(X, \Omega^{< p}_X)[2p-1].
    \]
    The map $\tau$ on obstruction spaces then comes from applying $\H^1$ to  the derived tangent maps
    \[\xymatrix@R=0ex{
    T_{[Z]}\oR\Hilb_X \ar[r] & T_{[\sO_Z]}\oR\cP\!\mathit{erf}_X  \ar[r]^{d\ch_p} & T_{\ch_p(\sO_Z)}\cJ_X^p\\
    \oR\Gamma(Z,\sN_{Z/X}) \ar[r] & \oR\HHom_{\sO_X}(\sO_Z,\sO_Z)[1] \ar[r]^{d\ch_p} & \oR\Gamma(X, \Omega^{< p}_X)[2p-1],
    }
    \]
    from the derived Hilbert scheme to the derived moduli stack of perfect complexes and then to $\cJ_X^p$,
    since $d\ch_p$ factors through $\oR\Gamma(X, \Omega^{p-1}_X)[p]$ (a summand via the Hodge decomposition). The obstruction in  $\cJ_X^p$ then vanishes when $k$ is a field, or more generally measures obstructions to deforming $[Z]$ as a Hodge class. 
    
    \begin{remark}
      The key geometric difference between this and earlier approaches is not so much the language of derived deformation theory, which already tended to feature in disguise,
            but rather  the use of derived de Rham cohomology $\oR\Gamma(X, \Tot^{\Pi}\oL\Omega^{\bt}_{X_A/k})$ over the fixed base $k$ to generate horizontal sections, instead of a more classical cohomology theory.   
    \end{remark}

\subsection{Cotangent complexes}\label{dcotsn}

    The cotangent complex (when it exists) of a functor $F \co d\Alg_R \to s\Set$
    represents the tangent functor. Explicitly, it is a quasi-coherent complex\footnote{Explicitly, this means we have an $A$-module $\bL_{F,x}$ for each $x \in F(A)$, such that the maps $\bL_{F,x}\ten^{\oL}_AB \to \bL_{F,fx}$ are quasi-isomorphisms for all $f \co A \to B$.} 
    $\bL_F$
    on $F$ such that for all  $A \in d\Alg_R$, all points $x \in F(A)$ and all $A$-modules $M$, we have 
    \begin{align*}
    T_x(F,M) &\simeq \oR\Map_{\catdgmU[A]}(\oL x^*\bL_F,M) \\
    &\simeq N^{-1}\tau_{\ge 0} \oR\HHom_{A}(\oL x^*\bL_F,M),
    \end{align*}
    so in particular $D^i_x(F,M) \cong \Ext^i_A(\oL x^*\bL_F,M)$.
    
    \smallskip 
    For homotopy derived DM $n$-hypergroupoids $X$, the cotangent complex $\bL^{X^{\sharp}}$ of the associated stack $X^{\sharp}$ 
    corresponds via \S \ref{qucohsn2} to the complex $ m\mapsto \bL^{X_m}$ on $X$, which is homotopy-Cartesian because  the maps $\pd_i \co X_{m+1} \to X_m$ are homotopy-\'etale, so $\bL^{X_{m+1}}\simeq \oL\pd_i^* \bL^{X_m}$.
    
    \smallskip
    For homotopy derived Artin $n$-hypergroupoids $X$, that doesn't work, but it turns out that the iterated derived loop space $X^{h S^n}$ \index{loop space!iterated derived}
    is a derived $0$-hypergroupoid for $n>0$, and then the complex $m \mapsto (\bL^{X^{h S^n}})_m[-n]$ is homotopy-Cartesian on $X^{h S^n}$  and pulls back to give a model for $\bL^{X^{\sharp}}$ on $X$.
    
    Explicitly, writing $X^K \in sd\Aff$ for the functor $(X^K)_i(A):=\Hom_{s\Set}(K \by \Delta^i, X(A))$, 
    when $X$ is Reedy fibrant as in \S \ref{strictderhgpdsn}, a model for the cotangent complex $\bL^{X^{\sharp}}$
    is given by $\oL i^*\Omega^1_{X^{\Delta^n}/X^{\pd \Delta^n}}[-n]$, for the natural map $i \co X \to X^{\Delta^n}$.
    
    \begin{example}
    If $X= B[U/G]$ (a homotopy derived Artin $1$-hypergroupoid), then
    \begin{align*}
    X^{\Delta^1}&= B[(U \by G)/(G \by G)]\\
    X^{\pd \Delta^1}&=X \by X = [(U \by U)/(G \by G)]
    \end{align*}
    In level $0$ (i.e. on $X_0$),  the complex $\oL i^* \oL \Omega^{\bt}_{X^{\Delta^1}/X^{\pd \Delta^1}}[-1]$ is then  $\oL e^*\oL\Omega^1_{(U \by G)/(U \by U)}[-1]$ for $e \co U\to U \by G$ given by $u \mapsto (u,e)$, where $e$ is the identity element of the group $G$. This therefore recovers the formula 
    \[
    \bL^{[U/G]}|_U \simeq \cone(\bL^U  \to \g^*\ten \sO_U)[-1],
    \]
    which readers familiar with cotangent complexes of Artin stacks will recognise. 
    \end{example}
    
    %
    
    \subsubsection{Morphisms revisited}\label{morphismsrevsn}
    
    Given homotopy derived Artin  $n$-hypergroupoids $X$ and $Y$,\footnote{In the terminology of \S \ref{repsn}, the description we use here in fact adapts to  $ \oR \Map(X^{\sharp},F)$ for any $X \in sd\Aff$ and any  homotopy-homogeneous nilcomplete functor $F \co d\Alg_R \to s\Set$.}
    what does the space $\oR \Map(X^{\sharp}, Y^{\sharp})$ of maps $f \co X \to Y$ between the associated derived $(n-1)$-geometric stacks look like?
    
    For a start, we have a morphism $ \oR \Map(X^{\sharp}, Y^{\sharp}) \to\oR \Map(\pi^0X^{\sharp}, Y^{\sharp}) $, and the latter is just the space of maps $(\pi^0X)^{\sharp} \to (\pi^0Y)^{\sharp} $ of underived $(n-1)$-geometric stacks, as described in \S \ref{nstackmorphismsn}. In particular, this is $m$-truncated whenever $Y^{\sharp}$ is so.
    
    \medskip
    By the universal property of hypersheafification, we can replace $X^{\sharp}$ with $X$. Since $\oR \Map(X, Y^{\sharp})\simeq \ho\Lim_{m \in \Delta}\oR\Map(X_m,Y^{\sharp})$,  any homotopy limit expressions for $Y^{\sharp}$ as a functor on $d\Alg$ thus apply to the contravariant functor $\oR\Map(-,Y^{\sharp})$ on $sd\Aff$ as well.
    
    \medskip
    We can now work our way up the Postnikov tower of \S \ref{postnikovsn}, writing $\tau^{\le k}\Spec A:= \Spec P_kA$ and $(\tau^{\le k}X)_m:=\tau^{\le k}(X_m)$ (so in particular $\tau^{\le 0}X=\pi^0X$) to give a tower
    \[
    \ldots \to  \oR \Map(\tau^{\le k+1}X, Y^{\sharp}) \to \oR \Map(\tau^{\le k}X, Y^{\sharp}) \to \ldots \to \oR \Map(\pi^0X, Y^{\sharp}). 
    \]
    Lemma \ref{postfact} and \S \ref{obssn} then give  an expression for $P_{k+1}\sO_X $ as a homotopy pullback of a diagram $P_k\sO_X \xra{u}  \H_0(\sO_X) \oplus \H_{k+1}(\sO_X)[k+2] \xla{(\id,0)}\H_0(\sO_X) $ in the homotopy category,
    giving a homotopy pullback square
    \[
    \xymatrix{
    \oR \Map(\tau^{\le k+1}X, Y^{\sharp}) \ar[d] \ar[r] & \oR \Map(\tau^{\le k}X, Y^{\sharp}) \ar[d]^u\\
    \oR \Map(\pi^0X, Y^{\sharp}) \ar[r] &  \oR \Map(\oSpec_{\pi^0X}(\sO_{\pi^0X} \oplus \H_{k+1}(\sO_X)[k+2]), Y^{\sharp}).
    }
    \]
    
    For a fixed element $[g] \in \pi_0\oR \Map(\pi^0X, Y^{\sharp})$, with  $\pi_0\oR \Map(\tau^{\le k}X, Y^{\sharp})_{[g]}$ the homotopy  fibre over $[g]$, we thus have   a long exact sequence
    \[
    \xymatrix@R=0ex{
    \ldots \ar[r] 
    &\pi_1\oR \Map(\tau^{\le k}X, Y^{\sharp})_{[g]} \ar[r] & \Ext^{k+1}_{\sO_{\pi^0X}}(\oL g^*\bL^Y, \H_{k+1}(\sO_X)) \ar[dll] \\
    \pi_0\oR \Map(\tau^{\le k+1}X, Y^{\sharp})_{[g]} \ar[r] &\pi_0 \oR \Map(\tau^{\le k}X, Y^{\sharp})_{[g]} \ar[r]^-u & \Ext^{k+2}_{\sO_{\pi^0X}}(\oL g^*\bL^Y, \H_{k+1}(\sO_X))
    }
    \]
    of homotopy groups and sets .\index{long exact sequence of groups and sets}
    Explicitly, this means that
    \begin{itemize}
    \item a class $[g^{(k)}] \in \pi_0 \oR \Map(\tau^{\le k}X, Y^{\sharp})_{[g]}$ lifts to  a class
    $[g^{(k+1)}]\in \pi_0\oR \Map(\tau^{\le k+1}X, Y^{\sharp})$ if and only if $u([g^{(k)}])=0$;
    \item the group  $\Ext^{k+1}_{\sO_{\pi^0X}}(\oL g^*\bL^Y, \H_{k+1}(\sO_X))$ acts transitively on the fibre over $[g^{(k)}]$;
    \item taking homotopy groups at basepoints $g^{(k)}$ and  $g^{(k+1)} $,  the rest of the sequence is a long exact sequence of groups, ending with the stabiliser of $[g^{(k+1)}]$ in $\Ext^{k+1}_{\sO_{\pi^0X}}(\oL g^*\bL^Y, \H_{k+1}(\sO_X))$.
    \end{itemize}

    In particular, since   $Y$ is $n$-truncated,  we have $\Ext^{<-n}_{\sO_{\pi^0X}}(\oL g^*\bL^Y, \H_{k+1}(\sO_X))=0$, so it follows by induction that $\pi_i\oR\Map(\tau^{\le k}X, Y^{\sharp}) =0$ for $i>k+n$.
    
    \bigskip Finally, we have 
    $$
    \oR \Map(X, Y^{\sharp}) \simeq \ho\Lim_k  \oR \Map(\tau^{\le k}X, Y^{\sharp}).
    $$
    %
    These homotopy limits behave exactly like derived inverse limits in homological algebra,  with the Milnor exact sequence  of \cite[Proposition VI.2.15]{sht} giving us exact sequences
    \[
    \ast \to {\Lim_k}^1\, \pi_{i+1}\oR \Map(\tau^{\le k}X, Y^{\sharp}) \to \pi_i\oR \Map(X^{\sharp}, Y^{\sharp})
    \to \Lim_k \pi_{i}\oR \Map(\tau^{\le k}X, Y^{\sharp}) \to \ast
    \]
    of groups and pointed sets (basepoints omitted from the notation, but must be compatible).

    \subsubsection{Derived de Rham complexes}\label{derivedDRsn}
    
    
    The module $m \mapsto \bL^{X_m}$  is not homotopy-Cartesian when $X$ is  a derived Artin $n$-hypergroupoid, so it does not give a quasi-coherent complex on the associated derived stack $\fX:=X^{\sharp}$. However, \cite[Lemma 7.8]{stacks2} implies that when $X$ is levelwise fibrant (so $\bL^{X_m} \simeq \Omega^1_{X_m}$), the natural map from the homotopy-Cartesian complex $ \bL^X $ to $\Omega^1_X$ does induce a quasi-isomorphism on global sections 
    \[
    \oR\Gamma(\fX, \bL^{\fX}) \simeq  \oR\Gamma(X, \Omega^1_X):= \Tot^{\Pi}(i \mapsto \Gamma(X_i, \Omega^1_{X_i}))
    \] 
    and similarly on tensor powers, including
    \[
    \oR\Gamma(\fX, \L^p\bL^{\fX}) \simeq  \oR\Gamma(X, \Omega^p_X).
    \]
    
    Derived de Rham cohomology can then just be defined as
    \[
    \H^*\Tot^{\Pi}(i \mapsto \Gamma(X_i, \Tot^{\Pi}\Omega^{\bt}_{X_i}));
    \]
    over $\Cx$, this agrees with $\H^*(|\pi^0X(\Cx)_{\an}|,\Cx)$, for $|\pi^0X(\Cx)_{\an}|$ the realisation of the simplicial topological space $ \pi^0X(\Cx)_{\an}$.
    
    \begin{example}
        For $X=B\bG_m$ over $\Cx$, this gives derived de Rham cohomology as  $\H^*(|B\Cx^*|,\Cx) \cong \H^*(|BS^1|,\Cx) \cong \H^*(K(\Z,2),\Cx)\cong \H^*(\Cx\bP^{\infty},\Cx)\cong \Cx[u]$, for $u$ of degree $2$.
    \end{example}
    
    There is a Hodge filtration $F^p\Omega^{\bt}_X$ given by brutal truncation. Since $\Tot^{\Pi}F^p$ is the right derived functor of $\z^p$, this leads to:
    
    \begin{definition}[\cite{PTVV}\footnote{this characterisation essentially \cite{poisson}; see Remark \ref{PTVVterminology} for terminology}]\label{sympdefstack}
    
        The complex of    \emph{$n$-shifted pre-symplectic structures} \index{shifted pre-symplectic structure} on $X$  is $\tau^{\le 0}\oR\Gamma(X,(\Tot^{\Pi}F^2\Omega^{\bt}_X))[n+2])$. Hence homotopy classes are in $\H^{n+2}(\Tot^{\Pi}F^2\Omega^{\bt}_{X})$. 
    
        We say    
        $\omega$ is  \emph{symplectic}  \index{shifted symplectic structure} if it is non-degenerate in the sense that  the map $\oR\hom_{\sO_{\fX}}(\bL^{\fX},\sO_{\fX}) \to \bL^{\fX}[n]$ induced by $\omega_2 \in \H^n(X, \Omega^2_X) \cong  \oR\Gamma(\fX, \L^p\bL^{\fX})$ is a quasi-isomorphism of quasi-coherent complexes on $\fX$.
    \end{definition}
    
    \begin{example}
    The trace on $\GL_n$ gives rise to a $2$-shifted symplectic structure on $B\GL_n$. 
    \end{example}
    
    There is an equivalent characterisation of shifted symplectic structures in  \cite[\S 3]{poisson}  better suited for comparisons with Poisson structures, effectively replacing the derived Artin  hypergroupoid $X$ with a form of derived Deligne--Mumford hypergroupoid $\Spec D^*O(X^{\Delta})$, but built from  \emph{double} complexes with a graded-commutative product, with the extra cochain grading modelling stacky structure as a form of higher Lie algebroid
    \index{derived Lie algebroid}, similarly to \Cref{LRinfty}; also see \cite{DQDG, smallet}. 
    
    Shifted Poisson structures are then given by  shifted $L_{\infty}$ structures on these stacky cdgas, with the brackets all being multiderivations; see \cite[Examples 3.31
    ]{poisson} and \cite{safronovPoissonLie} for explicit descriptions of the resulting  $2$-shifted structures on quotient stacks $[Y/G]$ and of $1$-shifted structures on $BG$, respectively.

    
    
    \subsection{Artin--Lurie representability}\label{repsn}
    
    Anyone familiar with Artin representability for algebraic stacks \cite{Artin} will know that in the underived setting, axiomatising and constructing obstruction theories was one of the hardest steps; also see \cite{BehrendFantechi}. However, derived algebraic geometry produces obstruction theory for free as in \cite{Man2} or \S \ref{lesobssn}, giving rise  to derived representability theorems which can be significantly simpler than their underived counterparts.
    
    The landmark result is the representability theorem of \cite{lurie}, but it is formulated in a way which can make the conditions onerous to verify, so we will be presenting it in the simplified form established in \cite{drep}. 
    
    \begin{remark}
    The results from now on have only been developed in the setting of algebraic geometry. There are much weaker derived representability theorems in differential and analytic settings given by adapting \cite[Appendix C]{hag2}. Such results only apply when the underlying underived moduli functor is already known to  be representable; the main obstacle is in formulating an analogue of algebraisation for formal moduli, since differential and analytic moduli functors are usually only defined on finitely presented objects.
    \end{remark}

    \begin{definition}
    A functor $F\co d\Alg_R \to s\Set$ is said to be \emph{locally of finite presentation} (l.f.p.)\index{locally of finite presentation}\index{l.f.p.} if it preserves filtered colimits, or equivalently colimits indexed by directed sets, i.e. if the natural map 
    \[
    \LLim_i F(A(i)) \to F(\LLim_i A(i))
    \]
    is a weak equivalence.\footnote{Note that we do not need to write these as homotopy colimits, since filtered colimits are already exact, so are their own left-derived functors.}
    
    A functor $F\co d\Alg_R \to s\Set$ is said to be \emph{almost of finite presentation} (a.f.p.)\index{almost of finite presentation}\index{a.f.p.} if it preserves filtered colimits (or equivalently directed colimits) of objects which are uniformly bounded  in the sense that there exists some $n$ for which the underlying chain complexes are all concentrated in degrees $\le n$. 
    \end{definition}
    
    \begin{example}
    If $U =\Spec S$ is a derived affine scheme, then $U^{\sharp}=\oR\Map_{d\Alg_R}(S,-)$ is l.f.p. if and only if $S$ has a finitely generated cofibrant model, whereas $U^{\sharp}$ is a.f.p.  if and only if $S$
    has cofibrant model with finitely many generators in each degree.
    
    Beware that a finitely presented algebra is not in general l.f.p as a cdga unless its cotangent complex is perfect, though it will be always be a.f.p. if the base is Noetherian.
    
    More generally, if  $X$ is an Artin $n$-hypergroupoid for which the derived affine scheme  $X_0$ is l.f.p. or a.f.p., then the functor $X^{\sharp}$ will be l.f.p. or a.f.p., essentially because smooth morphisms are l.f.p.
    \end{example}

    In order to state the representability theorems, from now on we will work over a base cdga $R$ which is  a derived G-ring admitting a dualising module (in the sense of \cite[Definition 3.6.1]{lurie}). Examples satisfying this hypothesis are any field, the integers, any Gorenstein local ring, and anything of finite type over any of these.  
    
    Our first formulation of the representability theorem is \cite[Corollary \ref{drep-lurierep2}]{drep}, substantially simplifying \cite{lurie}:
    
    \begin{theorem}\label{lurierep2}
    A functor  $F: d\Alg_R \to s\Set$ is an $n$-truncated  geometric derived stack which is almost of finite presentation if and only if 
    the following conditions hold:
    \begin{enumerate}
    
    \item\label{htpypreservingitem}  $F$ is homotopy-preserving:\index{homotopy-preserving} it maps quasi-isomorphisms 
    to weak equivalences.
    \item\label{discreteitem} For all discrete $\H_0(R)$-algebras $A$, $F(A)$ is $n$-truncated, i.e. $\pi_iF(A)=0$ for all $i>n$.
    
    \item\label{htpyhgsitem}
    $F$ is homotopy-homogeneous, i.e. for all square-zero extensions $A \onto C$ and all maps $B \to C$, the map
    $$
    F(A\by_CB) \to F(A)\by_{F(C)}^hF(B)
    $$
    is an equivalence.
    
    \item\label{nilcompleteitem} $F$ is nilcomplete,\index{nilcomplete} i.e. for all $A$, the map 
    $$
    F(A) \to {\Lim}^h F(P_kA)
    $$
    is an equivalence, for $\{P_kA\}$ the Postnikov tower of $A$.
    
    \item\label{colimitem} $\pi^0F:  \Alg_{\H_0(R)}\to s\Set$ preserves filtered colimits (equivalently colimits indexed by directed sets), i.e. 
    \begin{enumerate}  \item\label{colim1item} $\pi_0\pi^0F:  \Alg_{\H_0(R)} \to \Set$  preserves filtered colimits.
    
    \item\label{colim2item} For all $A \in \Alg_{\H_0(R)}$ and all $x \in F(A)$, the functors $\pi_i(\pi^0F,x): \Alg_A \to \Set$  preserve filtered colimits for all $i>0$.
    \end{enumerate}
    
    \item\label{shf1item} $\pi^0F:\Alg_{\H_0(R)} \to s\Set$ is a hypersheaf for the \'etale topology. 
    
    \item \label{shf2item}
    for all finitely generated integral domains $A \in \Alg_{\H_0(R)}$, all $x \in F(A)_0$ and all \'etale morphisms $f:A \to A'$, the maps
    \[
    \DD_x^*(F, A)\ten_AA' \to \DD_{f(x)}^*(F, A')
    \]
    on tangent cohomology groups are isomorphisms.
    
    \item\label{colimbitem}  for all finitely generated $A \in \Alg_{\H_0(R)}$  and all $x \in F(A)_0$, the functors $\DD^i_x(F, -): \Mod_A \to \Ab$ preserve filtered colimits for all $i>0$.
    
    \item\label{fgitem} for all finitely generated integral domains $A \in \Alg_{\H_0(R)}$  and all $x \in F(A)_0$, the groups $\DD^i_x(F, A)$ are all  finitely generated $A$-modules.
    
    \item\label{completeitem} formal effectiveness:\index{formally effective}
    for all complete discrete local Noetherian  $\H_0(R)$-algebras $A$, with maximal ideal $\m$, the map
    $$
    F(A) \to {\Lim_n}^h F(A/\m^r)
    $$
    is a weak equivalence (see \cite[Remark \ref{drep-formalexistrk}]{drep} for a reformulation).
    \end{enumerate}
    
    $F$ is moreover strongly quasi-compact (so built from $d\Aff$, not $\coprod d\Aff$)
    if and only if for all sets $S$  of separably closed fields, the map
    $$
    F(\prod_{k \in S} k) \to (\prod_{k \in S} F(k))
    $$
    is a weak equivalence in $s\Set$.
    
    
    \end{theorem}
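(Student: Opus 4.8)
The plan is to prove both directions of the equivalence, treating the reverse (representability) implication as the substantial one and isolating the construction of an atlas together with formal effectiveness as the crux. Throughout, the structural results already established --- homotopy-homogeneity (Lemma \ref{hgslemma}), the long exact sequence of obstructions of \S \ref{lesobssn}, the Postnikov-tower description of mapping spaces in \S \ref{morphismsrevsn}, and the hypergroupoid presentation of Theorem \ref{bigthm2} --- do most of the bookkeeping.

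\textbf{Necessity.} First I would check that an $n$-truncated geometric derived stack $F$ which is almost of finite presentation satisfies (1)--(10). Writing $F \simeq X^{\sharp}$ for a homotopy derived Artin hypergroupoid $X$ via Theorem \ref{bigthm2}, most conditions are immediate: homotopy-preservation (1) is built into $X^{\sharp}=\oR\Map_{\cW}(-,X)$; $n$-truncatedness (2) is the hypothesis; homotopy-homogeneity (3) is Lemma \ref{hgslemma}; nilcompleteness (4) follows from the identity $\oR\Map(X,Y^{\sharp})\simeq \ho\Lim_k \oR\Map(\tau^{\le k}X,Y^{\sharp})$ of \S \ref{morphismsrevsn} applied to $Y=\Spec A$; and the colimit, hypersheaf and tangent-cohomology conditions (5)--(9) descend from the corresponding properties of the building blocks $X_m$ together with a.f.p. (smooth morphisms being l.f.p., and $D^i_x$ computing $\Ext$-groups of the almost perfect complex $\bL^{X}$). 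The one genuinely non-formal condition is formal effectiveness (10), which here amounts to Grothendieck's existence theorem: coherent data on the formal completion of the Noetherian chart $X_0$ along $\m$ algebraizes.

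\textbf{Sufficiency.} For the converse I would first extract a cotangent complex: homotopy-homogeneity (3) makes the tangent spaces $T_x(F,M)$ of \S \ref{dstackdefsn} additive and linear in $M$, and conditions (8)--(9) ensure the functors $M\mapsto D^i_x(F,M)$ are corepresented by an almost perfect $A$-complex, which I assemble into $\bL_F$. Restricting to discrete test algebras, conditions (2),(5),(6),(7),(8),(9) are exactly the hypotheses of classical Artin representability \cite{Artin} (Schlessinger-type homogeneity, \'etale hypersheaf, openness of versality via \'etale-local constancy of tangent--obstruction, and the finiteness/colimit axioms), so $\pi^0F$ is an underived $n$-geometric Artin stack, presentable by an Artin hypergroupoid through Theorem \ref{bigthm}. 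With $\pi^0F$ geometric and $\bL_F$ in hand, I would reconstruct $F$ over the Postnikov tower: choosing a smooth atlas $U_0\to\pi^0F$ from a cofibrant derived affine $U_0$ and working up $\tau^{\le k}$, the obstruction sequence of \S \ref{lesobssn}, controlled by $\oL x^*\bL_F$, shows inductively that the chart lifts, the obstruction in each $\Ext^{k+2}$-group vanishing by smoothness and the lifts forming a torsor under the corresponding $\Ext^{k+1}$. Nilcompleteness (4) reassembles these into a derived smooth map $\Spec S\to F$ with $\pi_0 S$ the underived chart, homotopy-homogeneity and Lemma \ref{detectweak} show it is a smooth surjection of the required type, and taking the \v Cech nerve with Theorem \ref{bigthm2} identifies $F$ with $X^{\sharp}$ for a homotopy derived Artin hypergroupoid $X$, a.f.p.\ because $U_0$ is. The main obstacle is algebraization: the inductive lifting produces only a formal (pro-Artinian) chart, and condition (10) together with Artin approximation is precisely what descends this to an actual smooth atlas over a finitely presented base --- this is where the simplification of \cite{drep} over \cite{lurie} is concentrated, and where one must ensure effectivity is compatible with the full derived structure and not merely its classical truncation.

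\textbf{Strong quasi-compactness.} For the moreover clause I would note that $F$ is built from $d\Aff$ rather than $\coprod d\Aff$ exactly when the atlas $\Spec S$ can be chosen quasi-compact (affine). A disjoint union of derived affine schemes fails to commute with a product $\prod_{k\in S}k$ of separably closed fields precisely when infinitely many components are nonempty --- an idempotent/ultrafilter argument, using that each $\Spec k$ is a single geometric point --- so the stated weak equivalence $F(\prod_{k\in S}k)\xrightarrow{\sim}\prod_{k\in S}F(k)$ detects quasi-compactness of the atlas at every level of the hypergroupoid, completing the characterisation.
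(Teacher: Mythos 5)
The paper does not prove this theorem at all: it is quoted from \cite{drep}, whose actual route is a \emph{reduction to Lurie's representability theorem} \cite{lurie} --- one verifies Lurie's longer list of hypotheses (infinitesimal cohesiveness at derived rings, existence of a cotangent complex, integrability, compatibility with filtered colimits of derived rings) from the simplified list above, the substantive work being the propagation of the purely discrete conditions (\ref{colimitem})--(\ref{completeitem}) to all of $d\Alg_R$ by working up Postnikov towers with the obstruction sequence of \S \ref{lesobssn}. Your necessity direction is broadly sound, with one misattribution: formal effectiveness for a geometric stack does not ``amount to Grothendieck existence'' --- it follows from inductively lifting the compatible system of $A/\m^r$-points along a smooth atlas (smoothness kills the lifting obstructions, and the affine atlas then takes a limit), by induction on geometricity; Grothendieck existence enters later, when verifying condition (\ref{completeitem}) for specific moduli functors such as those of \S \ref{repsn}.

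The genuine gap is in your sufficiency argument, at the step where you claim that conditions (\ref{discreteitem}), (\ref{colimitem})--(\ref{fgitem}) restricted to discrete algebras ``are exactly the hypotheses of classical Artin representability'' and so make $\pi^0F$ a geometric $n$-stack before any chart-lifting begins. This fails twice over. First, \cite{Artin} treats $1$-stacks; for $n$-truncated $\pi^0F$ with $n>1$ there is no classical theorem to quote --- the ``higher analogues'' of Artin's conditions are part of what is being proven, so the step is circular, and indeed the remark following the theorem stresses that in practice one deduces representability of $\pi^0F$ \emph{from} the derived theorem rather than conversely. Second, even for $n=1$, Artin's criterion needs an obstruction theory and openness of versality, neither of which appears among conditions (1)--(10); they must be manufactured from the derived structure (homotopy-homogeneity at square-zero extensions in $d\Alg_R$, via Construction \ref{obstrnth}), and making that extraction rigorous is real work, not a citation. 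A second, related omission: your final identification $F\simeq X^{\sharp}$ via Theorem \ref{bigthm2} requires $F$ itself --- not merely $\pi^0F$ --- to satisfy \'etale hyperdescent, whereas condition (\ref{shf1item}) constrains only $\pi^0F$; deducing descent for $F$ on derived rings from (\ref{shf1item}), (\ref{shf2item}) and nilcompleteness, by climbing Postnikov towers with quasi-coherence of the tangent cohomology, is one of the key lemmas of \cite{drep} and is entirely absent from your sketch. Your cotangent-complex step glosses similar content (corepresentability needs a Brown-type argument, not just finiteness), though $\cite{drep}$ largely sidesteps it by working with the groups $D^i_x$ directly; the strong quasi-compactness clause, by contrast, is argued essentially correctly.
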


    \begin{remarks}
    Note that of the conditions in the theorem as stated in this form, only conditions (\ref{htpypreservingitem}), (\ref{htpyhgsitem}) and (\ref{nilcompleteitem}) are fully derived. The conditions (\ref{discreteitem}) (\ref{colimitem}), (\ref{shf1item}) and (\ref{completeitem}) are purely underived in nature, taking only discrete input, and in particular are satisfied if the underived truncation $\pi^0F$ is representable, while the conditions (\ref{shf2item}), (\ref{colimbitem}) and (\ref{fgitem}) relate to tangent cohomology groups.  The hardest conditions to check are  usually homotopy-homogeneity (\ref{htpyhgsitem}) and formal effectiveness (\ref{completeitem}).
    
    Because derived algebraic geometry automatically takes care of obstructions, it is often easier to establish representability of the underived  moduli functor $\pi^0F$ by  checking the conditions of Theorem \ref{lurierep2} for $F$, rather than checking Artin's conditions   \cite{Artin} and their higher analogues for $\pi^0F$. Beware that a natural equivalence of moduli functors does not necessarily give an equivalence of the corresponding derived moduli functors, a classical example being the derived Quot and Hilbert schemes  of \cite{Quot,Hilb}. 
    \end{remarks}
    
    \bigskip 
    As we saw back in  \S \ref{postnikovsn}, derived structure is infinitesimal in nature, and this now motivates a variant of the representability theorem which just looks at functors on derived rings which are bounded nilpotent extensions of discrete rings.     
    
    \begin{definition}
    Define $d\cN_R^{\flat}$ \index{dNR@$d\cN_R^{\flat}$} to be the full subcategory of $d\Alg_R$  consisting of objects $A$
    for which 
    \begin{enumerate}
    \item the map $A \to \H_0(A)$  has nilpotent kernel.
    \item $A_i=0$ (or $N_iA=0$ in the simplicial case $A \in s\Alg_R$) for all $i \gg 0$.
    \end{enumerate}
    \end{definition}
    
    \begin{exercise}
    Show that any homotopy-preserving a.f.p. nilcomplete functor $F \co d\Alg_R \to s\Set$ is determined by its restriction to $d\cN_R^{\flat}$, bearing in mind that $R$ is Noetherian.
    \end{exercise}

    The following is  \cite[Theorem \ref{drep-lurierep3}]{drep}; it effectively says that we can restrict to functors on $d\cN_R^{\flat}$ and drop the nilcompleteness condition.
    
    \begin{theorem}\label{lurierep3}
    Let $R$ be a  Noetherian G-ring admitting a dualising module. 
    
    Take a functor $F: d\cN_R^{\flat} \to s\Set$. Then $F$ is the restriction of an almost finitely presented  derived $n$-truncated geometric stack $F':d\Alg_R \to s\Set$ if and only if 
    the following conditions hold
    
    \begin{enumerate}
    
    \item $F$ maps square-zero acyclic extensions to weak equivalences.
    
    \item\label{ntrunc} For all discrete rings $A$, $F(A)$ is $n$-truncated, i.e. $\pi_iF(A)=0$ for all $i>n$.
    
    \item
    $F$ is homotopy-homogeneous.
    %
    %
    \item $\pi^0F:\Alg_{\H_0(R)} \to s\Set$ is a hypersheaf for the \'etale topology. 
    
    \item\label{afp1} $\pi^0F: \Alg_{\H_0(R)} \to \Ho(s\Set)$  preserves filtered colimits.
    
    
    \item\label{formaleff} for all complete discrete local Noetherian  $\H_0(R)$-algebras $A$, with maximal ideal $\m$, the map
    $
    \pi^0F(A) \to {\Lim}^h \pi^0F(A/\m^r)
    $
    is a weak equivalence. 
    
    \item 
    for all finitely generated integral domains $A \in \Alg_{\H_0(R)}$, all $x \in F(A)_0$ and all \'etale morphisms $f:A \to A'$, the maps
    $
    D_x^*(F, A)\ten_AA' \to D_{f(x)}^*(F, A')
    $
    are isomorphisms.
    
    \item\label{afp2} for all finitely generated $A \in \Alg_{\H_0(R)}$  and all $x \in F(A)_0$, the functors $D^i_x(F, -): \Mod_A \to \Ab$ preserve filtered colimits for all $i>0$.
    
    \item\label{Difg} for all finitely generated integral domains $A \in \Alg_{\H_0(R)}$  and all $x \in F(A)_0$, the groups $D^i_x(F, A)$ are all finitely generated $A$-modules.
    \end{enumerate}
    Moreover, $F'$ is uniquely determined by $F$ (up to weak equivalence).
    \end{theorem}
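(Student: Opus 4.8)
The plan is to derive both directions from the full representability theorem \Cref{lurierep2}, viewing the passage from $d\cN_R^{\flat}$ to $d\Alg_R$ as a nilcompletion. The ``only if'' direction is a routine restriction: if $F\simeq F'|_{d\cN_R^{\flat}}$ for an almost finitely presented, $n$-truncated geometric derived stack $F'$, then each of the nine conditions is the restriction to $d\cN_R^{\flat}$ of a hypothesis that $F'$ satisfies by \Cref{lurierep2}. Condition (1) is weaker than homotopy-preservation (\ref{htpypreservingitem}), condition (3) is the restriction of (\ref{htpyhgsitem}), and the conditions (\ref{ntrunc}), (\ref{afp1}), (\ref{formaleff}), the étale hypersheaf condition, and the tangent-cohomology conditions (7)--(\ref{Difg}) only ever see discrete rings, their modules, and square-zero extensions $A\oplus M[-j]$, all of which already lie in $d\cN_R^{\flat}$; so they transfer verbatim.

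For the ``if'' direction I would first establish that $F$ is homotopy-invariant on $d\cN_R^{\flat}$, upgrading the given invariance under square-zero acyclic extensions (condition (1)) to invariance under all quasi-isomorphisms of bounded objects by running the long exact sequence of obstructions of \S\ref{lesobssn} along the square-zero tower of \Cref{postfact}, exactly as in the proof of \Cref{detectweak}. Once this is in place I would define the extension by nilcompletion,
\[
F'(A) := \ho\Lim_k F(P_kA),
\]
which makes sense because each Postnikov stage $P_kA$ has homology concentrated in $[0,k]$ and, being an iterated square-zero extension of the discrete ring $\H_0(A)$, is quasi-isomorphic to an object of $d\cN_R^{\flat}$ (a bounded representative built over $\H_0(A)$ with positively graded, hence nilpotent, augmentation ideal). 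Since an object of $d\cN_R^{\flat}$ is bounded, its Postnikov tower stabilises, so $F'|_{d\cN_R^{\flat}}\simeq F$, giving the restriction claim; it then remains to verify the hypotheses of \Cref{lurierep2} for $F'$ and to invoke it.

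Most of that verification is inherited. Nilcompleteness (\ref{nilcompleteitem}) is built into the definition once a double-limit/cofinality argument shows $\ho\Lim_j F'(P_jA)\simeq \ho\Lim_k F(P_kA)$. The conditions (\ref{discreteitem}), (\ref{colimitem}), the étale hypersheaf condition (\ref{shf1item}), (\ref{shf2item}), (\ref{colimbitem}), (\ref{fgitem}) and formal effectiveness (\ref{completeitem}) are unaffected by the extension, since all of their inputs lie in $d\cN_R^{\flat}$ where $F'=F$; here the $\pi^0$-level effectiveness (\ref{formaleff}) suffices because $F'$ agrees with $F$ on the finite quotients $A/\m^r$.

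The hard part will be the two genuinely derived conditions, homotopy-preservation (\ref{htpypreservingitem}) and homotopy-homogeneity (\ref{htpyhgsitem}), for general (unbounded) arguments. Homotopy-preservation follows from the bounded case established above by passing to $\ho\Lim_k$, since a quasi-isomorphism $A\to A'$ induces quasi-isomorphisms $P_kA\to P_kA'$. For homotopy-homogeneity the key step is to reduce a fibre product $A\by_CB$ of arbitrary derived algebras to the bounded case: I would compare the Postnikov tower of $A\by_CB$ with the stagewise fibre products $P_kA\by_{P_kC}P_kB$ (using that a nilpotent surjection $A\onto C$ stays one after truncation and that the relevant truncated extensions remain square-zero), apply condition (3) at each finite stage, and then commute the defining homotopy limit of $F'$ past the homotopy fibre product, using that homotopy limits preserve homotopy pullbacks. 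Controlling this interchange of two homotopy limits, and the bookkeeping that the truncated fibre products stay square-zero, is where the real care is needed. Finally, uniqueness of $F'$ is immediate from the Exercise preceding the theorem: a homotopy-preserving, a.f.p., nilcomplete functor on $d\Alg_R$ is determined by its restriction to $d\cN_R^{\flat}$ (using that $R$ is Noetherian), and the $F'$ produced by \Cref{lurierep2} enjoys precisely those three properties.
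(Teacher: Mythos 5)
Your overall architecture --- extend $F$ to $d\Alg_R$ by nilcompletion and feed the result to \Cref{lurierep2}, with the ``only if'' direction being a routine restriction --- is the right one (the paper itself only cites \cite{drep} for this result). But there is a genuine gap at the foundational step of the ``if'' direction: the formula $F'(A):=\ho\Lim_k F(P_kA)$ does not typecheck, because $P_kA$ is generally \emph{not} in $d\cN_R^{\flat}$, and your parenthetical justification for replacing it by something that is --- ``a bounded representative built over $\H_0(A)$ with positively graded, hence nilpotent, augmentation ideal'' --- is false. The kernel of $P_kA \to \H_0(A)$ has degree-zero part $\b_0A=\delta A_1\subseteq A_0$, which is typically not nilpotent (already for $A=(k[x]\xla{\delta} k[x]t)$ with $\delta t=x$ the ideal is $(x)$), so the augmentation ideal is not positively graded and nilpotence is exactly the issue rather than something that comes for free. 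While each $P_kA$ \emph{is} abstractly quasi-isomorphic to an object of $d\cN_R^{\flat}$ (by building it as an iterated strict square-zero extension of $\H_0(A)$ using \Cref{postfact}), making this choice functorial in $A$, so that the tower and its homotopy limit exist and the verification of the hypotheses of \Cref{lurierep2} can proceed, is the central technical content of the theorem and cannot be waved through. The route that actually works uses the a.f.p.\ and Noetherian hypotheses to write each bounded stage as a filtered colimit of finitely generated bounded subalgebras $B$, and then \Cref{dgshrink} to replace $B$ by the tower $\{B/I^rB\}_r$ (for $I=\ker(B_0\to\H_0 B)$), whose terms \emph{do} lie in $d\cN_R^{\flat}$; this is where conditions (\ref{afp1}), (\ref{afp2}) and (\ref{formaleff}) enter the \emph{construction} of $F'$, not merely its final verification as your sketch suggests.

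A second, smaller gap: in verifying homotopy-homogeneity of $F'$ you propose ``comparing the Postnikov tower of $A\by_CB$ with the stagewise fibre products $P_kA\by_{P_kC}P_kB$'', but Postnikov truncation does not commute with fibre products --- the exact sequence $0\to A\by_CB\to A\oplus B\to C\to 0$ shows $\H_k(A\by_CB)$ receives a contribution from $\H_{k+1}(C)$, which $P_kA\by_{P_kC}P_kB$ cannot see. The correct manoeuvre is to note instead that $A\by_CB\simeq \ho\Lim_k\bigl(P_kA\by^h_{P_kC}P_kB\bigr)$ (homotopy limits commute with homotopy pullbacks), apply the bounded homogeneity statement at each stage, and then use homotopy-preservation plus a cofinality argument to identify the resulting tower with the one defining $F'(A\by_CB)$. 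Your treatment of the remaining points --- upgrading condition (1) to quasi-isomorphism invariance on $d\cN_R^{\flat}$ (better done by factoring surjective quasi-isomorphisms in $d\cN_R^{\flat}$ into finitely many acyclic square-zero extensions, which is where boundedness and nilpotence are used, than by the obstruction sequence), the transfer of the discrete-input conditions, and uniqueness via the preceding Exercise --- is essentially sound.
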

    
    \begin{remark}
    There is a much simpler  representability theorem for functors on local dg Artinian rings, essentially requiring only the homogeneity condition. Such derived Schlessinger functors\index{derived Schlessinger functors $\cS$}  (denoted $\cS$ in \cite{ddt1}) also tie in with other approaches to derived deformation theory such as \emph{dg Lie algebras}\index{dg Lie algebra} (dglas) and $L_{\infty}$-algebras. The relations between these were proved in \cite[Corollary 4.57, Theorem 2.30 and Remarks 4.28]{ddt1} (largely rediscovered as the main result of \cite{lurieDGLA,lurieDAG10}); for a survey see  \cite{maggiolo}\footnote{Beware that as in the published version of \cite{ddt1}, the statements in \cite{maggiolo}  giving comparisons with Manetti's set-valued extended deformation functors are missing necessary hypotheses;  see \cite[\S 6]{GuanLazarevShengTangII} for the refined statements.}.

        \end{remark}
    
    \begin{digression}[Comparison with dglas]\label{dglacomp}
The comparison between derived Schlessinger functors and dglas combines Heller's representability theorem \cite{heller} with the contravariant Koszul duality adjunction between dglas and local pro-Artinian $\Z$-graded cdgas. 
    
    As in \cite[Theorem 3.2]{hinstack}, that adjunction is a Quillen equivalence, provided we endow cdgas with a stronger notion of equivalence than quasi-isomorphism. By \cite[Proposition 4.36]{ddt1}, these equivalences are cogenerated by acyclic small extensions of local Artinian cdgas: surjections $f\co A \onto B$ with $\ker(f)\cdot \m(A)=0$ and $\H_*\ker(f)=0$. Similar adjunctions exist for any Koszul dual pair of operads, with the commutative--Lie adjunction the other way round used to  compare the Quillen and Sullivan models for rational homotopy theory, an observation which anticipated both derived deformation theory and operadic Koszul duality in \cite{drinfeldtoschechtman} (see \cite{GK} for the latter theory).
  
    The significance of the dgla comparison result has however been somewhat exaggerated in recent years, after \cite{lurieDGLA} conflated moduli problems\index{formal moduli problems@``formal moduli problems''} with derived Schlessinger functors 
    \index{Schlessinger's conditions} 
    (thereby turning a meta-conjecture into a definition). It is hard to imagine an experienced deformation theorist resorting to the theorem to infer the existence of a dgla governing a given deformation problem; it is almost always easier to write down the governing dgla than even to formulate the derived version of a deformation problem, let alone verify Schlessinger's conditions, and very general constructions were available off the shelf as early as \cite{KS,hinichDefsHtpyAlg,paper2} (the second of those includes a counterexample --- see \Cref{hinichcounterex}). Be wary of any source assuming the conditions are satisfied axiomatically by a given moduli problem.
    
    The formulation of \cite{lurieDGLA} is slightly weaker than in \cite{ddt1},  requiring the functor to be defined on a larger category of cdgas which are only homologically Artinian; that our strictly Artinian cdgas give an equivalent theory follows from \cite[Proposition 2.7]{drep} or \cite[Corollary 4.4.4]{boothDefThPt}; the latter  adapts directly to our commutative setting and avoids Noetherian arguments. In the weaker formulation, the Maurer--Cartan formalism no longer behaves adequately, and the link with operadic Koszul duality is obscured, but properties of the cotangent complex can be used off the shelf, without having to establish the homotopy theory of pro-Artinian objects as in \cite{ddt1}.
    \end{digression}
    
    
    \subsection{Examples}
    
    \begin{examples}\label{modex}
    The following simplicial-category valued functors $\C\co d\Alg \to s\Cat$ \index{sCat@$s\Cat$|see {simplicial category}} are homotopy-homogeneous, homotopy-preserving and  \'etale hypersheaves (though too big to be representable). For objects $A \in d\Alg$ and morphisms  $A \to B$  in $d\Alg$: 
    \begin{enumerate}
    \item  Take $\C(A)$ to be  the simplicial  category of strongly quasi-compact  $n$-geometric derived Artin stacks $\fX$ over $\Spec A$, with the simplicial functor $\C(A) \to \C(B)$ given by $\fX \mapsto \fX\by^h_{\oR \Spec A}\oR \Spec B$ (so $\sO_{\fX} \mapsto \sO_{\fX}\ten^{\oL}_AB$). 
    \smallskip
    
    \item\label{modcx} For a fixed  derived Artin stack $\fX$, take $\C(A)$ to be the simplicial  category of bounded below
    quasi-coherent chain complexes\footnote{i.e. bounded above if written as cochain complexes} 
    
    on $\fX \by \Spec A$, with the simplicial functor  $\C(A) \to \C(B)$ given by $\sE \mapsto \sE\ten^{\oL}_AB$. 
    \smallskip
    
    \item Take $\C(A)$ to be the simplicial  category of pairs $(\fX,\sE)$, for $\fX, \sE$ as above.
    \smallskip
    
    \item Given one of the functors $\C'\co d\Alg \to s\Cat$ above and a small category $I$, take $\C(A):= \C'(A)^I$ to be the 
    simplicial category diagrams of diagrams of shape $I$ in $\C'(A)$. More generally, given a functor $J \to I$ of small categories and an object $Z \in \C'(R)^J$, we can define $\C\co d\Alg_R \to s\Cat$ by $\C'(A)^I\by^h_{\C'(A)^J}\{Z\}$.
    
    Examples of this form include moduli of derived Artin stacks over a base $\fY$, or moduli of pairs $\fX \to \fY$, or of maps:
    \[
    A \mapsto \oR\Map(\fX \by \Spec A, \fY).
    \]
    \end{enumerate}
    
    \end{examples}
    \begin{proof}
    These all appear in \cite{dmsch}. The proofs use hypergroupoids intensively.
    \end{proof}

    \begin{digression}\label{hinichcounterex}
        The bounded below condition in Example \ref{modex}.(\ref{modcx}) arises because of the subtleties of derived base change for unbounded complexes as in \cite{spaltenstein}. 
        
        For instance, for any commutative ring $k$ consider as in \cite[Example 4.3]{hinichDefsHtpyAlg}, 
                the complex $M:=\ldots \xra{\eps} k[\eps]  \xra{\eps} k[\eps]  \xra{\eps} \ldots$ over the dual numbers $k[\eps]$. It is an extension of $V:=\bigoplus_{n \in \Z} k[n]$ by itself, so corresponds to a class in $\EExt^1_k(V,V)$, but $M$ is acyclic so $M\ten^{\oL}_{k[\eps]}k \simeq 0$, meaning $M$ is
        not a derived deformation of $V$. If follows that the functor of derived deformations of $V$ is not homotopy-homogeneous, since its tangent space is not additive.
        
        %
        These issues might be bypassed by instead working with derived categories of the second kind, and in particular 
        the projective model structure of the second kind \cite[8.3]{positselskiDerivedCategories}; 
        in practice, few unbounded complexes would  satisfy the finiteness conditions on self-$\Ext$s needed for representability anyway.
    \end{digression}
    
    \begin{construction}
    In order to associate  moduli functors to our simplicial category-valued functors,  we  first discard any morphisms which are not equivalences, so we restrict to the  simplicial subcategory $\cW(\C)\subset \C$ of  homotopy equivalences, given by
    \[
    \cW(C):=\C\by_{\pi_0\C}\mathrm{core}(\pi_0\C), 
    \]
    for $\mathrm{core}(\pi_0\C) \subset \pi_0\C$ the maximal subgroupoid, which contains all the objects of $\pi_0\C$ with morphisms given by the isomorphisms between them.

    Now, the nerve of a category is a simplicial set, and this extends to a construction giving the nerve $B\C$ of a simplicial category $\C$.\footnote{Explicitly, we first form the bisimplicial set $n \mapsto B\C_n$, then  take the diagonal $\diag$, or more efficiently the codiagonal $\bar{W}$ of \cite{CRdiag}, to give a simplicial set.} Taking
    \[
    B\cW(\C) \co d\Alg \to s\Set
    \]
    then gives the moduli stack of objects in $\C$.  
    \end{construction}
    
    \begin{examples}
    For each case in \Cref{modex}, we now  look at tangent cohomology:
    \begin{enumerate}
    \item For moduli of derived Artin $n$-stacks, at a point $[\fX]\in \C(A)$ the tangent cohomology groups are
    \[
    D^i_{[\fX]}(B\cW(\C),M)\cong \Ext^{i+1}_{\fX}(\bL^{\fX}, \O_{\fX}\ten^{\oL}_A M).
    \]
    
    \item For moduli of quasi-coherent complexes on $X$, at a point $[\sE]\in \C(A)$ the tangent cohomology groups are
    \[
    D^i_{[\sE]}(B\cW(\C), M)= \Ext^{i+1}_{\O_{X}}(\sE,\sE\ten^{\oL}_A M).
    \]
    
    \item For moduli of pairs $(\fX,\sE)$, we have a long exact sequence
    \[
    \Ext^{i+1}_{\O_{\fX}}(\sE,\sE\ten^{\oL}_A M) \to D^i_{[(\fX,\sE)]}(B\cW(\C), M) \to  \Ext^{i+1}_{\fX}(\bL^{\fX}, \O_{\fX}\ten^{\oL}_A M) \to \ldots,
    \]
    in which the boundary map is given by the Atiyah class of $\sE$.
    
    \item For moduli of maps $\fX \to \fY$ (both fixed), we have
    \[
    D^i_{[f\co \fX\to \fY]}(B\cW(\C),M) = \Ext^{i}_{\sO_{\fX}}(\oL f^*\bL^{\fY}, \O_{\fX}\ten^{\oL}_A M),
    \]
    similarly to \S \ref{morphismsrevsn}.
    \end{enumerate}
    \end{examples}
    
    These groups are all far too big to satisfy the finiteness conditions in general, so in each case we have to cut down to some suitably open subfunctor with good finiteness properties:
    
    %
    
    
    \begin{example}[Derived moduli of schemes]  Given $A \in d\Alg$, we can look at  derived Zariski $1$-hypergroupoids $X$ which are homotopy-flat over $\Spec A$, with suitable restrictions on $\pi^0X$ (proper, dimension $d$, \ldots). A specific   example of this type  is given by  moduli of  smooth proper curves, representable by a $1$-truncated derived Artin stack.
    \end{example}

    \begin{example}[Moduli of perfect complexes on a proper scheme (or stack) $X$] In this  example, given $A\in d\Alg$ we look at 
    quasi-coherent complexes on $X \by \Spec A$ which are homotopy-flat over $\Spec A$, and perfect on pulling back to $X \by \pi^0\Spec A$.
    This is an $\infty$-geometric \index{infinity-geometric@$\infty$-geometric} derived Artin stack, in the sense that it is a nested union of open $n$-geometric derived Artin substacks, for varying $n$. Explicitly,  restricting the complexes to live in degrees $[a,b]$ gives an open $(b-a+1)$-truncated derived moduli stack.
    \end{example}
    
    \begin{example}[Derived moduli of  polarised schemes]
        Given $A\in d\Alg$, look at  pairs $(X,\sL)$, with $X$ a derived Zariski $1$-hypergroupoid
        homotopy-flat over $\Spec A$ and  $\sL$ a quasi-coherent complex on $X$, such that  $(\pi^0X, \sL\ten_A^{\oL}\H_0(A))$ is a polarised projective  scheme, so $\sL$ is an ample line bundle.
        We can also  fix the Hilbert polynomial to give a smaller open subfunctor.
    \end{example}
    
    \subsection{Examples in detail}
    We still follow \cite{dmsch}, in particular \S \ref{dmsch-modschsn}; the examples are more general than the title of the paper might suggest.
    
    %
    %
    
        Take  a category-valued  functor $\C: \Alg_{\H_0(R)} \to \Cat$ and an   property $\oP$ on isomorphism classes of objects of $\C$ which is functorial in the sense that whenever $x \in \C(A)$ satisfies $\oP$, its image  $\C(f)(x) \in \C(B)$ also satisfies $\oP$, for any morphism $f\co A \to B$ in $\Alg_{\H_0(R)}$. Then:
    \begin{definition}\label{opencatmodified}
    Say that $\oP$ is an \emph{open property} \index{open property} if it is closed under deformations in the sense that for any square-zero extension $A \to B$, an object of $\C(A)$ satisfies $\oP$ whenever its image in $\C(B)$ does.
    \end{definition}

    \begin{definition}\label{etlocalprop}
    Say that  $\oP$ is an \emph{\'etale local property} \index{etale local property@\'etale local property} if for any $A \in \Alg_{\H_0(R)}$ and any  \'etale cover $\{f_i \co A \to B_i\}_{i \in I}$, an object of $\C(A)$ satisfies $\oP$ whenever its images in $\C(B_i)$ all do.
    \end{definition}

    \begin{definition}
    Given  $\C \co d\cN^{\flat}_R  \to s\Cat$ and  a functorial property $\oP$ on objects of $\pi^0\C$, extend $\oP$ to $\C$ by saying that an object of $\C(A)$ satisfies $\oP$ if and only if its image in $\C(\H_0(A))$ does so.\footnote{This convention gives a correspondence between open properties in the sense of Definition \ref{opencatmodified} and the open simplicial subcategories of \cite[Definition \ref{dmsch-opencat}]{dmsch}.}
    \end{definition}
    
    \begin{example}
    For instance, this means that we would declare a derived Artin stack $\fX$ over $\oR \Spec A$ to be an algebraic curve if and only if the derived stack $\fX\by^h_{\oR\Spec A}\Spec \H_0(A)$ (which has structure sheaf $\sO_{\fX}\ten^{\oL}_{A}\H_0(A)$) is an  algebraic curve over $\Spec \H_0(A)$.
    \end{example}

    \begin{lemma}
    Take   a homotopy-preserving and homotopy-homogeneous  \'etale hypersheaf $\C \co d\cN^{\flat}_R \to s\Cat$. If $\oP$ is a functorial property on $\pi^0\C$ which is open and \'etale local, then the full subfunctor 
    $\tilde{\cM}:d\cN^{\flat}_R \to s\Cat$ of $\C$ on objects satisfying $\oP$ is 
    a  homotopy-preserving and homotopy-homogeneous \'etale hypersheaf.
    \end{lemma}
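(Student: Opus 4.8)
The plan is to exploit the fact that $\tilde{\cM}$ is a \emph{full} subfunctor of $\C$: at each $A$ the simplicial category $\tilde{\cM}(A)$ has exactly the same mapping spaces as $\C(A)$ and differs only in which objects are present. Consequently each of the three properties reduces to the single assertion that the corresponding equivalence already established for $\C$ restricts to the full subcategories cut out by $\oP$; because $\oP$ is a property of equivalence classes, fullness then automatically upgrades ``fully faithful together with the correct essential image'' to a genuine weak equivalence of simplicial categories. First I would prove two transport lemmas lifting $\oP$ from the discrete setting (where openness and \'etale locality are assumed, Definitions \ref{opencatmodified} and \ref{etlocalprop}) to $d\cN^\flat_R$, using only that $\oP$ on $\C$ is read off from $\H_0$. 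For a square-zero extension $A' \onto B'$ in $d\cN^\flat_R$ with derived kernel $J$, the long exact sequence of homotopy groups shows that $\H_0(A') \onto \H_0(B')$ is a square-zero extension of discrete rings, so combining discrete openness with functoriality of $\oP$ yields the biconditional that an object of $\C(A')$ satisfies $\oP$ if and only if its image in $\C(B')$ does. For a homotopy-\'etale cover $\{A \to B_i\}$, the induced family $\{\H_0(A) \to \H_0(B_i)\}$ is an honest \'etale cover, so \'etale locality lifts to the statement that an object of $\C(A)$ satisfies $\oP$ if and only if all its images do.

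Granting these, \textbf{homotopy-preservation} is immediate: a quasi-isomorphism $A \to A'$ induces an isomorphism $\H_0(A) \cong \H_0(A')$, so the $\oP$-locus is preserved, and the known weak equivalence $\C(A) \xra{\sim} \C(A')$ restricts to $\tilde{\cM}(A) \to \tilde{\cM}(A')$, which is essentially surjective because any $\oP$-object of $\C(A')$ is equivalent to the image of some $x \in \C(A)$, and that $x$ then satisfies $\oP$ by equivalence-invariance.

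For \textbf{homotopy-homogeneity}, take a square-zero extension $A \onto C$ and a map $B \to C$; the projection $A\by_C B \to B$ is again square-zero, its kernel being $\ker(A \to C)$. By the first lemma together with functoriality, an object $x \in \C(A\by_C B)$ lies in $\tilde{\cM}$ if and only if its image in $\C(B)$ does, and this in turn forces its image in $\C(A)$ to lie in $\tilde{\cM}(A)$; hence the condition ``$x \in \tilde{\cM}(A\by_C B)$'' coincides with ``both images lie in the respective $\oP$-subcategories'', which is exactly the condition defining $\tilde{\cM}(A)\by^h_{\tilde{\cM}(B)}\tilde{\cM}(C)$ as a full sub-simplicial-category of $\C(A)\by^h_{\C(B)}\C(C)$. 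The homotopy-homogeneity equivalence for $\C$ therefore restricts to $\tilde{\cM}$. For the \textbf{\'etale hypersheaf} condition, given an \'etale hypercover $A \to B_\bt$, the second lemma applied to the covering map $A \to B_0$ shows that $x \in \C(A)$ satisfies $\oP$ exactly when its level-zero image does; combined with functoriality (which handles the higher levels) this identifies, under $\C(A)\simeq\ho\Lim_{n}\C(B_n)$, the subobject $\tilde{\cM}(A)$ with the full subcategory $\ho\Lim_n \tilde{\cM}(B_n)$ of descent data whose components satisfy $\oP$, giving the descent equivalence for $\tilde{\cM}$; the disjoint-union axiom is the same argument for the Zariski cover $\{A_1 \times A_2 \to A_i\}$.

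I expect the main obstacle to be organisational rather than conceptual: carefully formalising how homotopy limits and homotopy fibre products in $s\Cat$ interact with full subcategories, so that ``$\ho\Lim$ of the $\oP$-subcategories'' genuinely is the full subcategory of the $\ho\Lim$ on objects that are levelwise in $\oP$. This requires precisely the two compatibilities that $\oP$ is closed under equivalence and is carried along every transition functor, after which restricting each of the three equivalences is routine.
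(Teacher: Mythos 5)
Your proof is correct, and it is essentially the argument the paper delegates to its references: the paper's own ``proof'' is nothing but a citation of three results of \cite{dmsch} (one per property), and the route you take --- observe that $\oP$ cuts out a full subcategory closed under equivalence and compatible with all transition functors, then restrict each of the three equivalences for $\C$ to these full subcategories, using discrete openness along the square-zero surjection $\H_0(A\by_CB)\onto\H_0(B)$ for homogeneity and discrete \'etale locality along $\H_0(A)\to\H_0(B^0)$ for descent --- is exactly what those lemmas do. The only points worth writing out in full are the two you already flag as organisational: that $\H_0(A\by_CB)\cong\H_0(A)\by_{\H_0(C)}\H_0(B)$ (using that a square-zero extension is levelwise surjective, so this is just right-exactness of $\H_0$ rather than a long exact sequence), and that homotopy fibre products and homotopy limits in $s\Cat$ of full subcategories closed under equivalence are computed as the full subcategories of the ambient homotopy limits on levelwise-$\oP$ objects.
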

    \begin{proof}
    \cite[Proposition \ref{dmsch-descentlemma}, Lemmas \ref{dmsch-2htpicgood} and \ref{dmsch-2hgsgood}]{dmsch}.
    \end{proof}
    
    In particular, this means that for subfunctors cut out by open, \'etale local properties in any of any of the functors in \Cref{modex}, we need only check conditions \ref{ntrunc}, \ref{afp1}--\ref{Difg} of \Cref{lurierep3} to establish representability.  With the exception of condition \ref{formaleff} (effectiveness of formal deformations), these amount to finiteness properties of the relevant cohomology groups.

    For more general criteria to establish homotopy-homogeneity for 
    simplicial category-valued functors $\C$, see \cite[Proposition \ref{dmsch-wetale}]{dmsch}.
    
    \subsubsection{Moduli of quasi-coherent complexes}
    The following is \cite[Theorem \ref{dmsch-representdmod}]{dmsch}: 
    \begin{theorem}\label{representdmod}
    Take a strongly quasi-compact $m$-geometric  derived Artin stack $\fX$   over $R$.
    
    Assume that we have  
    an 
    open, \'etale local condition $\oP$ for objects of the functor $ A \mapsto 
    \cD^-(\fX\ten_R^{\oL}A)$, the derived category of  quasi-coherent complexes on $\fX\ten_R^{\oL}A $ which are bounded above as cochain  complexes\footnote{As in \S\S \ref{qucohcx}, \ref{qucohsn1}, \ref{qucohsn2} this is 
    a full subcategory of $\Ho(\mathrm{Cart}(\sO_{\fX}\ten_R^{\oL}A))$. It consists of complexes  $\sE$ with $\H_i\sE(U)=0$ for $i \ll 0$ (in homological, not cohomological, grading), for all affine atlases $U$ of $\fX$.}.
    
    \medskip  
    Also assume that this satisfies the following conditions:
    \begin{enumerate}

    \item\label{dmafp2} for all finitely generated $A \in \Alg_{\H_0(R)}$  and all $\sE\in \cD^-(\fX\ten_R^{\oL}A)$ satisfying $\oP$, the functors 
    \[
    \EExt^i_{\fX\ten_R^{\oL}A}(\sE,\sE\ten_A^{\oL}- ) :\Mod_A \to \Ab
    \]
    preserve filtered colimits (equivalently, colimits indexed by directed sets) for all $i$.
    
    \item for all finitely generated integral domains $A \in \Alg_{\H_0(R)}$  and all $\sE\in \cD^-(\fX\ten_R^{\oL}A)$ satisfying $\oP$, the groups $\EExt^i_{\fX\ten_R^{\oL}A}(\sE, \sE )$ are all  finitely generated $A$-modules.
    
    \item
    The functor $|P|:  \Alg_{\H_0(R)} \to \Set$  of isomorphism classes of objects satisfying $\oP$ preserves filtered colimits.
    
    \item\label{Ahatcdn} for all complete discrete local Noetherian  $\H_0(R)$-algebras $A$, with maximal ideal $\m$, the map
    \[
    |P|(A) \to \Lim_r  |P|(A/\m^r)
    \] 
    is an isomorphism, as are the maps 
    \begin{align*}
    \EExt^i_{\fX\ten_R^{\oL}A}(\sE, \sE ) &\to  \EExt^i_{\fX\ten_R^{\oL}A}(\sE, \oR\Lim_r\sE/\m^r )\\
    &\cong\Lim_r \EExt^i_{\fX\ten_R^{\oL}A}(\sE, \sE/\m^r )
    \end{align*}
    for  all $\sE$ satisfying $\oP$ and all $i\le 0$. 
    
    \item For any  $\H_0(R)$-algebra $A$ and $\sE \in \cD^-(\fX\ten_R^{\oL}A)$ satisfying $\oP$, the cohomology groups $\EExt^i_{\fX\ten_R^{\oL}A}(\sE, \sE ) $ vanish for $i\le -n$. 
    
    \end{enumerate}
    
    Let $\tilde{\cM}:d\cN^{\flat}_R \to s\Cat$  be given by sending $A$ to  the simplicial category  of quasi-coherent complexes $\sE$ on $\fX\ten_R^{\oL}A$ for which $\sE\ten^{\oL}_A\H_0(A) \in \cD^-(\fX\ten_R^{\oL}\H_0(A))$ satisfies $\oP$. Let $\cW\tilde{\cM}$ be the full simplicial subcategory of quasi-isomorphisms.
    
    \medskip
    {\bf Then the nerve of $\cW\tilde{\cM}$ 
    is an $n$-truncated derived Artin stack.}
    \end{theorem}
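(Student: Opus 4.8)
The plan is to verify the hypotheses of the representability theorem \ref{lurierep3} for the functor $F := B\cW\tilde{\cM}\co d\cN_R^{\flat} \to s\Set$, since \ref{lurierep3} then immediately yields that $F$ is the restriction of an $n$-truncated almost finitely presented derived geometric Artin stack, which is precisely the claim. The ambient functor $\C\co A \mapsto \cD^-(\fX\ten_R^{\oL}A)$ is the moduli of bounded-above quasi-coherent complexes of Examples \ref{modex}.(\ref{modcx}), where it is recorded to be homotopy-preserving, homotopy-homogeneous, and an \'etale hypersheaf. Since $\oP$ is open and \'etale local by assumption, the Lemma preceding this theorem transports those three properties to the subfunctor $\tilde{\cM}$, and passing to the nerve of the subcategory of weak equivalences preserves them; this disposes of the first, third, and fourth conditions of \ref{lurierep3} in one stroke.

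The remaining conditions are all controlled by the tangent cohomology, so the next step is to invoke the computation (from the examples following the nerve construction) identifying
\[
D^i_{[\sE]}(F, M) \cong \EExt^{i+1}_{\fX\ten_R^{\oL}A}(\sE, \sE\ten_A^{\oL}M)
\]
at a point $[\sE]\in F(A)$. Granting this, I would translate the hypotheses of \ref{representdmod} directly: condition (\ref{dmafp2}) gives that $D^i_x(F,-)$ preserves filtered colimits for all $i$, hence condition (\ref{afp2}) of \ref{lurierep3}; the finite-generation hypothesis on $\EExt^i$ gives condition (\ref{Difg}); and the \'etale-local nature of $\oP$ together with flat base change for $\EExt$ along \'etale maps gives the tangent-cohomology base-change condition (item 7 of \ref{lurierep3}).

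For the $n$-truncatedness (condition (\ref{ntrunc})) I would use the vanishing hypothesis $\EExt^i_{\fX\ten_R^{\oL}A}(\sE,\sE)=0$ for $i\le -n$: evaluated on a discrete $A$, the higher homotopy groups of $F(A)$ at a point $[\sE]$ are computed from the automorphism space as $\pi_jF(A)\cong\EExt^{1-j}_{\fX\ten_R^{\oL}A}(\sE,\sE)$ for $j\ge 2$, which vanish for $j>n$ since then $1-j\le -n$. The two purely underived conditions on $\pi^0F$ follow from the hypotheses at the level of isomorphism classes: condition (\ref{afp1}) (preservation of filtered colimits) is supplied by the assumption that $|P|$ and the relevant $\EExt$ functors commute with filtered colimits, while formal effectiveness (condition (\ref{formaleff})) is supplied by condition (\ref{Ahatcdn}), whose two displayed isomorphisms feed the $\pi_0$-data and the higher automorphism/deformation data into the Milnor-type $\Lim^1$ exact sequence for the tower $\{A/\m^r\}$.

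The main obstacle is the input underlying the tangent complex identification and, relatedly, formal effectiveness. Establishing $D^i_{[\sE]}(F,M)\cong\EExt^{i+1}(\sE,\sE\ten^{\oL}_A M)$ is a genuine deformation-theoretic computation (the boundary/Atiyah-class analysis of deforming a complex and its automorphisms), and it is here that the bounded-above restriction is essential: for unbounded complexes derived base change misbehaves and homotopy-homogeneity can fail outright, as the acyclic-but-non-trivial deformation of Digression \ref{hinichcounterex} shows. Formal effectiveness is the deepest classical ingredient, being an algebraization statement of Grothendieck-existence type; although it is handed to us at the $\pi^0$-level as condition (\ref{Ahatcdn}), care is needed to promote the underived isomorphisms to the equivalence of mapping spaces required by \ref{lurierep3}, controlling both the $\Lim^1$ term and the compatibility of the $\EExt$-towers.
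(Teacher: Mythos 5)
Your proposal is correct and follows exactly the route the notes indicate: the paper itself defers the proof to \cite{dmsch}, but the surrounding discussion (the lemma on open, \'etale local properties plus the tangent-cohomology computation $D^i_{[\sE]}\cong\EExt^{i+1}(\sE,\sE\ten^{\oL}_AM)$) sets up precisely the reduction to Theorem \ref{lurierep3} that you carry out, with your condition-by-condition matching and the truncation index bookkeeping both checking out.
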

    
    \begin{examples}
    One example of an open, \'etale local condition is to ask that $\sE$ be a perfect complex, 
    and we  could  then impose a further such condition by fixing its Euler characteristic. 
    
    Another open, \'etale local condition would be to impose bounds on $\sE$, asking that it only live in degrees $[a,b]$, provided a flatness condition is imposed to ensure functoriality, as we need the derived pullbacks $\sE\ten^{\oL}_AB$ to satisfy the same constraint. Similar considerations apply for perverse $t$-structures, and in particular the moduli stack  of objects living in 
    the heart of a  $t$-structure will be $1$-truncated because we have no negative $\Ext$s.
    
    We can also apply Theorem \ref{representdmod} to study derived moduli of Higgs bundles, for instance. One interpretation of a Higgs bundle on a smooth proper scheme $X$ is as a quasi-coherent sheaf $\sE$ on the cotangent scheme $T^*X=\oSpec_X\Symm_{\sO_X}\sT_X $, such that $\sE$  is a vector bundle when regarded as a sheaf on $X$. This defines an open, \'etale local condition on the functor  $A \mapsto \cD^-(T^*X\ten A)$, so gives rise to a derived moduli stack, with tangent spaces given by Higgs cohomology.
    
    There is a variant of this example for the derived de Rham moduli space of vector bundles with flat connection, replacing $\Symm_{\sO_X}\sT_X$ with the ring of differential operators. Since the latter is non-commutative, we cannot appeal directly to Theorem \ref{representdmod}, but the same proof adapts verbatim. 
    \end{examples}

    \subsubsection{Moduli of derived Artin stacks}
    
    \cite[Theorem \ref{dmsch-representdaffine}]{dmsch} gives a similar statement for moduli of derived Artin $n$-stacks (and thus any subcategories such as derived DM $n$-stacks, derived schemes, ...),
    taking
    open, \'etale local conditions $\oP$ on
    the homotopy category of $n$-truncated  derived Artin stacks $X$ over a fixed  base $Y$. The relevant cohomology groups are now
    \[
    \EExt^i_{X}(\bL^{X/Y_A},\O_{X}\ten_A^{\oL}- ): \Mod_A \to \Ab,
    \]
    and the resulting moduli stack $\tilde{\cM}$ is $(n+1)$-truncated.
    
    \bigskip  
    In detail, the conditions become:
    \begin{enumerate}
    
    \item
    for all finitely generated $A \in \Alg_{\H_0(R)}$  and all $X$ over $A$ satisfying $\oP$, the functors 
    \[
    \EExt^i_{X}(\bL^{X/Y_A},\O_{X}\ten_A^{\oL}- ): \Mod_A \to \Ab
    \]
    preserve filtered colimits for all $i>1$.
    
    \item for all finitely generated integral domains $A \in \Alg_{\H_0(R)}$  and all $X$ over $A$ satisfying $\oP$, the groups $\EExt^i_{X}(\bL^{X/Y_A}, \O_{X} )$ are all  finitely generated $A$-modules.
    
    \item for all complete discrete local Noetherian  $\H_0(R)$-algebras $A$, with maximal ideal $\m$, the map
    $$
    \tilde{\cM}(A) \to {\Lim_r}^h \tilde{\cM}(A/\m^r)
    $$
    is a weak equivalence.  

    \item $\tilde{\cM}: \Alg_{\H_0(R)} \to s\Cat$  preserves filtered  colimits (i.e. the simplicial functor $\LLim_i\tilde{\cM}( A(i))\to \tilde{\cM}(A)$ is a weak equivalence for all systems $A(i)$ indexed by directed sets.) 
    %
    
    \end{enumerate}
    
    \begin{example}
    If we let $Y$ be the stack $B\bG_m$, then one open, \'etale local condition on derived Artin stacks $X$ over $B\bG_m$ is to ask that $X$ be a projective scheme flat over the base $R$, with $X \to B\bG_m$ the morphism associated to an ample line bundle on $X$. We could also fix the Hilbert polynomial associated to this line bundle. The theorem then gives us representability of derived moduli stacks of polarised projective schemes.
    \end{example}
    
    \begin{remark}
    There are some earlier examples of representable derived moduli functors in the literature: \begin{itemize}
    \item Stable curves, line bundles and closed subschemes were addressed in \cite{lurie}, although for stable curves 
    the derived moduli stack is just the classical underived moduli stack, and the objects parametrised by the   derived Hilbert scheme there were not {\it a priori} derived schemes in  the usual sense, cf. \Cref{dschfootnote}.
    \item In \cite{hag2}, local systems, finite algebras over an operad,\footnote{The methods of \cite{ddt2,higher,dmc} apply to algebras over more general monads.} and mapping stacks were addressed; the last persist as the most popular way to construct  representable functors.
    \item For  associative dg-algebras $A$ of finite type,  representability for moduli of  complexes of $A$-modules perfect over the base ring $k$ was established directly in \cite{toenvaquie}, and hence for moduli of $T$-modules in  perfect $k$-complexes for any dg category $T$ derived Morita equivalent to such a dga $A$. 
    \end{itemize} 
    \end{remark}
    
    
    \subsection{Pre-representability}
    Details for this section appear in \cite[\S \ref{drep-prerepsn}]{drep}.
    
    The idea behind pre-representability is to generalise the way we associate derived functors  to smooth schemes, which can be useful when constructing things like derived  quotient stacks, or morphisms between derived stacks.
    
    \begin{definition}
    Given
    a functor $F: d\cN^{\flat} \to s\Set$, we  define  a functor $\underline{F}: d\cN^{\flat}\to ss\Set$ to  the category of bisimplicial sets by 
    $$
    \underline{F}(A)_{n} :=  F(A^{\Delta^n}),
    $$
    \index{Funderline@$\underline{F}$}
    where for $A \in \catdga[R]$, we set $ A^{\Delta^n}:= \tau_{\ge 0} (A \ten \Omega^{\bt}(\Delta^n))$ as in \Cref{pathexamples}, while for $A \in s\Alg$ the simplicial algebra $A^{\Delta^n}$ is given by $(A^{\Delta^n})_i := \Hom_{s\Set}(\Delta^i \by \Delta^n,A)$.
    
    \end{definition}
    
    The results of \cite[\S \ref{drep-prerepsn}]{drep} and in particular \cite[Theorem \ref{drep-lurieprerep}]{drep} then show that if $F$ satisfies the conditions of Theorem \ref{lurierep3}, but mapping acyclic square-zero extensions to surjections rather than weak equivalences,  then the diagonal  $\diag \uline{F}$ is an $n$-truncated derived Artin stack. We then think of $F$ as being pre-representable, by close analogy with the predeformation functors of \cite{Man2}.

    One way to interpret the construction is that $\diag \uline{F}$  is the right-derived functor of $F$  with respect to quasi-isomorphisms in $d\cN^{\flat}_R$. Note that  if $F$ was already representable, then the natural map $F \to \diag \uline{F}$ is a weak equivalence.
    
    \bigskip
    Constructing morphisms $f \co \fX \to \fY$ between derived stacks  can be cumbersome to attempt directly  because derived stacks encode so much data, but pre-representable functors can provide a simplification. 
    Instead of  constructing the morphism $f$ itself, if we can characterise $\fX$ as equivalent to $\diag \uline{X}$ for some much smaller functor $X$, then it suffices to construct a morphism $X \to \fY$, since 
    \[
    \fX \simeq \diag\uline{X} \to \diag\uline{\fY} \simeq \fY.
    \]

    \begin{example}\label{dgexample}
    If $X$ is a dg-manifold (in the sense of Definition \ref{dgmfddef}), then the functor $X: \operatorname{dg_+}\cN^{\flat}_R \to \Set$ given by $X(A):= \Hom((\Spec (A_0), A), X)$ satisfies the conditions of \cite[Theorem \ref{drep-lurieprerep}]{drep}, so $\uline{X}: \operatorname{dg_+}\cN^{\flat}_R \to s\Set$ is a $0$-truncated derived Artin (or equivalently DM) stack, i.e. a derived algebraic space. \index{derived algebraic space}
    
    However, the space of morphisms $\uline{X} \to F$ to a derived stack $F$, which would be complicated to calculate directly, is just equivalent to the space of  morphisms $X \to F$ for the functor $X$ above, so is given by the simplicial set $\oR\Gamma(X^0, F(\sO_X))$, which can be calculated via a \v Cech complex as in Example \ref{categs}.(\ref{categsSet}).
    \end{example}
    See \cite[\S\S 3--6]{dmc} for many more examples of $1$-truncated derived Artin moduli stacks constructed from pre-representable groupoid-valued functors.

\subsection{Addendum: derived hypergroupoids \texorpdfstring{\`a}{à}
    la  \texorpdfstring{\cite{stacks2}}{[Pri09]}}\label{addendum}
    
\subsubsection{Homotopy derived hypergroupoids}\label{cfstacks2sn}
    
    The definitions  given in \S \ref{dstackdefsn} for homotopy derived Artin and DM hypergroupoids are not the same as those of \cite{stacks2,stacksintro} (which are cast to work in more general settings), but are equivalent. For want of a suitable reference, we now  prove the equivalence of the two sets of definitions, but readers should regard this section as a glorified footnote.
    
    As with simplicial affine schemes, we still have notions of  matching objects $M_{\pd \Delta^m}(X)$ and partial matching objects $M_{\L^{m,k}}(X)$ for simplicial derived affine schemes $X$. Explicitly, $M_{\pd \Delta^m}(X)$ is the equaliser of a diagram
    \[
        \prod_{0\le i \le m} X_{m-1} \rightrightarrows    \prod_{0\le i<j \le m} X_{m-2},    
    \]
    and is characterised by the property that
    \[
        \Hom_{d\Aff}(U, M_{\pd \Delta^m}(X)) \cong \Hom_{sd\Aff}(\pd \Delta^m \by U,X),
    \]
    naturally in $U \in d\Aff$, while $M_{\L^{m,k}}(X)$ is the equaliser of a diagram
    \[
        \prod_{\substack{0\le i \le m \\ i \ne k} } X_{m-1} \rightrightarrows    \prod_{\substack{0\le i<j \le m\\ i,j \ne k}} X_{m-2},    
    \]
    and is characterised by the property that
    \[
        \Hom_{d\Aff}(U, M_{\L^{m,k}}(X)) \cong \Hom_{sd\Aff}(\L^{m,k} \by U,X),
    \]
    naturally in $U \in d\Aff$.

    In order to formulate the key definition from \cite{stacks2}, we now need to replace these limits 
    with homotopy limits: 
    \begin{definition}
        Define the \emph{homotopy matching objects} \index{homotopy matching objects} and  \emph{homotopy partial matching objects} \index{homotopy partial matching objects}
        \begin{align*}
            M_{\pd \Delta^m}^h \co sd\Aff &\to d\Aff\\
            M_{\L^{m,k}}^h \co sd\Aff &\to d\Aff
        \end{align*}
        to be the right-derived functors of the matching and partial matching object functors  $M_{\pd \Delta^m}$ and $M_{\L^{m,k}}$, respectively.
    \end{definition}

    \begin{definition}
        We say that a  morphism $f \co X \to Y$ in $d\Aff$ is \emph{surjective}\index{surjective!in $d\Aff$} if $\pi^0f \co \pi^0X \to \pi^0Y$ is a surjection of affine schemes.  
    \end{definition}

    \begin{definition}\label{dnpreldefh2}
        Given   $Y \in sd\Aff$, a morphism $X\to Y$ in $sd\Aff$ is said to be a \cite{stacks2}-\emph{homotopy 
        derived Artin (resp. DM) $n$-hypergroupoid}  \index{derived Artin $n$-hypergroupoid!homotopy} \index{derived DM $n$-hypergroupoid!homotopy} 
        over $Y$ if:
        \begin{enumerate}
            \item for all  $m\ge 1$ and $0 \le k \le m$, the   homotopy partial matching maps
            $$
            X_m \to M_{\L^{m,k}}^h (X)\by^h_{M_{\L^{m,k}}^h (Y)}Y_m 
            $$
            are homotopy-smooth (resp. homotopy-\'etale) surjections;
            
            \item for all $m>n$ and all $0 \le k\le m$, the  homotopy partial matching maps
            $$
            X_m \to M_{\L^{m,k}}^h (X)\by^h_{M_{\L^{m,k}}^h (Y)}Y_m 
            $$
            are weak equivalences.
        \end{enumerate}
        
        
        The morphism $X \to Y$ is then said to be homotopy-smooth (resp. homotopy-\'etale, resp. surjective) if $X_0 \to Y_0$ is  homotopy-smooth (resp. homotopy-\'etale, resp. surjective). 
    \end{definition}  

    \begin{definition}\label{dnpreldefhtriv2}
        Given   $Y \in sd\Aff$, a morphism $X\to Y$ in $sd\Aff$ is said to be a \cite{stacks2}-\emph{homotopy  trivial
        derived Artin (resp. DM) $n$-hypergroupoid}   \index{derived Artin $n$-hypergroupoid!homotopy trivial} \index{derived DM $n$-hypergroupoid!homotopy trivial}
        over $Y$ if and only if:
        \begin{enumerate}
            \item for each  $m$, the homotopy  matching map
            $$
            X_m \to M_{\pd \Delta^m}^h (X)\by_{M_{\pd \Delta^m}^h (Y)}^hY_m 
            $$
            is a   homotopy-smooth (resp. homotopy-\'etale)  surjection;
            
            \item for all $m \ge n$, the homotopy matching maps
            $$
            X_m \to M_{\pd \Delta^m}^h (X)\by_{M_{\pd \Delta^m}^h (Y)}^h Y_m 
            $$
            are weak equivalences.
        \end{enumerate}
    \end{definition}
    
    
    We now have the following consistency check:  
    
    \begin{lemma}\label{cfstacks2}
        A \cite{stacks2}-homotopy  (trivial)
        derived Artin (resp. DM) $n$-hypergroupoid is precisely the same as a homotopy  (trivial)
        derived Artin (resp. DM) $n$-hypergroupoid in the sense of \S \ref{dstackdefsn}.
    \end{lemma}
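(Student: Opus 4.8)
The plan is to show that the two pairs of definitions cut out the same morphisms by routing both through a common reformulation and then matching their two ``derived layers'' clause by clause. Throughout I would treat the Artin case with partial matching objects and horns $\L^{m,k}$; the Deligne--Mumford and trivial (boundary $\pd\Delta^m$) variants are identical with the obvious substitutions, so I will only flag the differences at the end.

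First I would isolate two structural facts. \textbf{(A)} The underived truncation $\pi^0=\Spec\H_0$ commutes with the homotopy (partial) matching objects. Indeed, by Lemma \ref{discretefull} the functor $\H_0\colon\catdga\to\Alg$ is left adjoint to the inclusion of discrete algebras, so its left-derived functor $\pi^0$ preserves the homotopy colimits of cdgas dual to the homotopy limits defining $M^h_{\L^{m,k}}$ and $M^h_{\pd\Delta^m}$; thus $\pi^0 M^h_{\L^{m,k}}(X)\simeq M^h_{\L^{m,k}}(\pi^0X)$, and once the relevant face maps of $\pi^0X$ are flat this homotopy matching object of $\pi^0X$ agrees with the ordinary one appearing in Definition \ref{npreldef} (flat base change kills the higher $\Tor$-corrections). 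Moreover, when the face maps are homotopy-flat the same base-change argument, together with Exercises \ref{strongex}, shows that homology commutes with the matching objects, i.e. $\H_j(\sO_{M^h_{\L^{m,k}}(X)})$ is the matching sheaf of $(m\mapsto\H_j(\sO_{X_m}))$. \textbf{(B)} By Definition \ref{strongdef}, a map is homotopy-smooth (resp. homotopy-\'etale) exactly when it is strong and its $\pi^0$ is smooth (resp. \'etale), while a strong map is a weak equivalence exactly when its $\pi^0$ is an isomorphism. Hence each clause of Definitions \ref{dnpreldefh2} and \ref{dnpreldefhtriv2} splits canonically into a purely $\pi^0$-statement and a strongness statement.

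Granting (A) and (B), the backward implication (the \S\ref{dstackdefsn}-definition implying the \cite{stacks2}-definition) is the easier one, and I would do it first. Assume $\pi^0X\to\pi^0Y$ is a relative Artin $n$-hypergroupoid and the maps $(f,\pd_i)$ are strong (equivalently, the sheaves $\H_j(\sO_X)$ are Cartesian, as recorded after Definition \ref{dnpdefh}). Strongness plus $\pi^0$-smoothness makes every face map homotopy-smooth, hence homotopy-flat, so (A) applies: the $\pi^0$ of each homotopy partial matching map is the corresponding partial matching map of $\pi^0X\to\pi^0Y$, a smooth surjection (an isomorphism for $m>n$) by hypothesis, and the matching map is strong because, by (A), its homology is computed by the Cartesian sheaves $\H_j(\sO_X)$. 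By (B) this is precisely a homotopy-smooth surjection (a weak equivalence for $m>n$), which is Definition \ref{dnpreldefh2}.

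The forward implication is where the real work lies, and I expect the bootstrapping induction to be the main obstacle. Assuming the \cite{stacks2}-conditions, I would argue by induction on $m$ that up to level $m$ the partial matching maps of $\pi^0X\to\pi^0Y$ are smooth surjections (isomorphisms beyond $n$), the face maps are homotopy-flat, and the maps $(f,\pd_i)$ are strong. The delicacy is that invoking (A) to identify $\pi^0 M^h_{\L^{m,k}}(X)$ with the ordinary matching object $M_{\L^{m,k}}(\pi^0X)$ already presupposes flatness of the lower face maps, which is exactly what the induction supplies; since $M^h_{\L^{m,k}}$ depends only on levels $<m$, the inductive hypothesis makes (A) available at stage $m$. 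Applying $\pi^0$ to the assumed homotopy-smooth surjection then yields the underived hypergroupoid condition at level $m$, which in turn propagates homotopy-flatness to level $m$. Finally, strongness of the homotopy partial matching map, combined with the matching-object computation from (A) and the cancellation property of Exercises \ref{strongex}(1) (if $g\circ f$ and $f$ are strong then so is $g$), disentangles the aggregate matching data to give strongness of the individual maps $(f,\pd_i)$, recovering Definition \ref{dnpreldefh}. The trivial case runs identically, replacing $(f,\pd_i)$ by $f_m$, $\L^{m,k}$ by $\pd\Delta^m$, and the range $m>n$ by $m\ge n$, and invoking Definition \ref{npreldeftriv} in place of Definition \ref{npreldef}.
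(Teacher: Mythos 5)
Your two-way structure is right, and both of your directions take genuinely different routes from the paper's, so let me address each.

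For the implication from Definition \ref{dnpreldefh2} to Definition \ref{dnpreldefh}, the crux is exactly the step you call ``disentangling the aggregate matching data'': you must show that the projection from the relative partial matching object $M^h_{\L^{m,k}}(X)\by^h_{M^h_{\L^{m,k}}(Y)}Y_m$ onto a single relative face $Y_m\by^h_{\pd_i,Y_{m-1}}X_{m-1}$ is strong, so that composing with the hypothesised partial matching map gives strongness of $(f,\pd_i)$. This is asserted rather than proved in your sketch, and it is the entire content of this implication. The paper disposes of it by the anodyne-extension argument of \cite{stacks2}: since $\Delta^{m-1}$ and $\Delta^m$ are contractible, the face inclusion $\Delta^{m-1}\hookrightarrow\Delta^m$ is a composite of pushouts of horn inclusions, so the projection in question is a composite of pullbacks of lower partial matching maps, each homotopy-smooth by hypothesis. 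Your homology-sheaf induction could probably be pushed through, but it would end up re-proving this combinatorial fact fibrewise; as written it is a gap. Relatedly, your fact (A) is imprecise on two counts: $\pi^0$ commutes with the homotopy (partial) matching objects unconditionally (because $\H_0$ takes derived tensor products to tensor products), so no flatness is needed for that half; but the claim that $\H_j$ of a matching object is ``the matching sheaf of $\H_j(\sO_X)$'' fails for disconnected $K$ (already $M^h_{\pd\Delta^1}(X)=X_0\by X_0$ gives a K\"unneth product, not a matching sheaf), and over a general base $Y\in sd\Aff$ the individual face maps $\pd_i$ of $X$ need not be homotopy-flat --- only the relative maps $(f,\pd_i)$ are controlled --- so the Eilenberg--Moore computation has to be organised relative to $Y$ throughout.

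For the converse direction your base-change computation is workable, but the paper uses a much shorter argument worth comparing with: a morphism $W\to Z$ in $d\Aff$ is strong if and only if $\pi^0W\to W\by^h_Z\pi^0Z$ is a weak equivalence, so the strongness clause of Definition \ref{dnpreldefh} says precisely that $g\co\pi^0X\to X\by^h_Y\pi^0Y$ is homotopy-Cartesian, i.e.\ a homotopy derived $0$-hypergroupoid; its homotopy (partial) matching maps are therefore equivalences, and translating back through the same criterion shows that the homotopy (partial) matching maps of $f$ are strong, with no matching-object homology computation at all. This also handles the trivial case uniformly, which is where your formulation of (A) is most strained.
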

    \begin{proof}
        If $f \co X \to Y$ is a \cite{stacks2}-homotopy derived Artin $n$-hypergroupoid, then as in the proof of \cite[Theorem \ref{stacks2-relstrict}]{stacks2}, the morphisms $ (f,\pd_i) \co X_m \to Y_m\by^h_{\pd_iY_{m-1}}X_{m-1}$ are all homotopy-smooth for all $m>0$ and all $i$, since $\Delta^{m-1}$ and $\Delta^m$ are contractible. In particular, those morphisms are strong, satisfying the second condition of Definitions \ref{dnpreldefh}, \ref{dnpreldefhtriv}; the first is automatic.
        
        For the converse, we start by using the following  observation ($\dagger$), as in the end of the proof of \cite[Lemma 2.2.2.8]{hag2}: that a morphism $W \to Z$ in $d\Aff$ is strong if and only if the map $\pi^0W \to W\by^h_Z\pi^0Z$ is a weak equivalence. Thus the second condition of Definition \ref{dnpreldefh} can be rephrased as saying that whenever $f \co X \to Y$ is a homotopy derived Artin $n$-hypergroupoid, the map
        \[
            g \co   \pi^0X \to X\by^h_{Y}\pi^0Y
        \]
        is homotopy-Cartesian \index{homotopy-Cartesian!morphism}
        (the derived analogue of the notion in  Example \ref{ex:hgpd}.(\ref{ex:0hgpd})).
        But this is equivalent to saying that $g$ is a \cite{stacks2}-homotopy derived  $0$-hypergroupoid\footnote{We do not specify Artin or DM, since $0$-hypergroupoids are independent of the notion of covering.}. In particular, the homotopy partial matching maps of $g$ are all quasi-isomorphisms, so the homotopy partial matching maps of $f$ are all strong, by ($\dagger$). Combined with the first condition, this completes the proof for Definition \ref{dnpreldefh}.  
        
        Likewise,  the second condition of Definition \ref{dnpreldefhtriv} says that $g$ is a levelwise equivalence whenever $f$ is a homotopy trivial derived Artin $n$-hypergroupoid, which is equivalent to saying that all the homotopy matching maps of $g$ are quasi-isomorphisms, and hence that the homotopy  matching maps of $f$ are all strong. 
    \end{proof}
    
    %
    %
    
\subsubsection{Derived hypergroupoids}   \label{strictderhgpdsn}
    
    We now introduce an equivalent, but much more restrictive,  model for homotopy derived hypergroupoids, which is especially useful when describing morphisms, but can be unwieldy to construct.
    
    By \cite[Theorem 5.2.5]{hovey},   there is  a model structure (the \emph{Reedy model structure})\index{Reedy model structure} on $sd\Aff$ in which a map $X \to Y$ is a weak equivalence if it is a quasi-isomorphism  in each level $X_m \xra{\sim} Y_m$, 
    a cofibration if it is a cofibration in each level, and a fibration if the matching maps
    \[
    X_m \to Y_m\by_{M_{\pd \Delta^m}(Y)}M_{\pd \Delta^m}(X)
    \]
    are fibrations for all $m \ge 0$.
    
    \begin{example}
        Derived affine schemes $U$   are almost  never Reedy fibrant when regarded as objects of $sd\Aff$ with constant simplicial structure, because then we would have $M_{\pd\Delta^1}(U) \cong U \by U$, with the matching map being the diagonal map $U \to U \by U$, which  can only be a fibration if it is an isomorphism. 
        
        For instance, the diagonal map $\bA^1 \to \bA^2$
        corresponds to $k[x,y] \to k[x,y]/(x-y) \cong k[x]$, which is not quasi-free,
       so $\bA^1$ is not Reedy fibrant, despite being the prototypical fibrant derived affine scheme.
       
       In fact, the homotopy matching objects $M_{\pd\Delta^n}^h(U)$ (see \S \ref{cfstacks2sn}) are given by higher derived loop spaces $U^{h S^{n-1}}$, and in particular $M_{\pd\Delta^2}^h(U) \simeq \cL U$ for the derived loop space $\cL$ of Definition \ref{def:loopspace}.
       
       The smallest Reedy fibrant replacement of the affine line $\bA^1$ is given in level $n$  by $\Spec (\Symm (\bar{C}_{\bt}(\Delta^n)))$ in the $\catdga[]$ setting, where $\bar{C}_{\bt}(\Delta^n)$ denotes
        normalised chains on the $n$-simplex, and by a similar construction with  unnormalised chains in the $s\Alg$ setting.
        \end{example}
    
    For Reedy fibrant simplicial derived affine schemes, the matching and partial matching objects are already {\it homotopy} matching and partial matching objects, leading to the following strictified analogues of Definitions \ref{dnpreldefh2}, \ref{dnpreldefhtriv2}:
    \begin{definition}\label{dnpreldef}
        Given   $Y \in sd\Aff$, define a   \emph{derived Artin} (resp. \emph{DM}) \emph{$n$-hypergroupoid}  \index{derived Artin $n$-hypergroupoid} \index{derived DM $n$-hypergroupoid}  over $Y$ to be a morphism $X\to Y$ in $sd\Aff$, satisfying the following:
        \begin{enumerate}
            \item $X \to Y$ is a Reedy fibration.
                %
            \item for each  $m\ge 1$ and $0 \le k \le m$, the   partial matching map
                $$
                X_m \to M_{\L^{m,k}} (X)\by_{M_{\L^{m,k}} (Y)}Y_m 
                $$
                is a homotopy-smooth (resp. homotopy-\'etale) surjection in $d\Aff$;  
            \item for all $m>n$ and all $0 \le k\le m$, the  partial matching maps
                $$
                X_m \to M_{\L^{m,k}} (X)\by_{M_{\L^{m,k}} (Y)}Y_m 
                $$
                are trivial fibrations in $d\Aff$.
        \end{enumerate}
        
    \end{definition}
    \begin{definition}
        A  \emph{trivial}  derived Artin (resp. DM) $n$-hypergroupoid $X \to Y$ is a morphism  in $sd\Aff$ satisfying the following: \index{derived Artin $n$-hypergroupoid!trivial} \index{derived DM $n$-hypergroupoid!trivial}
        \begin{enumerate}
            \item for each  $m$, the  matching map
                $$
                X_m \to M_{\pd \Delta^m} (X)\by_{M_{\pd \Delta^m} (Y)}^hY_m 
                $$
                is a fibration  and a   homotopy-smooth (resp. homotopy-\'etale)  surjection in $d\Aff$;
            \item for all $m \ge n$, the  matching maps
                $$
                X_m \to M_{\pd \Delta^m} (X)\by_{M_{\pd \Delta^m} (Y)} Y_m 
                $$
                are trivial fibrations.
        \end{enumerate}
    \end{definition}
    
    Since a model structure comes with fibrant replacement, the following is an immediate consequence of Reedy fibrant replacement combined with Lemma \ref{cfstacks2}:
    
    \begin{lemma}
        A  map  $X\to Y$ is  a homotopy  derived Artin $n$-hypergroupoid  if and only if  its Reedy fibrant replacement $\hat{X} \to Y$ is a   derived Artin $n$-hypergroupoid. 
        
        A  map  $X\to Y$ is a homotopy trivial derived Artin $n$-hypergroupoid  if and only if its Reedy fibrant replacement $\hat{X} \to Y$ is a trivial derived Artin $n$-hypergroupoid.
    \end{lemma}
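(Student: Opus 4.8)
The plan is to reduce the lemma to \Cref{cfstacks2} by showing that Reedy fibrant replacement interchanges homotopy (partial) matching objects with strict ones. First I would recall that the Reedy model structure on $sd\Aff$ comes equipped with functorial fibrant replacement $X \to \hat X$, a levelwise weak equivalence with $\hat X$ Reedy fibrant; since weak equivalences in $sd\Aff$ are levelwise quasi-isomorphisms, each map $X_m \to \hat X_m$ is a quasi-isomorphism, and in particular $\pi^0 X \to \pi^0 \hat X$ is a levelwise isomorphism (indeed a weak equivalence of simplicial affine schemes).

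The key observation is that for a Reedy fibrant object, the strict matching object $M_{\pd\Delta^m}(\hat X)$ already computes the homotopy matching object $M^h_{\pd\Delta^m}(X)$, and similarly $M_{\L^{m,k}}(\hat X) \simeq M^h_{\L^{m,k}}(X)$. This is precisely the defining feature of Reedy fibrancy: the matching maps of $\hat X$ are fibrations, so the limit defining $M_{\pd\Delta^m}(\hat X)$ is already a homotopy limit, and the right-derived functors $M^h_{\pd\Delta^m}$, $M^h_{\L^{m,k}}$ are computed by first taking Reedy fibrant replacement and then applying the strict functors. I would spell this out by invoking the standard fact (as in \cite[Theorem 5.2.5]{hovey}) that matching objects of Reedy fibrant diagrams are homotopy matching objects, so that for each $m,k$ the strict partial matching map
\[
\hat X_m \to M_{\L^{m,k}}(\hat X)\by_{M_{\L^{m,k}}(Y)}Y_m
\]
agrees up to weak equivalence with the homotopy partial matching map $X_m \to M^h_{\L^{m,k}}(X)\by^h_{M^h_{\L^{m,k}}(Y)}Y_m$, and likewise for the full matching maps in the trivial case. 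Here one must be mildly careful about the base $Y$: either replace $Y$ by a Reedy fibrant model as well, or note that the definitions are stated relative to a fixed $Y$ and that $M^h$ against $Y$ is what appears on both sides, so the comparison is internally consistent.

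Granting this identification, a map $X \to Y$ is a \cite{stacks2}-homotopy derived Artin $n$-hypergroupoid (Definition \ref{dnpreldefh2}) exactly when the homotopy partial matching maps are homotopy-smooth surjections for all $m \ge 1$ and weak equivalences for $m > n$; transporting along the weak equivalence $X \to \hat X$, this holds if and only if the corresponding strict partial matching maps of $\hat X$ are homotopy-smooth surjections (resp.\ trivial fibrations, using that a weak equivalence which is also a fibration is a trivial fibration) for the same ranges of $m$ --- which is exactly Definition \ref{dnpreldef}. The trivial case is identical, comparing Definition \ref{dnpreldefhtriv2} with its strict counterpart via the full matching maps. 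Finally, by \Cref{cfstacks2} the \cite{stacks2}-notion coincides with the notion of \S \ref{dstackdefsn}, so a map is a homotopy derived Artin $n$-hypergroupoid in the sense of \S \ref{dstackdefsn} if and only if its Reedy fibrant replacement is a derived Artin $n$-hypergroupoid, and similarly in the trivial and DM cases.

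The main obstacle I anticipate is the bookkeeping around homotopy-smoothness and surjectivity being preserved under the weak equivalence $X \to \hat X$: I would need to check that being a homotopy-smooth (resp.\ homotopy-\'etale) surjection is invariant under replacing a map by a weakly equivalent one (which follows since strongness and the conditions on $\H_0$ and on the cotangent complex of Definition \ref{strongdef} are manifestly quasi-isomorphism invariant, and $\pi^0$ is unchanged), and that fibrancy upgrades ``homotopy-smooth surjection that is a weak equivalence'' to ``trivial fibration'' in the top range $m > n$ (resp.\ $m \ge n$). These are routine once the matching-object comparison is in place, so the real content is entirely concentrated in the Reedy-fibrancy identification of strict with homotopy matching objects.
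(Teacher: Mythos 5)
Your proposal is correct and follows essentially the same route as the paper, which dispatches the lemma as an immediate consequence of Reedy fibrant replacement combined with Lemma \ref{cfstacks2}: the strict (partial) matching objects of a Reedy fibrant replacement compute the homotopy (partial) matching objects, and the relevant properties are quasi-isomorphism invariant. You have simply spelled out the details that the paper leaves implicit.
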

    
    \[
        \xymatrix{ X \ar[rr]^{\sim} \ar[ddrr] && \hat{X} \ar[dd]|-{\substack{\text{Reedy} \\ \text{ fibration}}} \\  \\&& Y}
    \]
    
    Theorem \ref{bigthm2} now has the following refinement (its original form as in \cite[Theorem 
    5.11]{stacksintro}).   
    
    \begin{theorem}\label{bigthm2a}\index{n-geometric@$n$-geometric!derived Artin stack} \index{derived Artin stack}
        The homotopy category of  strongly quasi-compact $(n-1)$-geometric derived   Artin  stacks is given by taking the full subcategory of $sd\Aff$ consisting of  derived   Artin  $n$-hypergroupoids $X$, and formally inverting the  trivial relative   Artin  $n$-hypergroupoids $X \to Y$. 
    
        \medskip 
        In fact, a model for the $\infty$-category of strongly quasi-compact $(n-1)$-geometric derived  Artin  stacks is given by the relative category $(\C,\cW)$ with $\C$ the full subcategory of $sd\Aff$ on derived Artin  $n$-hypergroupoids $X$ and $\cW$ the subcategory of trivial relative derived Artin $n$-hypergroupoids  $X \to Y$.
    
        \medskip
        The same results hold true if we substitute ``Deligne--Mumford'' for ``Artin''  throughout.\index{n-geometric@$n$-geometric!derived Deligne--Mumford stack}\index{derived Deligne--Mumford stack}
    \end{theorem}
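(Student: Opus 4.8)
The plan is to deduce Theorem \ref{bigthm2a} from Theorem \ref{bigthm2} by transporting the equivalence along Reedy fibrant replacement, the point being that the two statements differ only in replacing the homotopy-invariant hypergroupoid conditions of \S\ref{dstackdefsn} by their strict (Reedy fibrant) counterparts of Definition \ref{dnpreldef}. Concretely, write $(\C', \cW')$ for the relative category appearing in Theorem \ref{bigthm2}, with $\C'$ the full subcategory of $sd\Aff$ on homotopy derived Artin $n$-hypergroupoids and $\cW'$ the homotopy trivial relative ones, and $(\C, \cW)$ for the relative category of the present statement, with $\C$ the full subcategory on (strict) derived Artin $n$-hypergroupoids and $\cW$ the trivial relative ones. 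Since Theorem \ref{bigthm2} already identifies $(\C',\cW')$ (both its homotopy category and its simplicial localisation) with strongly quasi-compact $(n-1)$-geometric derived Artin stacks, it suffices to show that the inclusion $(\C,\cW) \hookrightarrow (\C',\cW')$ is a Dwyer--Kan equivalence of relative categories.

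First I would record that the inclusion is well defined and full: every (strict) derived Artin $n$-hypergroupoid is in particular a homotopy one by the preceding Lemma (its Reedy fibrant replacement being itself), and likewise $\cW \subseteq \cW'$. The candidate homotopy inverse is Reedy fibrant replacement $\hat{(-)}$, which exists functorially in the Reedy model structure on $sd\Aff$ of \cite[Theorem 5.2.5]{hovey}; by the preceding Lemma it carries $\C'$ into $\C$ and $\cW'$ into $\cW$, and the natural weak equivalence $X \xra{\sim} \hat X$ provides, for each object of $\C'$, a morphism to an object of $\C$ which is a levelwise quasi-isomorphism and hence lies in $\cW'$.

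The substance of the argument is then a standard relative-category criterion: a full inclusion $(\C,\cW)\hookrightarrow (\C',\cW')$ with $\cW = \cW'\cap \Mor\C$ and with every object of $\C'$ connected to an object of $\C$ by a zigzag in $\cW'$ induces an equivalence on both homotopy categories and simplicial localisations. To apply it I must verify the compatibility $\cW = \cW' \cap \Mor\C$, i.e. that a morphism between (strict) derived Artin $n$-hypergroupoids is a homotopy trivial relative hypergroupoid if and only if it is a trivial one. This is exactly where Reedy fibrancy does the work: for a Reedy fibrant $X$ the matching and partial matching objects $M_{\pd\Delta^m}(X)$, $M_{\L^{m,k}}(X)$ already compute the homotopy matching objects $M^h_{\pd\Delta^m}(X)$, $M^h_{\L^{m,k}}(X)$, so the (partial) matching maps appearing in Definition \ref{dnpreldef} coincide with the homotopy (partial) matching maps of Definitions \ref{dnpreldefh2}, \ref{dnpreldefhtriv2}; Lemma \ref{cfstacks2} then identifies these in turn with the conditions of \S\ref{dstackdefsn}.

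I expect the main obstacle to be precisely this matching of weak-equivalence classes --- ensuring that passing to Reedy fibrant models neither creates nor destroys trivial relative hypergroupoids, and that the comparison is functorial enough to yield a genuine map of relative categories rather than merely a bijection on homotopy classes. Once that compatibility is in hand, the homotopy-category statement follows by composing the localisation equivalence with Theorem \ref{bigthm2}, the $\infty$-categorical statement follows identically at the level of $L_\cW$, and the Deligne--Mumford case is obtained verbatim by substituting homotopy-\'etale for homotopy-smooth throughout, exactly as in the preceding lemmas.
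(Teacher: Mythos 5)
Your proposal inverts the paper's logical flow: the paper gives no internal proof of Theorem \ref{bigthm2a}, presenting it as the original form of \cite[Theorem 5.11]{stacksintro}, and uses the preceding lemmas to pass \emph{from} it \emph{to} Theorem \ref{bigthm2}. Attempting the converse deduction is legitimate in principle, but as written your argument has a genuine gap at its central step.

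The gap is twofold. First, the ``standard relative-category criterion'' you invoke --- full inclusion, $\cW=\cW'\cap\Mor\C$, every object of $\C'$ zigzag-equivalent to one of $\C$, hence an equivalence of localisations --- is false in that generality: take $\C'$ to be $a \xla{w} c \xra{f} b$ with $w$ the only non-identity weak equivalence and $\C$ the full subcategory on $\{a,b\}$; all hypotheses hold, yet $L_{\cW'}\C'$ acquires a morphism $a\to b$ invisible to $L_{\cW}\C$. What is actually needed is a replacement functor $G\co \C'\to\C$ with natural transformations to the identities that are objectwise weak equivalences \emph{in the respective classes}. Second, the compatibility $\cW=\cW'\cap\Mor\C$ on which you rely is itself false: Reedy fibrancy of the objects $X,Y$ controls only their absolute (partial) matching objects, whereas strict triviality of $f\co X\to Y$ in the sense of \S\ref{strictderhgpdsn} requires the \emph{relative} matching maps $X_m\to M_{\pd\Delta^m}(X)\by_{M_{\pd\Delta^m}(Y)}Y_m$ to be (trivial) fibrations, i.e.\ $f$ itself must be a Reedy fibration. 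A levelwise quasi-isomorphism between two derived Artin $n$-hypergroupoids always lies in $\cW'$ but generally not in $\cW$, so only $\cW\subsetneq\cW'\cap\Mor\C$ holds; for the same reason the unit $X\to\hat X$ for $X\in\C$ lies in $\cW'\cap\Mor\C$ but not in $\cW$, so Reedy fibrant replacement exhibits $(\C,\cW'\cap\Mor\C)$, not $(\C,\cW)$, as equivalent to $(\C',\cW')$. The remaining --- and essential --- content, namely that inverting only the trivial relative derived Artin $n$-hypergroupoids already inverts every morphism of $\cW'\cap\Mor\C$, is precisely the hard part of the theorem as proved in \cite{stacks2} (via the universal covers and explicit mapping spaces of \S\ref{dmorphismsn}); it does not follow from the formal manipulations you propose.
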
  
    
    %
    In particular, this means we obtain the simplicial category of such derived stacks  by simplicial localisation of  derived $n$-hypergroupoids at the class of trivial relative derived $n$-hypergroupoids.
    
    We can now give a direct proof of one of the ingredients we saw within the representability theorems:     
    
    \begin{corollary}\label{hgscor}
        Every derived $n$-geometric Artin  stack  $F \co d\Alg_R \to s\Set$ is homotopy-homogeneous.
    \end{corollary}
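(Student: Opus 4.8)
The plan is to present $F$ by a hypergroupoid and reduce homotopy-homogeneity to a smooth-descent statement, with the nilpotence of the kernel supplying the infinitesimal lifting that makes the descent exact. By Theorem \ref{bigthm2a} (together with Remark \ref{qcmpt2}, to drop strong quasi-compactness by allowing disjoint unions of derived affine schemes as building blocks), the derived $n$-geometric Artin stack $F$ is equivalent, as a functor $d\Alg_R \to s\Set$, to $X^{\sharp}$ for some Reedy-fibrant derived Artin $n$-hypergroupoid $X$ (Definition \ref{dnpreldef}). Homotopy-homogeneity is a condition on objectwise homotopy fibre products, so it is invariant under objectwise weak equivalence of functors, and it suffices to treat $F=X^{\sharp}$. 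Fix $A \to B \la C$ in $d\Alg_R$ with $g\co A \onto B$ surjective with nilpotent kernel $I$. A surjection of cdgas (or simplicial algebras) is a fibration, so the strict pullback computes the homotopy pullback, $A\by_B C \simeq A\by^h_B C$, and in $d\Aff$ the square
\[ \Spec(A\by_B C) \simeq \Spec A \sqcup^h_{\Spec B} \Spec C \]
is a homotopy pushout along the closed immersion $\Spec B \into \Spec A$ cut out by the nilpotent ideal $I$.

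Writing $X^{\sharp}(A) = \oR\Map_{\cW}(\Spec A, X) \simeq \oR\Map((\Spec A)^{\sharp}, X^{\sharp})$, and using that hypersheafification is a left adjoint (hence preserves homotopy colimits) while $\oR\Map(-,X^{\sharp})$ carries homotopy colimits to homotopy limits, the target $X^{\sharp}(A)\by^h_{X^{\sharp}(B)}X^{\sharp}(C)$ is identified with $\oR\Map$ out of the homotopy pushout above. Thus homotopy-homogeneity becomes the assertion that the canonical comparison map
\[ \Spec A \sqcup^h_{\Spec B}\Spec C \longrightarrow \Spec(A\by_B C) \]
is an equivalence \emph{as seen by} $X^{\sharp}$. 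This is not formal: the two sides differ as presheaves, and for a general target the map is not a local equivalence (compare the failure of homotopy-homogeneity in Digression \ref{hinichcounterex}), so the content lies entirely in the geometricity of $X^{\sharp}$.

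I would then induct on the geometric level. The base case is a homotopy derived $0$-hypergroupoid, where $X^{\sharp}\simeq \oR\Spec S$ is representable; then $X^{\sharp}(-)=\oR\Map_{d\Alg}(S,-)$ preserves all homotopy limits in its argument and sends $A\by_B C\simeq A\by^h_B C$ to the required homotopy pullback, using no nilpotence. For the inductive step I would use the homotopy-smooth atlas $X_0 \to X^{\sharp}$ and the bar presentation $X^{\sharp}\simeq \lvert m \mapsto X_m^{\sharp}\rvert$: the iterated homotopy fibre products of the atlas drop the geometric level, hence are homotopy-homogeneous by induction, while $X_0$ is affine, hence homotopy-homogeneous by the base case. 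The engine of the descent is the infinitesimal lifting property of homotopy-smooth maps: since the partial matching maps of $X$ are homotopy-smooth surjections (Definition \ref{dnpreldef}) and $A\onto B$ is filtered by square-zero layers, the obstruction theory of \S \ref{lesobssn} (via Lemma \ref{postfact}, one layer at a time) makes the lifting of atlas charts and horn fillers from $\Spec B$ to $\Spec A$ unobstructed. Computing $\oR\Map_{\cW}(\Spec A, X)$ through the derived analogue of the universal covers of Definition \ref{univcoverdef} and Theorem \ref{duskinmor} presents it as a filtered colimit of homotopy limits over $\Delta$ of mapping spaces into the affine levels $X_m$; filtered colimits of simplicial sets are exact and homotopy pullbacks commute with homotopy limits over $\Delta$, so the whole computation commutes with the finite homotopy pullback defining homotopy-homogeneity, once the indexing data can be chosen compatibly across the three corners.

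The main obstacle is exactly this last compatibility: choosing a universal cover of $\Spec(A\by_B C)$ (equivalently, compatible horn fillers) simultaneously from universal covers of $\Spec A$, $\Spec B$ and $\Spec C$, so that the homotopy colimit presenting $X^{\sharp}$ genuinely commutes with the finite homotopy pullback. This is where both the homotopy-smoothness of the structure maps of $X$ and the nilpotence of $I$ are indispensable, and it is the derived, relative refinement of the strictification arguments used in the proof of Lemma \ref{cfstacks2} and in \cite[Theorem \ref{stacks2-relstrict}]{stacks2}. I would carry it out by induction up the finite square-zero filtration of $I$, reducing at each stage to the unobstructed smooth lifting guaranteed by \S \ref{lesobssn}, which both establishes $\pi_0$-surjectivity of the comparison map and identifies its homotopy fibres, completing the verification that $X^{\sharp}$, and hence $F$, is homotopy-homogeneous.
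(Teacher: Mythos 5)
You have assembled the right ingredients --- the strict pullback of a nilpotent surjection computing the homotopy pullback in $d\Alg_R$, unobstructed lifting along (homotopy-)smooth maps over square-zero extensions, and matching of universal covers via unique lifting of nilpotents along \'etale maps --- and your reformulation of homotopy-homogeneity as the assertion that $X^{\sharp}$ inverts the comparison map out of the homotopy pushout of representables is correct, as is your recognition that this is not formal. But the proposal stops short of a proof exactly where you locate ``the main obstacle'': homotopy pullbacks do not commute with the bar realisation $|m \mapsto X_m^{\sharp}|$ or with the colimits over universal covers in general, and your plan to repair this by induction on geometric level plus a square-zero filtration of the kernel is only sketched, never executed. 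The induction on geometric level is in any case redundant in the hypergroupoid presentation, where every level $X_m$ is already affine; and invoking the long exact sequence of \S\ref{lesobssn} at this point risks circularity, since that sequence is constructed under the hypothesis that the functor is homotopy-homogeneous --- it may legitimately be applied only to the affine pieces, where the obstruction theory of \S\ref{obssn} is established independently.

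The paper's proof closes this gap in one stroke, bypassing both of your inductions. Working with a Reedy fibrant derived Artin $(n+1)$-hypergroupoid $X$ as in Definition \ref{dnpreldef}, it observes: (a) the strict identity $X(A\by_BC)\cong X(A)\by_{X(B)}X(C)$ holds for \emph{any} $X \in sd\Aff$, since levelwise $\Hom$ out of $O(X_m)$ commutes with limits; and (b) $X(A) \to X(B)$ is a Kan fibration, because a relative horn-filling problem amounts to lifting a $B$-point of the smooth surjection $X_m \to M_{\L^{m,k}}(X)$ to an $A$-point with prescribed image, which the infinitesimal smoothness criterion supplies since the kernel of $A \onto B$ is nilpotent. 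Hence the strict fibre product in (a) already computes the homotopy fibre product, with no descent, bar-resolution or filtration argument required. Passage to hypersheafifications is then exactly your universal-cover compatibility, resolved by the unique lifting of nilpotent extensions along \'etale morphisms, which matches the hypercovers of $\Spec(A\by_BC)$, $\Spec A$, $\Spec B$ and $\Spec C$ so that the colimit of Definition \ref{hyperdef} (in its derived form) commutes with the fibre product. If you make observation (b) the centrepiece, your square-zero induction becomes unnecessary and the whole argument collapses to a few lines.
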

    \begin{proof} 
        We need to show that for maps  $A \to B \la C$ in $d\Alg_R$, with $A \onto B$ a surjection with nilpotent kernel, we have
        \[
            F(A\by_BC) \xra{\sim} FA\by^h_{FB}FC.       
        \]
    
        For a derived Artin $(n+1)$-hypergroupoid $X$, this is an immediate consequence of the infinitesimal smoothness criterion, because $X(A) \to X(B)$ is then a Kan fibration, so $X(A)\by_{X(B)}X(C)\simeq X(A)\by_{X(B)}^hX(C)$, while we also have an isomorphism $X(A\by_BC)\cong X(A)\by_{X(B)}X(C)$ for any $X \in sd\Aff$. The result passes to hypersheafifications because \'etale morphisms lift nilpotent extensions uniquely.       
    \end{proof} 
    
\subsubsection{Explicit morphism spaces} \label{dmorphismsn}
    
    Definition \ref{hyperdef} and Theorem \ref{duskinmor} now adapt in the obvious way to give a description of the  derived mapping spaces $\oR \Map(X^{\sharp}, Y^{\sharp})$:
    
    \begin{definition}
        Define the \emph{simplicial $\Hom$ functor}\index{simplicial!Hom@$\Hom$ functor}   on    simplicial derived affine schemes by letting $\HHom_{sd\Aff}(X,Y)$ be the simplicial set given by 
        \[
            \HHom_{sd\Aff}(X,Y)_n:= \Hom_{sd\Aff}(\Delta^n \by X, Y),        
        \]
        where $(X\by \Delta^n)_i$ is given by the coproduct of $\Delta^n_i$ copies of $X_i$.
    \end{definition}
    
    There then exist derived $n$-Artin and  $n$-DM universal covers, defined similarly to Definition \ref{univcoverdef}. Every derived $n$-DM universal cover is then also a derived $n$-Artin universal cover,
    and  as in Definition \ref{hyperdef}:
    
    \begin{definition}  
        Given a derived  Artin $n$-hypergroupoid $Y$ and $X \in sd\Aff$, we define 
        \[
            \HHom_{sd\Aff}^{\sharp}(X,Y):= \LLim  \HHom_{sd\Aff}(\tilde{X}(i), Y),
        \]
\index{HomsdAffsharp@$\HHom_{sd\Aff}^{\sharp}$}
        where the colimit runs over the objects $\tilde{X}(i)$ of any $n$-Artin universal cover $\tilde{X} \to X$.
    \end{definition}
    
    The following is a case of \cite[Corollary \ref{stacks2-duskinmor}]{stacks2}:
    
    \begin{theorem}\label{duskinmord}
        If $X \in sd\Aff$ and $Y$ is a derived  Artin $n$-hypergroupoid, then the derived $\Hom$ functor on the associated   hypersheaves (a.k.a. derived $n$-stacks) $X^{\sharp}, Y^{\sharp}$  is given (up to weak equivalence)   by
        \[
            \oR \Map(X^{\sharp}, Y^{\sharp}) \simeq  \HHom_{sd\Aff}^{\sharp}(X,Y).   
        \]      
    \end{theorem}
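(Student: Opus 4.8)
The plan is to transport the proof of the underived Theorem \ref{duskinmor} (that is, \cite[Corollary \ref{stacks2-duskinmor}]{stacks2}) into the derived setting, replacing affine schemes by derived affine schemes and (\'etale, smooth) surjections by their homotopy-invariant counterparts, and to exploit that here $Y$ is given as a \emph{strict} (Reedy fibrant) derived Artin $n$-hypergroupoid in the sense of Definition \ref{dnpreldef}. Reedy fibrancy is precisely what guarantees that the ordinary simplicial mapping complex $\HHom_{sd\Aff}(-,Y)$ is homotopically meaningful: for such $Y$ the matching objects already compute the homotopy matching objects of \S \ref{cfstacks2sn} (cf. Warning \ref{reedyfibrant}). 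First I would invoke the universal property of hypersheafification --- the derived left adjoint to the forgetful functor from hypersheaves to presheaves --- to reduce $\oR\Map(X^{\sharp}, Y^{\sharp})$ to $\oR\Map(X, Y^{\sharp})$ computed in presheaves, so that it suffices to evaluate the latter inside the relative-category model $(\C, \cW)$ of Theorem \ref{bigthm2a}.

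The key geometric input is the existence of derived $n$-Artin (and $n$-DM) universal covers $\tilde{X} \to X$, the derived analogue of \cite[Proposition \ref{stacks2-procofibrant}]{stacks2}. These are built by the same transfinite small-object-type argument: iteratively attaching trivial relative derived hypergroupoids and passing to limits at limit ordinals, with the cofinality condition (3) of Definition \ref{univcoverdef} secured by a cardinality bound on the cells required at each stage. A universal cover serves as a cofibrant resolution of $X$ for the localisation at trivial hypergroupoids, each stage $\tilde{X}(a) \to X$ lying in $\cW$, and condition (3) makes the tower cofinal among all such resolutions. I would also record that every derived $n$-DM universal cover is an $n$-Artin universal cover, exactly as in the underived case.

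The crucial ingredient to prove is the \textbf{fibrancy statement}: if $Z \to W$ is a trivial relative derived Artin $n$-hypergroupoid, then the induced map $\HHom_{sd\Aff}(W, Y) \to \HHom_{sd\Aff}(Z, Y)$ is a weak equivalence of simplicial sets. This is the assertion that $Y$ is \emph{local} with respect to $\cW$, and it is the heart of the matter. I would establish it levelwise by an obstruction-theoretic lifting argument against the (partial) matching maps of $Y$: their homotopy-smoothness supplies lifts for the horn-filling data, while the strongness conditions control the derived directions, reducing the derived claim to its underived shadow via the square-zero and Postnikov analysis of \S \ref{lesobssn}. Granting this, each $\HHom_{sd\Aff}(\tilde{X}(i), Y)$ models $\oR\Map(\tilde{X}(i), Y^{\sharp})$, and since the $\tilde{X}(i) \to X$ are weak equivalences the filtered colimit
\[
\HHom_{sd\Aff}^{\sharp}(X, Y) = \LLim_i \HHom_{sd\Aff}(\tilde{X}(i), Y)
\]
is a homotopy colimit (filtered colimits being exact) computing $\oR\Map(X, Y^{\sharp})$, which is the desired identification.

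The hard part will be the fibrancy statement, since it is where one must genuinely check that inverting trivial hypergroupoids is corepresented by the simplicial Hom into $Y$, rather than merely invoking formal localisation properties. In practice I expect it to follow by importing the corresponding step of \cite[Corollary \ref{stacks2-duskinmor}]{stacks2}, whose proof was already written to accommodate such homotopical generalisations; the only additional bookkeeping is to verify that the derived matching objects behave correctly (cf. \S \ref{cfstacks2sn} and Lemma \ref{cfstacks2}) and that the homotopy-smooth and homotopy-\'etale conditions are stable under the pullbacks appearing in the matching-object equalisers. Once these compatibilities are in place the identification is immediate, and the theorem follows as the derived instance of the cited corollary.
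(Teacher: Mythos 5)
Your overall architecture (a universal cover as a cofinal system of resolutions of the source, a locality property of $Y$, and exactness of filtered colimits) has the right shape, but the key lemma you isolate --- that for every trivial relative derived Artin $n$-hypergroupoid $Z \to W$ the induced map $\HHom_{sd\Aff}(W,Y) \to \HHom_{sd\Aff}(Z,Y)$ is a weak equivalence --- is false, and the argument collapses without it. If it held, every transition map in the colimit defining $\HHom^{\sharp}_{sd\Aff}(X,Y)$ would be a weak equivalence and the theorem would degenerate to $\oR\Map(X^{\sharp},Y^{\sharp}) \simeq \HHom_{sd\Aff}(X,Y)$, with no hypersheafification needed. Example \ref{BGsharpex} already refutes this in the underived case (which embeds into the derived one): take $W$ an affine scheme carrying a non-trivial $\bG_m$-torsor (e.g.\ $\Spec$ of a Dedekind domain with non-trivial class group), $Z=\cosk_0(U/W)$ the \v Cech nerve of a cover splitting it, and $Y=B\bG_m$; then $\HHom_{s\Aff}(W,B\bG_m) \to \HHom_{s\Aff}(Z,B\bG_m)$ is the inclusion of the one-object groupoid of trivialised torsors into the groupoid of torsors split by $U$, which is fully faithful but not essentially surjective. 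Your intermediate claim that each individual $\HHom_{sd\Aff}(\tilde X(i),Y)$ already models $\oR\Map(X^{\sharp},Y^{\sharp})$ fails for the same reason: it is genuinely the colimit over the whole universal cover that is needed, and its transition maps are typically non-invertible monomorphisms.

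The locality actually required of the $n$-hypergroupoid $Y$ points in the other direction: the horn-filling (Kan-fibration-like) conditions on $Y$ ensure that one never needs to resolve the \emph{target}, so that a one-sided calculus of right fractions computes the localisation. The working ingredients are that trivial relative hypergroupoids are closed under composition and pullback, that the universal cover is cofinal among resolutions of $X$ (condition (3) of Definition \ref{univcoverdef}), and that the resulting filtered colimit of the $\HHom_{sd\Aff}(\tilde X(i),Y)$ then computes $\oR\Map(X^{\sharp},Y^{\sharp})$. That is the content of \cite[Corollary \ref{stacks2-duskinmor}]{stacks2}, which the paper simply cites rather than reproving; your Reedy-fibrancy and matching-object compatibility checks are sensible supporting observations, but they cannot substitute for the fractions/cofinality argument, and the ``fibrancy statement'' as you have formulated it cannot be repaired into a correct proof.
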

    
    In particular, this means the functor $Y^{\sharp} \co (d\Aff)^{\op} \to s\Set$ is given by $ \HHom_{sd\Aff}^{\sharp}(-,Y)$.     
    
    \begin{warning}
        Beware that the truncation formulae of \S \ref{nstacktruncationsn} do not have derived analogues, following Warning \ref{reedyfibrant}. Also note that Theorem \ref{duskinmord} cannot be relaxed by taking $Y$ to be  a homotopy derived  Artin $n$-hypergroupoid.
    \end{warning}

\clearpage
\bibliographystyle{alpha}
\addcontentsline{toc}{section}{Bibliography}

\bibliography{references}

\newcommand{\etalchar}[1]{$^{#1}$}
 \newcommand{\noop}[1]{} \def\cprime{$'$}
\begin{thebibliography}{{Sim}96a}

\bibitem[AKSZ95]{AKSZ}
M.~Alexandrov, M.~Kontsevich, A.~Schwarz, and O.~Zaboronsky.
\newblock The geometry of the master equation and topological quantum field
  theory.
\newblock {\em International Journal of Modern Physics A}, 12(07):1405--1429,
  March 1997\noop{1995}.
\newblock arXiv:hep-th/9502010v2.

\bibitem[Ane09]{anel}
Mathieu Anel.
\newblock Grothendieck topologies from unique factorisation systems.
\newblock arXiv:0902.1130v2 [math.AG], 2009.

\bibitem[Art74]{Artin}
M.~Artin.
\newblock Versal deformations and algebraic stacks.
\newblock {\em Invent. Math.}, 27:165--189, 1974.

\bibitem[Bal17]{balchinAugmentedHAG}
Scott Balchin.
\newblock Augmented {H}omotopical {A}lgebraic {G}eometry.
\newblock arXiv:1711.02640, 2017.

\bibitem[Bar04]{barrearlycoho}
Michael Barr.
\newblock Algebraic cohomology: the early days.
\newblock In {\em Galois theory, {H}opf algebras, and semiabelian categories},
  volume~43 of {\em Fields Inst. Commun.}, pages 1--26. Amer. Math. Soc.,
  Providence, RI, 2004.

\bibitem[Bec67]{beckThesis}
Jonathan~Mock Beck.
\newblock Triples, algebras and cohomology.
\newblock {\em Repr. Theory Appl. Categ.}, (2):1--59, 2003\noop{1967}.
\newblock Reprint of Ph.D. thesis, Columbia University, 1967.

\bibitem[Beh02]{behrendDGSch2}
Kai Behrend.
\newblock Differential graded schemes {II}: The 2-category of differential
  graded schemes.
\newblock arXiv:math/0212226, 2002.

\bibitem[BF96]{BehrendFantechi}
K.~Behrend and B.~Fantechi.
\newblock The intrinsic normal cone.
\newblock {\em Invent. Math.}, 128:45--88, 1997\noop{1996}.
\newblock arXiv:alg-geom/9601010v1.

\bibitem[BF03]{BuchweitzFlenner}
Ragnar-Olaf Buchweitz and Hubert Flenner.
\newblock A semiregularity map for modules and applications to deformations.
\newblock {\em Compositio Math.}, 137(2):135--210, 2003.

\bibitem[BFMT20]{BFMTLieModels}
Urtzi Buijs, Yves F{\'e}lix, Aniceto Murillo, and Daniel Tanr{\'e}.
\newblock {\em Lie Models in Topology}.
\newblock Progress in mathematics. Springer Nature, Cham, Switzerland, 1
  edition, December 2020.

\bibitem[{Bha}12]{BhattDerivedDR}
B.~{Bhatt}.
\newblock {Completions and derived de Rham cohomology}.
\newblock arXiv: 1207.6193 [math.AG], 2012.

\bibitem[BJ15]{BorisovJoyce}
D.~{Borisov} and D.~{Joyce}.
\newblock {Virtual fundamental classes for moduli spaces of sheaves on
  Calabi-Yau four-folds}.
\newblock {\em Geom. Topol.}, to appear. 2015.
\newblock arXiv: 1504.00690 [math.AG].

\bibitem[BK10]{BarwickKanRelative}
C.~Barwick and D.~M. Kan.
\newblock Relative categories: another model for the homotopy theory of
  homotopy theories.
\newblock {\em Indag. Math. (N.S.)}, 23(1-2):42--68, 2012\noop{2010}.
\newblock arXiv:1011.1691v2 [math.AT].

\bibitem[Blo72]{blochSemiregularity}
Spencer Bloch.
\newblock Semi-regularity and de{R}ham cohomology.
\newblock {\em Invent. Math.}, 17:51--66, 1972.

\bibitem[BN93]{bokstedtneeman}
Marcel B{\"o}kstedt and Amnon Neeman.
\newblock Homotopy limits in triangulated categories.
\newblock {\em Compositio Math.}, 86(2):209--234, 1993.

\bibitem[Boo20]{boothDefThPt}
Matt Booth.
\newblock The derived deformation theory of a point.
\newblock {\em Math. Z.}, 300:3023--3082, 2022\noop{2020}.
\newblock arXiv:2009.01590.

\bibitem[Bru10]{bruceGeomObjects}
A.~J. Bruce.
\newblock {\em Geometric objects on natural bundles and supermanifolds}.
\newblock PhD thesis, Manchester, 2010.
\newblock
  \href{https://www.researchgate.net/publication/202940003}{https://www.researchgate.net/publication/202940003}.

\bibitem[BV73]{BoardmanVogt}
J.~M. Boardman and R.~M. Vogt.
\newblock {\em Homotopy Invariant Algebraic Structures on Topological Spaces}.
\newblock Springer Berlin Heidelberg, 1973.

\bibitem[BZN07]{benzvinadlerlooplanglands}
David Ben-Zvi and David Nadler.
\newblock Loop spaces and {L}anglands parameters.
\newblock arXiv:0706.0322 [math.RT], 2007.

\bibitem[CFK99]{Quot}
Ionu{\c{t}} Ciocan-Fontanine and Mikhail Kapranov.
\newblock Derived {Q}uot schemes.
\newblock {\em Ann. Sci. {\'E}cole Norm. Sup. (4)}, 34(3):403--440,
  2001\noop{1999}.
\newblock arXiv:math/9905174v2 [math.AG].

\bibitem[CFK00]{Hilb}
Ionu{\c{t}} Ciocan-Fontanine and Mikhail~M. Kapranov.
\newblock Derived {H}ilbert schemes.
\newblock {\em J. Amer. Math. Soc.}, 15(4):787--815 (electronic),
  2002\noop{2000}.
\newblock arXiv:math/0005155v1 [math.AG].

\bibitem[Cis19]{cisinskiHigherCats}
Denis-Charles Cisinski.
\newblock {\em Higher Categories and Homotopical Algebra}.
\newblock Cambridge University Press, April 2019.

\bibitem[CPT{\etalchar{+}}15]{CPTVV}
D.~Calaque, T.~Pantev, B.~To{\"e}n, M.~Vaqui{\'e}, and G.~Vezzosi.
\newblock Shifted {P}oisson structures and deformation quantization.
\newblock {\em J. Topol.}, 10(2):483--584, 2017\noop{2015}.
\newblock arXiv:1506.03699v4 [math.AG].

\bibitem[CR05]{CRdiag}
A.~M. Cegarra and Josu{\'e} Remedios.
\newblock The relationship between the diagonal and the bar constructions on a
  bisimplicial set.
\newblock {\em Topology Appl.}, 153(1):21--51, 2005.

\bibitem[CR12]{CarchediRoytenberg}
David Carchedi and Dmitry Roytenberg.
\newblock On theories of superalgebras of differentiable functions.
\newblock {\em Theory Appl. Categ.}, 28:No. 30, 1022--1098, 2013\noop{2012}.
\newblock arXiv:1211.6134v2 [math.DG].

\bibitem[Del74]{Hodge3}
Pierre Deligne.
\newblock Th{\'e}orie de {H}odge. {III}.
\newblock {\em Inst. Hautes {\'E}tudes Sci. Publ. Math.}, (44):5--77, 1974.

\bibitem[DHKS04]{DwyerHirschhornKanSmith}
W.~G. Dwyer, P.~S. Hirschhorn, D.~M. Kan, and J.~H. Smith.
\newblock {\em Homotopy limit functors on model categories and homotopical
  categories}, volume 113 of {\em Mathematical Surveys and Monographs}.
\newblock American Mathematical Society, Providence, RI, 2004.

\bibitem[DK80a]{simploc2}
W.~G. Dwyer and D.~M. Kan.
\newblock Calculating simplicial localizations.
\newblock {\em J. Pure Appl. Algebra}, 18(1):17--35, 1980.

\bibitem[DK80b]{DKfunction}
W.G. Dwyer and D.M. Kan.
\newblock Function complexes in homotopical algebra.
\newblock {\em Topology}, 19(4):427--440, 1980.

\bibitem[DK84]{DubucKock}
E.~J. Dubuc and A.~Kock.
\newblock On {$1$}-form classifiers.
\newblock {\em Comm. Algebra}, 12(11-12):1471--1531, 1984.

\bibitem[DK87]{DKEquivsHtpyDiagrams}
W.~G. Dwyer and D.~M. Kan.
\newblock {VIII}. equivalences between homotopy theories of diagrams.
\newblock In {\em Algebraic Topology and Algebraic K-Theory ({AM}-113)}, pages
  180--205. Princeton University Press, 1987.

\bibitem[DM69]{DeligneMumford}
P.~Deligne and D.~Mumford.
\newblock The irreducibility of the space of curves of given genus.
\newblock {\em Publications math{\'{e}}matiques de l'{IH}{\'{E}}S},
  36(1):75--109, January 1969.

\bibitem[DM99]{DeligneMorganSupersymm}
Pierre Deligne and John~W. Morgan.
\newblock Notes on supersymmetry (following {J}oseph {B}ernstein).
\newblock In {\em Quantum fields and strings: a course for mathematicians,
  {V}ol. 1, 2 ({P}rinceton, {NJ}, 1996/1997)}, pages 41--97. Amer. Math. Soc.,
  Providence, RI, 1999.

\bibitem[Dri88]{drinfeldtoschechtman}
Vladimir Drinfeld.
\newblock A letter from {K}harkov to {M}oscow.
\newblock {\em EMS Surveys in Mathematical Sciences}, 1(2):241--248,
  2014\noop{1988}.

\bibitem[Dub81]{dubuc}
Eduardo~J. Dubuc.
\newblock {$C^{\infty }$}-schemes.
\newblock {\em Amer. J. Math.}, 103(4):683--690, 1981.

\bibitem[Dus75]{duskin}
J.~Duskin.
\newblock Simplicial methods and the interpretation of ``triple'' cohomology.
\newblock {\em Mem. Amer. Math. Soc.}, 3(issue 2, 163):v+135, 1975.

\bibitem[Emm95]{emmanouil}
Ioannis Emmanouil.
\newblock The cyclic homology of affine algebras.
\newblock {\em Invent. Math.}, 121(1):1--19, 1995.

\bibitem[FT85]{FeiginTsygan}
B.L. Feigin and B.L. Tsygan.
\newblock Additive {K}-theory and crystalline cohomology.
\newblock {\em Functional Analysis and Its Applications}, 19(2):124--132, 1985.

\bibitem[Gai11]{GaitsgoryIndCoh}
Dennis Gaitsgory.
\newblock Ind-coherent sheaves.
\newblock {\em Mosc. Math. J.}, 13(3):399--528, 553, 2013\noop{2011}.
\newblock arXiv: arXiv:1105.4857 [math.AG].

\bibitem[Get04]{getzler}
Ezra Getzler.
\newblock Lie theory for nilpotent {$L_\infty$}-algebras.
\newblock {\em Ann. of Math. (2)}, 170(1):271--301, 2009\noop{2004}.
\newblock arXiv:math/0404003v4 [math.AT].

\bibitem[GJ99]{sht}
Paul~G. Goerss and John~F. Jardine.
\newblock {\em Simplicial homotopy theory}, volume 174 of {\em Progress in
  Mathematics}.
\newblock Birkh{\"a}user Verlag, Basel, 1999.

\bibitem[GK94]{GK}
Victor Ginzburg and Mikhail Kapranov.
\newblock Koszul duality for operads.
\newblock {\em Duke Math. J.}, 76(1):203--272, 1994.

\bibitem[Gle82]{glenn}
Paul~G. Glenn.
\newblock Realization of cohomology classes in arbitrary exact categories.
\newblock {\em J. Pure Appl. Algebra}, 25(1):33--105, 1982.

\bibitem[GLST19]{GuanLazarevShengTangII}
Ai~Guan, Andrey Lazarev, Yunhe Sheng, and Rong Tang.
\newblock Review of deformation theory {II}: a homotopical approach.
\newblock 2019.
\newblock arXiv:1912.04028 [math.AT].

\bibitem[Gro60]{descent}
Alexander Grothendieck.
\newblock Technique de descente et th{\'e}or{\`e}mes d'existence en
  g{\'e}om{\'e}trie alg{\'e}brique. {II}. {L}e th{\'e}or{\`e}me d'existence en
  th{\'e}orie formelle des modules.
\newblock In {\em S{\'e}minaire Bourbaki, Vol.\ 5}, pages Exp.\ No.\ 195,
  369--390. Soc. Math. France, Paris, 1995\noop{1960}.

\bibitem[Gro83]{pursuingstacks}
A.~Grothendieck.
\newblock Pursuing stacks.
\newblock unpublished manuscript, 1983.

\bibitem[Har72]{HartshorneAlgDeRham}
Robin Hartshorne.
\newblock Algebraic de {R}ham cohomology.
\newblock {\em Manuscripta Math.}, 7:125--140, 1972.

\bibitem[Hel81]{heller}
Alex Heller.
\newblock On the representability of homotopy functors.
\newblock {\em J. London Math. Soc. (2)}, 23(3):551--562, 1981.

\bibitem[Hin98]{hinstack}
Vladimir Hinich.
\newblock D{G} coalgebras as formal stacks.
\newblock {\em J. Pure Appl. Algebra}, 162(2-3):209--250, 2001\noop{1998}.
\newblock https://arxiv.org/abs/math/9812034.

\bibitem[Hin99]{hinichDefsHtpyAlg}
Vladimir Hinich.
\newblock Deformations of homotopy algebras.
\newblock {\em Communications in Algebra}, 32(2):473--494, 2004\noop{1999}.
\newblock arXiv:math/9904145.

\bibitem[Hin17]{hinichLectInftyCats}
Vladimir Hinich.
\newblock Lectures on infinity categories.
\newblock arXiv:1709.06271, 2017.

\bibitem[Hir03]{Hirschhorn}
Philip~S. Hirschhorn.
\newblock {\em Model categories and their localizations}, volume~99 of {\em
  Mathematical Surveys and Monographs}.
\newblock American Mathematical Society, Providence, RI, 2003.

\bibitem[Hov99]{hovey}
Mark Hovey.
\newblock {\em Model categories}, volume~63 of {\em Mathematical Surveys and
  Monographs}.
\newblock American Mathematical Society, Providence, RI, 1999.

\bibitem[HS87]{HinSch}
V.~A. Hinich and V.~V. Schechtman.
\newblock On homotopy limit of homotopy algebras.
\newblock In {\em {$K$}-theory, arithmetic and geometry ({M}oscow,
  1984--1986)}, volume 1289 of {\em Lecture Notes in Math.}, pages 240--264.
  Springer, Berlin, 1987.

\bibitem[HS98]{hirschowitzsimpson}
Andr\'e Hirschowitz and Carlos Simpson.
\newblock Descente pour les $n$-champs.
\newblock arXiv:math.AG/9807049, 1998.

\bibitem[H{\"u}t08]{huettemann}
Thomas H{\"u}ttemann.
\newblock On the derived category of a regular toric scheme.
\newblock {\em Geom. Dedicata}, 148:175--203, 2010\noop{2008}.
\newblock arXiv:0805.4089v2 [math.KT].

\bibitem[Ill71]{Ill1}
Luc Illusie.
\newblock {\em Complexe cotangent et d{\'e}formations. {I}}.
\newblock Springer-Verlag, Berlin, 1971.
\newblock Lecture Notes in Mathematics, Vol. 239.

\bibitem[Ill72]{Ill2}
Luc Illusie.
\newblock {\em Complexe cotangent et d{\'e}formations. {I}{I}}.
\newblock Springer-Verlag, Berlin, 1972.
\newblock Lecture Notes in Mathematics, Vol. 283.

\bibitem[Joy02]{joyalQCatKan}
A.~Joyal.
\newblock Quasi-categories and {K}an complexes.
\newblock {\em Journal of Pure and Applied Algebra}, 175(1):207--222, 2002.
\newblock Special Volume celebrating the 70th birthday of Professor Max Kelly.

\bibitem[Joy07]{joyalQCatSCat}
Andr{\'e} Joyal.
\newblock Quasi-categories vs simplicial categories.
\newblock preprint, 2007.

\bibitem[JT07]{joyaltierney}
Andr{\'e} Joyal and Myles Tierney.
\newblock Quasi-categories vs {S}egal spaces.
\newblock In {\em Categories in algebra, geometry and mathematical physics},
  volume 431 of {\em Contemp. Math.}, pages 277--326. Amer. Math. Soc.,
  Providence, RI, 2007.

\bibitem[Kan58]{kanAdjointFunctors}
Daniel~M Kan.
\newblock Adjoint functors.
\newblock {\em Transactions of the American Mathematical Society},
  87(2):294--329, 1958.

\bibitem[Kap15]{kapranovSupergeometry}
Mikhail Kapranov.
\newblock Supergeometry in mathematics and physics.
\newblock In Mathieu Anel and Gabriel Catren, editors, {\em New Spaces in
  Physics: Formal and Conceptual Reflections}, volume~2, page 114–152.
  Cambridge University Press, 2021\noop{2015}.
\newblock arXiv:1512.07042 [math.AG].

\bibitem[Kel06]{kellerModelDGCat}
Bernhard Keller.
\newblock On differential graded categories.
\newblock In {\em International {C}ongress of {M}athematicians. {V}ol. {II}},
  pages 151--190. Eur. Math. Soc., Z\"urich, 2006.

\bibitem[Kon94a]{Kon}
Maxim Kontsevich.
\newblock Topics in algebra --- deformation theory.
\newblock Lecture Notes, available at
  \href{http://www.math.brown.edu/~abrmovic/kontsdef.ps}{http://www.math.brown.edu/$\sim$abrmovic/kontsdef.ps},
  1994.

\bibitem[Kon94b]{Kon2}
Maxim Kontsevich.
\newblock Enumeration of rational curves via torus actions.
\newblock In {\em The moduli space of curves (Texel Island, 1994)}, volume 129
  of {\em Progr. Math.}, pages 335--368. Birkh{\"a}user Boston, Boston, MA,
  1995\noop{1994}.
\newblock arXiv:hep-th/9405035v2.

\bibitem[Kon97]{kontsevichPoisson}
Maxim Kontsevich.
\newblock Deformation quantization of {P}oisson manifolds.
\newblock {\em Lett. Math. Phys.}, 66(3):157--216, 2003\noop{1997}.
\newblock arXiv:q-alg/9709040v1.

\bibitem[Kon99]{kontsevichOperads}
Maxim Kontsevich.
\newblock Operads and motives in deformation quantization.
\newblock {\em Lett. Math. Phys.}, 48(1):35--72, 1999.
\newblock arXiv:math/9904055v1 [math.QA].

\bibitem[KS00]{KS}
Maxim Kontsevich and Yan Soibelman.
\newblock Deformations of algebras over operads and the {D}eligne conjecture.
\newblock In {\em Conf{\'e}rence Mosh{\'e} Flato 1999, Vol. I (Dijon)},
  volume~21 of {\em Math. Phys. Stud.}, pages 255--307. Kluwer Acad. Publ.,
  Dordrecht, 2000.
\newblock arXiv:math/0001151v2 [math.QA].

\bibitem[KV08]{KhudaverdianVoronov}
H.~M. Khudaverdian and Th.~Th. Voronov.
\newblock Higher {P}oisson brackets and differential forms.
\newblock In {\em Geometric methods in physics}, volume 1079 of {\em AIP Conf.
  Proc.}, pages 203--215. Amer. Inst. Phys., Melville, NY, 2008.
\newblock arXiv:0808.3406v2 [math-ph].

\bibitem[LGLS18]{LaurentGengouxLavauStroblLieInftyAlgdFoliation}
Camille Laurent-Gengoux, Sylvain Lavau, and Thomas Strobl.
\newblock The universal lie
  {\textdollar}{\textbackslash}infty{\textdollar}-algebroid of a singular
  foliation.
\newblock {\em Doc. Math.}, 25:1571--1652, 2020\noop{2018}.
\newblock arXiv:1806.00475[math.DG].

\bibitem[LMB00]{Champs}
G{\'e}rard Laumon and Laurent Moret-Bailly.
\newblock {\em Champs alg{\'e}briques}, volume~39 of {\em Ergebnisse der
  Mathematik und ihrer Grenzgebiete. 3. Folge. A Series of Modern Surveys in
  Mathematics [Results in Mathematics and Related Areas. 3rd Series. A Series
  of Modern Surveys in Mathematics]}.
\newblock Springer-Verlag, Berlin, 2000.

\bibitem[LS67]{LS}
S.~Lichtenbaum and M.~Schlessinger.
\newblock The cotangent complex of a morphism.
\newblock {\em Trans. Amer. Math. Soc.}, 128:41--70, 1967.

\bibitem[Lur03]{lurieInftyTopoi}
Jacob Lurie.
\newblock On infinity topoi.
\newblock arXiv:math/0306109v2 [math.CT], 2003.

\bibitem[Lur04a]{lurie}
J.~Lurie.
\newblock {\em Derived Algebraic Geometry}.
\newblock PhD thesis, M.I.T., 2004.
\newblock
  \href{http://hdl.handle.net/1721.1/30144}{http://hdl.handle.net/1721.1/30144}
  or
  \href{https://www.math.ias.edu/~lurie/papers/DAG.pdf}{https://www.math.ias.edu/$\sim$lurie/papers/DAG.pdf}.

\bibitem[Lur04b]{lurietannaka}
Jacob Lurie.
\newblock Tannaka duality for geometric stacks.
\newblock arXiv:math/0412266v2 [math.AG], 2004.

\bibitem[Lur07]{lurieellipticsurvey}
Jacob Lurie.
\newblock A survey of elliptic cohomology.
\newblock Available at:
  \href{http://www-math.mit.edu/~lurie/papers/survey.pdf}{http://www-math.mit.edu/$\sim$lurie/papers/survey.pdf},
  2007.

\bibitem[Lur09a]{lurieDAG5}
Jacob Lurie.
\newblock Derived algebraic geometry {V}: Structured spaces.
\newblock arXiv:0905.0459v1 [math.CT], 2009.

\bibitem[Lur09b]{luriehighertopoi}
Jacob Lurie.
\newblock {\em Higher topos theory}, volume 170 of {\em Annals of Mathematics
  Studies}.
\newblock Princeton University Press, Princeton, NJ, 2009.

\bibitem[Lur10]{lurieDGLA}
Jacob Lurie.
\newblock Moduli problems for ring spectra.
\newblock In {\em Proceedings of the {I}nternational {C}ongress of
  {M}athematicians. {V}olume {II}}, pages 1099--1125, New Delhi, 2010.
  Hindustan Book Agency.

\bibitem[Lur11a]{lurieDAG9}
Jacob Lurie.
\newblock Derived algebraic geometry {IX}: Closed immersions.
\newblock available at
  \href{http://www.math.harvard.edu/~lurie/papers/DAG-IX.pdf}{www.math.harvard.edu/$\sim$lurie/papers/DAG-IX.pdf},
  2011.

\bibitem[Lur11b]{lurieDAG10}
Jacob Lurie.
\newblock Derived algebraic geometry {X}: Formal moduli problems.
\newblock available at
  \href{http://www.math.harvard.edu/~lurie/papers/DAG-X.pdf}{www.math.harvard.edu/$\sim$lurie/papers/DAG-X.pdf},
  2011.

\bibitem[Lur11c]{lurieDAG13}
Jacob Lurie.
\newblock Derived algebraic geometry {XIII}: Rational and $p$-adic homotopy
  theory.
\newblock available at
  \href{http://www.math.harvard.edu/~lurie/papers/DAG-XIII.pdf}{www.math.harvard.edu/$\sim$lurie/papers/DAG-XIII.pdf},
  2011.

\bibitem[Lur18]{lurieSAG}
Jacob Lurie.
\newblock Spectral algebraic geometry.
\newblock available at
  \href{https://www.math.ias.edu/~lurie/papers/SAG-rootfile.pdf}{www.math.ias.edu/$\sim$lurie/papers/SAG-rootfile.pdf},
  2018.

\bibitem[Mag10]{maggiolo}
Stefano Maggiolo.
\newblock Derived deformation functors, 2010.
\newblock Lecture notes available at
  \href{http://blog.poormansmath.net/docs/WDTII_Pridham.pdf}{blog.poormansmath.net/docs/WDTII\_Pridham.pdf}.

\bibitem[Mah88]{waldhausenBN}
Mark~E Mahowald.
\newblock {\em Algebraic topology: proceedings of the international conference
  held March 21-24, 1988}.
\newblock Number~96. American Mathematical Soc., 1989\noop{1988}.

\bibitem[Man99]{Man2}
Marco Manetti.
\newblock Extended deformation functors.
\newblock {\em Int. Math. Res. Not.}, (14):719--756, 2002\noop{1999}.
\newblock arXiv:math/9910071v2 [math.AG].

\bibitem[Mat89]{Mat}
Hideyuki Matsumura.
\newblock {\em Commutative ring theory}.
\newblock Cambridge University Press, Cambridge, second edition, 1989.
\newblock Translated from the Japanese by M. Reid.

\bibitem[Noo05]{Noohi1}
Behrang Noohi.
\newblock Foundations of topological stacks {I}.
\newblock arXiv:math/0503247v1 [math.AG], 2005.

\bibitem[Nui18]{nuitenThesis}
Joost Nuiten.
\newblock {\em Lie algebroids in derived differential topology}.
\newblock PhD thesis, Utrecht, 2018.

\bibitem[Ols07]{olssartin}
Martin Olsson.
\newblock Sheaves on {A}rtin stacks.
\newblock {\em J. Reine Angew. Math.}, 603:55--112, 2007.

\bibitem[Pos09]{positselskiDerivedCategories}
Leonid Positselski.
\newblock Two kinds of derived categories, {K}oszul duality, and
  comodule-contramodule correspondence.
\newblock {\em Mem. Amer. Math. Soc.}, 212(996):vi+133, 2011\noop{2009}.
\newblock arXiv:0905.2621v12 [math.CT].

\bibitem[Pri03]{paper2}
J.~P. Pridham.
\newblock Deformations of schemes and other bialgebraic structures.
\newblock {\em Trans. Amer. Math. Soc.}, 360(3):1601--1629, 2008\noop{2003}.
\newblock mostly contained in arXiv:math/0311168v6 [math.AG].

\bibitem[Pri06]{htpy}
J.~P. Pridham.
\newblock Pro-algebraic homotopy types.
\newblock {\em Proc. London Math. Soc.}, 97(2):273--338, 2008\noop{2006}.
\newblock arXiv math.AT/0606107 v8.

\bibitem[Pri07a]{ddt1}
J.~P. Pridham.
\newblock Unifying derived deformation theories.
\newblock {\em Adv. Math.}, 224(3):772--826, 2010\noop{2007}.
\newblock corrigendum 228 (2011), no. 4, 2554--2556, arXiv:0705.0344v6
  [math.AG].

\bibitem[Pri07b]{ddt2}
J.~P. Pridham.
\newblock Derived deformations of schemes.
\newblock {\em Comm. Anal. Geom.}, 20(3):529--563, 2012\noop{2007}.
\newblock arXiv:0908.1963v1 [math.AG].

\bibitem[Pri08]{higher}
J.~P. Pridham.
\newblock Derived deformations of {A}rtin stacks.
\newblock {\em Comm. Anal. Geom.}, 23(3):419--477, 2015\noop{2008}.
\newblock arXiv:0805.3130v3 [math.AG].

\bibitem[Pri09]{stacks2}
J.~P. Pridham.
\newblock Presenting higher stacks as simplicial schemes.
\newblock {\em Adv. Math.}, 238:184--245, 2013\noop{2009}.
\newblock arXiv:0905.4044v4 [math.AG].

\bibitem[Pri10a]{dmsch}
J.~P. Pridham.
\newblock Derived moduli of schemes and sheaves.
\newblock {\em J. K-Theory}, 10(1):41--85, 2012\noop{2010}.
\newblock arXiv:1011.2189v5 [math.AG].

\bibitem[Pri10b]{drep}
J.~P. Pridham.
\newblock Representability of derived stacks.
\newblock {\em J. K-Theory}, 10(2):413--453, 2012\noop{2010}.
\newblock arXiv:1011.2742v4 [math.AG].

\bibitem[Pri11a]{stacksintro}
J.~P. Pridham.
\newblock Notes characterising higher and derived stacks concretely.
\newblock arXiv:1105.4853v3 [math.AG], 2011.

\bibitem[Pri11b]{dmc}
J.~P. Pridham.
\newblock Constructing derived moduli stacks.
\newblock {\em Geom. Topol.}, 17(3):1417--1495, 2013\noop{2011}.
\newblock arXiv:1101.3300v2 [math.AG].

\bibitem[Pri12]{semireg2}
J.~P. Pridham.
\newblock Semiregularity as a consequence of {G}oodwillie's theorem.
\newblock arXiv:1208.3111v2 [math.AG], 2012.

\bibitem[Pri15]{poisson}
J.~P. Pridham.
\newblock Shifted {P}oisson and symplectic structures on derived {$N$}-stacks.
\newblock {\em J. Topol.}, 10(1):178--210, 2017\noop{2015}.
\newblock arXiv:1504.01940v5 [math.AG].

\bibitem[Pri17]{obsrec2}
J.~P. Pridham.
\newblock Non-abelian reciprocity laws and higher {B}rauer--{M}anin
  obstructions.
\newblock {\em Algebr. Geom. Topol.}, 20(2):699--756, 2020\noop{2017}.
\newblock arXiv:1704.03021v3 [math.AT].

\bibitem[Pri18a]{DQDG}
J.~P. Pridham.
\newblock An outline of shifted {P}oisson structures and deformation
  quantisation in derived differential geometry.
\newblock arXiv: 1804.07622v3 [math.DG], 2018.

\bibitem[Pri18b]{DStein}
J.P. Pridham.
\newblock A differential graded model for derived analytic geometry.
\newblock {\em Advances in Mathematics}, 360:106922, 2020\noop{2018}.
\newblock arXiv: 1805.08538v1 [math.AG].

\bibitem[Pri19]{smallet}
J.~P. Pridham.
\newblock A note on \'etale atlases for {A}rtin stacks, {P}oisson structures
  and quantisation.
\newblock arXiv:1905.09255, 2019.

\bibitem[Pri20]{NCstacks}
J.~P. Pridham.
\newblock Non-commutative derived moduli prestacks.
\newblock {\em Adv. Math.}, to appear\noop{2020}.
\newblock arXiv:2008.11684 [math.AG].

\bibitem[PTVV11]{PTVV}
T.~Pantev, B.~To{\"e}n, M.~Vaqui{\'e}, and G.~Vezzosi.
\newblock Shifted symplectic structures.
\newblock {\em Publ. Math. Inst. Hautes \'Etudes Sci.}, 117:271--328,
  2013\noop{2011}.
\newblock arXiv: 1111.3209v4 [math.AG].

\bibitem[Qui67]{QHA}
Daniel~G. Quillen.
\newblock {\em Homotopical algebra}.
\newblock Lecture Notes in Mathematics, No. 43. Springer-Verlag, Berlin, 1967.

\bibitem[Qui69]{QRat}
Daniel Quillen.
\newblock Rational homotopy theory.
\newblock {\em Ann. of Math. (2)}, 90:205--295, 1969.

\bibitem[Qui70]{Q}
Daniel Quillen.
\newblock On the (co-) homology of commutative rings.
\newblock In {\em Applications of Categorical Algebra (Proc. Sympos. Pure
  Math., Vol. XVII, New York, 1968)}, pages 65--87. Amer. Math. Soc.,
  Providence, R.I., 1970.

\bibitem[Ren06]{renaudin}
Olivier Renaudin.
\newblock Plongement de certaines théories homotopiques de quillen dans les
  dérivateurs.
\newblock {\em Journal of Pure and Applied Algebra}, 213(10):1916--1935,
  2009\noop{2006}.
\newblock arXiv:math/0603339v2 [math.AT].

\bibitem[Rie19]{riehlHtpicalCats}
Emily Riehl.
\newblock Homotopical categories: from model categories to
  $(\infty,1)$-categories.
\newblock In {\em Stable categories and structured ring spectra}, MSRI book
  series. CUP, to appear\noop{2019}.
\newblock arxiv:1904.00886v3[math.CT].

\bibitem[{Saf}17]{safronovPoissonLie}
P.~{Safronov}.
\newblock {Poisson-Lie structures as shifted Poisson structures}.
\newblock arXiv: 1706.02623v2 [math.AG], 2017.

\bibitem[Sch68]{Sch}
Michael Schlessinger.
\newblock Functors of {A}rtin rings.
\newblock {\em Trans. Amer. Math. Soc.}, 130:208--222, 1968.

\bibitem[Ser56]{GAGA}
Jean-Pierre Serre.
\newblock G{\'e}om{\'e}trie alg{\'e}brique et g{\'e}om{\'e}trie analytique.
\newblock {\em Ann. Inst. Fourier, Grenoble}, 6:1--42, 1955--1956.

\bibitem[Ser65]{serreAlgebreLocale}
Jean-Pierre Serre.
\newblock {\em Alg\`ebre locale. {M}ultiplicit\'{e}s}, volume~11 of {\em
  Lecture Notes in Mathematics}.
\newblock Springer-Verlag, Berlin-New York, 1965.
\newblock Cours au Coll\`ege de France, 1957--1958, r\'{e}dig\'{e} par Pierre
  Gabriel, Seconde \'{e}dition, 1965.

\bibitem[SGA4b]{SGA4.2}
{\em Th{\'e}orie des topos et cohomologie {\'e}tale des sch{\'e}mas. {T}ome 2}.
\newblock Lecture Notes in Mathematics, Vol. 270. Springer-Verlag, Berlin,
  1972\noop{1964}\noop{4b}.
\newblock S{\'e}minaire de G{\'e}om{\'e}trie Alg{\'e}brique du Bois-Marie
  1963--1964 (SGA 4), Dirig{\'e} par M. Artin, A. Grothendieck et J. L.
  Verdier. Avec la collaboration de N. Bourbaki, P. Deligne et B. Saint-Donat.

\bibitem[SGA6]{sga6}
{\em Th{\'e}orie des intersections et th{\'e}or{\`e}me de {R}iemann--{R}och}.
\newblock Lecture Notes in Mathematics, Vol. 225. Springer-Verlag, Berlin,
  1971\noop{1967}\noop{A6}.
\newblock S{\'e}minaire de G{\'e}om{\'e}trie Alg{\'e}brique du Bois-Marie
  1966--1967 (SGA 6), Dirig{\'e} par P. Berthelot, A. Grothendieck et L.
  Illusie. Avec la collaboration de D. Ferrand, J. P. Jouanolou, O. Jussila, S.
  Kleiman, M. Raynaud et J. P. Serre.

\bibitem[{Sim}96a]{simpsonalggeomnstacks}
C.~{Simpson}.
\newblock {Algebraic (geometric) $n$-stacks}.
\newblock arXiv:alg-geom/9609014, 1996.

\bibitem[Sim96b]{Simfil}
Carlos Simpson.
\newblock The {H}odge filtration on nonabelian cohomology.
\newblock In {\em Algebraic geometry---Santa Cruz 1995}, volume~62 of {\em
  Proc. Sympos. Pure Math.}, pages 217--281. Amer. Math. Soc., Providence, RI,
  1997\noop{1996}.
\newblock arXiv:alg-geom/9604005v1.

\bibitem[Spa88]{spaltenstein}
N.~Spaltenstein.
\newblock Resolutions of unbounded complexes.
\newblock {\em Compositio Mathematica}, 65(2):121--154, 1988.

\bibitem[Sul77]{Sullivan}
Dennis Sullivan.
\newblock Infinitesimal computations in topology.
\newblock {\em Inst. Hautes {\'E}tudes Sci. Publ. Math.}, (47):269--331 (1978),
  1977.

\bibitem[Tay02]{TaylorCV}
Joseph~L Taylor.
\newblock {\em Several complex variables with connections to algebraic geometry
  and lie groups}.
\newblock Graduate studies in mathematics. American Mathematical Society,
  Providence, RI, May 2002.

\bibitem[To{\"e}00]{chaff}
Bertrand To{\"e}n.
\newblock Champs affines.
\newblock {\em Selecta Math. (N.S.)}, 12(1):39--135, 2006\noop{2000}.
\newblock arXiv math.AG/0012219.

\bibitem[To{\"e}06]{toenseattle}
Bertrand To{\"e}n.
\newblock Higher and derived stacks: a global overview.
\newblock In {\em Algebraic geometry---{S}eattle 2005. {P}art 1}, volume~80 of
  {\em Proc. Sympos. Pure Math.}, pages 435--487. Amer. Math. Soc., Providence,
  RI, 2009\noop{2006}.
\newblock arXiv math/0604504v3.

\bibitem[To{\"e}08]{toencrm}
Bertrand To{\"e}n.
\newblock Simplicial presheaves and derived algebraic geometry.
\newblock In {\em Simplicial Methods for Operads and Algebraic Geometry}, pages
  119--186. Springer Basel, 2010\noop{2008}.
\newblock hal-00772850v2.

\bibitem[To{\"e}14]{toenICM}
Bertrand To{\"e}n.
\newblock Derived algebraic geometry and deformation quantization.
\newblock In {\em Proceedings of the {I}nternational {C}ongress of
  {M}athematicians (Seoul 2014), Vol. II}, pages 769--752, 2014.
\newblock arXiv:1403.6995v4 [math.AG].

\bibitem[TV02]{hag1}
Bertrand To{\"e}n and Gabriele Vezzosi.
\newblock Homotopical algebraic geometry. {I}. {T}opos theory.
\newblock {\em Adv. Math.}, 193(2):257--372, 2005\noop{2002}.
\newblock arXiv:math/0207028 v4.

\bibitem[TV03]{TVbn}
Bertrand To{\"e}n and Gabriele Vezzosi.
\newblock Brave new algebraic geometry and global derived moduli spaces of ring
  spectra.
\newblock In Haynes~R. Miller and Douglas~C.Editors Ravenel, editors, {\em
  Elliptic Cohomology: Geometry, Applications, and Higher Chromatic Analogues},
  London Mathematical Society Lecture Note Series, pages 325--359. Cambridge
  University Press, 2007\noop{2003}.
\newblock arXiv:math/0309145v2 [math.AT].

\bibitem[TV04]{hag2}
Bertrand To{\"e}n and Gabriele Vezzosi.
\newblock Homotopical algebraic geometry. {II}. {G}eometric stacks and
  applications.
\newblock {\em Mem. Amer. Math. Soc.}, 193(902):x+224, 2008\noop{2004}.
\newblock arXiv math.AG/0404373 v7.

\bibitem[TV05]{toenvaquie}
Bertrand To{\"e}n and Michel Vaqui{\'e}.
\newblock Moduli of objects in dg-categories.
\newblock {\em Ann. Sci. {\'E}cole Norm. Sup. (4)}, 40(3):387--444,
  2007\noop{2005}.
\newblock arXiv:math/0503269v5.

\bibitem[TV09]{TVchern}
Bertrand To{\"e}n and Gabriele Vezzosi.
\newblock Chern character, loop spaces and derived algebraic geometry.
\newblock In {\em Algebraic topology}, volume~4 of {\em Abel Symp.}, pages
  331--354. Springer, Berlin, 2009.
\newblock arXiv:0903.3292v2 [math.AG].

\bibitem[Vor07]{voronovMackenzie}
Theodore~Th. Voronov.
\newblock {$Q$}-manifolds and {M}ackenzie theory.
\newblock {\em Comm. Math. Phys.}, 315(2):279--310, 2012\noop{2007}.
\newblock arXiv:0709.4232v1 [math.DG].

\bibitem[Wei94]{W}
Charles~A. Weibel.
\newblock {\em An introduction to homological algebra}.
\newblock Cambridge University Press, Cambridge, 1994.

\bibitem[Zhu08]{zhu}
Chenchang Zhu.
\newblock {$n$}-groupoids and stacky groupoids.
\newblock {\em Int. Math. Res. Not. IMRN}, (21):4087--4141, 2009\noop{2008}.
\newblock arXiv:0801.2057.

\end{thebibliography}

\addcontentsline{toc}{section}{Index}
\printindex

\end{document}